 \def\versionno{ fgjs2   --   version 7.0   --   by dj   --  13.07.24  }

\documentclass[english,12pt]{article}


\usepackage{float}
\usepackage{mathtools}
\usepackage{amsmath}
\usepackage{hyperref} 
\usepackage{amsthm}
\usepackage{amssymb}
\usepackage{enumerate}
\usepackage{multicol}
\usepackage{stmaryrd}

\usepackage{hyperref,fancyhdr}
\usepackage[english]{babel} 
\usepackage{mathrsfs,mathtools}
\usepackage{amsfonts}
\usepackage{amssymb}
\usepackage{enumerate}
\usepackage{multicol}
\usepackage{arydshln}
\usepackage{stmaryrd}
\usepackage{graphicx,float}
\usepackage{wrapfig}
\usepackage[export]{adjustbox}

\usepackage{nameref}
\usepackage{bbm}
\usepackage{color,colordvi}
\usepackage[mathscr]{eucal}
\usepackage{caption}
\usepackage{subcaption}
\usepackage{chngcntr}


\usepackage{tikz}
\usepackage{tikz-cd}
\everymath{\displaystyle}
\usepackage{lmodern}
\usepackage{fancyhdr}
\usetikzlibrary{decorations,arrows}
\usetikzlibrary{decorations.markings}
\usetikzlibrary{decorations.pathmorphing}
\usetikzlibrary{matrix,arrows}
\usetikzlibrary{shadings}


\numberwithin{equation}{section}

\makeatletter \@ifundefined{date}{}{\date{}} \makeatother

\theoremstyle{plain}
\newtheorem{thm}{Theorem}[section]
\newtheorem{cor}[thm]{Corollary}
\newtheorem{lem}[thm]{Lemma}
\newtheorem{prop}[thm]{Proposition}

\theoremstyle{definition}

\newtheorem{defi}[thm]{Definition}
\newtheorem{rem}[thm]{Remark}
\newtheorem{exa}[thm]{Example}

\theoremstyle{definition}

\newtheorem*{rep@lemma}{\rep@title}
\newcommand{\newreplemma}[2]{%
\newenvironment{rep#1}[1]{%
 \def\rep@title{#2 \ref{##1}}%
 \begin{rep@lemma}}%
 {\end{rep@lemma}}}
\newreplemma{lem}{Lemma}


\makeatletter
\newcommand*{\relrelbarsep}{.386ex}
\newcommand*{\relrelbar}{%
  \mathrel{%
      \mathpalette\@relrelbar\relrelbarsep}}
\newcommand*{\@relrelbar}[2]{%
  \raise#2\hbox to 0pt{$\m@th#1\relbar$\hss}%
    \lower#2\hbox{$\m@th#1\relbar$}}
\providecommand*{\rightrightarrowsfill@}{%
      \arrowfill@\relrelbar\relrelbar\rightrightarrows}
\providecommand*{\leftleftarrowsfill@}{%
       \arrowfill@\leftleftarrows\relrelbar\relrelbar}
\providecommand*{\xrightrightarrows}[2][]{%
          \ext@arrow 0359\rightrightarrowsfill@{#1}{#2}}
\providecommand*{\xleftleftarrows}[2][]{%
    \ext@arrow 3095\leftleftarrowsfill@{#1}{#2}}
\makeatother


\usepackage{tikz}
\usepackage{tikz-cd}
\everymath{\displaystyle}
\usepackage{lmodern}
\usepackage{fancyhdr}
\usetikzlibrary{decorations,arrows}
\usetikzlibrary{decorations.markings}
\usetikzlibrary{decorations.pathmorphing}
\usetikzlibrary{matrix,arrows}

\setlength\textwidth{17cm} \hoffset -21mm
\setlength\textheight{23.2cm} \topmargin= -18mm


             \newcommand\Cite[2] {\cite[#1]{#2}}

\catcode`\@=11
\newif\if@fewtab\@fewtabtrue
{\count255=\time\divide\count255 by 60
\xdef\hourmin{\number\count255}
\multiply\count255 by-60\advance\count255 by\time
\xdef\hourmin{\hourmin:\ifnum\count255<10 0\fi\the\count255}}
\def\ps@draft{\let\@mkboth\@gobbletwo
    \def\@oddfoot{\hbox to 7 cm{\tiny \versionno
       \hfil}\hskip -7cm\hfil\rm\thepage \hfil {\tiny\draftdate}}
    \def\@oddhead{}
    \def\@evenhead{}\let\@evenfoot\@oddfoot}
\def\draftdate{\number\month/\number\day/\number\year\ \ \ \hourmin }

\catcode`\@=12

 %
\def\colorL  {red!51!white}
\def\colorM  {red!08!white}
\def\colorMA {red!11!white}
\def\colorMJ {red!21!white}
\def\colorMlab  {red!62!black} 
\def\colorMlabb {red!33!gray}  
\def\colorMd {red!33!white}
\def\colorMl {white}
\def\colorN  {red!03!white}
\def\radblob {0.9pt} 
\def\radcir  {0.7}
\def\radvtx  {0.8pt} 
\def\stdx    {3.5}   
\def\stdxx   {0.5}   
\def\stdy    {1.9}   
\def\stdyy   {0.05}  
\def\horddots {\filldraw[black]
            (bl1) circle (\radblob) (bl2) circle (\radblob) (bl3) circle (\radblob)
            (br1) circle (\radblob) (br2) circle (\radblob) (br3) circle (\radblob) ;
  }
\def\verddots {\filldraw[black]
            (bl1) circle (\radblob) (bl2) circle (\radblob) (bl3) circle (\radblob)
            (br1) circle (\radblob) (br2) circle (\radblob) (br3) circle (\radblob) ;
  }
\def\mercedesgraph{%
  \coordinate (p0) at (180:0.11*\locR) ;
  \coordinate (p2) at (15:0.48*\locR) ;
  \coordinate (p3) at (130:0.55*\locR) ;
  \coordinate (p4) at (240:0.55*\locR) ;
  \draw[line width=0.8*\widthObj,\colorObj]
        (p0) -- (\locaA:\locR) (p0) -- (\locaB:\locR) (p0) -- (\locaC:\locR)
        (0,0) circle (\locR) ;
  \filldraw[\colorObj]
        (\locaA:\locR) circle (\locv) (\locaB:\locR) circle (\locv)
        (\locaC:\locR) circle (\locv) (p0) circle (\locv) ;
  }
\def\tikzpicTwocell{%
  \coordinate (p01) at (0.3*\stdx,-\stdyy) ;
  \coordinate (p02) at (0.7*\stdx,0.5*\stdyy) ;
  \coordinate (p03) at (\stdx,0) ;
  \coordinate (p10) at (0.5*\stdxx,0.5*\stdy) ;
  \coordinate (p13) at (\stdx+0.5*\stdxx,0.5*\stdy) ;
  \coordinate (p20) at (\stdxx,\stdy) ;
  \coordinate (p21) at (0.4*\stdx+\stdxx,\stdy+\stdyy) ;
  \coordinate (p22) at (0.75*\stdx+\stdxx,\stdy-\stdyy) ;
  \coordinate (p23) at (\stdx+\stdxx,\stdy) ;
  \filldraw[draw=\colorL,thick,fill=\colorM,rounded corners] 
            (0,0) [out=90,in=270] to (p10) [out=90,in=270] to (p20) 
	    [out=0,in=180] to (p21) [out=0,in=180] to (p22) [out=0,in=180] to (p23)
	    [out=270,in=90] to (p13) [out=270,in=90] to (p03)
	    [out=180,in=0] to (p02) [out=180,in=0] to (p01) [out=180,in=0] to (0,0);
  }
\def\tikzpicTwocellEE{%
  \tikzpicTwocell
  \coordinate (p11)  at (0.44*\stdx,0.5*\stdy) ;
  \coordinate (p12)  at (0.71*\stdx,0.55*\stdy) ;
  \coordinate (p20a) at (\stdxx+0.09,\stdy) ;
  \coordinate (p20b) at (0.85*\stdxx+0.04,0.85*\stdy) ;
  \coordinate (p23a) at (\stdx+\stdxx-0.10,\stdy) ;
  \coordinate (p23b) at (\stdx+0.87*\stdxx+0.06,0.87*\stdy) ;
  \coordinate (bl1)  at (0.205*\stdx,0.39*\stdy) ;
  \coordinate (bl2)  at (0.20*\stdx,0.49*\stdy) ;
  \coordinate (bl3)  at (0.21*\stdx,0.59*\stdy) ;
  \coordinate (br1)  at (0.91*\stdx,0.37*\stdy) ;
  \coordinate (br2)  at (0.92*\stdx,0.47*\stdy) ;
  \coordinate (br3)  at (0.915*\stdx,0.57*\stdy) ;
  \draw[black,thick]
            (p11) [out=215,in=25] to (0,0)
            (p11) [out=125,in=-30] to (p20a)
            (p11) [out=155,in=-25] to (p20b)
            (p12) [out=-55,in=125] to (\stdx-0.07,0)
            (p12) [out=55,in=200] to (p23a)
            (p12) [out=40,in=215] to (p23b) ;
  \filldraw[black] (p11) circle (\radvtx) (p12) circle (\radvtx) ;
  }
\def\tikzpicTwocellEEfour{%
  \tikzpicTwocell
  \coordinate (p11)  at (0.44*\stdx,0.5*\stdy) ;
  \coordinate (p12)  at (0.71*\stdx,0.55*\stdy) ;
  \coordinate (p20a) at (\stdxx+0.09,\stdy) ;
  \coordinate (p23b) at (\stdx+0.87*\stdxx+0.06,0.87*\stdy) ;
  \coordinate (bl1)  at (0.205*\stdx,0.39*\stdy) ;
  \coordinate (bl2)  at (0.20*\stdx,0.49*\stdy) ;
  \coordinate (bl3)  at (0.21*\stdx,0.59*\stdy) ;
  \coordinate (br1)  at (0.91*\stdx,0.37*\stdy) ;
  \coordinate (br2)  at (0.92*\stdx,0.47*\stdy) ;
  \coordinate (br3)  at (0.915*\stdx,0.57*\stdy) ;
  \draw[black,thick]
            (p11) [out=215,in=25] to (0,0)
            (p11) [out=125,in=-30] to (p20a)
            (p12) [out=-55,in=125] to (\stdx-0.07,0)
            (p12) [out=40,in=215] to (p23b) ;
  \filldraw[black] (p11) circle (\radvtx) (p12) circle (\radvtx) ;
  }
\def\tikzpicTwocellFF{%
  \tikzpicTwocell
  \coordinate (p1m)  at (0.47*\stdx+0.5*\stdxx,0.52*\stdy) ;
  \coordinate (p20a) at (\stdxx+0.09,\stdy) ;
  \coordinate (p20b) at (0.85*\stdxx+0.04,0.85*\stdy) ;
  \coordinate (p23a) at (\stdx+\stdxx-0.10,\stdy) ;
  \coordinate (p23b) at (\stdx+0.87*\stdxx+0.06,0.87*\stdy) ;
  \coordinate (bl1)  at (0.205*\stdx,0.39*\stdy) ;
  \coordinate (bl2)  at (0.20*\stdx,0.49*\stdy) ;
  \coordinate (bl3)  at (0.21*\stdx,0.59*\stdy) ;
  \coordinate (br1)  at (0.91*\stdx,0.37*\stdy) ;
  \coordinate (br2)  at (0.92*\stdx,0.47*\stdy) ;
  \coordinate (br3)  at (0.915*\stdx,0.57*\stdy) ;
  \draw[black,thick]
            (p1m) [out=215,in=25] to (0,0)
            (p1m) [out=125,in=-30] to (p20a)
            (p1m) [out=155,in=-25] to (p20b)
            (p1m) [out=-55,in=125] to (\stdx-0.07,0)
            (p1m) [out=55,in=200] to (p23a)
            (p1m) [out=40,in=215] to (p23b) ;
  \filldraw[black] (p1m) circle (\radvtx) ;
  }
\def\tikzpicTwocellFFfour{%
  \tikzpicTwocell
  \coordinate (p1m)  at (0.47*\stdx+0.5*\stdxx,0.52*\stdy) ;
  \coordinate (p20a) at (\stdxx+0.09,\stdy) ;
  \coordinate (p23b) at (\stdx+0.87*\stdxx+0.06,0.87*\stdy) ;
  \coordinate (bl1)  at (0.205*\stdx,0.39*\stdy) ;
  \coordinate (bl2)  at (0.20*\stdx,0.49*\stdy) ;
  \coordinate (bl3)  at (0.21*\stdx,0.59*\stdy) ;
  \coordinate (br1)  at (0.91*\stdx,0.37*\stdy) ;
  \coordinate (br2)  at (0.92*\stdx,0.47*\stdy) ;
  \coordinate (br3)  at (0.915*\stdx,0.57*\stdy) ;
  \draw[black,thick]
            (p1m) [out=215,in=25] to (0,0)
            (p1m) [out=125,in=-30] to (p20a)
            (p1m) [out=-55,in=125] to (\stdx-0.07,0)
            (p1m) [out=40,in=215] to (p23b) ;
  \filldraw[black] (p1m) circle (\radvtx) ;
  }
\def\tikzpicTwocellGG{%
  \tikzpicTwocell
  \coordinate (p1m)  at (0.47*\stdx+0.5*\stdxx,0.52*\stdy) ;
  \coordinate (p20a) at (\stdxx+0.09,\stdy) ;
  \coordinate (p20b) at (0.82*\stdxx+0.04,0.82*\stdy) ;
  \coordinate (p23a) at (\stdx+\stdxx-0.10,\stdy) ;
  \coordinate (p23b) at (\stdx+0.19*\stdxx-0.02,0.19*\stdy) ;
  \coordinate (bl1)  at (0.53*\stdx,0.92*\stdy) ;
  \coordinate (bl2)  at (0.59*\stdx,0.93*\stdy) ;
  \coordinate (bl3)  at (0.65*\stdx,0.92*\stdy) ;
  \coordinate (br1)  at (0.46*\stdx,0.19*\stdy) ;
  \coordinate (br2)  at (0.52*\stdx,0.18*\stdy) ;
  \coordinate (br3)  at (0.58*\stdx,0.19*\stdy) ;
  \draw[black,thick]
            (p1m) [out=215,in=25] to (0,0)
            (p1m) [out=125,in=-30] to (p20a)
            (p1m) [out=155,in=-25] to (p20b)
            (p1m) [out=-60,in=145] to (\stdx-0.07,0)
            (p1m) [out=55,in=200] to (p23a)
            (p1m) [out=10,in=170] to (p23b) ;
  \filldraw[black] (p1m) circle (\radvtx) ;
  }
\def\tikzpicTwocellGGfour{%
  \tikzpicTwocell
  \coordinate (p1m)  at (0.47*\stdx+0.5*\stdxx,0.52*\stdy) ;
  \coordinate (p20a) at (\stdxx+0.09,\stdy) ;
  \coordinate (p23a) at (\stdx+\stdxx-0.10,\stdy) ;
  \coordinate (p23b) at (\stdx+0.19*\stdxx-0.02,0.19*\stdy) ;
  \coordinate (bl1)  at (0.53*\stdx,0.92*\stdy) ;
  \coordinate (bl2)  at (0.59*\stdx,0.93*\stdy) ;
  \coordinate (bl3)  at (0.65*\stdx,0.92*\stdy) ;
  \coordinate (br1)  at (0.46*\stdx,0.19*\stdy) ;
  \coordinate (br2)  at (0.52*\stdx,0.18*\stdy) ;
  \coordinate (br3)  at (0.58*\stdx,0.19*\stdy) ;
  \draw[black,thick]
            (p1m) [out=215,in=25] to (0,0)
            (p1m) [out=125,in=-30] to (p20a)
            (p1m) [out=55,in=200] to (p23a)
            (p1m) [out=-10,in=190] to (p23b) ;
  \filldraw[black] (p1m) circle (\radvtx) ;
  }
\def\tikzpicTwocellGGfourlabeled{%
  \tikzpicTwocellGGfour
  \node at (0.06*\stdx+\stdxx,0.72*\stdy) {$\scriptstyle H_{\!1}$} ;
  \node at (0.54*\stdx+0.8*\stdxx,0.9*\stdy) {$\scriptstyle H_{\!2}$} ;
  \node at (0.85*\stdx+\stdxx,0.72*\stdy) {$\scriptstyle H_{\!3}$} ;
  \node at (0.53*\stdx,0.14*\stdy) {$\scriptstyle H_{\!4}$} ;
  }
\def\tikzpicpercol{%
  \shade[left color=\colorMd,right color=\colorN,draw=\colorL,thick]
            (p1m) [out=170,in=10] to ++(-0.14*\stdx,-0.09*\stdy)
            [out=195,in=270] to ++(-0.09*\stdx,0.17*\stdy)
            [out=90,in=180] to ++(0.16*\stdx,0.26*\stdy)
            [out=0,in=180] to ++(0.38*\stdx,-0.32*\stdy) ;
  \draw[black,thick,dash pattern=on 1.8pt off 1.6pt,dash phase=0.8pt]
            (p1m) [out=125,in=-30] to (p20a)
            (p1m) [out=155,in=-25] to (p20b)
            (p1m) [out=55,in=200] to (p23a) ;
  \draw[black,thick,dash pattern=on 1.5pt off 1.2pt,dash phase=0.6pt]
            (p1m) [out=10,in=170] to (p23b) ;
  \filldraw[\colorMJ]
            (p1m) [out=170,in=10] to ++(-0.14*\stdx,-0.09*\stdy)
            [out=195,in=270] to (p1n) [out=90,in=150] to ++(0.075*\stdx,0.05*\stdy)
            [out=-30,in=170] to (p1m) ;
  \draw[black,thick]
            (p1m) [out=170,in=10] to ++(-0.14*\stdx,-0.09*\stdy)
            [out=195,in=270] to (p1n) ;
  \draw[black,thick,densely dotted]
            (p1n) [out=90,in=150] to ++(0.075*\stdx,0.05*\stdy)
            [out=-30,in=170] to (p1m) ;
            }
\def\tikzpicpercolfour{%
  \shade[left color=\colorMd,right color=\colorN,draw=\colorL,thick]
            (p1m) [out=170,in=10] to ++(-0.14*\stdx,-0.09*\stdy)
            [out=195,in=270] to ++(-0.09*\stdx,0.17*\stdy)
            [out=90,in=180] to ++(0.16*\stdx,0.26*\stdy)
            [out=0,in=180] to ++(0.38*\stdx,-0.32*\stdy) ;
  \draw[black,thick,dash pattern=on 1.8pt off 1.6pt,dash phase=0.8pt]
            (p1m) [out=125,in=-30] to (p20a)
            (p1m) [out=55,in=200] to (p23a) ;
  \filldraw[\colorMJ]
            (p1m) [out=170,in=10] to ++(-0.14*\stdx,-0.09*\stdy)
            [out=195,in=270] to (p1n) [out=90,in=150] to ++(0.078*\stdx,0.09*\stdy)
            [out=-30,in=170] to (p1m) ;
  \draw[black,thick]
            (p1m) [out=170,in=10] to ++(-0.14*\stdx,-0.09*\stdy)
            [out=195,in=270] to (p1n) ;
  \draw[black,thick,densely dotted]
            (p1n) [out=90,in=150] to ++(0.078*\stdx,0.09*\stdy)
            [out=-30,in=170] to (p1m) ;
            }
 
\def\Act           {{\triangleright}}   
\def\act           {\,{\Act}\,}         
\def\Actr          {{\triangleleft}}    
\def\actr          {\,{\Actr}\,}        
\def\actF          {\,{\Yright}\,}
\def\ActrF         {{\Yleft}} 
\def\actrF         {\,\ActrF\,} 
\def\be            {\begin{equation}}
\def\bearl         {\begin{array}{l}}
\def\bearll        {\begin{array}{ll}}

\def\boti          {\,{\boxtimes}\,}

\def\cala          {{\mathcal A}}
\def\Cala          {{\!\mathcal A}}
\def\calb          {{\mathcal B}}

\def\calc          {{\mathcal C}}
\def\Calc          {{\!\mathcal C}}
\def\cald          {{\mathcal D}}
\def\Cald          {{\!\mathcal D}}

\def\calm          {{\mathcal M}}
\def\Calm          {{\!\mathcal M}}
\def\caln          {{\mathcal N}}
\def\Caln          {{\!\mathcal N}}
\def\call          {{\mathcal L}}

\def\calx          {{\mathcal X}}
\def\caly          {{\mathcal Y}}
\def\calz          {{\mathcal Z}}
\def\cir           {\,{\circ}\,}
\def\Colon         {:\quad}
\def\complex       {{\mathbbm C}}

\def\dim           {{\rm dim}}

\def\dualcat       {{\cala^*_{\!\calm}}}                 
\def\DD            {\mathbb{D}}

\def\DDC           {\mathbb{D}_{\calc}}
\def\dd            {^{\vee\vee}}
\def\ldd           {{}^{\vee\vee\!}}

\def\ee            {\end{equation}}
\def\eear          {\end{array}}

\def\Enumerate     {\def\leftmargini{1.34em}~\\[-1.42em]\begin{enumerate}}
\def\Enumeratei    {
	~\\[-1.42em]\begin{enumerate}[{\rm (i)}]\addtolength\itemsep{-5pt}}
\def\Enumerateii   {~\\[-1.42em]\def\leftmargini{1.84em}  
                   \begin{enumerate}[{\rm (i)}]\addtolength\itemsep{-3pt}}

\def\eq            {\,{=}\,}  
\def\findim        {fini\-te-di\-men\-si\-o\-nal}
\def\Funle         {{\mathrm{Lex}}}                  
\def\Funre         {{\mathrm{Rex}}}                  
\def\Fun           {{\mathrm{Fun}}}            
\def\FunM          {{\Fun_\cala(\calm,\cala)}}

\def\gra           {\ensuremath{\varGamma}}
\def\Hom           {\mathrm{Hom}}

\newcommand\hsp[1] {\mbox{\hspace{#1 em}}}
\def\id            {{\mathrm{id}}}

\def\Irr            {{\mathrm{Irr}}}
\def\iHom          {\underline{\Hom}}
\def\icoHom          {\underline{\mathrm{coHom}}}
\def\iHomM         {\underline{\Hom}_\calm}
\def\iN            {\,{\in}\,}

\def\Itemize       {
	           ~\\[-1.65em] \begin{itemize}\addtolength\itemsep{-6pt}}
\def\ko            {{\ensuremath{\Bbbk}}}    
\def\la            {^{\rm la}}             
\def\lL            {\mathrm l}  
\def\lla           {^{\rm lla}}           

\def\Mor           {\mathbf{\mathbbm{M}}}
\def\Mod           {{\text{Mod}}}


\def\Nat           {\mathrm{Nat}}
\def\Nak           {\mathbb{N} }
\def\naki          {\mathrm n} 

\def\NakL          {\Nak^{\mathrm l}}

\def\NakR          {\Nak^{\mathrm r}}
\newcommand\nicemapsto[1]{\mbox{\LARGE $\stackrel{\mbox{\small $#1$}}\longmapsto$}}
\newcommand\nxl[1] {\\[#1mm]}
\newcommand\Nxl[1] {\\[-1.3em]\\[0.#1em]}

\def\one           {{\bf1}}

\def\opp           {^{\rm opp}}              
\def\Ot            {{\otimes}}
\def\ot            {\,{\otimes}\,}

\def\oti           {\,{\otimes}\,}

\def\otik          {\,{\otimes_\ko}\,}
\def\Proj          {{\rm Proj}\,}
\newcommand\rarr[1]{\xrightarrow{~#1~}}
\newcommand\Rarr[1]{\,{\xrightarrow{\,#1\,}}\,}
\def \Ra           {\xRightarrow{~\,~}}

\def\Reg           {\mathrm{Reg}}
\def\ra            {^{\rm ra}}              
\def\rR            {\mathrm r}  
\def\rra           {^{\rm rra}}            

\newcommand{\Se}{\mathbb{S}}              
\newcommand{\lSe}{\overline{\mathbb{S}}}              
\def\setmin        {\,{\setminus}\,} 
\def\Times         {\,{\times}\,}
\def\To            {\,{\to}\,}
\def\Varphi        {\!\varphi}

\def\xcong         {\,{\Xcong}~}
\def\tr            {\mathrm{tr}}
\def\qtr           {\mathrm{qtr}}
\def\un            {{\mathbf{1}_\Cala}}

\def\vect          {\mathcal V\hspace*{-0.7pt}e\hspace*{-0.3pt}c\hspace*{-0.2pt}t}
\def\Vee           {{}^{\vee\!}}
\def\VEe           {{}^{\vee\!\!}}
\newcommand\void[1] {}
\def\xcong         {\,{\xrightarrow{~\cong\,}}\,}
\def\xnatiso       {{\;\xRightarrow{\,\cong\,}\,}}

\def\Z             {\mathbb{Z}}




\newcommand\bmixt[2]  {#1 \,{\ominus}\, #2}      
\newcommand\bmixtd[2] {#1 \,{\boxminus}\, #2}    

\newcounter{cmt}
\definecolor{DarkViolet} {rgb}{0.580392,0.000000,0.827450}

\newcommand\scopeArrow[2] {\begin{scope}[decoration={markings,mark=at position #1
                   with \arrow{#2}}]}
\def\arrowObj   {stealth'}

\def\colorCat   {blue!70!black}
\def\colorMor   {black}
\def\colorObj   {black}
\def\colorPiv   {white}
\def\hsA        {0.11}  
\def\symbPiv    {circle}
\def\widthMor   {1.2pt}
\def\widthObj   {1.5pt} 
\def\widthObjSm {0.9pt}  
\def\widthPiv   {0.09}  

\newcommand\drawPiv[2]  {\fill[line width=\widthObjSm,draw=\colorObj,fill=\colorPiv] (#1,#2) \symbPiv (\widthPiv)}

\def\locscale   {1}

\def\locC    {0.4}   
\def\locH    {2.7}   
\def\locHs   {0.7}   
\def\locHM   {0.58}  
\def\locW    {0.5}   
\def\locWM   {0.15}  


\begin{document}


\thispagestyle{empty}
\begin{flushright}
   {\sf ZMP-HH/24-13}\\
   {\sf Hamburger$\;$Beitr\"age$\;$zur$\;$Mathematik$\;$Nr.$\;$970}\\[2mm]
   July  2024
\end{flushright}

\vskip 2.7em

\begin{center}
	{\bf \Large A manifestly Morita-invariant construction
	\\[7pt] of Turaev-Viro invariants}

\vskip 2.6em

{\large 
J\"urgen Fuchs\,$^{\,a},~$
C\'esar Galindo\,$^{\,b},~$
David Jaklitsch\,$^{\,c},~$
Christoph Schweigert\,$^{\,d}$
}

\vskip 15mm

 \it$^a$
 Teoretisk fysik, \ Karlstads Universitet\\
 Universitetsgatan 21, \ S\,--\,651\,88\, Karlstad, Sweden
 \\[9pt]
 \it$^b$
 Departamento de Matem\'aticas, Universidad de los Andes\\
 Carrera 1 \# 18A - 12, Edificio H, Bogot\'a, Colombia
 \\[9pt]
 \it$^c$
 Department of Mathematics, University of Oslo\\
 Moltke Moes vei 35, Niels Henrik Abels hus, 0851 Oslo, Norway
 \\[9pt]
 \it$^d$
 Fachbereich Mathematik, \ Universit\"at Hamburg\\
 Bereich Algebra und Zahlentheorie\\
 Bundesstra\ss e 55, \ D\,--\,20\,146\, Hamburg, Germany

\end{center}

\vskip 3.7em

\noindent{\sc Abstract}\\[3pt]
We present a state sum construction that assigns a scalar to a skeleton in a
closed oriented three-dimensional manifold. The input datum
is  the pivotal bicategory $\mathbf{Mod}^{\rm sph}(\cala)$ of
spherical module categories over a spherical fusion category $\cala$.
 \\
The interplay of algebraic structures in this pivotal bicategory with moves
of skeleta ensures that our state sum is independent of the skeleton
on the manifold. We show that the bicategorical invariant recovers
the value of the standard Turaev-Viro invariant associated to $\cala$,
thereby proving the independence of the Turaev-Viro invariant under pivotal Morita
equivalence without recurring to the Reshetikhin-Turaev construction.
 \\
A key ingredient for the construction is the evaluation of graphs on the
sphere with labels in $\mathbf{Mod}^{\rm sph}(\cala)$ that we develop
in this article.  A central tool are Nakayama-twisted traces on pivotal
bimodule categories which we study beyond semisimplicity.  
                         
\newpage
\tableofcontents{}

\newpage

\section{Introduction}

State sum constructions are an important source of (extended) topological
field theories and modular functors. A classical example is the Turaev-Viro
construction, which is based on a spherical fusion category $\cala$.
It is well understood that this produces an oriented topological field
theory that is once-extended, and it is expected to yield a fully 
extended oriented topological field theory.
The once-extended theory obtained from $\cala$ via the Turaev-Viro construction 
coincides with the one obtained by the Reshetikhin-Turaev surgery construction 
applied to the Drinfeld center $\calz(\cala)$ \Cite{Thm.\,17.1}{TV17}. 

On the other 
hand, the Drinfeld center, as a pivotal braided finite tensor category, is a pivotal 
Morita invariant \Cite{Thm.\,5.16}{FGJS}. Pivotal Morita equivalent spherical fusion
categories therefore lead to equivalent once-extended topological field theories.
This can also be understood by realizing Morita contexts in quantum field theory by 
duality defects labeled by invertible pivotal bimodule categories, as has been first 
demonstrated for two-dimensional field theories in \cite{ffrs5}. 

The standard Turaev-Viro construction is based on a chosen representative of a
pivotal Morita class of pivotal fusion categories. It is instructive to also have a
\emph{manifestly} Morita invariant construction. Achieving such a description is
the primary goal of this paper in which, for the sake of simplicity, we concentrate
on the invariants assigned to closed oriented three-manifolds, rather than
constructing a full-fledged topological field theory.

Our goal can also be motivated algebraically. To obtain a better alignment between
categorical dimensions and geometric dimensions, consider instead of a finite tensor category $\cala$ the one-object bicategory $\mathrm B\cala$ whose 
single endocategory is $\cala$. Just like the one-object category $\mathrm BA$
furnished by a finite-dimensional algebra $A$ calls for being completed to
the category of finite-dimensional $A$-modules, the bicategory 
$\mathrm B\cala$ calls for a completion as well, which has to be 
a suitable bicategory of $\cala$-module categories.
 
Such completions have also emerged in state sum constructions in four dimensions
\cite{DR}, under the assumption of semisimplicity in the form of idempotent completions.
As we are interested in oriented manifolds, we need additional structure
on the algebraic input data. In the classical Turaev-Viro construction this is
a pivotal structure on $\cala$ that has the property of being spherical. In the
present paper we use instead recent results from \cite{FGJS}, where in particular the
notions of spherical module category and pivotal Morita context have been developed
which allow one to work in a bicategorical setting. Specifically, we consider a
pivotal Morita invariant: the pivotal bicategory $\mathbf{Mod}^{\rm sph}(\cala)$ of
spherical $\cala$-module categories, which we study in Section \ref{traces_modsphA}.
We take this bicategory as candidate for 
a spherical $2$-idempotent completion of $\cala$.

Once we decide to work in this setup, it is crucial to have a sound theory
of traces on endomorphisms of objects in bimodule categories at our disposal. This
need explains the organization of the paper: After a review of relevant background
and earlier work in Section \ref{sec:background}, we turn in Section \ref{sec:traces}
to a systematic study of traces of $2$-morphisms in the bicategory of spherical
$\cala$-module categories. To this end we first examine traces on pivotal bimodule
categories without imposing semisimplicity. This combines insights from \cite{FGJS}
with the theory of Nakayama-twisted traces (sometimes called modified traces) as
developed in \cite{shShi,SchW}. The latter theory does not require a monoidal
structure, which makes it suitable for application to finite (bi)module categories 
$\call \eq {}_\calc\call_\cald$. The prize
to pay is that traces are not defined for endomorphisms $\Hom_\call(p,p)$, but for
morphisms twisted by the Nakayama functor $\NakR_\call$ and that one must restrict
oneself to projective objects $p\iN\call$: traces are linear functionals on
$\Hom_\call(p,\NakR_\call(p))$, see Definition \ref{def:tpX}. In Proposition
\ref{bimod_partial_trace_prop}, we show that these traces possess partial trace
properties.
If the categories $\calc$ and $\cald$ are unimodular and pivotal and $\call$ is 
pivotal as well, then the twisted endomorphisms $\Hom_\call(p,\NakR_\call(p))$, for
$p\iN\call$ projective, can naturally be identified in two different ways with 
endomorphisms $\Hom_\call(p,p)$. In Theorem \ref{thm:spherical_traces} we show that
these two different possibilities agree in the case that the Radford natural 
isomorphism that relates the two Serre functors on $\call$ can be expressed in terms 
of the pivotal structures on $\call$, i.e.\ $\call$ is a spherical bimodule category
in the sense of Definition \ref{spherical_bimod}. If $\call$ is semisimple, then
traces are defined for arbitrary endomorphisms on any object in $\call$. This leads
to Theorem \ref{thm:CY_bimodule}, which reveals two structures of a Calabi-Yau 
category on the semisimple category $\call$, which again 
coincide if $\call$ is a spherical bimodule category. 

The algebraic structures uncovered in Section \ref{sec:traces} provide important input
for the Morita invariant state sum construction based on the bicategory of spherical
$\cala$-modules that we present in Section \ref{sec:state_sum}. Our construction
passes two important checks. On the one hand, in Theorem \ref{skeleton_inv} we show
that the new state sum for three-manifolds that is supplied by the Morita invariant 
construction does not depend on the choice of skeleton used to define the sum.

This is a non-trivial insight: it demonstrates that the geometric primary moves on
manifold skeletons interact in synergy with the algebraic structures of the spherical
module categories. Theorem \ref{thm:TV=St}, our second main result, shows, on the 
other hand, that our construction reproduces the three-manifold invariants obtained
via the original Turaev-Viro construction based on the representative spherical
fusion category $\cala$. This constitutes an intrinsic proof that the Turaev-Viro
invariants indeed only depend on the pivotal Morita class of a spherical fusion
category, without any reference to the Reshetikhin-Turaev construction.


\section{Preliminaries}\label{sec:background}

In this section we revisit a few pertinent concepts and structures. Throughout the
paper we fix an algebraically closed field \ko. All categories we consider are 
supposed to be finite linear abelian over \ko, i.e.\ categories that are equivalent as
an abelian category to the category of \findim\ modules over a \findim\ \ko-algebra.
We take without loss of generality all monoidal categories to be strict so as
to simplify the exposition.


\subsection{Tensor categories and module categories}

We first recall, closely following \cite{EGno}, a few standard notions from the 
theory of tensor categories. A \emph{finite tensor category} is a finite rigid 
monoidal category $\cala$ whose tensor product functor $\otimes$ is bilinear and
whose monoidal unit $\un$ is a simple object. A finite tensor category is called a
\emph{fusion category} if its underlying category is semisimple. The
\emph{monoidal opposite} $\overline{\cala\,}$ of a monoidal category $\cala$ is the
monoidal category with the same underlying category as $\cala$, but with reversed
tensor product, i.e.\ $a \,{\otimes_ {\overline{\cala\,}}}\,b \eq b \ot a$, and with
the accordingly adjusted associators. 

Given a tensor category $\cala$, our conventions regarding dualities are as follows. 
A right dual $a^\vee$ of an object $a\iN\cala$ comes equipped with evaluation and 
coevaluation morphisms
\be
  {\rm ev}_a\Colon a^\vee \Ot\, a\to \un \qquad{\rm and}\qquad
  {\rm coev}_a\Colon \un\to a\ot a^\vee ,
  \ee
while a left dual $\Vee a$ of $a\iN \cala$ comes with evaluation and coevaluation
morphisms
  \be
  \widetilde{{\rm ev}_a}\Colon a \,\Ot \Vee a\to \un \qquad{\rm and}\qquad
  \widetilde{{\rm coev}_a}\Colon \un\to \Vee a\ot a \,.
  \ee
The graphical depiction of these morphisms is described in Appendix \ref{app_graph}.
Duals are unique up to unique isomorphism compatible with evaluation and 
coevaluation morphisms. We canonically identify the left dual of the right dual of an
object, as well as the right dual of the left dual, with the object itself. As a 
consequence, the component of a pivotal structure for a left dual has a right dual 
as its codomain.

\medskip

A \emph{$($left$)$ module category} over a finite tensor category $\cala$, or 
\emph{$($left$)$ $\cala$-module category}, is a category $\calm$ together with an 
exact \emph{module action} functor 
  \be
  \Act \Colon \cala\Times\calm\rarr~\calm 
  \ee
and a mixed associator obeying a pentagon axiom. In order to indicate the
tensor category over which $\calm$ is a module, we also denote it by
${}_\cala\calm$. Invoking an analogue for module categories of Mac Lane's 
strictification theorem \Cite{Rem.\,7.2.4}{EGno}, we assume strictness also
for module categories. A \emph{right} $\calb$-module $\caln$ is defined as a left 
$\overline{\calb}$-module, with module action functor
  \be
  \actr \Colon \caln\Times\calb\to\caln ,
  \ee
and we also denote it by $\caln_\calb$. Similarly, for finite tensor categories 
$\cala$ and $\calb$ an $(\cala,\calb)$-\emph{bimodule category} is a 
(left) module category over the Deligne product $\cala\boti\overline{\calb}$.

An $\cala$-module category $\calm$ is called \emph{exact} iff $p\act m$ is 
projective in $\calm$ for any projective object $p\iN\cala$ and any object 
$m\iN\calm$. A module category which is not equivalent to a direct sum of two
non-trivial module categories is said to be \emph{indecomposable}.

A \emph{module functor} between $\cala$-module categories $\calm$ and $\caln$ is a
functor $H\colon \calm\Rarr{}\caln$ together with a \emph{module constraint}, i.e.\
a collection of natural isomorphisms $H(a\act m)\xcong a\act H(m)$ for $a\iN\cala$
and $m\iN\calm$ obeying appropriate pentagon axioms. In the case that $\calm$ is
exact, every $H\iN \Fun_\cala(\calm,\caln)$ is an exact module functor. A
\emph{module natural transformation} between module functors is a natural
transformation between the underlying functors that commutes with the respective
module structures. We denote by $\Fun_\cala(\calm,\caln)$ the category that has
module functors between two exact $\cala$-modules $\calm$ and $\caln$ as objects and module
natural transformations as morphisms. We denote by $\Nat_{\rm mod}(H_1,H_2)$ the vector
space of module natural transformations between given module functors $H_1$ and $H_2$.

The \emph{dual tensor category} $\dualcat$ of a
finite tensor category $\cala$ with respect
to an exact $\cala$-module $\calm$ is the category of module endofunctors,
$\dualcat \coloneqq \Fun_\cala(\calm,\calm)$, with tensor product given by
composition of functors, and dual objects given by left and right adjoints. If
$\calm$ is indecomposable, then the identity functor $\id_\calm$ is simple, making
$\dualcat$ a finite tensor category. The evaluation of a functor on an object turns 
$\calm$ into an $\dualcat$-module category.

The category of module functors has the structure of a bimodule category. More
specifically, given exact module categories ${}_\cala\calm$ and ${}_\cala\caln$,
$\Fun_\cala(\calm,\caln)$ becomes an $(\cala_\caln^*,\dualcat)$-bimodule category via
composition of module functors:
  \be
  \label{left_module_functors}
  F_1\actF H:= F_1\cir H \qquad{\rm and}\qquad H\actrF F_2:=H\cir F_2 
  \ee
for $H \iN \Fun_\cala(\calm,\caln)$, $F_1\iN\cala_\caln^*$ and $F_2\iN\dualcat$.


\subsection{Internal Hom and relative Serre functors}

Given a module category ${}_\cala\calm$, for every object $m\iN\calm$ the action
functor $-{\act}m\colon \cala\Rarr~\calm$ is exact and hence has a right adjoint
$\iHomM^\cala(m,-)\colon \calm\Rarr~\cala$, i.e.\ there are natural isomorphisms
$\Hom_\calm(a\act m,n) \Rarr\cong \Hom_\cala(a,\iHomM^\cala(m,n))$
for $a\iN\cala$ and $m,n\iN\calm$. This extends to a left exact functor
$\iHom_\calm^\cala(-,-)\colon \calm\opp\Times\calm \Rarr~ \cala$,
which is called the \emph{internal Hom} functor of ${}_\cala\calm$. 
Additionally, there are canonical natural isomorphisms
  \be
  \label{ihom_lmod} 
  \iHomM^\cala(m,a\act n)\cong a\ot \iHomM^\cala(m,n)
  \ee
that turn $-{\act} m\dashv \iHom_\calm(m,-)$ into an adjunction of $\cala$-module 
functors, as well as
  \be
  \label{ihom_rmod}
  \iHomM^\cala(a\act m,n) \cong \iHomM^\cala(m,n)\ot a^\vee .
  \ee
Dually, for $m\iN\calm$ the action functor $-{\act} m\colon \cala\To\calm$ has a 
left adjoint, called the \emph{internal coHom}, satisfying
  \be
  \Hom_\calm(n,a\act m) \rarr\cong \Hom_\cala(\icoHom_\calm^\cala(m,n),a)
  \ee
for $a\iN\cala$ and $m,n\iN\calm$. This extends to a right exact functor
$\icoHom_\calm^\cala(-,-)\colon \calm\opp\Times\calm \To\cala$.
The internal Hom and coHom are related by the rigid duality in $\cala$:
  \be
  \label{hom_cohom}
  \icoHom_\calm^\cala(m,n) \cong {}^\vee\iHom_\calm^\cala(n,m)
  \qquad\text{for}~~ m,n\iN\calm \,.
  \ee 
Hence analogously to \eqref{ihom_lmod} and \eqref{ihom_rmod}, there are 
coherent natural isomorphisms
  \be
  \begin{aligned}
  & \icoHom_\calm^\cala(m,a\act n) \cong a\ot \icoHom_\calm^\cala(m,n) \qquad\text{and}
  \Nxl1 
  & \icoHom_\calm^\cala(a\act m,n) \cong \icoHom_\calm^\cala(m,n)\ot \Vee a\,.
  \end{aligned}
  \ee 

A \emph{Serre functor} of a finite \ko-linear category $\calx$ is an endofunctor
$\Se$ of $\calx$ together with natural isomorphisms between the vector spaces
$\Hom_\calx(x,y)$ and $\Hom_\calx (y,\Se(x))^*_{}$. If $\calx$ admits a Serre
functor, then $\calx$ is necessarily semisimple. In contrast, if $\calx \eq \calm$
is a finite module category over a finite tensor 
category $\cala$, then there is an internalized version
of the Serre functor which exists without the requirement of semisimplicity:

\begin{defi}{\Cite{Def.\,4.22}{fuScSc}}
Let $\calm$ be a left $\cala$-module category. A \emph{$($right$)$ relative 
Serre functor} on $\calm$ is an endofunctor $\Se_\calm^\cala \colon \calm\To\calm$
together with a family
  \be
  \iHomM^\cala(m,n)^\vee \rarr\cong \iHomM^\cala(n,\Se_\calm^\cala(m))
  \ee
of natural isomorphisms for $m,n\iN\calm$. Similarly, a \emph{$($left$)$ relative
Serre functor} $\lSe_\calm^\cala$ comes with a family
  \be
  {}^\vee\iHomM^\cala(m,n) \rarr\cong \iHomM^\cala(\lSe_\calm^\cala(n),m)
  \ee
of natural isomorphisms.
\end{defi}

A module category admits relative Serre functors if and only if it is exact  \Cite{Prop.\,4.24}{fuScSc}. In that case the left and right 
relative Serre functors are quasi-inverses of each other and can be uniquely 
expressed (see \Cite{Lemmas\,3.3-3.5}{Sh}) as
  \be
  \Se_\calm^\cala(m)\cong \iHomM^\cala(m,-)\ra(\un) \qquad\text{and}\qquad
  \lSe_\calm^\cala(m)\cong \icoHom_\calm^\cala(m,-)\la(\un)\,.
  \label{eq:relSerre4M}
  \ee

\begin{prop}
For $\calm$ an exact $(\cala,\calb)$-bimodule category, there are coherent
natural isomorphisms
  \be
  \Se_\calm^\cala(a\act m)\cong a\dd\act \Se_\calm^\cala(m) \qquad\text{and}\qquad
  \Se_\calm^\cala(m\actr b)\cong \Se_\calm^\cala(m)\actr b\dd
  \label{Serre_twisted}
  \ee
which turn the relative Serre functor $\Se_\calm^\cala$ into a twisted bimodule 
equivalence. 
\end{prop}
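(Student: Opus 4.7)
The plan is to derive both isomorphisms via Yoneda for internal Homs applied against the defining natural isomorphism $\iHomM^\cala(n,\Se_\calm^\cala(m)) \cong \iHomM^\cala(m,n)^\vee$ of the relative Serre functor, together with the natural isomorphisms \eqref{ihom_lmod} and \eqref{ihom_rmod} that encode the left $\cala$-module structure of the internal Hom.

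For the left twist $\Se_\calm^\cala(a\act m) \cong a\dd \act \Se_\calm^\cala(m)$, the strategy is to chain together natural isomorphisms starting from $\iHomM^\cala(n, \Se_\calm^\cala(a\act m))$: first apply the defining isomorphism of $\Se_\calm^\cala$ to pass to $\iHomM^\cala(a\act m, n)^\vee$, then extract $a^\vee$ via \eqref{ihom_rmod}, use the contravariance $(x \ot a^\vee)^\vee \cong a\dd \ot x^\vee$ of the rigid duality (combined with $(a^\vee)^\vee \eq a\dd$), and finally re-apply the defining isomorphism of $\Se_\calm^\cala$ and then \eqref{ihom_lmod} to re-absorb $a\dd$ into the second argument, landing in $\iHomM^\cala(n, a\dd\act\Se_\calm^\cala(m))$. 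Yoneda for internal Homs then produces the claimed isomorphism.

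For the right twist I would first prove a preliminary natural isomorphism $\iHomM^\cala(m\actr b, n) \cong \iHomM^\cala(m, n\actr b^\vee)$, since \eqref{ihom_lmod}-\eqref{ihom_rmod} only address the left action. This follows by Yoneda in $\cala$ after observing that $-\actr b\colon \calm\To\calm$ is an $\cala$-module endofunctor (because actions in a bimodule commute) whose right adjoint is $-\actr b^\vee$ (coming from $b \dashv b^\vee$ in $\calb$). Applying this preliminary isomorphism twice, interspersed with the defining property of $\Se_\calm^\cala$ and the identification $(b^\vee)^\vee \eq b\dd$, yields $\Se_\calm^\cala(m\actr b) \cong \Se_\calm^\cala(m) \actr b\dd$ by the same Yoneda argument.

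The main obstacle will be coherence: the two families of natural isomorphisms must satisfy the pentagon-type axioms that promote $\Se_\calm^\cala$ to a twisted $(\cala,\calb)$-bimodule functor, and must be mutually compatible in the hexagonal sense that relates left and right actions. Each such axiom should reduce, via naturality, to coherence of its constituent pieces (the bimodule structure of $\iHomM^\cala$ together with rigidity in $\cala$ and $\calb$), so the verification will be a routine but lengthy diagram chase. That the underlying functor $\Se_\calm^\cala$ is an equivalence of categories is already known from \Cite{Prop.\,4.24}{fuScSc} (the left relative Serre functor $\lSe_\calm^\cala$ is a quasi-inverse), so combining this with the twisted bimodule structure produces the claimed twisted bimodule equivalence.
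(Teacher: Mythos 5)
Your plan is correct, and the paper itself gives no proof of this proposition (it is stated as an imported result; see e.g.\ the works of Shimizu and of Fuchs--Schaumann--Schweigert on relative Serre and Nakayama functors). Your route --- starting from the defining isomorphism $\iHomM^\cala(m,n)^\vee\cong\iHomM^\cala(n,\Se_\calm^\cala(m))$, inserting the $\cala$-module structure \eqref{ihom_lmod}--\eqref{ihom_rmod} of $\iHomM^\cala$, using the contravariant monoidality of rigid duals $(x\ot a^\vee)^\vee\cong a\dd\ot x^\vee$, and closing with the Yoneda argument via $\Hom_\calm(n,-)\cong\Hom_\cala(\mathbf{1},\iHomM^\cala(n,-))$ --- is the standard direct approach and is sound at every step. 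Your auxiliary isomorphism $\iHomM^\cala(m\actr b,n)\cong\iHomM^\cala(m,n\actr b^\vee)$ is also correct with the paper's duality conventions: $(-\actr b)\dashv(-\actr b^\vee)$ as $\cala$-module endofunctors, the unit and counit being supplied by $\mathrm{coev}_b$ and $\mathrm{ev}_b$ in $\calb$. Note that the surrounding material of the paper suggests an alternative route that goes through the Nakayama functor: one combines $\Se_\calm^\cala\cong\DD_\cala\act\NakR_\calm$ from \eqref{Nakayama_Serre} with the twisted bimodule structure \eqref{module_Nakayama_twisted_functor} of $\NakR_\calm$, which yields $\Se_\calm^\cala(a\act m)\cong\DD_\cala\act\ldd a\act\NakR_\calm(m)$, and then invokes the Radford isomorphism \eqref{Radford} to commute $\DD_\cala$ past $\ldd a$ and arrive at $a\dd\act\Se_\calm^\cala(m)$. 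Your argument has the merit of avoiding any appeal to the Nakayama functor or Radford isomorphism, working entirely from the internal-Hom characterization; either way, as you say, verifying the pentagon-type coherence reduces to the coherence of the bimodule structure on $\iHomM^\cala$ and the rigidity of $\cala$ and $\calb$, and the invertibility of $\Se_\calm^\cala$ is already established in \Cite{Prop.\,4.24}{fuScSc}.
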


The relation between the relative Serre functor and the Nakayama functor (see Section
\ref{sec:Nakayama} below) and the fact \Cite{Thm.\,3.18}{fuScSc} that the Nakayama
functor comes with a canonical structure of a twisted module functor leads to

\begin{prop}[\Cite{Thm.\,3.10}{Sh} and \Cite{Prop.\,2.11}{FGJS}]%
  \label{Serre_and_module_functors}
Given exact $\cala$-module categories $\calm$ and $\caln$, for every module functor 
$F \colon \calm\To\caln$ there is a natural isomorphism
  \be
  \label{Serre_module_functor}
  \Lambda_F\Colon \Se_\caln^\cala\cir F \xRightarrow{~\cong~}
  F\rra\cir \Se_\calm^\cala
  \ee
of twisted module functors, where $F\rra$ is the double right adjoint of $F$. 
$\Lambda_F$ is compatible with composition of module functors, i.e.\ the diagram
  \be
  \label{Serre_compatibility_composition}
  \begin{tikzcd}[row sep=2.3em,column sep=2.9em]
  \Se_\call^\cala\cir H\cir F \ar[r,"\Lambda_{H\circ F}\,",Rightarrow]
  \ar[d,swap,"\Lambda_H\circ\id\,",Rightarrow]
  & (H\cir F)\rra\cir \Se_\calm^\cala \ar[d,"\,\cong",Rightarrow]
  \\
  H\rra\cir\Se_\caln^\cala\,\cir F \ar[r,swap,"\id\circ\Lambda_F~",Rightarrow]
  & H\rra\cir F\rra\cir \Se_\calm^\cala
  \end{tikzcd}
  \ee
commutes for $H\iN\Fun_\cala(\caln,\call)$ and $F\iN\Fun_\cala(\calm,\caln)$. 
Analogously, in the case of bimodule categories and a bimodule functor $F$, 
the natural isomorphism $\eqref{Serre_module_functor}$ is an isomorphism of 
twisted bimodule functors.
\end{prop}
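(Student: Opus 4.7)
The plan is to construct $\Lambda_F$ via a Yoneda-style argument on internal Homs, and then verify the two coherences separately. Since $\calm$ and $\caln$ are exact $\cala$-module categories, the module functor $F$ admits a module right adjoint $F\ra$, and iterating yields a module functor $F\rra$. The adjunction $F \dashv F\ra$ passes to a natural isomorphism of internal Homs
\[
\iHom_\caln^\cala(F(m), n) \;\cong\; \iHomM^\cala(m, F\ra(n))
\]
for $m\iN\calm$, $n\iN\caln$, obtained by applying $\Hom_\cala(a,-)$ to both sides, using the defining property of internal Hom, the module structure of $F$ and the adjunction $F\dashv F\ra$, and invoking Yoneda on $\cala$; and analogously for $F\ra \dashv F\rra$. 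Combining these with the defining isomorphism \eqref{eq:relSerre4M} of the relative Serre functor on $\calm$ and on $\caln$ produces the chain
\begin{align*}
\iHom_\caln^\cala(n, \Se_\caln^\cala(F(m)))
& \;\cong\; \iHom_\caln^\cala(F(m), n)^\vee
\;\cong\; \iHomM^\cala(m, F\ra(n))^\vee \\
& \;\cong\; \iHomM^\cala(F\ra(n), \Se_\calm^\cala(m))
\;\cong\; \iHom_\caln^\cala(n, F\rra(\Se_\calm^\cala(m))) .
\end{align*}
Each step is natural in $n\iN\caln$, so Yoneda delivers the desired natural isomorphism $\Lambda_F\colon \Se_\caln^\cala \cir F \Rarr{\cong} F\rra \cir \Se_\calm^\cala$, and naturality in $m\iN\calm$ follows from naturality of the four elementary isomorphisms used.

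To promote $\Lambda_F$ to an isomorphism of twisted module functors, I observe that by the previous proposition both $\Se_\caln^\cala \cir F$ and $F\rra \cir \Se_\calm^\cala$ inherit the twist $a \mapsto a\dd$ from the Serre functors. I then check that each of the four arrows in the chain above is compatible with the $\cala$-action according to the correct twist: the two Serre-functor identifications each insert a $(-)^\vee$ on the action argument (because the duality appears in the Serre formula), while the two adjunction isomorphisms are untwisted (since $F$ and $F\ra$ are plain module functors). Composing, the two $(-)^\vee$ insertions amalgamate into a single $(-)\dd$-twist on $\Lambda_F$, matching the twists of source and target.

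For the diagram \eqref{Serre_compatibility_composition}, I reduce commutativity to a statement about internal Homs via Yoneda. The canonical adjunction comparisons $(H\cir F)\ra \cong F\ra \cir H\ra$ and $(H\cir F)\rra \cong H\rra \cir F\rra$ are coherent with the basic adjunction isomorphisms, so both paths around the square unpack into the very same chain of four elementary internal-Hom isomorphisms, and commutativity follows. The bimodule version is obtained by re-running the same argument with a second, right $\calb$-action tracked throughout, using that the right relative Serre functor on a bimodule category is a twisted bimodule equivalence by the previous proposition. The main obstacle I anticipate is purely bookkeeping: pinning down exactly which step in the chain introduces which twist, and verifying that the twisted module coherence hexagons assemble consistently; but because $\cala$ is rigid and all categories in sight are exact, every required adjoint and every required coherence datum is available, so the verification is long but routine.
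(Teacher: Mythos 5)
Your proof is correct in substance and follows the internal-Hom/Yoneda route, which is essentially the argument in \Cite{Thm.\,3.10}{Sh}, one of the two cited sources. The chain of four isomorphisms is exactly right: starting from $\iHom_\caln^\cala(n,\Se_\caln^\cala(F(m)))$, applying the defining isomorphism of the right relative Serre functor in $\caln$, then the adjunction comparison for $F\dashv F\ra$, then the Serre isomorphism in $\calm$, then the comparison for $F\ra\dashv F\rra$, and finally removing $\iHom_\caln^\cala(n,-)$ by the (internalized) Yoneda lemma. Each of the adjunction comparisons, such as $\iHom_\caln^\cala(F(m),n)\cong\iHomM^\cala(m,F\ra(n))$, does follow as you say by probing with $\Hom_\cala(a,-)$ and chaining through the universal properties. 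One small slip: \eqref{eq:relSerre4M} is a \emph{derived formula} for the Serre functor, not its defining isomorphism; what you actually use is the isomorphism $\iHomM^\cala(m,n)^\vee\cong\iHomM^\cala(n,\Se_\calm^\cala(m))$ appearing in the definition of the relative Serre functor.

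The paper itself hints at a genuinely different route, which is the one in \Cite{Prop.\,2.11}{FGJS}: use the isomorphism \eqref{Nakayama_Serre}, $\Se_\calm^\cala\cong\DD_\cala\act\NakR_\calm$, to reduce the statement to the canonical twisted structure \eqref{Nakayama_twisted_functor} of the Nakayama functor, which already provides $\NakR_\caln\cir F\xRightarrow{~\cong~}F\rra\cir\NakR_\calm$ together with the composition coherence. Then $\Lambda_F$ is obtained by conjugating with $\DD_\cala\act-$ and using that $F\rra$ is a plain module functor, so that $\DD_\cala\act(F\rra\cir\NakR_\calm)\cong F\rra\cir(\DD_\cala\act\NakR_\calm)$. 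What this buys you is that the twisted-module-functor compatibility and the commutativity of \eqref{Serre_compatibility_composition} are inherited essentially for free from the coherence of the Nakayama twist, which is exactly the part your argument defers to ``long but routine'' bookkeeping. Both routes are legitimate; yours is the more elementary, the Nakayama-based one the more economical.

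One inaccuracy in your twist accounting: the $a\dd$-twist in the chain does not arise from ``two $(-)^\vee$ insertions amalgamating.'' Tracing the first Serre step with $a\act m$ in place of $m$ gives $\iHom_\caln^\cala(a\act F(m),n)^\vee\cong\bigl(\iHom_\caln^\cala(F(m),n)\ot a^\vee\bigr)^\vee\cong a\dd\ot\iHom_\caln^\cala(F(m),n)^\vee$ via the module property \eqref{ihom_rmod}, so the entire $a\dd$ already appears after the first step and is then simply transported through the remaining three by the left-module property \eqref{ihom_lmod}. This does not affect your conclusion, but getting the bookkeeping right matters if one wants to verify the pentagon for the twisted module structure in full.
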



\subsection{Dualities for module categories}\label{sec:module_duals}

There is a notion of dualities for a $1$-morphism in a bicategory $\mathscr{F}$: A
\emph{right dual $($or right ad\-joint$\,)$} to a $1$-morphism $a\iN\mathscr{F}(x,y)$
consists of a $1$-morphism $a^\vee\iN\mathscr{F}(y,x)$ and $2$-morphisms
$\textbf{1}_y\,{\Rightarrow}\, a\cir a^\vee$ and
$a^\vee\cir a\,{\Rightarrow}\, \textbf{1}_x$ that satisfy the appropriate snake
relations; left duals are similarly defined. A bicategory in which every $1$-morphism
has both a right and a left dual is said to be a \emph{bicategory with dualities}.

To every exact $\cala$-module category $\calm$ there is associated a two-object
bicategory $\mathbb{M}$ \Cite{Sec.\ 3}{FGJS}. The two objects of $\Mor$ are denoted
by ``$+$'' and ``$-$''; their Hom-categories are
  \be
  \Mor(\mathbf{+},\mathbf{+}) = \cala \,, \quad
  \Mor(\mathbf{-},\mathbf{-}) = \dualcat \,,\quad
  \Mor(\mathbf{-},\mathbf{+}) = \calm \,,\quad
  \Mor(\mathbf{+},\mathbf{-})=\Fun_\cala(\calm,\cala) \,.
  \ee
In particular, objects of $\calm$, $\FunM$, $\cala$ and $\dualcat$ are $1$-morphisms
in $\mathbb{M}$. The bicategory $\mathbb{M}$ is a bicategory with dualities
\Cite{Thm.\,4.2}{FGJS}. The tensor categories $\cala$ and $\dualcat$ are rigid.
Duals for objects in $\calm$ and $\FunM$ can be realized as follows:
\Enumeratei
  \item 
The \emph{right dual} of an object $m\iN \calm$ is the $\cala$-module functor 
  \be 
  m^\vee := \iHom_\calm^\cala(m,-) ~\in \FunM \,,
  \ee 
with evaluation and coevaluation morphisms given by the counit and the unit of the
ad\-junc\-tion $(-\Act m) \,{\dashv}\, \iHom^\cala_\calm(m,-)$, respectively.
  \item 
The \emph{left dual} of an object $m\iN \calm$ is the $\cala$-module functor 
  \be
  \Vee m := \icoHom_\calm^\cala(m,-) ~\in \FunM \,,
  \ee 
with evaluation and coevaluation morphisms given by the counit and the unit of the 
ad\-junc\-tion $\icoHom^\cala_\calm(m,-) \,{\dashv}\, (-\Act m)$, respectively.
    \item 
The \emph{right dual} of a module functor $H \iN \FunM$ is the object
  \be
  H^\vee := H\ra(\un) ~\in\calm \,,
  \ee 
where the evaluation and coevaluation morphisms come from the unit and counit of the 
adjunction $H \,{\dashv}\, H\ra$.
  \item 
The \emph{left dual} of a module functor $H\iN\FunM$ is the object
  \be
  \VEe H := H\la(\un) ~\in\calm \,,
  \ee
with evaluation and coevaluation morphisms that come from the unit and counit 
of the adjunction $H\la \,{\dashv}\, H$.
\end{enumerate}

Double-duals for module categories are the relative Serre functors:

\begin{prop}[\Cite{Prop.\,4.11,\,\&\,Lem.\,5.3}{FGJS}]\label{double_duals}
For $\calm$ an exact module category over a finite tensor category $\cala$,
let $m\iN\calm$ and $H\iN\FunM$. We have natural isomorphisms
  \be
  \begin{aligned}
  {\rm (i)} \quad & m\dd\cong \Se_\calm^\cala(m) \,, \qquad\qquad 
  & {\rm (iii)} \quad & H\dd\cong H\rra \cong \Se_\FunM^{\overline{\dualcat}}(H) \,,
  \Nxl2
  {\rm (ii)} \quad & \ldd m\cong \lSe_\calm^\cala(m) \,, \qquad
  & {\rm (iv)} \quad & \,{}\dd\! H\cong H\lla \cong \Se_\FunM^{\overline{\cala}}(H) \,.
  \end{aligned}
  \ee
More generally, for exact $\cala$-module categories $\calm$ and $\caln$ there are
natural isomorphisms
  \be
  \Se_{\Fun_\Cala(\Calm,\Caln)}^{\cala_\caln^*}(H)\cong H\lla \quad\text{and}\quad
  \Se_{\Fun_\Cala(\Calm,\Caln)}^{\overline{\dualcat}}(H) \cong H\rra 
  \quad\text{for}\quad H\iN\Fun_\Cala(\calm,\caln)\,.
  \ee
\end{prop}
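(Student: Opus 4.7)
The plan is to reduce all four parts to the formula \eqref{eq:relSerre4M}, which expresses the relative Serre functor as the right adjoint of the internal Hom functor, evaluated at the monoidal unit. For (i), by the explicit formulas for duals in $\Mor$ recorded just above the proposition,
$m\dd \eq (m^\vee)^\vee \eq (m^\vee)\ra(\un) \eq \iHomM^\cala(m,-)\ra(\un) \cong \Se_\calm^\cala(m)$,
the last isomorphism being the first formula in \eqref{eq:relSerre4M}. Statement (ii) is dispatched symmetrically: the left dual of $m$ is the internal coHom of $\calm$, and the second formula in \eqref{eq:relSerre4M} then yields $\ldd m \cong \lSe_\calm^\cala(m)$.

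The substance of the proposition lies in the two general identities at the end. To establish $\Se^{\cala^*_\caln}_{\Fun_\cala(\calm,\caln)}(H) \cong H\lla$, I would first identify the internal Hom for the left $\cala^*_\caln$-action on $\Fun_\cala(\calm,\caln)$, which sends $F \,{\mapsto}\, F \cir H$. Mate correspondence under the adjunction $H\la \dashv H$ applied in the right-composition slot yields $\Nat(F \cir H, K) \cong \Nat(F, K \cir H\la)$, so $\iHom^{\cala^*_\caln}(H, K) \cong K \cir H\la$. Iterating the argument with $H\lla \dashv H\la$, the right adjoint of $\iHom^{\cala^*_\caln}(H, -)$ is $F \,{\mapsto}\, F \cir H\lla$; evaluating at the monoidal unit $\id_\caln$ of $\cala^*_\caln$ produces $H\lla$. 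A symmetric computation, using $H \dashv H\ra$ followed by $H\ra \dashv H\rra$, yields $\Se^{\overline{\dualcat}}_{\Fun_\cala(\calm,\caln)}(H) \cong H\rra$.

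Parts (iii) and (iv) then follow by specializing to $\caln \eq \cala$: the monoidal equivalence $\cala^*_\cala \cong \overline{\cala}$ given by evaluation at $\un$ converts the two general identities into the Serre-functor assertions in (iii) and (iv). The remaining identifications $H\dd \cong H\rra$ and $\ldd H \cong H\lla$ admit a direct Yoneda verification: unpacking $H\dd \eq (H\ra(\un))^\vee \eq \iHomM^\cala(H\ra(\un), -)$ and invoking that $H\ra$ inherits a module functor structure, one computes
$\HomA(a, H\dd(m)) \cong \HomM(a \act H\ra(\un), m) \cong \HomM(H\ra(a), m) \cong \HomA(a, H\rra(m))$,
where the middle step uses $a \act H\ra(\un) \cong H\ra(a)$; an analogous computation with $\icoHom$ and $H\la$ handles $\ldd H$.

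The main obstacle is 2-categorical bookkeeping: the mate correspondences under the tower $\cdots \dashv H\la \dashv H \dashv H\ra \dashv H\rra \dashv \cdots$ (together with their pre- and post-composition variants) must be carried out with care, and one must know that each such adjoint is itself a strong $\cala$-module functor, not merely lax, so that identities like $a \act H\ra(\un) \cong H\ra(a)$ are available. For exact module categories this upgrade from lax to strong is standard and is already implicit in Proposition \ref{Serre_and_module_functors}.
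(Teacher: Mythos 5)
Since the paper recalls Proposition~\ref{double_duals} only as a citation (Prop.~4.11 and Lem.~5.3 of \cite{FGJS}) and does not reproduce its proof, there is no in-text argument to compare against; I can only assess your derivation on its own merits, and it is sound. You correctly read off (i) and (ii) from the explicit formulas for duals in Section~\ref{sec:module_duals} combined with \eqref{eq:relSerre4M}. For the general identities your computation of the internal Hom is right: the mate correspondence $\Nat_{\rm mod}(F\cir H, K)\cong\Nat_{\rm mod}(F, K\cir H\la)$ (which is $(-\cir H)\dashv(-\cir H\la)$, the precomposition adjunction induced by $H\la\dashv H$) gives $\iHom^{\cala_\caln^*}(H,K)\cong K\cir H\la$, and iterating with $H\lla\dashv H\la$ before evaluating at $\id_\caln$ indeed produces $H\lla$; the $\overline{\dualcat}$-side is its mirror image. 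The Yoneda verification of $H\dd\cong H\rra$ via $a\act H\ra(\un)\cong H\ra(a)$ and the adjunction $H\ra\dashv H\rra$ is likewise correct. The two caveats you raise at the end are exactly the right ones and both hold in this setting: for exact module categories over a rigid finite tensor category, adjoints of strong module functors carry canonical strong (not merely lax) module structures, and the mate of a module natural transformation is again a module natural transformation, so the adjunctions you use live where they need to.
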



\subsection{Pivotal Morita equivalence}\label{sec:PME}

Let $\cala$ be a pivotal finite tensor category and $\calm$ an exact left 
$\cala$-module category. The pivotal structure $\rm{p}\colon\id_\cala\xnatiso(-)\dd$
of $\cala$ turns the relative Serre functor of $\calm$ into an $\cala$-module functor via
  \be
  a\act \Se_\calm^\cala(m)\xrightarrow{\;\rm{p}_a\act\id\;}a\dd\act
  \Se_\calm^\cala(m)\xrightarrow{\;\eqref{Serre_twisted}\;} \Se_\calm^\cala(a\:\Act m) \,.
  \ee

\begin{defi}[\Cite{Def.\,3.11}{Sh}]\label{def:pivmodule}
A \emph{pivotal structure} on an exact left $\cala$-module category $\calm$ is a 
module natural isomorphism $\widetilde{\rm{p}} \colon\id_\calm{\xnatiso}\Se_\calm^\cala$.
\end{defi}

A module category together with a pivotal structure is called a \emph{pivotal module
category}. Pivotal right module categories and pivotal bimodule categories are
defined similarly \Cite{Def.\,5.1}{FGJS}.

\begin{prop}[\rm\Cite{Thm.\,3.13}{Sh}] \label{pivotal_dual_category}
Let $\cala$ be a pivotal tensor category and $\calm$ a pivotal $\cala$-mo\-du\-le
category with pivotal structure $\widetilde{\rm{p}}$. The dual tensor category
$\dualcat$ has a pivotal structure given by the composite
  \be
  {\rm q}_F\Colon F \xRightarrow{\,\id\,\circ\,\widetilde{\rm{p}}~} F \cir \Se_\calm^\cala
  \xRightarrow{\;\eqref{Serre_module_functor}~\,} \Se_\calm^\cala\cir F\lla
  \xRightarrow{~\widetilde{\rm{p}}^{-1}\circ\,\id~\,} F\lla
  \label{pivotal_dualcat}
  \ee
for a module endofunctor $F\iN\dualcat$.
\end{prop}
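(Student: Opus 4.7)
The plan is to show that the prescription $F \mapsto q_F$ defines a monoidal natural isomorphism from $\id_\dualcat$ to the double-dual endofunctor of $\dualcat$, which in view of Proposition~\ref{double_duals}(iv) can be identified with the assignment $F \mapsto F\lla$; this is exactly a pivotal structure on $\dualcat$. The argument splits into three ingredients: each $q_F$ is a morphism in $\dualcat$ (i.e.\ a module natural isomorphism); the assignment $F \mapsto q_F$ is natural in $F$; and $q$ satisfies the monoidality axiom.

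For the first ingredient, the outer whiskerings by $\widetilde{\rm p}$ and $\widetilde{\rm p}^{-1}$ are module natural isomorphisms by the very definition of a pivotal module category. The middle arrow is obtained from \eqref{Serre_module_functor} after the substitution $F \rightsquigarrow F\lla$, using the canonical identification $(F\lla)\rra \cong F$, and inversion. By Proposition~\ref{Serre_and_module_functors} it is an isomorphism of twisted module functors, whose twist is precisely the double right dual $(-)\dd$ of $\cala$. The outer whiskerings absorb this twist on both sides, turning $q_F$ into an honest module natural isomorphism $F \xRightarrow{\cong} F\lla$, and each of the three constituent arrows is invertible.

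Naturality of $q$ in $F$ follows from the naturality of $\widetilde{\rm p}$ as a natural transformation on $\calm$ together with the naturality of the family $\Lambda_{(-)}$ in Proposition~\ref{Serre_and_module_functors}: for any module natural transformation $\eta \colon F \Rightarrow G$, the square with sides $q_F$, $q_G$, $\eta$ and $\eta\lla$ decomposes into three commuting cells, one per factor of $q$.

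The main obstacle is the monoidality axiom. One has to verify that under the canonical iso $(F \cir G)\lla \cong G\lla \cir F\lla$ (the order of composition depending on the convention for the tensor product in $\dualcat$), the $2$-morphism $q_{F \cir G}$ agrees with the horizontal composite of $q_F$ and $q_G$. The key tool is the compatibility diagram \eqref{Serre_compatibility_composition} for the family $\Lambda$, specialised to the functors $F\lla$ and $G\lla$. Expanding $q_{F \cir G}$ via its definition and inserting one instance of \eqref{Serre_compatibility_composition} produces interior copies of $\widetilde{\rm p}^{-1} \cir \widetilde{\rm p}$ which cancel, leaving the composite in exactly the form $q_F * q_G$. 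The bookkeeping of these whiskerings is the only delicate point; no algebraic input beyond \eqref{Serre_compatibility_composition} and the module naturality of $\widetilde{\rm p}$ enters the argument.
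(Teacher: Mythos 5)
The paper does not supply its own proof of this proposition — it is imported from Shimizu (the cited Theorem 3.13) — but your direct verification is correct, and it uses precisely the two tools, Proposition~\ref{Serre_and_module_functors} and the compatibility diagram \eqref{Serre_compatibility_composition}, that the paper itself invokes a few lines later when arguing that \eqref{pivotal_structure_bicategory_of_pivotal_modules} defines a pivotal structure on the whole bicategory $\mathbf{Mod}^{\rm piv}(\cala)$, of which this proposition is the restriction to the endo-category $\dualcat$. So your argument matches the paper's reasoning in spirit and in substance; the only place where you are a bit terse is in the first ingredient, where the claim that the whiskerings by $\widetilde{\rm p}^{\pm1}$ ``absorb'' the twist of the middle arrow rests on the compatibility between the pivotal structure of $\cala$ (which makes $\Se_\calm^\cala$ an honest module functor) and the twisted module structure underlying $\Lambda$, which deserves to be spelled out but is indeed true.
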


This leads to a variant of categorical Morita equivalence which takes pivotal 
structures into account:

\begin{defi}[\Cite{Def.\ 5.10}{FGJS}]\label{PME}
Two pivotal tensor categories $\cala$ and $\calb$ are said to be \emph{pivotal Morita 
equivalent} iff there exists a pivotal $\cala$-module category $\calm$ together with 
a pivotal tensor equivalence $\calb\,{\simeq}\,\overline{\dualcat}$.
\end{defi}

In analogy to its non-pivotal version, this notion of Morita equivalence is 
characterized by a suitable bicategory of module categories. However, we require
additional structure: Let $\mathscr{F}$ be a bicategory with dualities. Dualities for
$1$-morphisms extend to a pseudofunc\-tor
  \be
  \begin{aligned}[c]
  (-)^\vee\Colon \mathscr{F} & \,\longrightarrow\, \mathscr{F}^{\,\text{op},\text{op}},
  \\
  \hspace*{2.5em} x & \,\longmapsto\, x \,,
  \\
  \hspace*{2.5em} (a\colon x\,{\to}\, y) & \,\longmapsto\, (a^\vee\colon y\,{\to}\, x) \,.
  \end{aligned}
  \ee
A \emph{pivotal structure} on a bicategory $\mathscr{F}$ with dualities is a 
pseudo-natural equivalence 
  \be\label{pivotal_st_bicategory}
  \textbf{P} \Colon \id_\mathscr{F} \xRightarrow{\;\simeq~\,} (-)^{\!\vee\vee}
  \ee
obeying $\textbf{P}_{\!x} \eq \id_x$ for every object $x\iN\mathscr{F}$.
A bicategory together with a pivotal structure is called a \emph{pivotal bicategory}.

Let $\cala$ be a pivotal tensor category. Denote by 
$\textbf{Mod}^{\text{piv}\!}(\cala)$ the bicategory that has pivotal $\cala$-module 
categories as objects, $\cala$-module functors as $1$-morphisms and module natural
transformations as $2$-morphisms. Since pivotal modules are exact
\Cite{Prop.\,4.24}{fuScSc}, every module functor 
$H\colon {}_{\cala\,}\calm \To {}_{\cala\,}\caln$ comes with adjoints
  \be
  H\Vee := H\la :~ {}_{\cala\,}\caln \rarr{} {}_{\cala\,}\calm \qquad\text{and}\qquad
  \Vee H := H\ra :~ {}_{\cala\,}\caln \rarr{} {}_{\cala\,}\calm \,,
  \ee
which are themselves module functors.
These turn $\textbf{Mod}^{\text{piv}\!}(\cala)$ into a bicategory with dualities 
for $1$-morphisms. Moreover, $\textbf{Mod}^{\text{piv}\!}(\cala)$ is endowed with 
a pivotal structure \eqref{pivotal_st_bicategory}. Indeed, given any $1$-morphism
$H\colon {}_{\cala\,}\calm \Rarr~ {}_{\cala\,}\caln$ in 
$\textbf{Mod}^{\text{piv}\!}(\cala)$, define 
  \be
  \label{pivotal_structure_bicategory_of_pivotal_modules}
  \textbf{P}_{\!H}^{}\Colon H
  \xRightarrow{\;\id\cir\widetilde{\rm{p}}_\calm~} H \cir \Se_{\calm}^\cala
  \xRightarrow{\;\eqref{Serre_module_functor}~} \Se_{\caln}^\cala \cir H\lla
  \xRightarrow{\;(\widetilde{\rm{p}}_\caln)^{-1}\cir\id~\,} H\lla ,
  \ee
where $\widetilde{\rm{p}}_\calm$ is the pivotal structure of $\calm$. 
The $2$-morphisms $\textbf{P}_{\!H}^{}$ are invertible and natural in $H$. Moreover,
commutativity of the diagram \eqref{Serre_compatibility_composition} implies that
they are compatible with the composition of module functors. Therefore $\textbf{P}$
constitutes a pivotal structure on the bicategory 
$\textbf{Mod}^{\text{piv}\!}(\cala)$. In particular we have

\begin{prop}[\Cite{Prop.\,5.4}{FGJS}]
Let $\calm$ and $\caln$ be pivotal $\cala$-module categories. The natural isomorphism
\eqref{pivotal_structure_bicategory_of_pivotal_modules} endows the invertible
$(\cala_\caln^*,\dualcat)$-bimodule category $\Fun_\cala(\calm,\caln)$ with a
pivotal structure.
\end{prop}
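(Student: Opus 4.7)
The plan is to verify that the candidate $\mathbf{P}_H$ from \eqref{pivotal_structure_bicategory_of_pivotal_modules} defines a bimodule natural isomorphism from $\id_{\Fun_\cala(\calm,\caln)}$ to the relative Serre functor $\Se^{\cala^*_\caln}_{\Fun_\cala(\calm,\caln)}$, which by Proposition \ref{double_duals} sends $H$ to $H\lla$. First I would confirm that each $\mathbf{P}_H$ is a module natural isomorphism $H \xnatiso H\lla$. The outer two factors $\id\cir\widetilde{\rm{p}}_\calm$ and $\widetilde{\rm{p}}_\caln^{-1}\cir\id$ are module natural isomorphisms by the very definition of a pivotal structure on $\calm$ and $\caln$ (Definition \ref{def:pivmodule}). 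The middle factor is, a priori, only an isomorphism of \emph{twisted} module functors (Proposition \ref{Serre_and_module_functors}), but the twists by $(-)\dd$ on the $\cala$-actions introduced by the relative Serre functor (cf.\ \eqref{Serre_twisted}) are precisely neutralized when one conjugates by $\widetilde{\rm{p}}_\calm$ and $\widetilde{\rm{p}}_\caln^{-1}$, yielding an honest module natural isomorphism. Naturality of $\mathbf{P}_H$ in $H$ then follows from the naturality of $\Lambda_{(-)}$ stated in Proposition \ref{Serre_and_module_functors}, together with naturality of the pivotal structures on $\calm$ and $\caln$.

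The core of the proof lies in verifying that $\mathbf{P}$ is compatible with the $(\cala^*_\caln,\dualcat)$-bimodule structure on $\Fun_\cala(\calm,\caln)$, which is given by pre- and post-composition as in \eqref{left_module_functors}. Concretely, for $F_1\iN\cala^*_\caln$ and $F_2\iN\dualcat$, one must show that $\mathbf{P}_{F_1\,\circ\, H\,\circ\, F_2}$ is obtained by suitably whiskering $\mathbf{P}_H$ with the pivotal structures $\mathrm{q}_{F_1}$ and $\mathrm{q}_{F_2}$ on the two dual tensor categories (Proposition \ref{pivotal_dual_category}). The key tool is the compatibility diagram \eqref{Serre_compatibility_composition}: applying it first to the pair $(F_1, H)$ allows one to factor $\Lambda_{F_1\circ H}$ through $\Lambda_{F_1}$ and $\Lambda_H$, and a second application to $(H, F_2)$ does the same on the right. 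After this splitting, the pivotal structures $\widetilde{\rm{p}}_\calm$ and $\widetilde{\rm{p}}_\caln$ re-assemble into the formulas for $\mathrm{q}_{F_1}$ and $\mathrm{q}_{F_2}$ given by \eqref{pivotal_dualcat}, producing the required bimodule compatibility.

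The main obstacle is this last step, which requires sustained diagrammatic bookkeeping: one must track how the twists $(-)\dd$ introduced by the relative Serre functor on each module action interact with the two applications of $\Lambda$, and verify that the cancellations occur exactly where needed, separately for the left and the right action. No new conceptual input is required — the definitions \eqref{pivotal_structure_bicategory_of_pivotal_modules} and \eqref{pivotal_dualcat} are both produced by the same recipe of sandwiching $\Lambda$ between pivotal isomorphisms, so the argument must close — but it is the most technically involved portion of the proof, and it is precisely here that the compatibility \eqref{Serre_compatibility_composition} of $\Lambda$ with composition carries all the weight.
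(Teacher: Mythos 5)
Your proposal matches the paper's approach: the paper derives this proposition as an immediate consequence of the construction of the bicategorical pivotal structure $\mathbf{P}$ on $\textbf{Mod}^{\text{piv}}(\cala)$, whose compatibility with horizontal composition ``follows from commutativity of the diagram \eqref{Serre_compatibility_composition}'', exactly as you argue. Your unpacking — that the outer factors are genuine module natural isomorphisms by definition, that the twisted nature of the middle factor $\Lambda$ is neutralized by conjugation with $\widetilde{\rm p}_\calm$ and $\widetilde{\rm p}_\caln$, and that compatibility with the $(\cala^*_\caln,\dualcat)$-bimodule actions by pre- and post-composition is carried entirely by \eqref{Serre_compatibility_composition} with the pivotal structures of the dual categories reassembled via \eqref{pivotal_dualcat} — is the right decomposition and identifies the right key lemma.

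One detail to be aware of: a pivotal bimodule category in the sense of [FGJS, Def.\ 5.1] comes with \emph{two} trivializations, $\widetilde{\rm p}\colon\id\xnatiso\Se^{\cala^*_\caln}$ (landing in $H\lla$) and $\widetilde{\rm q}\colon\id\xnatiso\Se^{\overline{\dualcat}}$ (landing in $H\rra$); see the usage of both in Definition \ref{spherical_bimod}. Your argument explicitly produces and verifies only the former, namely $\mathbf{P}_H$. The latter is produced by the same recipe but with the roles of left and right adjoints exchanged, and its bimodule naturality again follows from \eqref{Serre_compatibility_composition}; this is entirely parallel bookkeeping, but a complete writeup should record it.
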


We thus arrive at the following characterization of pivotal Morita equivalence:

\begin{thm}[\Cite{Thm.\,5.12}{FGJS}]\label{bicategories_pivotal_modules}
Two pivotal tensor categories $\cala$ and $\calb$ are pivotal 
Morita equivalent if and only if $\mathbf{Mod}^{\rm{piv}\!}(\cala)$ and 
$\mathbf{Mod}^{\rm{piv}\!}(\calb)$ are equivalent as pivotal bicategories.
\end{thm}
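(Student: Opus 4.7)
The plan is to prove both implications separately, treating each as a pivotal enhancement of the classical non-pivotal bicategorical description of Morita equivalence. For the forward implication, assume a pivotal Morita equivalence realized by a pivotal $\cala$-module $\calm$ together with a pivotal tensor equivalence $\calb \simeq \overline{\dualcat}$. I would construct a pseudofunctor
\[ \Phi\Colon \mathbf{Mod}^{\rm{piv}\!}(\cala) \To \mathbf{Mod}^{\rm{piv}\!}(\calb) \]
by sending a pivotal $\cala$-module $\caln$ to $\Fun_\cala(\calm,\caln)$, regarded as a left $\calb$-module via its right $\dualcat$-action and the equivalence $\calb \simeq \overline{\dualcat}$, and equipped with the pivotal structure furnished by the proposition stated just before the theorem. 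On a $1$-morphism $H\colon \caln_1 \To \caln_2$ set $\Phi(H) := H \cir (-)$, with horizontal whiskering on $2$-morphisms. The underlying (non-pivotal) bicategory equivalence is the standard bicategorical formulation of Morita equivalence, with pseudo-inverse supplied by tensoring with the invertible bimodule $\Fun_\cala(\calm,\cala)$.

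The core task is then to show that $\Phi$ intertwines the pivotal structures $\mathbf{P}^{\cala}$ and $\mathbf{P}^{\calb}$. Unwinding \eqref{pivotal_structure_bicategory_of_pivotal_modules}, the component $\mathbf{P}^{\cala}_{\!H}$ is assembled from the pivotal isomorphisms $\widetilde{\rm{p}}_{\caln_1}$, $\widetilde{\rm{p}}_{\caln_2}$ and the Serre-twist $\Lambda_H$ of Proposition \ref{Serre_and_module_functors}. One must verify that applying $\Phi$ produces the analogous composite on $\Phi(H) = H \cir (-)$ built from the induced pivotal structures on $\Phi(\caln_1)$, $\Phi(\caln_2)$ and from $\Lambda_{H\cir(-)}$. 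The composition coherence diagram \eqref{Serre_compatibility_composition} for $\Lambda$ is tailored exactly for this matching; combined with the pivotality of $\calb \simeq \overline{\dualcat}$ — which ensures that the pivotal structure on $\dualcat$ transported from $\calb$ coincides with the canonical one from Proposition \ref{pivotal_dual_category} — the desired compatibility follows.

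For the backward implication, assume $\Psi\colon \mathbf{Mod}^{\rm{piv}\!}(\calb) \To \mathbf{Mod}^{\rm{piv}\!}(\cala)$ is an equivalence of pivotal bicategories, and evaluate $\Psi$ on the regular pivotal $\calb$-module $\calb$, obtaining a pivotal $\cala$-module $\calm := \Psi(\calb)$. Restriction of $\Psi$ to endomorphism categories yields a tensor equivalence
\[ \overline{\calb} \simeq \Fun_\calb(\calb,\calb) \xrightarrow{~\Psi~} \Fun_\cala(\calm,\calm) = \cala^*_\calm, \]
and hence $\calb \simeq \overline{\cala^*_\calm}$ as tensor categories. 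Its pivotality is a direct consequence of $\Psi$ being a pivotal pseudofunctor: the bicategorical pivotal structure $\mathbf{P}$, restricted to the endomorphism category of any object, coincides by construction \eqref{pivotal_structure_bicategory_of_pivotal_modules} with the pivotal structure provided by Proposition \ref{pivotal_dual_category}, and pseudonaturality of $\mathbf{P}$ under $\Psi$ identifies these along the above tensor equivalence, witnessing a pivotal Morita equivalence between $\cala$ and $\calb$ through $\calm$. The main obstacle is the bookkeeping in the forward direction, where $\mathbf{P}^{\cala}_{\!H}$ mixes the pivotal structures of source and target with the Serre comparison $\Lambda_H$; the coherence \eqref{Serre_compatibility_composition} of $\Lambda$ under composition of module functors is the tool that turns a potentially unwieldy string-diagram verification into a clean coherence argument.
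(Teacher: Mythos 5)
Your proposal matches the approach the paper takes: the theorem is quoted from \cite{FGJS}, and the paper's own sketch (given for the analogous spherical Proposition \ref{bicategories_spherical_modules}) uses exactly your two constructions — $\caln \mapsto \Fun_\cala(\calm,\caln)$ for the forward direction, and evaluation at the regular module $\calm := \Psi(\calb)$ for the converse, with pivotality of the resulting tensor equivalence read off from pivotality of the pseudofunctor on endomorphism categories. Your additional observations about why the forward pseudofunctor is pivotal — the coherence \eqref{Serre_compatibility_composition} of $\Lambda$ under composition, together with the pivotality of $\calb \simeq \overline{\dualcat}$ aligning the transported pivotal structure on $\dualcat$ with the canonical one from Proposition \ref{pivotal_dual_category} — correctly identify the content of the verification that the paper delegates to \cite{FGJS}.
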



\subsection{Nakayama functors and module Radford isomorphisms}\label{sec:Nakayama}

The right exact \emph{Nakayama functor} $\NakR_\calx$ of a finite \ko-linear category
$\calx$ is the image of the identity functor under the Eilenberg-Watts correspondence 
\Cite{Def.\,3.14}{fuScSc}, i.e.\ the coend
  \be
  \NakR_\calx := \int^{x\in\calx}\!\!\Hom_\calx (-,x)^* \ot x \,.
  \label{Nakayama}
  \ee
It comes equipped with a family of natural isomorphisms
  \be
  \NakR_\calx\circ F\xRightarrow{\;\cong~}F\rra\circ\NakR_\calx
  \label{Nakayama_twisted_functor}
  \ee
for every $F\iN\Funre(\calx,\calx)$ that has a right adjoint that is right exact 
as well. The left exact analogue of the Nakayama functor is the end
$\NakL_\calx \,{:=}\, \int_{x\in\calx}\!\Hom_\calx (x,-) \ot x $,
which is a left adjoint to $\NakR_\calx$. In a similar manner as $\NakR_\calx$ the
functor $\NakL_\calx$ is endowed with a family of natural isomorphisms
  \be
  \NakL_\calx\circ F\xRightarrow{\;\cong~} F\lla\circ\NakL_\calx
  \label{lNakayama_twisted_functor}
  \ee
for every $F\iN\Funle(\calx,\calx)$ having a left adjoint that is left exact. 

The Nakayama functors of the $\ko$-linear category underlying a finite tensor
category $\calc$ \Cite{Lemma\ 4.10}{fuScSc} can be described, with the help of
\eqref{Nakayama_twisted_functor} and \eqref{lNakayama_twisted_functor}, as
  \be
  \NakR_\calc\cong \DD_\calc^{-1}\otimes(-)\dd\quad\text{ and }\quad 
  \NakL_\calc\cong \DD_\calc\otimes\ldd(-)
  \label{tensor_Nakayama}
  \ee
where we have defined \emph{distinguished invertible objects}
$\DDC \,{:=}\, \NakL_\calc(\textbf{1})$ and $\DDC^{-1} \,{:=}\, \NakR_\calc(\textbf{1})$,
which are inverses of each other \Cite{Lemma\ 4.11}{fuScSc}. For every finite tensor
category $\calc$ the invertible object $\DDC$ comes with a monoidal natural 
isomorphism
  \be
  r_\calc^{}\Colon \DDC^{}\otimes-\otimes\DDC^{-1}
  \xRightarrow{~\cong~} (-)^{\vee\vee\vee\vee}
  \label{Radford}
  \ee
called the \emph{Radford isomorphism}, which encodes the classical Radford 
$S^4$-theorem in categorical terms. Composing the isomorphisms 
\eqref{tensor_Nakayama} with the Radford isomorphism we obtain
  \be
  \NakR_\calc\cong \ldd(-)\otimes\DD_\calc^{-1} \qquad\text{and}\qquad
  \NakL_\calc\cong (-)\dd\otimes\DD_\calc\,.
  \label{tensor_Nakayama2}
  \ee
A finite tensor category $\calc$ is said to be \emph{unimodular} iff its 
distinguished invertible object $\DDC$ is isomorphic to the monoidal unit 
$\textbf{1}$. Examples of unimodular categories are fusion categories and 
factorizable finite tensor categories \Cite{Prop.\,8.10.10}{EGno}. For a unimodular
finite tensor category the Radford isomorphism
monoidally trivializes the fourth power of the right dual functor: Consider any
trivialization $\textbf{u}_\calc \colon \textbf{1}\Rarr\cong\DD_\Calc^{-1}$
of the distinguished invertible object. The composite
  \be\label{Radford_can}
  \overline{r}_\Calc^{}\Colon\id_\calc\xRightarrow{\textbf{u}^*\otimes\id\otimes\textbf{u}}
  \DD_\Calc^{}\ot{-}\ot\DD_\Calc^{-1} \xRightarrow{\,\eqref{Radford}\,}(-)^{\vee\vee\vee\vee} 
  \ee
constitutes a canonical monoidal isomorphism
that does not depend of the choice of $\textbf{u}_\calc$, since $\textbf{1}$ is 
simple so that $\Hom_\calc(\textbf{1},\DD_\Calc)$ is one-dimensional.

More generally, given a $(\calc,\cald)$-bimodule category $\call$, the isomorphisms
\eqref{Nakayama_twisted_functor} and \eqref{lNakayama_twisted_functor} endow the
Nakayama functors with a twisted module functor structure \Cite{Thm.\,4.4-4.5}{fuScSc}:
  \be
  \NakL_\call(c\act y \actr d)\cong  c \dd\act\Nak^l_\call(y)\actr \ldd d
  \qquad\text{and}\qquad
  \NakR_\call(c\act y \actr d)\cong \ldd c\act\Nak^r_\call(y)\actr d\dd .
  \label{module_Nakayama_twisted_functor}
  \ee
The relative Serre functors of ${}_\calc\call$ are related to the Nakayama functors by
the isomorphisms
  \be
  \DDC\act\NakR_\call \cong \Se_\call^\calc \qquad\text{and}\qquad
  \DDC^{-1}\act\NakL_\call \cong \lSe_\call^\calc
  \label{Nakayama_Serre}
  \ee
of twisted bimodule functors \Cite{Thm.\,4.26}{fuScSc}. Similar isomorphisms of
twisted bimodule functors exist for ${}\call_\cald$. These lead to an extension of
Radford's theorem to bimodule categories:

\begin{thm}[\Cite{Thm.\,4.14}{FGJS}]\label{thm:bimod_Radford}
Let ${}_\calc\call_\cald$ be an exact bimodule category. There exists a natural isomorphism
  \be
  \mathcal{R}_{\!\call}\Colon \DD_{\!\calc}^{-1} \act \Se^\calc_\call(-)
  \xnatiso \Se^{\overline{\cald}}_\call(-)\actr\DD_ {\!\cald}^{-1}
  \label{eq:bimod_Radford}
  \ee
of twisted bimodule functors.
\end{thm}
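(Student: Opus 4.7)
The plan is to use the Nakayama functor $\NakR_\call$ of $\call$, regarded only as a finite linear category, as a common intermediary between the two sides of the sought isomorphism. Since $\NakR_\call$ depends only on the linear structure of $\call$, it sits a priori independently of either module action, yet via the two instances of \eqref{Nakayama_Serre} it is naturally related separately to each of the relative Serre functors $\Se^\calc_\call$ and $\Se^{\overline{\cald}}_\call$. Composing these two relations will produce the isomorphism $\mathcal{R}_\call$.

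Concretely, first I would apply \eqref{Nakayama_Serre} to the left $\calc$-module structure of $\call$ to obtain a natural isomorphism $\DD_\calc^{-1} \act \Se^\calc_\call \xnatiso \NakR_\call$. Next, applying the analogue of \eqref{Nakayama_Serre} to $\call$ viewed as a left $\overline{\cald}$-module---using that, as a linear-categorical invariant, the distinguished invertible object $\DD_{\overline{\cald}}$ coincides with $\DD_\cald$, and translating the left $\overline{\cald}$-action into the right $\cald$-action---yields $\NakR_\call \xnatiso \Se^{\overline{\cald}}_\call \actr \DD_\cald^{-1}$. Composing these two supplies the candidate natural isomorphism $\mathcal{R}_\call$.

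The main obstacle is to verify that this candidate respects the twisted bimodule functor structures on both sides. By \eqref{Serre_twisted}, each relative Serre functor is twisted on the relevant side by a double right dual, whereas by \eqref{module_Nakayama_twisted_functor} the Nakayama functor $\NakR_\call$ is twisted by the double left dual $\ldd(-)$ on the $\calc$-side and by the double right dual $(-)\dd$ on the $\cald$-side. Reconciling the left-versus-right dual mismatch on the $\calc$-side amounts to producing a monoidal natural isomorphism $\DD_\calc^{-1} \otimes (-)\dd \xcong \ldd(-) \otimes \DD_\calc^{-1}$, which is exactly the content of the Radford isomorphism \eqref{Radford} (equivalently, of the two presentations \eqref{tensor_Nakayama} and \eqref{tensor_Nakayama2} of $\NakR_\calc$). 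An analogous argument on the $\cald$-side handles the right action. The remaining coherence verifications reduce to diagram chases that follow from the naturality of \eqref{Nakayama_Serre}, the monoidality of the Radford isomorphism, and the twisted-functor compatibility \eqref{Nakayama_twisted_functor}; the crux is that the two distinguished invertible objects $\DD_\calc^{-1}$ and $\DD_\cald^{-1}$ in the statement of the theorem are precisely what is needed to absorb the double-dual mismatches produced by the Radford isomorphisms on either side.
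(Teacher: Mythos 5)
Your proposal is correct and follows the approach that the paper itself indicates: Theorem \ref{thm:bimod_Radford} is cited from [FGJS], but the surrounding text explicitly sets it up as a composition of the two instances of \eqref{Nakayama_Serre} (for ${}_\calc\call$ and its analogue for $\call_\cald$), with $\NakR_\call$ as the common intermediary, which is precisely your plan. Your remarks on reconciling the $\ldd(-)$ versus $(-)\dd$ mismatch via the tensor-category Radford isomorphism are accurate, though they re-derive compatibility already packaged into \eqref{Nakayama_Serre} as an isomorphism of twisted bimodule functors (it is quoted from \Cite{Thm.\,4.26}{fuScSc} with that structure).
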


As a particular instance, there is a Radford isomorphism for an exact module category
with the relative Serre functor playing the role of the double right dual functor:

\begin{cor}[\Cite{Cor.\ 4.16}{FGJS}]\label{thm:Radford}
Let $\cala$ be a finite tensor category and $\calm$ an exact $\cala$-mo\-du\-le. There
is a natural isomorphism
  \be
  r_\Calm^{}\Colon \DD_\Cala^{}\act
  {-}\actr\DD_{\!\dualcat}^{-1} \xRightarrow{~\cong~\,} \Se_\calm^\cala\cir\Se_\calm^\cala
  \label{Radford_mod}
  \ee
of twisted bimodule functors.
\end{cor}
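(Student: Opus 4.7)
The plan is to specialize Theorem \ref{thm:bimod_Radford} to the canonical bimodule structure on $\calm$. Specifically, one views $\calm$ as an $(\cala,\dualcat)$-bimodule with the given left $\cala$-action and with the right $\dualcat$-action given by evaluation of module endofunctors on objects. Applying the bimodule Radford theorem to this bimodule yields a natural isomorphism
\[
\mathcal{R}_{\!\calm}\Colon \DD_\cala^{-1}\act \Se_\calm^\cala(-) \xnatiso \Se_\calm^{\overline{\dualcat}}(-) \actr \DD_{\!\dualcat}^{-1}
\qquad(\dagger)
\]
of twisted bimodule functors $\calm\To\calm$.

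The crucial additional input needed to pass from $(\dagger)$ to the claimed iso $r_\calm^{}$ is the natural identification $\Se_\calm^{\overline{\dualcat}} \cong \lSe_\calm^\cala$ of the Serre functor from the $\dualcat$-side with the quasi-inverse of the Serre functor from the $\cala$-side. This identification should rest on Proposition \ref{double_duals} together with the uniqueness (up to unique iso) of dualities in the bicategory $\Mor$: the functor $\Se_\calm^\cala$ realizes the double right dual of a $1$-morphism $m\iN\Mor(\mathbf{-},\mathbf{+})$, while the analogous construction built from the right $\dualcat$-action is naturally a quasi-inverse, namely the double left dual $\lSe_\calm^\cala(m) \cong \ldd m$.

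Granted that identification, substitute $m \mapsto \Se_\calm^\cala(m)$ in $(\dagger)$ and use $\Se_\calm^{\overline{\dualcat}}\cir\Se_\calm^\cala \cong \id_\calm$ on the right-hand side; the resulting iso reads
\[
\DD_\cala^{-1}\act (\Se_\calm^\cala\cir \Se_\calm^\cala)(m) \xcong m \actr \DD_{\!\dualcat}^{-1}\,.
\]
Tensoring on the left with $\DD_\cala$ and rearranging produces the desired iso $r_\calm^{}$. Naturality and the twisted-bimodule-functor structure are inherited directly from $\mathcal{R}_{\!\calm}$ together with the compatibility isomorphism \eqref{Serre_twisted}, so no separate verification is needed.

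The main obstacle is establishing $\Se_\calm^{\overline{\dualcat}} \cong \lSe_\calm^\cala$ in a way that respects the twisted bimodule structures. This requires careful bookkeeping of the adjunctions underlying the two relative Serre functors (one built from $\iHom_\calm^\cala$, the other from $\iHom_\calm^{\overline{\dualcat}}$) and using the universal-property characterization of duals in $\Mor$ afforded by Section \ref{sec:module_duals}; everything else in the derivation is formal rearrangement of invertible tensor factors, using that distinguished invertible objects satisfy $L^{\vee\vee}\cong L$.
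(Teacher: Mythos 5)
Your plan — specialize Theorem \ref{thm:bimod_Radford} to $\calm$ regarded as a bimodule, then trivialize the Serre functor that arises on the dual side by identifying it with the quasi-inverse of $\Se_\calm^\cala$ and rearrange — is the right one and matches how one would derive the corollary from the bimodule Radford theorem. However, two points deserve attention.

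First, a bookkeeping issue. With the paper's conventions (a right $\calb$-module is a left $\overline{\calb}$-module, and an $(\cala,\calb)$-bimodule is a left $\cala\boti\overline{\calb}$-module), the evaluation action $F\act m = F(m)$ makes $\calm$ a \emph{left} $\dualcat$-module, hence a right $\overline{\dualcat}$-module, so $\calm$ is an $(\cala,\overline{\dualcat})$-bimodule, not an $(\cala,\dualcat)$-bimodule. Specializing \eqref{eq:bimod_Radford} with $\calc=\cala$, $\cald=\overline{\dualcat}$ therefore produces $\Se_\calm^{\overline{\cald}} = \Se_\calm^{\dualcat}$ on the right-hand side, not $\Se_\calm^{\overline{\dualcat}}$ as in your $(\dagger)$. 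This is compensated by the equality $\DD_{\overline{\dualcat}} \cong \DD_{\dualcat}$ (the distinguished invertible object is insensitive to the monoidal opposite), so the formula you ultimately manipulate is the intended one, but the wrong overline makes the subsequent Serre-functor identification harder to pin down.

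Second, and more substantively: the step you rightly call the ``main obstacle'' — showing that the dual-side relative Serre functor is quasi-inverse to $\Se_\calm^\cala$ \emph{as a twisted bimodule functor} — is not actually delivered by the argument you sketch. Proposition \ref{double_duals} realizes $\Se_\calm^\cala$ and $\lSe_\calm^\cala$ as double right and double left duals of $m\iN\Mor(\mathbf{-},\mathbf{+})$, and it realizes $\Se_{\Fun_\cala(\calm,\caln)}^{\cala^*_\caln}$ and $\Se_{\Fun_\cala(\calm,\caln)}^{\overline{\dualcat}}$ as $(-)^{\rm lla}$ and $(-)^{\rm rra}$; but it does \emph{not} give a double-dual description of $\Se_\calm^{\dualcat}$ itself. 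Uniqueness of duals in $\Mor$ is not enough here: what is needed is the claim that the relative Serre functor built from the $\dualcat$-action is computed by the double \emph{left} dual in $\Mor$, which requires an extra argument — for instance transporting the Serre functor through the $1$-opposite bicategory $\Mor^{\textbf{1(op)}}$ and the pseudo-functor \eqref{Mor_embedding}, or re-running the general version of Proposition \ref{double_duals} after identifying $\calm\simeq\Fun_\cala(\cala,\calm)$ and carefully matching the two (permuted) bimodule labels. Without that step made explicit, the proposal reduces the corollary to an unproven assertion; once it is supplied, the formal rearrangement of the invertible tensor factors you describe does finish the proof.
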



\subsection{Spherical module categories}\label{sec:sph_module}

The property of sphericality of a pivotal tensor category is defined \cite{DSPS}
by means of the Radford isomorphism \eqref{Radford_can}, under the assumption of
unimodularity. In the semisimple case this notion is equivalent to trace-sphericality
\Cite{Prop.\,3.5.4}{DSPS}, i.e.\ to the property that right and left traces of 
endomorphisms coincide. 

\begin{defi}[\Cite{Def.\,3.5.2}{DSPS}] \label{spherical_tc}
A unimodular pivotal tensor category $\calc$ is called 
\emph{spherical} iff the diagram 
  \be
  \begin{tikzcd}[row sep=2.5em,column sep=1.8em]
  \id_\calc \ar[rr,"\overline{r}_\calc^{}",Rightarrow] \ar[rd,"\rm{p}",swap,Rightarrow] &~
  & (-)^{\vee\vee\vee\vee}
  \\
  ~& (-)\dd \ar[ru,"\!\rm{p}\dd",swap,Rightarrow] &~
  \end{tikzcd}
  \ee
commutes, where $\rm{p}$ is the pivotal structure of $\calc$.
\end{defi}

As it turns out, this notion of sphericality is a pivotal Morita invariant:

\begin{prop}[\Cite{Cor.\,5.19}{FGJS}]
Let $\cala$ and $\calb$ be two pivotal Morita equivalent pivotal tensor categories. 
$\cala$ is $($unimodular$)$ spherical if and only if $\calb$ is $($unimodular$)$ spherical.
\end{prop}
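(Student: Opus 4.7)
The plan is to use Theorem \ref{bicategories_pivotal_modules} to reduce the claim to a single-category assertion. Pivotal Morita equivalence of $\cala$ and $\calb$ means, by Definition \ref{PME}, that there is a pivotal $\cala$-module category $\calm$ together with a pivotal tensor equivalence $\calb \simeq \overline{\dualcat}$. Both unimodularity and sphericality are invariant under monoidal opposition (the distinguished invertible object and the Radford isomorphism transform compatibly with the interchange of left and right duals, and likewise for the pivotal structure). It therefore suffices to prove the following: if $\cala$ is unimodular and spherical and $\calm$ is a pivotal $\cala$-module category, then $\dualcat$ is unimodular and spherical.

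For unimodularity, I would compute $\DD_{\dualcat}$ by evaluating the Nakayama functor of $\dualcat$ on its tensor unit $\id_\calm$. Using the identification \eqref{Nakayama_Serre} applied to the $\dualcat$-module category $\calm$, together with the hypothesis $\DD_\cala \cong \un$, the computation reduces to the relative Serre functor $\Se_\calm^{\overline{\dualcat}}$. By Proposition \ref{double_duals}(iv), this is the double-left-adjoint endofunctor of $\id_\calm$, which is trivially $\id_\calm$. Chasing the unimodularity trivialization of $\DD_\cala$ through these identifications, together with the pivotal structure $\widetilde{\rm{p}}$ of $\calm$ that trivializes $\Se_\calm^\cala$, one concludes $\DD_{\dualcat} \cong \id_\calm = \mathbf{1}_{\dualcat}$.

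Granting unimodularity, I would then verify the diagram of Definition \ref{spherical_tc} for $(\dualcat,\rm{q})$, with $\rm{q}$ the pivotal structure of \eqref{pivotal_dualcat}. The canonical Radford isomorphism $\overline{r}_{\dualcat}\colon \id_{\dualcat} \Rightarrow (-)^{\vee\vee\vee\vee}$ can be rewritten, using the module Radford isomorphism $r_\calm$ of Corollary \ref{thm:Radford} and the identifications of Proposition \ref{double_duals}, as a composite built from the Serre-functor naturality \eqref{Serre_module_functor}. Similarly, the composite $\rm{q}\dd \cir \rm{q}$ unpacks into a chain that uses $\widetilde{\rm{p}}$ twice and \eqref{Serre_module_functor} twice. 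The sphericality of $\cala$ enters by replacing the component $r_\cala$ of the ambient Radford isomorphism (implicit in $r_\calm$) by $\rm{p}\dd \cir \rm{p}$, while the coherence square \eqref{Serre_compatibility_composition} lets one commute the two successive applications of the Serre naturality past each other. After this rearrangement the two composites become equal.

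The main obstacle is the bookkeeping of the final diagram chase: one must carefully track how the bimodule Radford isomorphism $r_\calm$ of \eqref{Radford_mod}, the pivotal structures $\rm{p}$ on $\cala$ and $\widetilde{\rm{p}}$ on $\calm$, and the repeated use of \eqref{Serre_module_functor} for the double adjoints $F\lla$ and $F\rra$ fit together. All the needed coherence has been assembled in Section \ref{sec:background} (in particular the compatibility \eqref{Serre_compatibility_composition} and Theorem \ref{thm:bimod_Radford}), so no new structural input is required, but the identification of the two resulting natural transformations has to be performed one component at a time.
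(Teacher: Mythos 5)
The paper does not supply its own proof of this statement but cites it to Corollary 5.19 of \cite{FGJS}, so I assess your proposal on its own terms.

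Your reduction is sound: passing to the monoidal opposite preserves both unimodularity (the underlying linear category, and hence $\NakL$ and $\DD$, is unchanged) and sphericality, and Definition \ref{PME} then reduces the claim to showing that if $\cala$ is unimodular spherical and $\calm$ is a pivotal $\cala$-module category, then $\dualcat$ is unimodular spherical. The sphericality portion of your sketch is plausible and uses the right coherence data, though of course the diagram chase you defer is where most of the work lies.

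The unimodularity argument, however, has a genuine gap. You invoke \eqref{Nakayama_Serre} ``applied to the $\dualcat$-module category $\calm$'' to reduce $\DD_{\dualcat}$ to $\Se_\calm^{\overline{\dualcat}}$, and then cite Proposition \ref{double_duals}(iv) to conclude that this Serre functor is trivially the identity. But Proposition \ref{double_duals}(iv) concerns the Serre functor $\Se_{\FunM}^{\overline{\cala}}$ of the functor category $\FunM=\Fun_\cala(\calm,\cala)$ (and, in its general form, $\Se_{\Fun_\cala(\calm,\caln)}^{\overline{\dualcat}}$ of a functor category), not the Serre functor $\Se_\calm^{\overline{\dualcat}}$ of $\calm$ itself viewed as a $\overline{\dualcat}$-module. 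The statement ``$\Se_\calm^{\overline{\dualcat}}\cong\id_\calm$'' is exactly as nontrivial as ``$\dualcat$ is unimodular'' (they are related by \eqref{Nakayama_Serre} on the $\overline{\dualcat}$-side once one knows $\NakR_\calm\cong\id_\calm$), so your argument is circular at this step. The tool that actually closes the gap is Corollary \ref{thm:Radford}: the module Radford isomorphism $r_\calm\colon \DD_\cala\act-\actr\DD_{\dualcat}^{-1}\Rightarrow\Se_\calm^\cala\circ\Se_\calm^\cala$, specialized using $\DD_\cala\cong\un$ (unimodularity of $\cala$) and $\Se_\calm^\cala\cong\id_\calm$ (pivotality of $\calm$), gives $-\actr\DD_{\dualcat}^{-1}\cong\id_\calm$ as module endofunctors of $\calm$. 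Since $\dualcat\simeq\Fun_\cala(\calm,\calm)$ via $F\mapsto-\actr F$, this forces $\DD_{\dualcat}\cong\mathbf{1}_{\dualcat}$. You do reference $r_\calm$ later in the sphericality part, so the ingredient is available to you; it simply needs to be the primary input for unimodularity rather than Proposition \ref{double_duals}(iv).
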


In a similar manner, we define sphericality for pivotal bimodule categories:

\begin{defi}[\Cite{Def.\ 5.20}{FGJS}](Spherical bimodule category)\label{spherical_bimod}~\\
Let $\calc$ and $\cald$ be $($unimodular$)$ spherical tensor categories and 
$\textbf{u}_\calc:\textbf{1}\xcong\DD_\Calc^{-1}$ and 
$\textbf{u}_\cald:\textbf{1}\xcong\DD_\Cald^{-1}$ trivializations of the 
corresponding distinguished invertible objects.
A pivotal bimodule category ${}_\Calc\call_\cald$ is called 
$(\textbf{u}_\calc,\textbf{u}_\cald)$-\emph{spherical}, or just
\emph{spherical}, iff the diagram
  \be
  \begin{tikzcd}[row sep=2.5em,column sep=1.8em]
  \Se^\calc_\call\ar[rr,"\overline{\mathcal{R}}_{\!\call}",Rightarrow]&~
  & \Se^{\overline{\cald}}_\call
  \\
  ~ & \id_\call
  \ar[ul,"\tilde{\rm{p}}",Rightarrow] \ar[ur,"\tilde{\rm{q}}",Rightarrow,swap] &~
  \end{tikzcd}
  \label{eq:def:sphericalbimodule}
  \ee
commutes, where $\overline{\mathcal{R}}_{\!\call}$ is the composite
  \be
  \Se^\calc_\call
  \xRightarrow{~\textbf{u}_\calc\act\id~} \DD_{\!\calc}^{-1} \act \Se^\calc_\call
  \xRightarrow{~\;\mathcal{R}_{\!\call}\;~}
  \Se^{\overline{\cald}}_\call(-)\actr\DD_ {\!\cald}^{-1} 
  \xRightarrow{~\id\actr\textbf{u}_\cald^{-1}~} \Se^{\overline{\cald}}_\call
  \ee
and $\tilde{\rm{p}}$ and $\tilde{\rm{q}}$ are the pivotal structures of
${}_\calc\call$ and $\call_\cald$, respectively.
\end{defi}

\begin{rem}\label{semi_dist_choices}
The notion of sphericality for bimodule categories given in Definition 
\ref{spherical_bimod} is relative to the choice of isomorphisms 
$\textbf{u}_\calc \colon \textbf{1}\Rarr\cong\DD_\Calc^{-1}$ and
$\textbf{u}_\cald \colon \textbf{1}\Rarr\cong\DD_\Cald^{-1}$. As we will see in
Definition \ref{norm_convention}, in the semisimple case there is a special choice 
of such trivializations.
\end{rem}

\begin{prop}[\Cite{Prop.\,5.24}{FGJS}]\label{Fun_spherical}
Let $\calm$ and $\caln$ be spherical $\cala$-module categories. Then the
$(\cala_\caln^*,\dualcat)$-bimodule category $\Fun_\cala(\calm,\caln)$ is spherical.
\end{prop}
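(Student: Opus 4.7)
The plan is to verify the sphericality condition \eqref{eq:def:sphericalbimodule} for $\call \eq \Fun_\cala(\calm,\caln)$ viewed as an $(\cala_\caln^*,\dualcat)$-bimodule category. First, by Proposition \ref{double_duals} the two relevant relative Serre functors of $\call$ are $\Se^{\cala_\caln^*}_\call(H) \cong H\lla$ and $\Se^{\overline{\dualcat}}_\call(H) \cong H\rra$. The left pivotal structure $\widetilde{\rm{p}}_\call\colon\id\xRightarrow{~}\Se^{\cala_\caln^*}_\call$ of $\call$ is the natural isomorphism $\textbf{P}$ from \eqref{pivotal_structure_bicategory_of_pivotal_modules}, built from $\widetilde{\rm{p}}_\calm$, the Serre naturality $\Lambda_H$ of Proposition \ref{Serre_and_module_functors}, and $\widetilde{\rm{p}}_\caln^{-1}$. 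The right pivotal structure $\widetilde{\rm{q}}_\call\colon\id\xRightarrow{~}\Se^{\overline{\dualcat}}_\call$ is constructed analogously by interchanging the roles of $\calm$ and $\caln$, i.e.\ from $\widetilde{\rm{p}}_\caln$, $\Lambda_H$, and $\widetilde{\rm{p}}_\calm^{-1}$.

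Next, since $\cala$ is unimodular spherical and sphericality is a pivotal Morita invariant, the dual categories $\cala_\caln^*$ and $\dualcat$ are unimodular spherical as well, and a trivialization $\textbf{u}_\cala\colon\textbf{1}\Rarr\cong\DD_\cala^{-1}$ induces canonical trivializations of $\DD_{\cala_\caln^*}^{-1}$ and $\DD_\dualcat^{-1}$. The key technical step is to identify the bimodule Radford isomorphism of $\call$ from Theorem \ref{thm:bimod_Radford} explicitly: after absorbing these trivializations, the resulting natural iso $\overline{\mathcal{R}}_\call\colon H\lla \Rightarrow H\rra$ should equal the composition obtained by gluing the one-sided module Radford isomorphisms $\overline{r}_\calm^{}$ and $\overline{r}_\caln^{-1}$ through two applications of $\Lambda_H$. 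Granting this identification, the sphericality identity $\overline{\mathcal{R}}_\call \cir \widetilde{\rm{p}}_\call = \widetilde{\rm{q}}_\call$ reduces by a diagram chase to substituting the sphericality identities of $\calm$ and $\caln$ (namely $\overline{r}_\calm \cir \widetilde{\rm{p}}_\calm = \widetilde{\rm{p}}_\calm\dd \cir \widetilde{\rm{p}}_\calm$ and its analogue for $\caln$) and then collecting terms by invoking naturality of $\Lambda_H$ in $H$ together with its coherence \eqref{Serre_compatibility_composition} with composition of module functors.

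The main obstacle is obtaining the explicit identification of $\overline{\mathcal{R}}_\call$ in the second paragraph, because the bimodule Radford isomorphism of Theorem \ref{thm:bimod_Radford} is characterized up to coherence with the twisted bimodule structure of the Nakayama functor of $\call$. One must match this twisted structure on $\Fun_\cala(\calm,\caln)$, which is controlled by double adjoints of module functors, against the twisted structures of the Nakayama functors of $\calm$ and $\caln$ transported through the identifications of Proposition \ref{double_duals}. Once the correct formula for $\overline{\mathcal{R}}_\call$ as a conjugate of $\overline{r}_\calm^{}\,{\cdot}\,\overline{r}_\caln^{-1}$ by $\Lambda_H$ is established, the remaining verification is a mechanical diagram chase that reduces the sphericality condition of $\call$ to those of $\calm$ and $\caln$.
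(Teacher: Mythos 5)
The statement is cited from \cite{FGJS} rather than proved in the paper, so I assess your plan on its merits. The overall architecture is sensible and uses the right ingredients: Proposition~\ref{double_duals} to identify $\Se^{\cala_\caln^*}_\call(H)\cong H\lla$ and $\Se^{\overline{\dualcat}}_\call(H)\cong H\rra$, the pseudo-natural transformation $\textbf{P}$ of \eqref{pivotal_structure_bicategory_of_pivotal_modules} and its mirror as the two candidate pivotal structures, and Theorem~\ref{thm:bimod_Radford} plus Corollary~\ref{thm:Radford} together with the coherence \eqref{Serre_compatibility_composition} of $\Lambda_H$ for the diagram chase.

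However, there is a genuine gap, and you name it yourself: the explicit identification of $\overline{\mathcal{R}}_{\call}$ as a conjugate of $\overline{r}_\calm^{}\cdot\overline{r}_\caln^{-1}$ by $\Lambda_H$ is the crux and you leave it unproved. It cannot simply be granted. The bimodule Radford isomorphism of Theorem~\ref{thm:bimod_Radford} is pinned down by being an isomorphism of \emph{twisted bimodule functors}, and to verify that the glued composite equals it, you would have to track how the twisted bimodule functor structure of $\NakR_{\Fun_\cala(\calm,\caln)}$, expressed via double adjoints of module functors, transports through the identifications of Proposition~\ref{double_duals} into the twisted bimodule structures of $\NakR_\calm$ and $\NakR_\caln$. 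There is no uniqueness-of-characterization shortcut cited, so this must be done explicitly (it is essentially where the nontrivial work in \cite{FGJS} lives). Without it the proposal reduces the statement to an unverified lemma.

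Separately, the form in which you record sphericality of $\calm$, namely $\overline{r}_\calm\cir\widetilde{\rm{p}}_\calm = \widetilde{\rm{p}}_\calm\dd\cir\widetilde{\rm{p}}_\calm$, is borrowed from the tensor-category condition of Definition~\ref{spherical_tc} and does not match Definition~\ref{spherical_mod}, which asserts commutativity of the triangle \eqref{spherical_module} involving $\overline{\mathcal{R}}_\calm$, $\Se^\cala_\calm$, and $\overline{\Se}^{\cala}_\calm$ (not $\Se^\cala_\calm\cir\Se^\cala_\calm$); the object $\widetilde{\rm{p}}_\calm\dd$ is not well-defined in the module setting. If your diagram chase in the final step substitutes this tensor-category identity in place of the actual module-category sphericality of $\calm$ and $\caln$, it will not close.
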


Spherical (left) module categories are defined in terms of their associated bimodule 
category:

\begin{defi}[\Cite{Def.\ 5.22}{FGJS}](Spherical module category)\label{spherical_mod}~\\
Let $\calm$ be a pivotal module category over a spherical fusion category $\cala$,
with pivotal structure $\widetilde{\rm{p}}$.
The pivotal module $({}_\cala\calm,\widetilde{\rm{p}})$ is called \emph{spherical} iff the pivotal
$(\cala,\overline{\dualcat})$-bimodule category $\calm$ is spherical, i.e.\ iff
the diagram 
  \be\label{spherical_module}
  \begin{tikzcd}[row sep=2.5em,column sep=1.8em]
 \Se^\cala_\calm  \ar[rr,"\overline{\mathcal{R}}_{\calm}",Rightarrow]  &~
  & \overline{\Se}^{\cala}_\Calm\ar[dl,"\tilde{\rm{p}}",Rightarrow] \\
  ~& \id_\calm  \ar[ul,"\tilde{\rm{p}}",Rightarrow]&~
  \end{tikzcd}
  \ee
commutes.
\end{defi}

Given a spherical fusion category $\cala$, we denote by 
$\textbf{Mod}^{\text{sph}\!}(\cala)$ the full pivotal sub-bicategory of 
$\textbf{Mod}^{\text{piv}\!}(\cala)$ that has spherical $\cala$-module categories as
objects. This pivotal sub-bicategory is sufficient for witnessing pivotal Morita
equivalence of spherical fusion categories:

\begin{prop}\label{bicategories_spherical_modules}
Two spherical fusion categories $\cala$ and $\calb$ are pivotal 
Morita equivalent if and only if $\mathbf{Mod}^{\rm{sph}\!}(\cala)$ and 
$\mathbf{Mod}^{\rm{sph}\!}(\calb)$ are equivalent as pivotal bicategories.
\end{prop}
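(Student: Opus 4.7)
The plan is to deduce this statement from Theorem \ref{bicategories_pivotal_modules} by showing that the pivotal bicategorical equivalence between $\mathbf{Mod}^{\rm piv}(\cala)$ and $\mathbf{Mod}^{\rm piv}(\calb)$ restricts to the full sub-bicategories consisting of spherical module categories, whenever the input tensor categories are spherical fusion categories.

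For the direction $(\Rightarrow)$, suppose $\cala$ and $\calb$ are pivotal Morita equivalent, so that by Definition \ref{PME} there is a pivotal $\cala$-module category $\calm$ with a pivotal tensor equivalence $\calb \simeq \overline{\dualcat}$. The first step is to verify that, under the additional hypothesis that both $\cala$ and $\calb$ are spherical, the mediating module $\calm$ can be chosen to be a \emph{spherical} $\cala$-module in the sense of Definition \ref{spherical_mod}. This reduces to unpacking how the pivotal structure on $\dualcat$ from Proposition \ref{pivotal_dual_category} interacts with the Radford isomorphism \eqref{Radford_mod}, so that sphericality of $\dualcat$ forces commutativity of the diagram \eqref{spherical_module} for $\calm$. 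Once this is secured, Theorem \ref{bicategories_pivotal_modules} supplies a pivotal bicategorical equivalence, whose object map can be taken as $\caln\mapsto \Fun_\cala(\calm,\caln)$. By Proposition \ref{Fun_spherical}, for any spherical $\cala$-module $\caln$ the bimodule $\Fun_\cala(\calm,\caln)$ is a spherical $(\cala^*_\caln,\dualcat)$-bimodule category, and hence a spherical $\calb$-module under the equivalence $\calb \simeq \overline{\dualcat}$. Applying the same reasoning to the inverse equivalence, I would conclude that the pivotal equivalence from Theorem \ref{bicategories_pivotal_modules} restricts on both sides to the spherical sub-bicategories.

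For the direction $(\Leftarrow)$, let $\Phi\colon \mathbf{Mod}^{\rm sph}(\cala)\xrightarrow{\simeq}\mathbf{Mod}^{\rm sph}(\calb)$ be an equivalence of pivotal bicategories. The regular module ${}_\cala\cala$, equipped with the canonical pivotal structure induced by the pivotal structure of $\cala$, is a spherical $\cala$-module since $\cala$ is a spherical tensor category (Definition \ref{spherical_tc}). Set $\calm:=\Phi({}_\cala\cala)$, a spherical $\calb$-module. An equivalence of pivotal bicategories induces pivotal tensor equivalences between endomorphism categories of corresponding objects, hence
\[
\cala^*_\cala \;\simeq\; \calb^*_\calm
\]
as pivotal fusion categories. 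Combining this with the canonical pivotal tensor equivalence $\cala^*_\cala\simeq \overline{\cala}$ (which sends a module endofunctor of ${}_\cala\cala$ to its value at $\un$, and whose compatibility with pivotal structures follows from the construction \eqref{pivotal_dualcat}) yields $\cala\simeq \overline{\calb^*_\calm}$ as pivotal tensor categories. This exhibits $\cala$ and $\calb$ as pivotal Morita equivalent via the spherical (in particular pivotal) $\calb$-module $\calm$, as required by Definition \ref{PME}.

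The main obstacle is the spherical upgrade in the forward direction: verifying that whenever $\calb\simeq\overline{\dualcat}$ is spherical, the mediating module $\calm$ is itself spherical, so that Proposition \ref{Fun_spherical} applies. This requires a careful compatibility check between the Radford natural isomorphisms for $\cala$, $\dualcat$ and $\calm$, together with the trivializations of the distinguished invertible objects (cf.\ Remark \ref{semi_dist_choices}). Once this compatibility is in place, the remainder of the argument is a direct transcription of Theorem \ref{bicategories_pivotal_modules} to the spherical setting.
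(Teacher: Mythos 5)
Your proof plan follows the same backbone as the paper's proof: restrict the pivotal bicategorical equivalence from Theorem~\ref{bicategories_pivotal_modules} to the full sub-bicategories of spherical modules, invoke Proposition~\ref{Fun_spherical} for the assignment $\caln\mapsto\Fun_\cala(\calm,\caln)$, and for the converse take the image of the regular module under a given pivotal $2$-equivalence. The converse direction is correct and essentially identical to the paper's.

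The forward direction, however, contains a genuine gap that you flag but leave open. Definition~\ref{PME} supplies only a \emph{pivotal} $\cala$-module category $\calm$ with $\calb\simeq\overline{\dualcat}$, whereas Proposition~\ref{Fun_spherical} requires that both $\calm$ and $\caln$ be \emph{spherical} $\cala$-modules. You correctly isolate the missing ingredient --- establishing that the mediating pivotal module $\calm$ is spherical whenever both $\cala$ and $\overline{\dualcat}$ are --- and name it the ``main obstacle,'' but you do not carry out the verification. Concretely, one must show that sphericality of $\dualcat$ with the pivotal structure \eqref{pivotal_dualcat} induced from $\tilde{\rm p}$, together with sphericality of $\cala$, forces commutativity of the diagram \eqref{spherical_module}; this is a diagram chase relating the module Radford isomorphism $\overline{\mathcal{R}}_\calm$ of Corollary~\ref{thm:Radford} to the tensor Radford isomorphisms of $\cala$ and $\dualcat$. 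The paper's own proof invokes ``complete analogy to the proof of Theorem 5.12 of \cite{FGJS}'' at exactly this point, so your plan is faithful in structure, but it remains an outline rather than a proof until that compatibility check is actually performed.
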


\begin{proof}
This is seen in complete analogy to the proof of Theorem 5.12 of \cite{FGJS}. Given
an indecomposable spherical $\cala$-module category $\calm$, it follows from
Proposition \ref{Fun_spherical} that $\Fun_\cala(\calm,\caln)$ is spherical for every
$\caln\iN\textbf{Mod}^{\text{sph}\!}(\cala)$, and thus we have an assignment
  \be
  \begin{aligned}
  \Psi\Colon \textbf{Mod}^{\text{sph}\!}(\cala) & 
  \longrightarrow\textbf{Mod}^{\text{sph}\!} (\overline{\dualcat}) \,,
  \nxl1
  \caln & \longmapsto \Fun_\cala(\calm,\caln) 
  \end{aligned}
  \ee
that is a $2$-equivalence \Cite{Thm.\,7.12.16}{EGno} and preserves the pivotal
structure \Cite{Thm.\,5.12}{FGJS}. Conversely, given a pivotal $2$-equivalence
  \be
  \Phi\Colon \textbf{Mod}^{\text{sph}\!}(\calb) 
  \longrightarrow\textbf{Mod}^{\text{sph}\!} (\cala) \,,
  \ee
the image of the regular spherical $\calb$-module category ${}_\calb\calb$ under
$\Phi$ gives the desired pivotal $\cala$-mo\-du\-le category 
$\calm \,{;=}\, \Phi(\calb)$. Moreover, the pivotality of $\Phi$ ensures that the
tensor equivalence $\Phi_{\calb,\calb}\colon \overline{\calb} \,{\simeq}\,
\Fun_\calb(\calb,\calb) \,{\simeq}\, \Fun_\cala(\calm,\calm) \eq \dualcat$ is 
pivotal, and hence $\cala$ and $\calb$ are pivotal Morita equivalent.
\end{proof}


\section{Categories with traces and dimensions}\label{sec:traces}

Suitable traces for the endomorphisms of a category allow one to assign scalars to
closed graphs on spheres. This feature is a crucial ingredient in the computation of 
topological invariants, both in the Turaev-Viro state sum construction for the
case of semisimple categories and for the invariants studied in \cite{gePa5,gepV} 
for non-semisimple categories. In the bicategorical state sum construction in 
Section \ref{sec:state_sum} this ingredient will be instrumental as well.

In the present section we study traces on bimodule categories. For the sake of
generality, and to exhibit the relevant structures more clearly, contrary to what
we will do in Section \ref{sec:state_sum}, we do not impose the condition that the
bimodule categories under consideration are semisimple. We also analyze 
particular properties of the traces in the case that the bimodule categories admit
pivotal structures. Additionally, we explore the properties of the traces constructed
for $2$-endomorphisms in the pivotal bicategory $\mathbf{Mod}^{\rm sph}(\cala)$
of spherical module categories over a spherical fusion category $\cala$. 
This will allow us to evaluate, in Section \ref{sec:state_sum}, 
$\mathbf{Mod}^{\rm sph}(\cala)$-labeled graphs on spheres to produce a manifestly
Morita-balanced topological invariant.

We will make use of the graphical string calculus in bicategories and in pivotal
bicategories, following the conventions that we summarize in Appendix \ref{app_graph}.
The graphical calculus will in particular be used in the context of a tensor category,
seen as a one-object bicategory.


\subsection{The canonical Nakayama-twisted trace of a linear category}

Non-semisimple \ko-linear categories may not admit non-degenerate traces defined 
for all endomorphisms \Cite{Ex.\,3.12}{He}.
In contrast, once we restrict to a particular subclass of morphisms,
there is a canonical trace structure on any finite \ko-linear category. 

Let $\calx$ be a finite \ko-linear category. Denote by ${\rm Proj}\,\calx$ the full
subcategory of projective objects in $\calx$, and by $\Irr\,\calx$ a set of
representatives for the equivalence classes of simple objects in $\calx$.
Recall that every finite $\ko$-linear category comes with a distinguished
right exact endofunctor, the Nakayama functor \eqref{Nakayama}. An important feature
of Nakayama functors is the existence of canonical natural isomorphisms 
  \be
  \naki_{p;x} \Colon \Hom_\calx(p,x)\xcong \Hom_\calx (x,\NakR_\calx(p))^*
  \quad \text{ for }~ p\iN{\rm Proj}\,\calx \;\text{ and }~ x\iN\calx \,,
  \label{Nak_proj_CY}
  \ee
which can be derived from the coend description of $\NakR_\calx$ 
\Cite{Cor.\,2.3}{SchW}.

\begin{defi}[\Cite{Def.\ 2.4}{SchW} and \Cite{Def.\ 4.4}{shShi}]\label{def:tpX}
Let $\calx$ be a finite $\ko$-linear category. 
For every projective object $p\iN {\rm Proj}\,\calx$ the linear map
  \be
  \mathbf{t}^\calx_p := \naki_{p;p}(\id_p) \Colon \Hom_\calx\left(p,\NakR_\calx(p)\right)
  \rarr~ \ko \,,
  \label{twist_trace}
  \ee 
is called the \emph{Nakayama-twisted trace}, or just the \emph{twisted trace}, for 
$p\iN {\rm Proj}\,\calx$.
\end{defi}

The twisted trace of a linear category obeys twisted variants of the properties of
ordinary traces:

\begin{lem}{\rm \Cite{Lemma\,2.5}{SchW}}\label{twisted_trace_properties}
Let $\calx$ be a finite $\ko$-linear category. The twisted trace \eqref{twist_trace} 
has the following properties:
\Enumeratei
\item 
Cyclicity: For $p,q\iN {\rm Proj}\,\calx$ one has
  \be
  \mathbf{t}^\calx_q(f\cir g) = \mathbf{t}^\calx_p\left(\,\NakR_\calx (g)\cir f\,\right)
  \ee
for every $f\iN\Hom_\calx(p,\Nak^r_\calx(q))$ and every $g\iN\Hom_\calx(q,p)$.

\item 
Non-degeneracy: The induced pairing
  \be
  \langle-,-\rangle_\calx^{} \Colon \Hom_\calx(p,x) \otik \Hom_\calx(x,\NakR_\calx(p))
  \rarr~ \ko \,,\quad~ f\oti g\xmapsto{~~} \mathbf{t}^\calx_p(g\cir f)
  \ee
is non-degenerate for every $p\iN {\rm Proj}\,\calx$ and every $x\iN \calx$.
\end{enumerate}
\end{lem}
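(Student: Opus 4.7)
The plan is to derive both properties directly from the naturality of the family of canonical isomorphisms $\naki_{p;x}$ in its two arguments. I would begin with a preparatory reformulation: for every projective $p\iN\calx$, every $x\iN\calx$ and every pair of morphisms $f\iN\Hom_\calx(p,x)$ and $g\iN\Hom_\calx(x,\NakR_\calx(p))$, I claim that
$$ \naki_{p;x}(f)(g) \,=\, \mathbf{t}^\calx_p(g\cir f). $$
This is a direct consequence of naturality of $\naki_{p;-}$ in its covariant slot applied to $f$, starting from the definitional identity $\naki_{p;p}(\id_p)\eq\mathbf{t}^\calx_p$.

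For cyclicity (i), the preparatory identity (with the roles of $p$ and $q$, and of $x$ and $p$, swapped) gives $\mathbf{t}^\calx_q(f\cir g) \eq \naki_{q;p}(g)(f)$. I would then invoke naturality of $\naki_{-;p}$ in its first (contravariant) slot, applied to the morphism $g\iN\Hom_\calx(q,p)$ between projective objects, to obtain
$$ \naki_{q;p}(g)(f) \,=\, \naki_{p;p}(\id_p)(\NakR_\calx(g)\cir f) \,=\, \mathbf{t}^\calx_p(\NakR_\calx(g)\cir f). $$
Concatenating the two identities yields the cyclicity relation. The appearance of $\NakR_\calx(g)$ on the right-hand side is accounted for by the fact that the target functor $\Hom_\calx(x,\NakR_\calx(-))^*$ is contravariant in its argument, which is precisely what the dualization of a Nakayama-post\-com\-posed Hom produces.

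For non-degeneracy (ii), the same preparatory reformulation exhibits the pairing $\langle-,-\rangle_\calx$ as the evaluation pairing induced by the canonical isomorphism $\naki_{p;x}\colon \Hom_\calx(p,x) \xcong \Hom_\calx(x,\NakR_\calx(p))^*$ from \eqref{Nak_proj_CY}; non-degeneracy is therefore immediate.

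The only real subtlety lies in keeping the two variances of $\naki_{p;x}$ straight in the naturality squares; once that bookkeeping is pinned down, both parts reduce to a single application of naturality each, so I do not anticipate a substantive obstacle.
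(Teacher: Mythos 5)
Your proof is correct and is essentially the argument used in the cited reference [SchW, Lemma 2.5]; the paper itself gives no proof but just invokes that lemma. Both parts do indeed reduce to naturality of the canonical isomorphism $\naki_{p;x}\colon \Hom_\calx(p,x)\xcong\Hom_\calx(x,\NakR_\calx(p))^*$, once in the covariant $x$-slot (to obtain the reformulation $\naki_{p;x}(f)(g)\eq\mathbf{t}^\calx_p(g\cir f)$, which gives non-degeneracy immediately) and once in the contravariant $p$-slot restricted to projectives (where functoriality of $\NakR_\calx$ produces the $\NakR_\calx(g)$ that appears in the cyclicity formula).
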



\subsection{Nakayama-twisted traces on tensor categories}\label{sec:traces_tensor}

Even in the absence of pivotality, a rigid monoidal
structure on a category permits the definition of \emph{partial traces}. These
lead to the so-called \emph{quantum traces}, which are defined for a certain 
subclass of morphisms for which appropriate instances of double-duals appear in 
the codomain. In this section we explain the interaction of these structures
with the Nakayama-twisted trace.

\begin{defi}
Let $\cala$ be a finite tensor category.
\Enumeratei
\item 
For objects $a,b,c\iN\cala$, the \emph{left partial trace} with respect to $a$ is the map 
  \be
  \label{left_partial}
  \begin{aligned}
  \tr^a_{\rm l}\Colon  \Hom_\cala(a\oti b, \ldd a\oti c)&\rarr~ \,\Hom_\cala(b,c)
   \\
  \hspace*{1.9em}
  \raisebox{-3.2em}{
  \scalebox{0.75}{
  \begin{tikzpicture}[scale=\locscale]
  \draw[line width=\widthObj,\colorObj]
       (0,0) node[below=1pt] {$a$} -- +(0,\locH) node[above=1pt,xshift=-5pt] {$\ldd a$}
       (\locW,0) node[below=-1pt] {$b$} -- +(0,\locH) node[above=1pt,xshift=1pt] {$c$} ;
  \filldraw[line width=\widthMor,fill=white,draw=\colorMor]
       (-\locWM,0.5*\locH-0.5*\locHM) rectangle +(\locW+2*\locWM,\locHM) ;
  \node at (0.5*\locW,0.5*\locH) {$f$} ;
  \end{tikzpicture}
  }}
  ~~ &\longmapsto 
\raisebox{-3.2em}{
\scalebox{0.75}{
  \begin{tikzpicture}[scale=\locscale]
  \draw[line width=\widthObj,\colorObj]
       (\locW,0) node[below=1pt] {$b$} -- +(0,\locH) node[above=-1pt] {$c$}
       (0,\locHs) -- +(0,\locH-2*\locHs)
       (-2*\locC,\locHs) -- +(0,\locH-2*\locHs) node[left=-1pt,midway] {${}^{\vee\!}a$}
       (0,\locH-\locHs) arc (0:180:\locC)
       (0,\locHs) arc (0:-180:\locC) ;
  \scopeArrow{0.5}{\arrowObj}
  \draw[line width=\widthObj,\colorObj,postaction={decorate}]
       (\hsA-\locC,\locH-\locHs+\locC) -- + (0.01,0) ;
  \draw[line width=\widthObj,\colorObj,postaction={decorate}]
       (\hsA-\locC,\locHs-\locC) -- +(0.01,0) ;
  \end{scope}
  \filldraw[line width=\widthMor,fill=white,draw=\colorMor]
       (-\locWM,0.5*\locH-0.5*\locHM) rectangle +(\locW+2*\locWM,\locHM) ;
  \node at (0.5*\locW,0.5*\locH) {$f$} ;
  \end{tikzpicture}
  }
  }
  \end{aligned}
  \ee
\item 
Similarly, the \emph{right partial trace} with respect to $a$ is the map
  \be
  \label{right_partial}
  \begin{aligned}
  {\rm tr}^a_{\rm r}\Colon  \Hom_\cala(b\oti a, c\oti a\dd)&\rarr~ \Hom_\cala(b,c)
   \\
  \raisebox{-3.2em}{
  \scalebox{0.75}{
  \begin{tikzpicture}[scale=\locscale]
  \draw[line width=\widthObj,\colorObj]
       (0,0) node[below=-1pt] {$b$} -- +(0,\locH) node[above=1pt,xshift=-1pt] {$c$}
       (\locW,0) node[below=1pt] {$a$} -- +(0,\locH)
       node[above=1pt,xshift=5pt] {$a^{\vee\vee}$} ;
  \filldraw[line width=\widthMor,fill=white,draw=\colorMor]
       (-\locWM,0.5*\locH-0.5*\locHM) rectangle +(\locW+2*\locWM,\locHM) ;
  \node at (0.5*\locW,0.5*\locH) {$f$} ;
  \end{tikzpicture}
  }}
  ~ &\longmapsto ~~
    \raisebox{-3.2em}{
    \scalebox{0.75}{
  \begin{tikzpicture}[scale=\locscale]
  \draw[line width=\widthObj,\colorObj]
       (0,0) node[below=1pt] {$b$} -- +(0,\locH) node[above=-1pt] {$c$}
       (\locW,\locHs) -- +(0,\locH-2*\locHs)
       (\locW+2*\locC,\locHs) -- +(0,\locH-2*\locHs) node[right=-1pt,midway] {$a^\vee$}
       (\locW,\locH-\locHs) arc (180:0:\locC)
       (\locW,\locHs) arc (180:360:\locC) ;
  \scopeArrow{0.5}{\arrowObj}
  \draw[line width=\widthObj,\colorObj,postaction={decorate}]
       (\locW-\hsA+\locC,\locH-\locHs+\locC) -- + (-0.01,0) ;
  \draw[line width=\widthObj,\colorObj,postaction={decorate}]
       (\locW-\hsA+\locC,\locHs-\locC) -- +(-0.01,0) ;
  \end{scope}
  \filldraw[line width=\widthMor,fill=white,draw=\colorMor]
       (-\locWM,0.5*\locH-0.5*\locHM) rectangle +(\locW+2*\locWM,\locHM) ;
  \node at (0.5*\locW,0.5*\locH) {$f$} ;
  \end{tikzpicture}
  }}
\end{aligned}
\ee

\end{enumerate}
\end{defi}

In a finite tensor category $\cala$ the monoidal unit $\one$ is by definition a
simple object and hence, since \ko\ is algebraically closed,
$\Hom_\cala(\mathbf{1},\mathbf{1})$ is isomorphic to \ko. Thus it is natural to
identify the endomorphisms of the monoidal unit as elements of the base field. There
is a canonical way to make this identification, namely via the linear map (see 
\Cite{Sect.\ 4.2.3}{TV17} and \Cite{Rem.\ 4.7.2}{EGno})
  \be
  \label{can_ident_unit}
  \begin{aligned}
  {\tr^\cala_{\mathbf{1}}}\Colon \Hom_\cala\!\left(\mathbf{1},\mathbf{1}\right)
  & \rarr~ \ko \,,
  \\
  \id_{\mathbf{1}}&\xmapsto{~~~} 1 \,,
  \end{aligned}
  \ee
which is the unique isomorphism of unital $\ko$-algebras 
from $ \Hom_\cala(\mathbf{1},\mathbf{1})$ to
\ko. Left and right $($quantum$)$ traces are 
then defined as the scalars associated to partial traces under this identification:

\begin{defi}{\rm \Cite{Def.\ 4.7.1}{EGno}}\label{quantum_traces}
Let $\cala$ be a finite tensor category.
\Enumeratei
\item The \emph{left $($quantum$)$ trace} of an object $a\iN\cala$ is the map
  \be
  \qtr^\cala_{\lL;\,a}
  \Colon \Hom_\cala(a,\ldd a) \xrightarrow{~\;\tr_{\rm l}^a\;~}
  \Hom_\cala(\mathbf{1},\mathbf{1})\xrightarrow{~\;\tr^\cala_{\mathbf{1}}\;~} \ko \,.
  \ee
\item The \emph{right $($quantum$)$ trace} of $a$ is the map
  \be
  \qtr^\cala_{\rR;\,a}
  \Colon \Hom_\cala(a,a\dd)\xrightarrow{~\;\tr_{\rm r}^a\;~}
  \Hom_\cala(\mathbf{1},\mathbf{1})\xrightarrow{~\;\tr^\cala_{\mathbf{1}}\;~} \ko \,.
  \ee
\end{enumerate}
\end{defi}

In case that $\cala$ admits a pivotal structure, it can be precomposed with the 
quantum traces so as to extend their definition to vector spaces of endomorphisms.

\begin{exa}
Quantum traces and Nakayama-twisted traces are, in general, different. As an
illustration consider the category $\cala \eq \mathrm{Rep}(H)$ with
$H \eq \ko_2[C_2]$ the group algebra of the cyclic group $C_2 \eq \langle g\rangle$ 
of order two over a field $\ko_2$ of characteristic $\rm{char}(\ko_2) \eq 2$. 
The indecomposable projective object $P \eq {}_{H\!}H\iN{\rm Proj}\,\cala$ satisfies 
$P\Vee \eq \Vee P \eq P^*$, and the canonical \ko-linear isomorphism $P\xcong P^{**}$
is an intertwiner. This implies that
  \be
  \qtr^\cala_P := \qtr^\cala_{\lL;\,P} = \qtr^\cala_{\rR;\,P} \Colon
  \Hom_H(P,P)\cong\Hom_H(P,P^{**}) \rarr~ \ko\,.
  \ee
Moreover, a direct computation shows that 
$\qtr^\cala_P(\id_P) \eq 0 \eq \qtr^\cala_P(g.-)$, i.e.\ the quantum trace is
degenerate.
 \\
On the other hand, from \Cite{Lemma\,3.15}{fuScSc} it follows that $\NakR_\cala(P)
\,{\cong}\, H^* \,{\otimes_H}\, P \eq P^*\,{\otimes_H}\, P $. Considering that 
$\Hom_H(P,P^*\,{\otimes_H}\, P ) \, {\cong}\, P^*\,{\otimes_H}\, P$, the
Nakayama-twisted trace is nothing else than the left evaluation on
$P^*\,{\otimes_H}\, P$. Under the identification $P\xcong P^*\,{\otimes_H}\, P $ that
is afforded by the intertwiner that assigns $g\,{\mapsto}\, e^*\otimes e$ and
$e \,{\mapsto}\, g^*\otimes e$ one finds that the twisted trace
  \be
  \mathbf{t}^\cala_P\Colon
  \Hom_H(P,P)\cong\Hom_H\left(P,P^*\,{\otimes_H}\, P\right) \rarr~ \ko 
  \ee
becomes the linear map defined by $\mathbf{t}^\cala_P(\id_P)=0$ and
$\mathbf{t}^\cala_P(g.-)=1$. This
differs from the quantum trace and is, in particular, non-degenerate.
\end{exa}

As illustrated by this example, quantum traces may be degenerate if the underlying 
category is non-semisimple (see also \Cite{Rem.\ 4.8.5}{EGno}). It is therefore 
appropriate to consider instead the $\NakR_\Cala$-twisted trace associated
to the $\ko$-linear category that underlies a finite tensor category $\cala$. We 
can extend the partial traces to morphisms with codomain in the image of the
Nakayama functor \Cite{Def.\,3.3}{SchW}, by means of the twisted $\cala$-bimodule 
functor structure on $\NakR_\cala$. Explicitly, for $a,b\in\cala$ 
the \emph{left partial trace} is given by the composite
  \be
  \tr^a_{\rm l}\Colon \Hom_\cala(a\oti b,\NakR_\cala(a\oti b))
  \xrightarrow{\eqref{module_Nakayama_twisted_functor}\,}
  \Hom_\cala(a\oti b,\! \ldd a\oti\NakR_\cala(b))
  \xrightarrow{\eqref{left_partial}\,} \Hom_\cala(b,\NakR_\cala(b)) \,,
  \ee
while the \emph{right partial trace} is the composite
  \be
  \tr^a_{\rm r}\Colon \Hom_\cala(b\oti a,\NakR_\cala(b\oti a))
  \xrightarrow{\eqref{module_Nakayama_twisted_functor}\,}
  \Hom_\cala(b\oti a,\NakR_\cala(b)\oti a\dd)
  \xrightarrow{\eqref{right_partial}\,} \Hom_\cala(b,\NakR_\cala(b))
  \ee
for $a,b\iN\cala$. If $b$ is projective, then we can post-compose the 
Nakayama-twisted trace \eqref{twist_trace} after such a partial trace to obtain 
scalar values:

\begin{prop}{\rm \Cite{Prop.\,3.5}{SchW}}\label{tensor_partial_trace_prop}\\
Let $\cala$ be a finite tensor category and $p\iN {\rm Proj}\,\cala$. The twisted 
trace $\mathbf{t}^\cala_p\colon \Hom_\cala(p,\NakR_\cala(p)) \Rarr~ \ko$
\eqref{twist_trace} satisfies the right and left partial trace properties: We have
  \be
  \mathbf{t}^\cala_{p\otimes a}(f) = \mathbf{t}^\cala_p\,{\rm tr}^a_{\rm r}(f)
  \qquad \text{and} \qquad 
  \mathbf{t}^\cala_{a\otimes p}(g) = \mathbf{t}^\cala_p\,{\rm tr}^a_{\rm l}(g) 
  \ee
for all $a\iN\cala$, $f\iN \Hom_\cala(p\oti a,\,\NakR_\cala(p\oti a))$ 
and $g\iN \Hom_\cala(a\oti p,\,\NakR_\cala(a\oti p))$.
\end{prop}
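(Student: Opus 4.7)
The plan is to derive both partial-trace identities from the cyclicity of the twisted trace (Lemma~\ref{twisted_trace_properties}(i)), combined with the twisted module-functor isomorphism \eqref{module_Nakayama_twisted_functor} and the snake identities of the rigid duality on $\cala$. The two identities are symmetric under exchange of the roles of right and left duals, so I will only outline the right-partial-trace case. A preliminary observation is that since the tensor product in a finite tensor category is biexact, tensoring a projective by an arbitrary object preserves projectivity; hence $p\oti a, a\oti p \iN \Proj\,\cala$ and the twisted traces $\mathbf{t}^\cala_{p\otimes a}$ and $\mathbf{t}^\cala_{a\otimes p}$ are both defined.

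Given $f \iN \Hom_\cala(p\oti a, \NakR_\cala(p\oti a))$, denote by $\tilde f\colon p\oti a \to \NakR_\cala(p)\oti a\dd$ its image under \eqref{module_Nakayama_twisted_functor}, and factor $\tr^a_\mathrm{r}(f) = \beta\circ\alpha$ with $\alpha \coloneqq \id_p\oti\mathrm{coev}_a\colon p \to p\oti a\oti a^\vee$ and $\beta \coloneqq (\id\oti\mathrm{ev}_{a^\vee})\circ(\tilde f\oti\id_{a^\vee})\colon p\oti a\oti a^\vee \to \NakR_\cala(p)$. Since $p\oti a\oti a^\vee$ is again projective, cyclicity yields
\begin{equation*}
\mathbf{t}^\cala_p(\tr^a_\mathrm{r}(f)) \,=\, \mathbf{t}^\cala_{p\otimes a\otimes a^\vee}\bigl(\NakR_\cala(\alpha)\circ\beta\bigr).
\end{equation*}
Unfolding $\NakR_\cala(\alpha)$ via \eqref{module_Nakayama_twisted_functor} and applying the snake identity for the dual pair $(a,a^\vee)$, the composite $\NakR_\cala(\alpha)\circ\beta$ is identified with the image of $f$ under the canonical natural isomorphism $\Hom_\cala(p\oti a,\NakR_\cala(p\oti a)) \cong \Hom_\cala(p\oti a\oti a^\vee,\NakR_\cala(p\oti a\oti a^\vee))$ induced by tensoring with $\id_{a^\vee}$. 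A second application of cyclicity then collapses this redundant $a^\vee$ factor and returns $\mathbf{t}^\cala_{p\otimes a}(f)$.

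A conceptually cleaner variant would invoke Lemma~\ref{twisted_trace_properties}(ii): by non-degeneracy of $\langle-,-\rangle_\cala$, it suffices to show that the two functionals $f\mapsto\mathbf{t}^\cala_{p\otimes a}(f)$ and $f\mapsto\mathbf{t}^\cala_p(\tr^a_\mathrm{r}(f))$ induce the same pairing with $\Hom_\cala(p\oti a, p\oti a)$. Via the rigid-duality adjunction $\Hom_\cala(p\oti a,-)\cong\Hom_\cala(p,-\oti a^\vee)$, this would reduce to the naturality of the coend isomorphism $\naki_{p,-}$ of \eqref{Nak_proj_CY} together with the compatibility of the Nakayama coend with tensoring by a dualizable object, which is exactly encoded in \eqref{module_Nakayama_twisted_functor}. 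The main obstacle in either approach is the careful bookkeeping required to match the twist $(-)\dd$ introduced by \eqref{module_Nakayama_twisted_functor} against the snake identities: the appearance of $a\dd$ in the codomain of the partial trace is forced by that twist, and tracking that all occurrences of $(-)\dd$ cancel correctly is the nontrivial piece. Once this compatibility is set up, both identities follow formally; the left-partial-trace identity is proved analogously, using the adjunction $\Hom_\cala(a\oti p,-)\cong\Hom_\cala(p,{}^\vee\!a\oti-)$ in place of the right-dual one.
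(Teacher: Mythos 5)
Your preliminary observation that $p\oti a$ and $a\oti p$ are again projective is correct (the rigidity adjunction gives $\HomC(p\oti a,-)\cong\HomC(p,-\oti a^\vee)$, so this is an exact functor), and the first application of Lemma~\ref{twisted_trace_properties}(i) is carried out correctly: with $\alpha=\id_p\oti\mathrm{coev}_a$ and $\beta=(\id\oti\mathrm{ev}_{a^\vee})\circ(\tilde f\oti\id_{a^\vee})$ one indeed gets $\mathbf{t}^\cala_p(\tr^a_{\rm r}(f))=\mathbf{t}^\cala_{p\otimes a\otimes a^\vee}(\NakR_\cala(\alpha)\circ\beta)$. Your choice of tools --- cyclicity of the twisted trace plus the twisted module functor structure on $\NakR_\cala$ --- is exactly what the argument must run on (and is what the paper's remark says \Cite{Prop.\,3.5}{SchW} relies on; the paper itself does not reproduce the proof but only cites it).

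The gap is in the next step. You assert that $\NakR_\cala(\alpha)\circ\beta$ ``is identified with the image of $f$ under the canonical natural isomorphism $\Hom_\cala(p\oti a,\NakR_\cala(p\oti a)) \cong \Hom_\cala(p\oti a\oti a^\vee,\NakR_\cala(p\oti a\oti a^\vee))$ induced by tensoring with $\id_{a^\vee}$''. No such isomorphism exists. Tensoring with $\id_{a^\vee}$ sends $f$ to $f\oti\id_{a^\vee}\iN\Hom_\cala(p\oti a\oti a^\vee,\NakR_\cala(p\oti a)\oti a^\vee)$; the codomain differs from $\NakR_\cala(p\oti a\oti a^\vee)$ by exactly the twist you want to control, since \eqref{module_Nakayama_twisted_functor} gives $\NakR_\cala(p\oti a\oti a^\vee)\cong\NakR_\cala(p\oti a)\oti(a^\vee)\dd$, and $(a^\vee)\dd\not\cong a^\vee$ in a general finite tensor category. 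Moreover, even after choosing an identification of codomains, $-\oti\id_{a^\vee}$ is only (split) injective --- it is an isomorphism iff $a$ is invertible --- so the two $\Hom$-spaces generally have different dimensions and cannot be ``identified''. What one actually needs is a \emph{numerical} equality $\mathbf{t}^\cala_{p\oti a\oti a^\vee}(\NakR_\cala(\alpha)\circ\beta)=\mathbf{t}^\cala_{p\oti a}(f)$, not an identification of morphisms, and that step requires a further factorization and application of cyclicity (or a direct dinaturality computation from the coend description \eqref{Nakayama} of $\NakR_\cala$, as your second variant sketches). You flag this yourself as ``the nontrivial piece'' whose bookkeeping you have not carried out; since that bookkeeping is precisely where the proposition is proved, the argument as written has a genuine gap, and its central intermediate claim is incorrect as stated.
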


\begin{proof}
The right partial trace property is shown in \Cite{Prop.\,3.5}{SchW}. The proof
strongly relies on the fact that the Nakayama functor $\NakR_\cala$ comes with a
canonical right twisted module functor structure 
$\NakR_\cala({-}\oti a)\,{\cong}\,\NakR_\cala(-)\oti a\dd$. Mutatis mutandis
the same argument shows the left partial trace property, where now the canonical left
twisted structure $\NakR_\cala(a\ot{-})\,{\cong}\,\ldd a\oti\NakR_\cala(-)$ coming
from \eqref{module_Nakayama_twisted_functor} is in play.
\end{proof}

As pointed out in \Cite{Thm.\,3.6}{SchW} and \Cite{Thm.\,6.8}{shShi}, one can further
study the implications of a choice of pivotal structure on the finite tensor category
$\cala$. We will deal with this issue in Section \ref{sec:traces_module_pivotal}, where we 
consider a more general situation involving bimodule categories.


\subsection{Nakayama-twisted traces on bimodule categories}\label{sec:traces_module}

In \cite{schaum} the notion of a \emph{module trace} is defined in the setting of
fusion categories. Twisted traces for right module categories beyond the semisimple
setting are studied in \cite{shShi}. In the present section we explore the
Nakayama-twisted trace on bimodule categories and their partial trace properties. 

By definition, for ${}_\calc\call_\cald$ a bimodule category over finite tensor 
categories $\calc$ and $\cald$, the underlying categories are all $\ko$-linear, so 
that in particular each of them comes naturally equipped with Nakayama-twisted
traces. The structure of a bimodule category on $\call$ together with the twisted
bimodule functor structure on $\NakR_\call$ then allow us to define partial traces 
in full analogy to the case of a tensor category: 

\begin{defi}\label{partial_traces_bimod}
Let ${}_\calc\call_\cald$ be a bimodule category. 
\Enumeratei
  \item 
For objects $x\iN\call$ and $c\iN\calc$, the \emph{left partial trace} with respect
to $c$ is defined by the composite
  \be
  \hspace*{-2.4em} \begin{array}{rcl}
  {\rm tr}^c_{\rm l}\Colon \Hom_\call\!\left(c\act x,\NakR_\call(c\act x)\right)
  \xrightarrow{\,\eqref{module_Nakayama_twisted_functor}\,} 
  \Hom_\call(c\act x, \ldd c\act\NakR_\call(x))
  &\hspace*{-0.5em} \longrightarrow \hspace*{-0.5em}& \Hom_\call(x,\NakR_\call(x)) \,,
  \\~\\[-0.7em]
  \raisebox{-3.2em}{ \scalebox{0.75}{
  \begin{tikzpicture}[scale=\locscale]
  \draw[line width=\widthObj,\colorObj]
       (0,0) node[below=1pt] {$c$} -- +(0,\locH)
             node[above=1pt,xshift=-10pt] {${}^{\vee\vee\!}c$}
       (\locW,0) node[below=1pt] {$x$} -- +(0,\locH)
             node[above=-1pt,xshift=10pt] {$\NakR_\call(x)$} ;
  \filldraw[line width=\widthMor,fill=white,draw=\colorMor]
       (-\locWM,0.5*\locH-0.5*\locHM) rectangle +(\locW+2*\locWM,\locHM) ;
  \node[teal] at (0.5*\locW,0.5*\locH) {$f$} ;
  \node[\colorCat] at (-1.5*\locW,0.5*\locH) {$\calc$} ;
  \node[\colorCat] at (0.5*\locW,0.8*\locH) {$\calc$} ;
  \node[\colorCat] at (0.5*\locW,0.2*\locH) {$\calc$} ;
  \node[\colorCat] at (2.5*\locW,0.5*\locH) {$\call$} ;
  \end{tikzpicture}
  }}
  &\hspace*{-0.5em} \longmapsto \hspace*{-0.5em}&
  \raisebox{-3.2em}{ \scalebox{0.75}{  
  \begin{tikzpicture}[scale=\locscale]
  \draw[line width=\widthObj,\colorObj]
       (\locW,0) node[below=1pt] {$x$} -- +(0,\locH) node[above=-1pt] {$\NakR_\call(x)$}
       (0,\locHs) -- +(0,\locH-2*\locHs)
       (-2*\locC,\locHs) -- +(0,\locH-2*\locHs)
             node[left=-1pt,midway,yshift=9pt] {${}^{\vee\!}c$}
       (0,\locH-\locHs) arc (0:180:\locC)
       (0,\locHs) arc (0:-180:\locC) ;
  \scopeArrow{0.5}{\arrowObj}
  \draw[line width=\widthObj,\colorObj,postaction={decorate}]
       (\hsA-\locC,\locH-\locHs+\locC) -- + (0.01,0) ;
  \draw[line width=\widthObj,\colorObj,postaction={decorate}]
       (\hsA-\locC,\locHs-\locC) -- +(0.01,0) ;
  \end{scope}
  \filldraw[line width=\widthMor,fill=white,draw=\colorMor]
       (-\locWM,0.5*\locH-0.5*\locHM) rectangle +(\locW+2*\locWM,\locHM) ;
  \node[teal] at (0.5*\locW,0.5*\locH) {$f$} ;
  \node[\colorCat] at (-1*\locW,0.5*\locH) {$\calc$} ;
  \node[\colorCat] at (-3.5*\locW,0.5*\locH) {$\calc$} ;
  \node[\colorCat] at (2.5*\locW,0.5*\locH) {$\call$} ;
  \end{tikzpicture}
  }}
  \end{array}
  \ee

  \item 
The \emph{right partial trace} with respect to $d\iN\cald$ is given by the composition
  \be
  \hspace*{-2.3em} \begin{array}{rcl}
  {\rm tr}^d_{\rm r}\Colon \Hom_\call(x\actr d,\NakR_\call(x\actr d))
  \xrightarrow{\,\eqref{module_Nakayama_twisted_functor}\,}
  \Hom_\call(x\actr d,\NakR_\call(x)\actr d\dd)
  &\hspace*{-0.5em} \longrightarrow \hspace*{-0.5em}& \Hom_\call(x,\NakR_\call(x)) \,, 
  \\~\\[-0.7em]
  \raisebox{-3.2em}{ \scalebox{0.75}{
  \begin{tikzpicture}[scale=\locscale]
  \draw[line width=\widthObj,\colorObj]
       (0,0) node[below=1pt,xshift=-3pt] {$x$} -- +(0,\locH)
             node[above=-1pt,xshift=-10pt] {$\NakR_\call(x)$}
       (\locW,0) node[below=-1pt,xshift=3pt] {$d$} -- +(0,\locH)
       node[above=1pt,xshift=10pt] {$d^{\vee\vee}$} ;
  \filldraw[line width=\widthMor,fill=white,draw=\colorMor]
       (-\locWM,0.5*\locH-0.5*\locHM) rectangle +(\locW+2*\locWM,\locHM) ;
  \node[teal] at (0.5*\locW,0.5*\locH) {$f$} ;
  \node[\colorCat] at (-1.5*\locW,0.5*\locH) {$\calc$} ;
  \node[\colorCat] at (0.5*\locW,0.8*\locH) {$\call$} ;
  \node[\colorCat] at (0.5*\locW,0.2*\locH) {$\call$} ;
  \node[\colorCat] at (2.5*\locW,0.5*\locH) {$\call$} ;  
  \end{tikzpicture}
  }}
  &\hspace*{-0.5em} \longmapsto \hspace*{-0.5em}&
    \raisebox{-3.2em}{ \scalebox{0.75}{
  \begin{tikzpicture}[scale=\locscale]
  \draw[line width=\widthObj,\colorObj]
       (0,0) node[below=1pt] {$x$} -- +(0,\locH) node[above=-1pt] {$\NakR_\call(x)$}
       (\locW,\locHs) -- +(0,\locH-2*\locHs)
       (\locW+2*\locC,\locHs) -- +(0,\locH-2*\locHs)
            node[right=-1pt,midway,yshift=9pt] {$d^\vee$}
       (\locW,\locH-\locHs) arc (180:0:\locC)
       (\locW,\locHs) arc (180:360:\locC) ;
  \scopeArrow{0.5}{\arrowObj}
  \draw[line width=\widthObj,\colorObj,postaction={decorate}]
       (\locW-\hsA+\locC,\locH-\locHs+\locC) -- + (-0.01,0) ;
  \draw[line width=\widthObj,\colorObj,postaction={decorate}]
       (\locW-\hsA+\locC,\locHs-\locC) -- +(-0.01,0) ;
  \end{scope}
  \filldraw[line width=\widthMor,fill=white,draw=\colorMor]
       (-\locWM,0.5*\locH-0.5*\locHM) rectangle +(\locW+2*\locWM,\locHM) ;
  \node[teal] at (0.5*\locW,0.5*\locH) {$f$} ;
  \node[\colorCat] at (-1.5*\locW,0.5*\locH) {$\calc$} ;
  \node[\colorCat] at (4.2*\locW,0.5*\locH) {$\call$} ;
  \node[\colorCat] at (1.9*\locW,0.5*\locH) {$\call$} ;
  \end{tikzpicture}
  }}
  \end{array}
  \ee
\end{enumerate}
\end{defi}

Note that these bimodule partial traces are defined with the help of evaluation and
coevaluation morphisms for objects in $\calc$ and in $\cald$ and their rigid duals.
In Section \ref{sec:module_duals} we briefly recalled the treatment of dualities for
module categories from \Cite{Sect.\,4}{FGJS}. This ensuing notion of duals allows one
to extend partial traces with respect to objects in invertible bimodule categories.
These are set up by means of the following isomorphisms which involve the
double-duals of objects in the bimodule category, and are analogous to the
twisted module structure of the Nakayama functor:

\begin{lem}
Let ${}_\calc\call_\cald$ be an invertible bimodule category over finite tensor 
categories. There are natural isomorphisms
  \be
  \NakR_\call(c\act y)\cong \NakR_\calc(c)\act y\dd \qquad\text{and}\qquad
  \NakR_\call(y \actr d)\cong \ldd y\actr\NakR_\cald(d) 
  \label{twisted_Nakayamas}
  \ee
for all $c\iN\calc$, $d\iN\cald$ and $y\iN\call$.
\end{lem}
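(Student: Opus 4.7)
The plan is to prove each of the two isomorphisms by concatenating three ingredients already assembled in the paper: the twisted bimodule functor structure of $\NakR_\call$ recorded in \eqref{module_Nakayama_twisted_functor}, the Nakayama--Serre comparison \eqref{Nakayama_Serre}, and the identification of double-duals with relative Serre functors from Proposition \ref{double_duals}. Throughout the argument I would work at the level of natural isomorphisms of twisted bimodule functors, so that naturality follows automatically from the naturality of each factor.

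For the first claim $\NakR_\call(c\act y)\cong \NakR_\calc(c)\act y\dd$, I would begin by applying \eqref{module_Nakayama_twisted_functor} to the left $\calc$-action in order to strip the action off of the Nakayama functor: $\NakR_\call(c\act y)\cong \ldd c \act \NakR_\call(y)$. The next step is to rewrite the factor $\NakR_\call(y)$ purely in terms of the left $\calc$-structure. Combining \eqref{Nakayama_Serre}, which supplies $\NakR_\call(y)\cong \DDC^{-1}\act \Se_\call^\calc(y)$, with Proposition \ref{double_duals}(i), which gives $\Se_\call^\calc(y)\cong y\dd$, delivers $\NakR_\call(y)\cong \DDC^{-1}\act y\dd$. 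Reassembling and invoking module-associativity together with \eqref{tensor_Nakayama2} then yields $\ldd c\act(\DDC^{-1}\act y\dd)\cong(\ldd c \ot \DDC^{-1})\act y\dd \cong \NakR_\calc(c)\act y\dd$, which is the desired isomorphism.

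The second isomorphism $\NakR_\call(y\actr d)\cong \ldd y\actr\NakR_\cald(d)$ is proved in the symmetric fashion: first the right-hand twisted bimodule structure from \eqref{module_Nakayama_twisted_functor} peels off the right $\cald$-action, then the analogue of \eqref{Nakayama_Serre} for the right-module structure (viewing $\call_\cald$ as a left $\overline{\cald}$-module) together with Proposition \ref{double_duals}(ii) identifies the remaining factor with $\ldd y\actr\DDD^{-1}$, and finally \eqref{tensor_Nakayama2} for $\cald$ rewrites $\ldd y \actr \DDD^{-1}$ as $\ldd y \actr \NakR_\cald(d)$ after reabsorbing $d$.

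The main subtlety I anticipate is interpreting the symbols $y\dd$ and $\ldd y$ unambiguously. A priori the double-dual of an object of $\call$ depends on whether $\call$ is regarded as a left $\calc$-module or as a right $\cald$-module, for it is defined via the associated bicategory recalled in Section \ref{sec:module_duals}. Here the hypothesis that ${}_\calc\call_\cald$ is \emph{invertible} is what ensures coherence: invertibility furnishes a tensor equivalence $\cald\simeq\overline{\calc^*_\call}$ under which the left-$\calc$ and right-$\cald$ bicategorical frameworks match up, so that the two candidate interpretations of $y\dd$ (and of $\ldd y$) are canonically identified and the bimodule Radford isomorphism of Theorem \ref{thm:bimod_Radford} becomes coherent with both applications of Proposition \ref{double_duals}. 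Once this identification is set up, the remaining verification -- that the concatenation of natural isomorphisms above is itself natural and compatible with the twisted bimodule structures -- is routine, each step reducing to the corresponding coherence already established for \eqref{module_Nakayama_twisted_functor}, \eqref{Nakayama_Serre} and the dualities in $\Mor$.
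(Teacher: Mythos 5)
Your proof of the first isomorphism follows the paper's chain exactly: the twisted module functor structure \eqref{module_Nakayama_twisted_functor} to strip off $c$, then \eqref{Nakayama_Serre} together with the identification $\Se_\call^\calc(y)\cong y\dd$ from Proposition \ref{double_duals} to rewrite $\NakR_\call(y)$, and finally \eqref{tensor_Nakayama2} to reassemble $\NakR_\calc(c)$; since the paper dispatches the second isomorphism with the phrase ``shown analogously,'' your symmetric argument is precisely what is intended, and your closing remark correctly flags the one bookkeeping point that the symbols $y\dd$ and $\ldd y$ refer to double duals in the two-object bicategory $\Mor$ attached to the invertible bimodule, which invertibility makes consistent with the left- and right-module relative Serre functors used in the intermediate steps.
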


\begin{proof}
For the first isomorphism in \eqref{twisted_Nakayamas} consider the composition
  \be
  \NakR_\call(c\act y) \rarr\cong \ldd c\act\NakR_\call(y)  \rarr\cong
  \ldd c\oti \DD_\calc^{-1}\act\Se^\calc_\call(y)  \rarr\cong \NakR_\calc(c)\act y\dd ,
  \ee
where the first isomorphism comes from the twisted module functor structure 
\eqref{module_Nakayama_twisted_functor} of the Nakayama functor, the second
isomorphism is \eqref{Nakayama_Serre} and the last one uses \eqref{tensor_Nakayama2} 
together with the fact that relative Serre functors are isomorphic to double duals.
The second isomorphism in \eqref{twisted_Nakayamas} is shown analogously.
\end{proof}

Next we define partial traces with respect to objects in invertible bimodule 
categories. These will be crucial for proving Propositions \ref{spherical_traces2}
and \ref{multi_sph_mod} below, which ultimately will allow us to formulate the
bicategorical state sum construction of Section \ref{sec:state_sum}.

\begin{defi}\label{mod_partial_traces}
Let ${}_\calc\call_\cald$ be an invertible bimodule category over finite tensor
categories.
 \Enumeratei
 \item
For objects $y\iN\call$ and $d\iN\cald$, the \emph{left partial trace} 
with respect to $y$ is defined to be the composite
  \be
  \hspace*{-2.3em} \begin{array}{rcl}
  {\rm tr}^y_{\rm l} \Colon \Hom_\call(y\actr d,\NakR_\call(y\actr d))
  \xrightarrow{\eqref{twisted_Nakayamas}}
  \Hom_\call(y\actr d,\ldd y\actr \NakR_\cald(d))
  &\hspace*{-0.5em} \longrightarrow \hspace*{-0.5em}& \Hom_\cald(d,\NakR_\cald(d)) \,,
  \\~\\[-0.7em]
  \raisebox{-3.2em}{ \scalebox{0.75}{
  \begin{tikzpicture}[scale=\locscale]
  \draw[line width=\widthObj,\colorObj]
       (0,0) node[below=1pt] {$y$} -- +(0,\locH)
             node[above=1pt,xshift=-10pt] {${}^{\vee\vee\!}y$}
       (\locW,0) node[below=-1pt] {$d$} -- +(0,\locH)
             node[above=-1pt,xshift=10pt] {$\NakR_\cald(d)$} ;
  \filldraw[line width=\widthMor,fill=white,draw=\colorMor]
       (-\locWM,0.5*\locH-0.5*\locHM) rectangle +(\locW+2*\locWM,\locHM) ;
  \node[teal] at (0.5*\locW,0.5*\locH) {$f$} ;
  \node[\colorCat] at (-1.5*\locW,0.5*\locH) {$\calc$} ;
  \node[\colorCat] at (0.5*\locW,0.8*\locH) {$\call$} ;
  \node[\colorCat] at (0.5*\locW,0.2*\locH) {$\call$} ;
  \node[\colorCat] at (2.5*\locW,0.5*\locH) {$\call$} ; 
  \end{tikzpicture}
  }}
  &\hspace*{-0.5em} \longmapsto \hspace*{-0.5em}&
    \raisebox{-3.2em}{
    \scalebox{0.75}{
  \begin{tikzpicture}[scale=\locscale]
  \draw[line width=\widthObj,\colorObj]
       (\locW,0) node[below=1pt] {$d$} -- +(0,\locH) node[above=-1pt] {$\NakR_\cald(d)$}
       (0,\locHs) -- +(0,\locH-2*\locHs)
       (-2*\locC,\locHs) -- +(0,\locH-2*\locHs)
              node[left=-1pt,midway,yshift=9pt] {${}^{\vee\!}y$}
       (0,\locH-\locHs) arc (0:180:\locC)
       (0,\locHs) arc (0:-180:\locC) ;
  \scopeArrow{0.5}{\arrowObj}
  \draw[line width=\widthObj,\colorObj,postaction={decorate}]
       (\hsA-\locC,\locH-\locHs+\locC) -- + (0.01,0) ;
  \draw[line width=\widthObj,\colorObj,postaction={decorate}]
       (\hsA-\locC,\locHs-\locC) -- +(0.01,0) ;
  \end{scope}
  \filldraw[line width=\widthMor,fill=white,draw=\colorMor]
       (-\locWM,0.5*\locH-0.5*\locHM) rectangle +(\locW+2*\locWM,\locHM) ;
  \node[teal] at (0.5*\locW,0.5*\locH) {$f$} ;
  \node[\colorCat] at (-1*\locW,0.5*\locH) {$\calc$} ;
  \node[\colorCat] at (-3.5*\locW,0.5*\locH) {$\call$} ;
  \node[\colorCat] at (2.5*\locW,0.5*\locH) {$\call$} ;
  \end{tikzpicture}
  }}
  \end{array}
  \ee
 \item 
Similarly, for $y\iN\call$ and $c\iN\calc$ the \emph{right partial trace} with
respect to $y$ is the composite
  \be 
  \hspace*{-2.3em} \begin{array}{rcl}
  {\rm tr}^y_{\rm r}\Colon \Hom_\call(c\act y,\NakR_\call(c\act y))
  \xrightarrow{\eqref{twisted_Nakayamas}}
  \Hom_\call(c\act y, \NakR_\calc(c)\act y\dd )
  &\hspace*{-0.5em} \longrightarrow \hspace*{-0.5em}& \Hom_\calc(c,\NakR_\calc(c)) \,,
  \\~\\[-0.7em]
  \raisebox{-3.2em}{ \scalebox{0.75}{
  \begin{tikzpicture}[scale=\locscale]
  \draw[line width=\widthObj,\colorObj]
       (0,0) node[below=1pt,xshift=-3pt] {$c$} -- +(0,\locH)
             node[above=-1pt,xshift=-10pt] {$\NakR_\calc(c)$}
       (\locW,0) node[below=-1pt,xshift=3pt] {$y$} -- +(0,\locH)
       node[above=1pt,xshift=10pt] {$y^{\vee\vee}$} ;
  \filldraw[line width=\widthMor,fill=white,draw=\colorMor]
       (-\locWM,0.5*\locH-0.5*\locHM) rectangle +(\locW+2*\locWM,\locHM) ;
  \node[teal] at (0.5*\locW,0.5*\locH) {$f$} ;
  \node[\colorCat] at (-1*\locW,0.5*\locH) {$\calc$} ;
  \node[\colorCat] at (0.5*\locW,0.8*\locH) {$\calc$} ;
  \node[\colorCat] at (0.5*\locW,0.2*\locH) {$\calc$} ;
  \node[\colorCat] at (2*\locW,0.5*\locH) {$\call$} ;  
  \end{tikzpicture}
  }}
  &\hspace*{-0.5em} \longmapsto \hspace*{-0.5em}&
  \raisebox{-3.2em}{ \scalebox{0.75}{
  \begin{tikzpicture}[scale=\locscale]
  \draw[line width=\widthObj,\colorObj]
       (0,0) node[below=1pt] {$c$} -- +(0,\locH) node[above=-1pt] {$\NakR_\calc(c)$}
       (\locW,\locHs) -- +(0,\locH-2*\locHs)
       (\locW+2*\locC,\locHs) -- +(0,\locH-2*\locHs)
            node[right=-1pt,midway,yshift=9pt] {$y^\vee$}
       (\locW,\locH-\locHs) arc (180:0:\locC)
       (\locW,\locHs) arc (180:360:\locC) ;
  \scopeArrow{0.5}{\arrowObj}
  \draw[line width=\widthObj,\colorObj,postaction={decorate}]
       (\locW-\hsA+\locC,\locH-\locHs+\locC) -- + (-0.01,0) ;
  \draw[line width=\widthObj,\colorObj,postaction={decorate}]
       (\locW-\hsA+\locC,\locHs-\locC) -- +(-0.01,0) ;
  \end{scope}
  \filldraw[line width=\widthMor,fill=white,draw=\colorMor]
       (-\locWM,0.5*\locH-0.5*\locHM) rectangle +(\locW+2*\locWM,\locHM) ;
  \node[teal] at (0.5*\locW,0.5*\locH) {$f$} ;
  \node[\colorCat] at (-1.5*\locW,0.5*\locH) {$\calc$} ;
  \node[\colorCat] at (4.2*\locW,0.5*\locH) {$\calc$} ;
  \node[\colorCat] at (1.9*\locW,0.5*\locH) {$\call$} ;
  \end{tikzpicture}
  }}
  \end{array}
  \ee
\end{enumerate}
\end{defi}

\begin{prop}\label{bimod_partial_trace_prop}
Let ${}_\calc\call_\cald$ be a bimodule category over finite tensor categories. 
  \Enumeratei
  \item 
Let $p\iN {\rm Proj}\,\call$ be a projective object, and $c\iN\calc$ and 
$d\iN\cald$ arbitrary objects. The twisted trace \eqref{twist_trace}
satisfies right and left partial trace properties: we have
  \be
  \mathbf{t}^\call_{p\Actr d}(f) = \mathbf{t}^\call_p\,{\rm tr}^d_{\rm r}(f)
  \quad~\text{for}\quad f\iN \Hom_\call(p\actr d,\,\NakR_\call(p\actr d))
  \label{partial_trace_proj}
  \ee
and
  \be 
  \mathbf{t}^\call_{c\Act p}(g) = \mathbf{t}^\call_p\,{\rm tr}^c_{\rm l}(g)
  \quad~\text{for}\quad g\iN \Hom_\call(c\act p,\,\NakR_\call(c\act p)) \,.
  \label{partial_trace_proj1}
  \ee
  \item 
If $\call$ is invertible, then we further have
  \be
  \label{lpartial_trace_exact}
  \mathbf{t}^\call_{y\Actr q}(f) = \mathbf{t}^\cald_q\,{\rm tr}^y_{\rm l}(f)
  \quad~\text{for}\quad f\iN \Hom_\call(y\actr q,\,\NakR_\call(y\actr q)) 
  \ee
and
  \be
  \label{rpartial_trace_exact}
  \mathbf{t}^\call_{p\Act y}(g) = \mathbf{t}^\calc_p\,{\rm tr}^y_{\rm r}(g)
  \quad~\text{for}\quad g\iN \Hom_\call(p\act y,\,\NakR_\call(p\act y))
  \ee
for all objects $y\iN\call$ and projective objects $p\iN {\rm Proj}\,\calc$ and
$q\iN {\rm Proj}\,\cald$.
\end{enumerate}
\end{prop}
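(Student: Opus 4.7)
The proof adapts the strategy of \Cite{Prop.\,3.5}{SchW} to the bimodule setting. Recall that the twisted trace $\mathbf{t}^\call_p$ is determined by the canonical natural isomorphism $\naki_{p;x}\colon\Hom_\call(p,x)\xcong\Hom_\call(x,\NakR_\call(p))^*$ through $\mathbf{t}^\call_p(g\cir f)=\naki_{p;x}(f)(g)$. Consequently, its behaviour under any functorial structure on $\NakR_\call$ is governed entirely by naturality of $\naki$ in both arguments, and the plan is to exploit this rigidity together with the twisted bimodule functor structure recorded in \eqref{module_Nakayama_twisted_functor}.

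For part (i), the argument of \Cite{Prop.\,3.5}{SchW} invokes only three features of the ambient tensor category: exactness of the action $-\Ot a$ admitting a right adjoint that preserves projectives; rigidity of the object $a$; and a canonical twisted structure on the Nakayama functor with respect to that action. All three features are present in our setting: the action $-\actr d\colon\call\To\call$ is exact with right adjoint $-\actr d^\vee$ which preserves projectives by the duality between $d$ and $d^\vee$ in $\cald$; the object $d\iN\cald$ is rigid; and the twisted bimodule structure $\NakR_\call(p\actr d)\cong\NakR_\call(p)\actr d\dd$ is provided by \eqref{module_Nakayama_twisted_functor}. The chain of naturality and adjunction manipulations of \cite{SchW} therefore carries over verbatim to yield \eqref{partial_trace_proj}. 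Equation \eqref{partial_trace_proj1} is obtained by the mirror argument using the left action $c\act-$ together with the companion twisted structure $\NakR_\call(c\act p)\cong\ldd c\act\NakR_\call(p)$.

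Part (ii) additionally calls for invertibility of $\call$, which enters through the isomorphism \eqref{twisted_Nakayamas} established in the preceding lemma: this transfers the Nakayama functor from $\call$ to the appropriate tensor category, $\NakR_\call(p\act y)\cong\NakR_\calc(p)\act y\dd$, and dually for the right action. With this substitution in hand, the same adjunction-plus-naturality manipulation as in (i) applies, but the target Nakayama trace now lives on $\calc$ or $\cald$ rather than on $\call$. Concretely, to establish \eqref{rpartial_trace_exact} one starts from the identity $\mathbf{t}^\call_{p\act y}(g)=\naki_{p\act y;p\act y}(\id)(g)$, rewrites $\NakR_\call(p\act y)$ using \eqref{twisted_Nakayamas} so as to move $y$ past the Nakayama functor, cancels the resulting copies of $y$ and $\Vee y$ through the evaluation and coevaluation pairings that define $\tr^y_{\rm r}$, and identifies the remaining expression as $\naki_{p;-}$ applied to the partially-traced morphism, i.e.\ with $\mathbf{t}^\calc_p\,\tr^y_{\rm r}(g)$. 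Equation \eqref{lpartial_trace_exact} is handled symmetrically, using the other isomorphism in \eqref{twisted_Nakayamas}. The only genuinely new ingredient beyond the tensor category case is the transfer afforded by invertibility via \eqref{twisted_Nakayamas}; the main technical obstacle is the coherent bookkeeping of twisted module structures against rigid dualities, but once the relevant isomorphisms are matched everything reduces to the scheme of \cite{SchW}.
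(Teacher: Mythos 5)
Your argument follows the paper's own strategy: both defer to the proof of \Cite{Prop.\,3.5}{SchW} (restated here as Proposition \ref{tensor_partial_trace_prop}) and transfer it to the bimodule setting by substituting the twisted bimodule structure \eqref{module_Nakayama_twisted_functor} for part (i) and the transfer isomorphisms \eqref{twisted_Nakayamas} for part (ii). One small omission in part (ii): you assert that ``the only genuinely new ingredient beyond the tensor category case is the transfer afforded by invertibility via \eqref{twisted_Nakayamas}'', but invertibility plays a second, logically prior role --- it implies exactness of $\call$, which is what guarantees that $p\act y$ and $y\actr q$ are projective in $\call$ and hence that the twisted traces $\mathbf{t}^\call_{p\act y}$ and $\mathbf{t}^\call_{y\actr q}$ on the left-hand sides of \eqref{rpartial_trace_exact} and \eqref{lpartial_trace_exact} are defined in the first place; your appeal to $\naki_{p\act y;p\act y}(\id)$ tacitly presupposes this.
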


\begin{proof}
The statements in part (i) follow analogously as in the proof of Proposition
\ref{tensor_partial_trace_prop}. The equalities
\eqref{partial_trace_proj} and \eqref{partial_trace_proj1} are obtained by using the same argument on the twisted structure of $\NakR_\call$ given by
\eqref{module_Nakayama_twisted_functor} (see also \Cite{Lemma\,5.5}{shShi}). 
Part (ii) requires invertibility for two reasons: First,
this implies exactness of $\call$, whereby $p\act y$ and $y\actr q$ are projective,
thus ensuring that the traces $\mathbf{t}^\call_{y\Actr q}(f)$ and
$\mathbf{t}^\call_{p\Act y}(g)$ in \eqref{lpartial_trace_exact} and
\eqref{rpartial_trace_exact} are defined. Second, invertibility allows one to define
partial traces for objects in $\call$ as described in Definition
\ref{mod_partial_traces}. Again the statement can be proved by means of the argument
in Proposition \ref{tensor_partial_trace_prop} mutatis mutandis, where the
isomorphisms \eqref{twisted_Nakayamas} play the role of the twisted structure
of the Nakayama functor.
\end{proof}


\subsection{The role of pivotal structures} \label{sec:traces_module_pivotal}

In order to obtain a trace on endomorphisms of a bimodule category 
${}_\calc\call_\cald$ we need to trivialize the Nakayama functor $\NakR_\call$.
According to \eqref{Nakayama_Serre} this functor is isomorphic to the two
relative Serre functors of $\call$ up to the action of the respective distinguished
invertible object. The choice of a pivotal structure (if it exists) permits to 
trivialize the relative Serre functors. If we additionally require unimodularity, we
can further choose isomorphisms $\textbf{u}_\calc:\textbf{1}\xcong\DD_\Calc^{-1}$ and
$\textbf{u}_\cald:\textbf{1}\xcong\DD_\Cald^{-1}$ and thus obtain two trivializations
of the Nakayama functor. This procedure leads to two traces on endomorphisms of
projective objects of $\call$. We say that $\call$ is 
$(\textbf{u}_\calc,\textbf{u}_\cald)$-trace-spherical if these two traces 
coincide:

\begin{defi}\label{left_right_traces}
Let $\calc$ and $\cald$ be unimodular pivotal finite tensor categories and
$\textbf{u}_\calc:\textbf{1}\xcong\DD_\Calc^{-1}$ and
$\textbf{u}_\cald:\textbf{1}\xcong\DD_\Cald^{-1}$ be trivializations of the
corresponding distinguished invertible objects.  Let ${}_\calc\call_\cald$
be a pivotal bimodule category with pivotal structures 
$\widetilde{\rm{p}} \colon \id_\call\,{\xRightarrow{\,\cong\,}}\,\Se_\call^{\calc}$ and 
$\widetilde{\rm{q}}: \id_\call\,{\xRightarrow{\,\cong\,}}\,\Se_\call^{\overline{\cald}}$.
  \Enumeratei
  \item 
The \emph{left trace} \rm on $\call$ is the linear map defined as the composite
  \be
  \label{un_left_trace_bimod}
  \tr^\call_{\lL;\,p} \Colon \Hom_\call(p,p)
  \xrightarrow{\,\textbf{u}_{\scriptscriptstyle\calc}\act\tilde{\rm{p}}\circ-\,}
  \Hom_\call(p,\DD^{-1}_\calc\Act\Se_\call^\calc(p))
  \xrightarrow{\eqref{Nakayama_Serre}}\Hom_\call(p,\NakR_\call(p))
  \xrightarrow{\eqref{twist_trace}} \ko
  \ee
for any projective object $p\iN {\rm Proj}\,\call$. 
\item Similarly, the \emph{right trace} for $p\iN {\rm Proj}\,\call$ is given by
  \be
  \label{un_right_trace_bimod}
  \hspace*{-0.4em}
  \tr^\call_{\rR;\,p}\Colon \Hom_\call(p,p)
  \xrightarrow{\tilde{\rm{q}}\actr\textbf{u}_{\scriptscriptstyle{\cald}}\circ-\,}
  \Hom_\call(p,\Se_\call^{\overline{\cald}}(p)\Actr\DD^{-1}_\cald)
  \xrightarrow{\eqref{Nakayama_Serre}}\Hom_\call(p,\NakR_\call(p))
  \xrightarrow{\eqref{twist_trace}}\ko\,.
  \ee

\item The pivotal bimodule category ${}_\Calc\call_\cald$ is called 
$(\textbf{u}_\calc,\textbf{u}_\cald)$-\emph{trace-spherical}, or just
\emph{trace-spherical}, iff 
\be
  \label{trace_spherical}
  \tr^\call_{\lL;\,p}(f) = \tr^\call_{\rR;\,p}(f) =: {\rm tr}^\call_{\;p}(f)
  \ee
for every projective $p\in\Proj\call$ and every endomorphism $f\iN\Hom_\call(p,p)$.
\end{enumerate}
\end{defi}

Recall from Definition \ref{spherical_bimod} that the notion of bimodule sphericality
is defined in terms of the bimodule Radford isomorphism \eqref{eq:bimod_Radford}. It
turns out that trace-sphericality is implied by sphericality:

\begin{thm}\label{thm:spherical_traces}
Let ${}_\calc\call_\cald$ be a $(\textbf{u}_\calc,\textbf{u}_\cald)$-spherical 
bimodule category over $($unimodular$)$ spherical tensor categories $\calc$ and
$\cald$. Then ${}_\calc\call_\cald$ is 
$(\textbf{u}_\calc,\textbf{u}_\cald)$-trace-spherical, for the same choice of
the trivializations $\textbf{u}_\calc$ and $\textbf{u}_\cald$.
\end{thm}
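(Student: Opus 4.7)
The plan is to unfold both traces, recognize each as the twisted trace $\mathbf{t}^{\call}_p$ of a morphism built from a natural isomorphism $\id_{\call} \xRightarrow{\cong} \NakR_{\call}$, and then use the sphericality assumption to show that these two natural isomorphisms coincide.

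Denote by
\begin{equation*}
\kappa^{\calc} \colon \DD_{\calc}^{-1} \act \Se_{\call}^{\calc} \xRightarrow{~\cong~} \NakR_{\call}
\qquad\text{and}\qquad
\kappa^{\cald} \colon \Se_{\call}^{\overline{\cald}} \actr \DD_{\cald}^{-1} \xRightarrow{~\cong~} \NakR_{\call}
\end{equation*}
the two instances of the Serre-Nakayama isomorphism \eqref{Nakayama_Serre}, the second of which arises from viewing $\call$ as a left $\overline{\cald}$-module. Unpacking Definition \ref{left_right_traces} directly gives, for $p \iN \Proj \call$ and $f \iN \Hom_{\call}(p,p)$,
\begin{equation*}
\tr^{\call}_{\lL;\,p}(f) = \mathbf{t}^{\call}_p\bigl(\alpha^{\lL}_p \circ f\bigr)
\qquad\text{and}\qquad
\tr^{\call}_{\rR;\,p}(f) = \mathbf{t}^{\call}_p\bigl(\alpha^{\rR}_p \circ f\bigr),
\end{equation*}
where $\alpha^{\lL} := \kappa^{\calc} \circ (\textbf{u}_{\calc} \act \tilde{\rm{p}})$ and $\alpha^{\rR} := \kappa^{\cald} \circ (\tilde{\rm{q}} \actr \textbf{u}_{\cald})$ are natural isomorphisms $\id_{\call} \xRightarrow{\cong} \NakR_{\call}$. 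The theorem therefore reduces to establishing the equality $\alpha^{\lL} = \alpha^{\rR}$ of these natural transformations.

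The crucial input is that the bimodule Radford isomorphism $\mathcal{R}_{\call}$ of Theorem \ref{thm:bimod_Radford} coincides with the canonical transport between the two Serre-Nakayama trivializations, that is, $\kappa^{\cald} \circ \mathcal{R}_{\call} = \kappa^{\calc}$. Granting this, unfolding $\overline{\mathcal{R}}_{\call}$ in the sphericality condition \eqref{eq:def:sphericalbimodule} and composing on the left with $\id \actr \textbf{u}_{\cald}$ to cancel $\id \actr \textbf{u}_{\cald}^{-1}$ yields
\begin{equation*}
\mathcal{R}_{\call} \circ (\textbf{u}_{\calc} \act \tilde{\rm{p}}) = \tilde{\rm{q}} \actr \textbf{u}_{\cald}.
\end{equation*}
Composing further on the left with $\kappa^{\cald}$ turns the left-hand side into $\kappa^{\calc} \circ (\textbf{u}_{\calc} \act \tilde{\rm{p}}) = \alpha^{\lL}$ and the right-hand side into $\alpha^{\rR}$, so $\alpha^{\lL} = \alpha^{\rR}$ and both traces agree after evaluation at $p$ and application of $\mathbf{t}^{\call}_p(-\cir f)$.

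The main obstacle is justifying that the abstract $\mathcal{R}_{\call}$ supplied by \Cite{Thm.\,4.14}{FGJS} is indeed this canonical composite $(\kappa^{\cald})^{-1} \circ \kappa^{\calc}$: one has to trace through the construction and verify that the two expressions $\DD_{\calc}^{-1} \act \Se_{\call}^{\calc}$ and $\Se_{\call}^{\overline{\cald}} \actr \DD_{\cald}^{-1}$, regarded as twisted bimodule functors, carry their isomorphism with $\NakR_{\call}$ coherently enough that any natural isomorphism of twisted bimodule functors between them is pinned down by these identifications. Once this compatibility is in place, the remainder of the argument is routine diagram-chasing using only the pivotal data and the trivializations $\textbf{u}_{\calc}, \textbf{u}_{\cald}$.
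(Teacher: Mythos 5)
Your argument follows essentially the same route as the paper's proof: the paper exhibits a single commutative diagram whose left triangle is the sphericality condition \eqref{eq:def:sphericalbimodule} and whose right triangle is, as the paper puts it, "the definition of the Radford isomorphism $\mathcal{R}_\call$." Your reduction to $\alpha^{\lL}=\alpha^{\rR}$ and your identity $\kappa^{\cald}\circ\mathcal{R}_\call = \kappa^{\calc}$ are exactly these two triangles, and the "main obstacle" you flag at the end is in fact dissolved by the construction in \Cite{Thm.\,4.14}{FGJS}, where $\mathcal{R}_\call$ is produced precisely by transporting across the Nakayama--Serre trivializations \eqref{Nakayama_Serre}; so it is not an extra compatibility to verify but the definition.
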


\begin{proof}
To establish the equality \eqref{trace_spherical} we must show that the diagram 
  \be
  \begin{tikzcd}[row sep=2.3em,column sep=3.9em]
  & \Hom_\call(p,\DD^{-1}_\calc\Act\Se_\call^\calc(p))
  \ar[xshift=-5pt,yshift=-3pt]{rd}{\eqref{Nakayama_Serre}}
  \ar[dd,"\mathcal{R}_\call\cir-"] &
  \\
  \Hom_\call(p,p) 
  \ar[ru,"(\,\textbf{u}_\calc\Act\tilde{\rm{p}}\,)\cir-"]
  \ar[swap,rd,"(\,\tilde{\rm{q}}\Actr \textbf{u}_\cald\,)\cir-"]
  & & \hspace*{-2.1em} \Hom_\call(p,\NakR_\call(p))
  \,\xrightarrow{~\eqref{twist_trace}~}\, \ko
  \\
  & \Hom_\call(p,\Se_\call^{\overline{\cald}}(p)\Actr\DD^{-1}_\cald)
  \ar[xshift=-21pt,yshift=-1pt]{ru}[swap]{\eqref{Nakayama_Serre}} &
  \end{tikzcd}    
  \ee
commutes. This is indeed the case: according to the diagram 
\eqref{eq:def:sphericalbimodule}, commutativity of the left triangle is nothing but 
the sphericality condition on the bimodule category $\call$, while commutativity of 
the right triangle is the definition of the Radford isomorphism $\mathcal{R}_\call$.
\end{proof}

\begin{rem}
The notion of trace-sphericality from Definition \ref{left_right_traces}(iii), 
specialized to the case of the regular bimodule category ${}_\calc\calc_\calc$ of a
finite tensor category $\calc$, differs from \Cite{Def.\,3.5.3}{DSPS} in that the
latter is defined in terms of the quantum traces instead of the twisted trace
\eqref{twist_trace}. As pointed out in \Cite{Exs.\,3.5.5\,\&\,3.5.6}{DSPS},
if $\calc$ is not semisimple, then sphericality (in the sense of Definition
\ref{spherical_tc}) neither implies nor is implied by 
quantum-trace-sphericality of $\calc$. In contrast, if
$\calc$ is a fusion category, then the traces \eqref{un_left_trace_bimod} and
\eqref{un_right_trace_bimod} reproduce the classical quantum traces from Definition
\ref{quantum_traces}, and Theorem \ref{thm:spherical_traces} reduces to the statement
that for fusion categories sphericality implies quantum-trace-sphericality. This
illustrates the advantage of working with the twisted trace.
\end{rem}


\subsection{Semisimple setting and Calabi-Yau categories}

For a finite $\ko$-linear category $\calx$ one can axiomatize a different trace
structure that is defined for every endomorphism of $\calx$, unlike the 
Nakayama-twisted trace. However, this trace only exists if $\calx$ is semisimple.

\begin{defi}{\rm \Cite{Def.\ 2.9}{He}}\label{CY}
\Enumerate
    \item 
A \emph{Calabi-Yau category} $(\calx,{\rm tr}^\calx)$ consists of a finite 
$\ko$-linear category $\calx$ and a family of \emph{traces}, i.e. $\ko$-linear maps
  \be
   {\rm tr}^\calx_x\Colon \Hom_\calx(x,x)\rarr~{}\ko
  \ee
for each $x\iN\calx$ that obey the following conditions:
 \Enumeratei \addtolength\itemsep{5pt}
    \item 
Symmetry: For any pair of objects $x,y\iN\calx$ one has
  \be
  {\rm tr}^\calx_y(f\cir g) = {\rm tr}^\calx_x(g\cir f)
  \label{symm}
  \ee
for every $f\iN \Hom_\calx(x,y)$ and $g\iN \Hom_\calx(y,x)$.
    \item 
Non-degeneracy: The induced pairing
  \be 
  \begin{aligned}
  \langle-,-\rangle_\calx^{} \Colon \Hom_\calx(x,y)\otimes_\ko\Hom_\calx(y,x)
  & \rarr~ \ko \,,
  \\
  f\oti g &\xmapsto{~~~} {\rm tr}^\calx_x(g\cir f)
  \end{aligned}
  \ee
is non-degenerate, i.e.\ the $\ko$-linear map
  \be
  \Hom_\calx(x,y)\,\rarr~ \,\Hom_\calx(y,x)^* , \quad
  f\xmapsto{~~~} {\rm tr}^\calx_x({-}\cir f)
  \ee
is an isomorphism for every pair $x,y\iN\calx$.
\end{enumerate}
\item 
A \emph{Calabi-Yau functor} between Calabi-Yau categories $(\calx,{\rm tr}^\calx)$
and $(\caly,{\rm tr}^\caly)$ is a functor $F\iN\Funre(\calx,\caly)$ satisfying
  \be 
  {\rm tr}^\calx_x(f)={\rm tr}^\caly_{F(x)}(F(f))
  \ee
for all $x\iN\calx$ and $f\iN\Hom_\calx(x,x)$.
\end{enumerate}
\end{defi}

\begin{rem}
The datum of a Calabi-Yau structure on a finite $\ko$-linear category $\calx$ is 
equivalent to the data of a collection of natural isomorphisms
  \be
  \label{CY_Homs}
  \Hom_\calx(x,y) \rarr\cong \Hom_\calx(y,x)^*
  \ee
for all pairs of objects $x,y\iN\calx$ \Cite{Prop.\ 4.1}{schaum}. The existence of
isomorphism \eqref{CY_Homs} implies that the Hom functor is exact and thus $\calx$
is necessarily semisimple.
\end{rem}

In a Calabi-Yau category there is the notion of a \emph{dimension of an object 
$x\iN\calx$}, given by
  \be
  \label{ob_dim} 
  {\dim}(x) := {\rm tr}^\calx_x(\id_x) \,\in\ko \,.
  \ee
The \emph{categorical dimension of $\calx$} is defined as the scalar
  \be
  \label{cat_dim} 
  {\dim}(\calx) := \sum_{x\in\Irr\calx} {\dim}(x)^2 \,\in\ko \,.
  \ee

\begin{lem}\label{semi_CY}
Let $(\calx,\rm{tr}^\calx)$ be a Calabi-Yau category. 
  \Enumeratei
  \item 
Any endomorphism $f\iN\Hom_\calx(h,h)$ of a simple object $h\iN\Irr\,\calx$
can be expressed as
  \be
  f = \frac{{\rm tr}^\calx_h(f)}{\dim(h)}\; \id_h \,.
  \ee
  \label{eq:f-idh}
  \item 
For every object $x\iN\calx$ one has
  \be
  \label{dim_expansion}
  \dim(x) = \sum_{h\in\Irr\calx} \dim(h)\, \dim_\ko(\Hom_\calx(h,x)) \,.
  \ee
\end{enumerate}
\end{lem}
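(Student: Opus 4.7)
The plan is to exploit semisimplicity (which is automatic for Calabi-Yau categories, as noted in the remark following Definition \ref{CY}) together with Schur's lemma and the symmetry/non-degeneracy axioms of the trace.

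For part (i), since $h\iN\Irr\,\calx$ is simple and $\ko$ is algebraically closed, Schur's lemma gives $\Hom_\calx(h,h) \eq \ko\,{\cdot}\,\id_h$, so any endomorphism $f$ can be written uniquely as $f \eq \lambda\,\id_h$ for some $\lambda\iN\ko$. Applying ${\rm tr}^\calx_h$ to both sides yields ${\rm tr}^\calx_h(f) \eq \lambda\,\dim(h)$. To solve for $\lambda$ I need $\dim(h)\,{\neq}\,0$, and this follows from the non-degeneracy axiom of Definition \ref{CY}(i.b): the pairing $\langle-,-\rangle_\calx$ restricted to $\Hom_\calx(h,h)\,{\otimes_\ko}\,\Hom_\calx(h,h)\,{\cong}\,\ko\,{\otimes_\ko}\,\ko$ sends $\id_h\,{\otimes}\,\id_h$ to $\dim(h)$, and a non-degenerate bilinear form on a one-dimensional space must evaluate non-trivially on $\id_h\,{\otimes}\,\id_h$. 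This gives $\lambda \eq {\rm tr}^\calx_h(f)/\dim(h)$.

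For part (ii), the first step is to establish additivity of $\dim$ under finite direct sums: if $x \eq a\oplus b$ with inclusions $\iota_a,\iota_b$ and projections $\pi_a,\pi_b$, then $\id_x \eq \iota_a\cir\pi_a + \iota_b\cir\pi_b$, and applying symmetry \eqref{symm} gives
\[
\dim(x) = {\rm tr}^\calx_x(\iota_a\cir\pi_a) + {\rm tr}^\calx_x(\iota_b\cir\pi_b) = {\rm tr}^\calx_a(\pi_a\cir\iota_a) + {\rm tr}^\calx_b(\pi_b\cir\iota_b) = \dim(a) + \dim(b).
\]
Then, invoking semisimplicity of $\calx$, decompose $x \,{\cong}\, \bigoplus_{h\in\Irr\calx} h^{\oplus n_h}$ with multiplicities $n_h \eq \dim_\ko(\Hom_\calx(h,x))$ (the latter identity being standard from Schur). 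Combining the two ingredients yields $\dim(x) \eq \sum_{h\in\Irr\calx} n_h\,\dim(h)$, which is the desired formula \eqref{dim_expansion}.

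No serious obstacle is expected; the only subtle point is the verification that $\dim(h)\,{\neq}\,0$ for simple $h$, but this is forced by the non-degeneracy axiom as described above. The rest is a routine application of Schur's lemma and the cyclicity of the trace.
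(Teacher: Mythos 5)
Your proof is correct and takes essentially the same approach as the paper. Part~(i) is identical (Schur's lemma plus non\nobreakdash-degeneracy forcing $\dim(h)\neq 0$); for part~(ii) the paper works directly with dual bases $\{\psi_i\}\subset\Hom_\calx(x,h)$, $\{\psi_i^*\}\subset\Hom_\calx(h,x)$ satisfying $\sum_{h,i}\psi_i^*\circ\psi_i=\id_x$ and applies cyclicity in one step, whereas you first prove additivity of $\dim$ under direct sums and then invoke the semisimple decomposition of $x$ --- the same computation, just organized in two steps instead of one.
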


\begin{proof}
Due to simplicity of $h$ we have $f \eq \lambda \,\id_h$ for some $\lambda\iN\ko$,
so that ${\rm tr}^\calx_h(f) \eq \lambda\, \dim(h)$. By the non-degeneracy of the 
trace, $\dim(h)$ is not zero; this implies \eqref{eq:f-idh}. Further, since $\calx$
is semisimple, we can decompose
  \be
  \Hom_\calx(x,x) \cong \bigoplus_{h\in\Irr\calx} \Hom_\calx(x,h) \otik \Hom_\calx(h,x) \,,
  \ee
and thus for each $h\iN\Irr\,\calx$ there are bases $\{\psi_i^*\} \,{\subset}\,
\Hom_\calx(h,x)$ and $\{\psi_{i}\} \,{\subset}\, \Hom_\calx(x,h)$ such that
  \be
  \label{dual_base}
  \psi_{i}\circ \psi_j^* = \delta_{i,j}\,\id_h \qquad \text{and}\qquad
  \sum_{h\in\Irr\calx}\,\sum_i\,  \psi_i^*\circ \psi_{i} = \id_x \,.
  \ee
It follows that
  \be
  \begin{aligned}
  \dim(x) = {\rm tr}^\calx_x(\id_x)
  \overset{\eqref{dual_base}}{=} \sum_{h\in\Irr\calx} \sum_i
  {\rm tr}^\calx_x(\psi_i^*\cir \psi_{i})
  \overset{\eqref{symm}}{=} & \sum_{h\in\Irr\calx}\sum_i {\rm tr}^\calx_h(\psi_{i}\cir\psi_i^*)
  \\
  = & \sum_{h\in\Irr\calx} \dim(h)\, \dim_\ko \Hom_\calx(h,x) \,,   
  \end{aligned}
  \ee
which gives statement (ii).
\end{proof}

We will now focus our attention back to pivotal bimodule categories in the semisimple
setting. We then deal with Calabi-Yau categories, as will be seen below in Theorem
\ref{thm:CY_bimodule}. Let ${}_\calc\call_\cald$ be a pivotal bimodule category, and 
assume in addition that $\calc$, $\cald$ and $\call$ are semisimple. In particular,
$\calc$ and $\cald$ are unimodular and all their objects are projective. Moreover,
there is a specific trivialization of the distinguished invertible object that is 
compatible with the twisted trace associated to the monoidal unit:

\begin{defi}\label{norm_convention}
Let $\calc$ be a fusion category. The \emph{standard trivialization} of the
distinguished invertible object $\DD_\calc$ of $\calc$ is the isomorphism 
$\textbf{s}_{\calc} \colon \mathbf{1}\xcong\NakR_\calc(\mathbf{1})
\,{\equiv}\, \DD_\calc^{-1}$ for which the composite
  \be
  \Hom_\calc\left(\mathbf{1},\mathbf{1}\right)
  \xrightarrow{~\textbf{s}_{{\scriptscriptstyle\calc}}\cir-~}
  \Hom_\calc\left(\mathbf{1},\NakR_\calc(\mathbf{1})\right)
  \xrightarrow{~\eqref{twist_trace}~} \ko
  \ee
is equal to the $\ko$-linear map $\tr^\calc_{\mathbf{1}}$ from Equation
\eqref{can_ident_unit} that assigns $\id_{\mathbf{1}}\,{\xmapsto{~~}}\,1$.
\end{defi}

\begin{rem}\label{can_sph}
Recall that the notion of sphericality on a pivotal bimodule category 
${}_\calc\call_\cald$ from Definition \ref{spherical_bimod} and of trace-sphericality 
from Definition \ref{left_right_traces} depend on the choices 
of trivialization of the distinguished invertible objects of $\calc$ and $\cald$. In
the semisimple setting we will always consider sphericality relative to the standard
trivializations $\textbf{s}_{\calc}$ and 
$\textbf{s}_{\cald}$ as given by Definition \ref{norm_convention}.
The same convention will apply in the case of (left) spherical module categories 
over a spherical fusion category.
\end{rem}

Now since $\call$ is semisimple, every object in $\call$ is projective. As a 
consequence one can take traces for every endomorphism via the standard
trivialization of the distinguished invertible objects.

\begin{thm}\label{thm:CY_bimodule}
Let ${}_\calc\call_\cald$ be a pivotal bimodule category over pivotal fusion 
categories $\calc$ and $\cald$. 
 \Enumeratei
 \item 
The left and right traces ${{\rm tr^\call_\lL}\,}$ 
and ${{\rm tr^\call_\rR}\,}$ from Definition \ref{left_right_traces} endow the 
semisimple category $\call$ with two Calabi-Yau structures.
 \item 
If $\call$ is a spherical bimodule over spherical fusion categories, then the
two Calabi-Yau structures in $(\rm i)$ coincide.
\end{enumerate}
\end{thm}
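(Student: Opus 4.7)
The plan is to reduce both Calabi-Yau structures to the Nakayama-twisted trace. Semisimplicity ensures that every object of $\call$ is projective, so the left and right traces of Definition \ref{left_right_traces} are defined on every $\Hom_\call(x,x)$; each factors as $\tr^\call_{\lL;\,x} = \mathbf{t}^\call_x \circ (\alpha^\lL_x \circ -)$ and $\tr^\call_{\rR;\,x} = \mathbf{t}^\call_x \circ (\alpha^\rR_x \circ -)$, where $\alpha^\lL,\alpha^\rR \colon \id_\call \Rightarrow \NakR_\call$ are the natural isomorphisms built respectively from the left module pivotal structure $\tilde{\rm{p}}$ and the standard trivialization $\textbf{s}_\calc$, and from the right pivotal structure $\tilde{\rm{q}}$ and $\textbf{s}_\cald$, each composed with the Nakayama-Serre isomorphism \eqref{Nakayama_Serre}.

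For part $\rm (i)$ the two Calabi-Yau axioms then follow from the corresponding properties of the twisted trace in Lemma \ref{twisted_trace_properties}. Symmetry: given $f\colon x \to y$ and $g\colon y \to x$, cyclicity of $\mathbf{t}^\call$ gives
\[
\mathbf{t}^\call_y\bigl(\alpha^\lL_y \cir f \cir g\bigr) = \mathbf{t}^\call_x\bigl(\NakR_\call(g) \cir \alpha^\lL_y \cir f\bigr),
\]
and naturality of $\alpha^\lL$ rewrites the right-hand side as $\mathbf{t}^\call_x(\alpha^\lL_x \cir g \cir f)$, so $\tr^\call_{\lL;\,y}(f \cir g) = \tr^\call_{\lL;\,x}(g \cir f)$. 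Non-degeneracy: the Calabi-Yau pairing $(f,g) \mapsto \mathbf{t}^\call_x(\alpha^\lL_x \cir g \cir f)$ is the twisted-trace pairing of Lemma \ref{twisted_trace_properties}$\rm (ii)$ pulled back along the linear isomorphism $g \mapsto \alpha^\lL_x \cir g \colon \Hom_\call(y,x) \xcong \Hom_\call(y,\NakR_\call(x))$, so it inherits non-degeneracy. The same reasoning with $\alpha^\rR$ handles $\tr^\call_{\rR}$.

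Part $\rm (ii)$ is an immediate consequence of Theorem \ref{thm:spherical_traces}: with the standard trivializations fixed by Definition \ref{norm_convention} and Remark \ref{can_sph}, sphericality of the pivotal bimodule ${}_\calc\call_\cald$ implies trace-sphericality, hence $\tr^\call_{\lL;\,x}(f) = \tr^\call_{\rR;\,x}(f)$ for every projective $x$ and every endomorphism $f$. Semisimplicity upgrades this to every object of $\call$, so the two Calabi-Yau structures of part $\rm (i)$ coincide on every Hom-space and therefore agree as Calabi-Yau structures.

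The only point requiring care is the naturality of $\alpha^\lL$ (and $\alpha^\rR$) as a transformation $\id_\call \Rightarrow \NakR_\call$, which is what makes the cyclicity step above go through. I would verify it component by component: $\tilde{\rm{p}}$ is a module natural isomorphism by definition, acting on $\Se_\call^\calc$ by the fixed arrow $\textbf{s}_\calc$ is natural in the argument, and the Nakayama-Serre identification \eqref{Nakayama_Serre} is a natural isomorphism of twisted bimodule functors. Once assembled, the rest of the argument is essentially automatic, and part $\rm (ii)$ is a direct citation.
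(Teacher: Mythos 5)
Your argument is correct and takes essentially the same route as the paper's own proof: reduce both traces to the Nakayama-twisted trace via a natural isomorphism $\id_\call\Rightarrow\NakR_\call$, invoke Lemma \ref{twisted_trace_properties} for symmetry and non-degeneracy (available for all objects by semisimplicity), and obtain (ii) directly from Theorem \ref{thm:spherical_traces}. You simply spell out the cyclicity-and-naturality computation that the paper calls an ``immediate consequence.''
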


\begin{proof}
The first statement is an immediate consequence of Lemma
\ref{twisted_trace_properties}, once one notices that, owing to semisimplicity, the
non-degenerate traces are defined for any endomorphism. Statement $(\rm ii)$ follows
from Theorem  \ref{thm:spherical_traces}.
\end{proof}

This result leads in particular to the notion of left and right dimensions for any 
object $x\iN\call$:
  \be
  \dim_{\lL}(x):=\tr^\call_{\lL;\,x}(\id_x) \qquad \text{and} \qquad
  \dim_{\rR}(x):=\tr^\call_{\rR;\,x}(\id_x) \,,
  \ee
which agree if the pivotal structures of $\calc$, $\cald$ and $\call$ are spherical.

\begin{rem}
Provided that the $\ko$-linear category underlying the bimodule category $\call$ is
semisimple, the statement of Theorem \ref{thm:CY_bimodule} holds even in the case
that $\calc$ and $\cald$ are not semisimple.
\end{rem}

\begin{prop}\label{spherical_traces2}
Let ${}_\calc\call_\cald$ be an invertible spherical bimodule category over spherical
fusion categories $\calc$ and $\cald$. One has
  \be
  \label{un_triangle} 
  \def\locC    {0.31}
  \def\locH    {0.4}
  \tr^{\calc}_{\mathbf{1}}\Bigg( \hspace*{-0.3em}
  \raisebox{-1em}{
  \begin{tikzpicture}[scale=\locscale]
  \draw[line width=\widthObjSm,\colorObj]
       (0,0) -- node[left=-2pt,midway,teal] {$\scriptstyle f$} +(0,\locH)
       (2*\locC,0) --  +(0,\locH)
       (0,0) arc (180:360:\locC)
       (0,\locH) arc (180:0:\locC) ;
  \drawPiv {2*\locC}{0.5*\locH} ;
  \scopeArrow{0.5}{\arrowObj}
  \draw[line width=\widthObjSm,\colorObj,postaction={decorate}]
       (\hsA+\locC,\locH+\locC) -- + (0.01,0) ;
  \draw[line width=\widthObjSm,\colorObj,postaction={decorate}]
       (-\hsA+\locC,-\locC) -- +(-0.01,0) ;
  \end{scope}
    \node[\colorCat] at (1*\locC,0.1*\locH) {\tiny $\call\,$} ;
    \node[\colorCat] at (-1.5*\locC,0.5*\locH+1*\locC) {\tiny $\calc$} ;
  \filldraw[black] (0,0.5*\locH) circle (1.5pt);
  \end{tikzpicture}
  }
  \hspace*{-0.2em} \Bigg)
  \;=\;
  \tr^{\call}_{x}\Bigg( \hspace*{-0.6em}
  \raisebox{-1.8em}{
  \begin{tikzpicture}[scale=\locscale]
  \draw[line width=\widthObjSm,\colorObj]
       (0,-\locC) node[below=-2pt]{$\scriptstyle x$} -- node[left=-1.5pt,midway,teal]
       {$\scriptstyle f$} +(0,\locH+2*\locC) node[above=-3pt]{$\scriptstyle x$};
  \node[\colorCat] at (1.2*\locC,0.8*\locH) {\tiny $\call\,$} ;
  \node[\colorCat] at (-1.7*\locC,0.8*\locH) {\tiny $\calc$} ;
  \filldraw[black] (0,0.5*\locH) circle (1.5pt);
  \end{tikzpicture}
  }
  \hspace*{-0.6em} \Bigg)
  \;=\;
  \tr^{\cald}_{\mathbf{1}}\Bigg( \hspace*{-0.3em}
  \raisebox{-1em}{
  \begin{tikzpicture}[scale=\locscale]
  \draw[line width=\widthObjSm,\colorObj]
       (0,0)  -- +(0,\locH)
       (2*\locC,0) -- node[right=-2pt,midway,teal] {$\scriptstyle f$} +(0,\locH)
       (0,0) arc (180:360:\locC)
       (0,\locH) arc (180:0:\locC) ;
  \drawPiv {0}{0.5*\locH} ;
  \scopeArrow{0.5}{\arrowObj}
  \draw[line width=\widthObjSm,\colorObj,postaction={decorate}]
       (-\hsA+\locC,\locH+\locC) -- + (-0.01,0) ;
  \draw[line width=\widthObjSm,\colorObj,postaction={decorate}]
       (\hsA+\locC,-\locC) -- +(0.01,0) ;
  \end{scope}
  \node[\colorCat] at (1*\locC,0.1*\locH) {\tiny $\calc\,$} ;
  \node[\colorCat] at (3.5*\locC,0.6*\locH+1*\locC) {\tiny $\call$} ;
  \filldraw[black] (2*\locC,0.5*\locH) circle (1.5pt);
  \end{tikzpicture}
  }
  \hspace*{-0.5em} \Bigg) 
  \ee
for every $f\iN\Hom_\call(x,x)$ and every object $x\iN\call$, with 
$\tr_\mathbf{1}^\calc$ and
$\tr_\mathbf{1}^\cald$ the $\ko$-algebra isomorphisms given by \eqref{can_ident_unit}.
\end{prop}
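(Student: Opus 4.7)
The strategy is to recognise each of the two outer expressions in \eqref{un_triangle} as an instance of the partial-trace identities of Proposition \ref{bimod_partial_trace_prop}(ii), specialised to the monoidal units of $\calc$ and of $\cald$, and then invoke sphericality via Theorem \ref{thm:spherical_traces}. The invertibility hypothesis on ${}_\calc\call_\cald$ is exactly what makes part (ii) of that proposition available.

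First, I would translate each bubble diagram in \eqref{un_triangle} into an algebraic composite using the graphical calculus of the pivotal bicategory associated with ${}_\calc\call_\cald$. Concretely, the left-hand bubble represents the following procedure: form the morphism $\tilde f_{\mathrm l}\iN\Hom_\call(x,\NakR_\call(x))$ as the composite
\[
x \xrightarrow{~f~} x \xrightarrow{~\textbf{s}_\calc \act \widetilde{\rm p}_x~} \DD_\calc^{-1} \act \Se_\call^\calc(x) \xrightarrow{~\eqref{Nakayama_Serre}~} \NakR_\call(x);
\]
then apply the right partial trace $\mathrm{tr}^x_{\rR}$ of Definition \ref{mod_partial_traces} at $c\eq\mathbf{1}_\calc$ (for which $c\act x\eq x$), landing in $\Hom_\calc(\mathbf{1},\NakR_\calc(\mathbf{1}))$; finally apply the canonical unit-identification $\tr^\calc_{\mathbf{1}}$ of \eqref{can_ident_unit}, which by Definition \ref{norm_convention} equals $\mathbf{t}^\calc_{\mathbf{1}}$ precomposed with $\textbf{s}_\calc$. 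The right-hand bubble is the mirror-symmetric composite, involving $\widetilde{\rm q}_x$, $\textbf{s}_\cald$, the second isomorphism of \eqref{Nakayama_Serre}, the left partial trace $\mathrm{tr}^x_{\lL}$ at $d\eq\mathbf{1}_\cald$, and $\tr^\cald_{\mathbf{1}}\eq\mathbf{t}^\cald_{\mathbf{1}}\cir(\textbf{s}_\cald\cir-)$.

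Next, I would invoke \eqref{rpartial_trace_exact} with $p\eq\mathbf{1}_\calc$ to obtain $\mathbf{t}^\calc_{\mathbf{1}}(\mathrm{tr}^x_{\rR}(\tilde f_{\mathrm l})) \eq \mathbf{t}^\call_x(\tilde f_{\mathrm l})$. Comparing with \eqref{un_left_trace_bimod}, the right-hand side is precisely the left trace $\tr^\call_{\lL;x}(f)$, so that the left-hand side of \eqref{un_triangle} collapses to $\tr^\call_{\lL;x}(f)$. An entirely parallel argument, based on \eqref{lpartial_trace_exact} with $q\eq\mathbf{1}_\cald$ and on the defining formula \eqref{un_right_trace_bimod}, identifies the right-hand side of \eqref{un_triangle} with the right trace $\tr^\call_{\rR;x}(f)$.

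Finally, sphericality of ${}_\calc\call_\cald$ relative to the standard trivialisations $\textbf{s}_\calc$ and $\textbf{s}_\cald$ (see Remark \ref{can_sph}) together with Theorem \ref{thm:spherical_traces} yields $\tr^\call_{\lL;x}(f)\eq\tr^\call_{\rR;x}(f)$, and both equal the common value $\tr^\call_x(f)$ in the middle of \eqref{un_triangle}. The principal obstacle is the first, translational step: correctly unfolding the pivotal bicategorical bubbles so as to pair each with the correct partial trace, pivotal $2$-cell, and Nakayama-Serre identification. Once this dictionary is set up, the three-way equality follows directly from Proposition \ref{bimod_partial_trace_prop}(ii), Definition \ref{norm_convention} and Theorem \ref{thm:spherical_traces}.
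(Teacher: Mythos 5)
Your proposal is correct and follows the same route as the paper's proof: both reduce the outer bubbles to the Nakayama-twisted trace on $\call$ via the partial-trace identities \eqref{rpartial_trace_exact} and \eqref{lpartial_trace_exact} applied at the projective monoidal units $\mathbf{1}_\calc$ and $\mathbf{1}_\cald$. You additionally spell out the translation of the bubble diagrams into the composites defining $\tr^\call_{\lL;x}(f)$ and $\tr^\call_{\rR;x}(f)$ and then explicitly invoke Theorem \ref{thm:spherical_traces} (with the standard trivializations of Remark \ref{can_sph}) to identify these with the common value $\tr^\call_x(f)$, a step the paper's one-sentence argument leaves implicit.
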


\begin{proof}
The equalities \eqref{un_triangle} follow from \eqref{rpartial_trace_exact} and
\eqref{lpartial_trace_exact} as applied to the monoidal units $\mathbf{1}\iN\calc$
and $\mathbf{1}\iN\cald$, which are projective due to semisimplicity.
\end{proof}


\subsection{Traces in the bicategory of spherical module categories}
\label{traces_modsphA}

We now focus our attention on a specific type of bicategory with traces for 
$2$-endomorphisms that will be relevant in Section \ref{sec:state_sum}. For $\cala$ 
a spherical fusion category, denote by
$\mathbf{Mod}^{\rm sph}(\cala)$ the bicategory that has spherical $\cala$-module
categories as objects, $\cala$-module functors as $1$-morphisms, and module natural
transformations as $2$-morphisms. At the level of Hom-categories, this
bica\-te\-go\-ry is effectively a collection of invertible spherical bimodule 
categories: Given two indecomposable spherical module categories $\calm$ and $\caln$,
the $(\cala_\caln^*,\dualcat)$-bimodule category $\Fun_\cala(\calm,\caln)$ is
invertible and is, by Proposition \ref{Fun_spherical}, naturally endowed with
a spherical structure. Hence according to Theorem \ref{thm:spherical_traces},
$\Fun_\cala(\calm,\caln)$ is a Calabi-Yau category; in other words, the bicategory
$\textbf{\Mod}^{\rm sph}(\cala)$ is locally Calabi-Yau. 

Moreover, as described in Section \ref{sec:PME}, $\textbf{\Mod}^{\rm sph}(\cala)$
is a pivotal bicategory, with pivotal structure
given by \eqref{pivotal_structure_bicategory_of_pivotal_modules}.
We denote the identity module functor $\id_\calm$ of an $\cala$-module $\calm$ 
by $\mathbf{1}_{\!\calm}$, so as to emphasize its role as monoidal unit of $\dualcat$.

\begin{rem}
Recall that in a spherical fusion category $\cala$ the following holds:
 \Enumeratei
 \item 
Dual objects have equal dimensions: $\dim(a^\vee) \eq \dim(a)$ for every
$a\iN\cala$.
 \item
Traces are multiplicative: 
  \be
  \tr^\cala_{a\otimes b}(f\oti g) = \tr^\cala_a(f) \; \tr^\cala_b(g)
  \ee
for all $a,b \iN\cala$, $f\iN\Hom_\Cala(a,a)$ and $g\iN\Hom_\Cala(b,b)$.
 \item 
In particular, dimensions are multiplicative:
  \be
  \dim(a\oti b) = \dim(a)\;\dim(b)
  \ee
for all $a,b \iN\cala$.
\end{enumerate}
\end{rem}

These familiar properties generalize to $\textbf{\Mod}^{\rm sph}(\cala)$:

\begin{prop}\label{multi_sph_mod}
Let $\calm$, $\caln$ and $\call$ be indecomposable spherical module categories over 
a spherical fusion category $\cala$.
 \Enumeratei
 \item For every $H\iN\Fun_\cala(\calm,\caln)$ one has
  \be
  {\rm dim}(H\la) = {\rm dim}(H) \,.
  \ee
 \item
For $H_1\iN\Fun_\cala(\calm,\caln)$, $H_2\iN\Fun_\cala(\caln,\call)$ and module 
natural transformations $\alpha\colon H_1\,{\xRightarrow{~~}}\, H_1$ and 
$\beta\colon H_2 \,{\xRightarrow{~~}}\, H_2$ one has
  \be
  {\rm tr}^{\Fun_{\!\cala}\!(\calm,\call)}_{H_2\circ H_1}(\beta\cir\alpha)
  = {\rm tr}^{\Fun_{\!\cala}\!(\caln,\call)}_{H_2}(\beta) \cdot
  {\rm tr}^{\Fun_{\!\cala}\!(\calm,\caln)}_{H_1}(\alpha) \,,
  \ee
i.e.\ the horizontal composition functor
  \be
  \circ\Colon\Fun_\cala(\caln,\call) \boxtimes \Fun_\cala(\calm,\caln)
  \rarr~ \Fun_\cala(\calm,\call)
  \ee
is a Calabi-Yau functor.
 \item 
For any two module functors $H_1\iN\Fun_\cala(\calm,\caln)$ and
$H_2\iN\Fun_\cala(\caln,\call)$ one has
  \be
  \dim(H_2\cir H_1) = \dim(H_2) \cdot \dim(H_1) \,.
  \ee
 \end{enumerate}
\end{prop}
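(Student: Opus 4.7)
The proof naturally splits into three pieces, of which (iii) is immediate: setting $\alpha \eq \id_{H_1}$ and $\beta \eq \id_{H_2}$ in (ii) gives $\tr_{H_2\circ H_1}(\id_{H_2\circ H_1}) \eq \tr_{H_2}(\id_{H_2})\cdot \tr_{H_1}(\id_{H_1})$, which is precisely (iii). So the content lies in (i) and (ii), and in both cases the strategy is to transport the trace, via Proposition \ref{spherical_traces2}, to a quantum trace in one of the flanking spherical fusion categories, where the desired identity can be read off diagrammatically.

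For (i), the plan is to apply Proposition \ref{spherical_traces2} once on each side of the claimed equality. By Proposition \ref{Fun_spherical} the $(\cala_\caln^*,\dualcat)$-bimodule $\Fun_\cala(\calm,\caln)$ is spherical and invertible; likewise for the $(\dualcat,\overline{\cala_\caln^*})$-bimodule $\Fun_\cala(\caln,\calm)$. Closing $\id_H$ on the left yields $\dim(H) \eq \tr^{\cala_\caln^*}_{\mathbf{1}_\caln}(\chi_H)$ for an explicit 2-endomorphism $\chi_H$ of $\id_\caln$, and closing $\id_{H\la}$ on the right yields $\dim(H\la) \eq \tr^{\cala_\caln^*}_{\mathbf{1}_\caln}(\chi_{H\la})$. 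The identification of $\chi_H$ with $\chi_{H\la}$ then expresses the fact that the right dual of $H$, viewed as an object of the bimodule $\Fun_\cala(\calm,\caln)$, is realized by the bicategorical right dual $H^\vee \eq H\la$ in $\textbf{Mod}^{\text{sph}}(\cala)$, and that the bimodule pivotal structure on $\Fun_\cala(\calm,\caln)$ coincides with the one induced by \eqref{pivotal_structure_bicategory_of_pivotal_modules}.

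For (ii), the plan is to express the left-hand side, via Proposition \ref{spherical_traces2} applied to $\Fun_\cala(\calm,\call)$, as $\tr^{\dualcat}_{\mathbf{1}_\calm}(\xi)$, where $\xi$ is obtained by closing the $(H_2{\circ}H_1)$-strand carrying $\beta\circ\alpha$. The key step is to decompose this diagram using the factorization of the bicategorical duality, $(H_2\cir H_1)^\vee \,{\cong}\, H_1\la \cir H_2\la$, together with the compatibility of the pivotal structure with horizontal composition encoded by the coherence diagram \eqref{Serre_compatibility_composition}. This recasts $\xi$ as an "outer" loop around $H_1$ carrying, in its interior, a closed loop around $H_2$ sitting in the intermediate category $\caln$. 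Applying Proposition \ref{spherical_traces2} again --- this time to the bimodule $\Fun_\cala(\caln,\call)$ --- identifies the inner loop with the scalar $\tr^{\Fun_\cala(\caln,\call)}_{H_2}(\beta)\,\id_{\mathbf{1}_\caln}$. Pulling this scalar out of the outer trace and invoking Proposition \ref{spherical_traces2} once more on $\Fun_\cala(\calm,\caln)$ recognizes the remaining outer loop as $\tr^{\Fun_\cala(\calm,\caln)}_{H_1}(\alpha)$.

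The main obstacle is the diagrammatic bookkeeping in (ii): one must check that the factorization of the bicategorical ev and coev of $H_2\cir H_1$ through the ev and coev of $\mathbf{1}_\caln$ is strictly compatible with the pivotal structure \eqref{pivotal_structure_bicategory_of_pivotal_modules} and with the spherical structures on both bimodules involved, so that the inner loop does indeed yield a scalar (an endomorphism of $\id_\caln$ proportional to the identity, by simplicity of $\mathbf{1}_\caln$ in the fusion category $\cala_\caln^*$) and can therefore be freely extracted from the outer trace.
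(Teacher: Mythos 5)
Your proposal is correct and follows essentially the same strategy as the paper: in all three parts the key tool is Proposition \ref{spherical_traces2} (i.e.\ the identity \eqref{un_triangle}), used to transport the Calabi-Yau trace on $\Fun_\cala(-,-)$ to the scalar $\tr^{\cala_\caln^*}_{\mathbf{1}_\caln}$ in a flanking dual fusion category, after which the claims become diagrammatic manipulations in the pivotal bicategory $\mathbf{Mod}^{\rm sph}(\cala)$. The only cosmetic difference is in (ii), where the paper closes the $(H_2\circ H_1)$-strand on the $\cala_\call^*$-side and then invokes multiplicativity of $\tr^{\cala_\caln^*}_{\mathbf{1}_\caln}$ on the two resulting side-by-side loops, while you close on the $\dualcat$-side and extract the inner $H_2$-loop as a scalar via simplicity of $\mathbf{1}_\caln$ together with the normalization convention --- two equivalent phrasings of the same argument.
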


\begin{proof}
Assertion (i) follows by the computation
   \def\locC    {0.31}  
   \def\locH    {0.6}   
  \be
  \begin{aligned}
  \dim(H)={\rm tr}^{\Fun_{\!\cala}\!(\calm,\caln)}_{H}\left(\id_H\right)
  ={\rm tr}^{\cala_{\!\caln}^*}_{\mathbf{1}_{\!\caln}}
  \Big( \hspace*{-0.7em}
  \scalebox{0.75}{
  \raisebox{-1.2em}{ \begin{tikzpicture}[scale=\locscale]
  \draw[line width=\widthObjSm,\colorObj]
       (0,0) -- node[left=-3pt,midway] {$\scriptstyle H$} +(0,\locH) 
       (2*\locC,0) -- node[right=-3.3pt,very near start] {$\scriptstyle H\la$}
                      node[right=-3.3pt,very near end] {$\scriptstyle H\ra$} +(0,\locH)
       (0,0) arc (180:360:\locC)
       (0,\locH) arc (180:0:\locC) ;
  \drawPiv{2*\locC}{0.5*\locH} ;
    \node[\colorCat] at (-1.5*\locC,0.8*\locH) {\tiny $\caln$} ;
    \node[\colorCat] at (0.9*\locC,0.3*\locH) {\tiny $\calm$} ;
  \scopeArrow{0.5}{\arrowObj}
  \draw[line width=\widthObjSm,\colorObj,postaction={decorate}]
       (\hsA+\locC,\locH+\locC) -- + (0.01,0) ;
  \draw[line width=\widthObjSm,\colorObj,postaction={decorate}]
       (-\hsA+\locC,-\locC) -- +(-0.01,0) ;
  \end{scope}
  \end{tikzpicture}
  }}
  \hspace*{-0.5em} \Big)
  & = {\rm tr}^{\cala_{\!\caln}^*}_{\mathbf{1}_{\!\caln}}
  \Big( \hspace*{-0.5em}
  \scalebox{0.75}{
  \raisebox{-1.2em}{ \begin{tikzpicture}[scale=\locscale]
  \draw[line width=\widthObjSm,\colorObj]
       (2*\locC,0) -- node[right=-3pt,midway] {$\scriptstyle H\la$} +(0,\locH) 
       (0,0) -- node[left=3.3pt,very near start] {$\scriptstyle H$}
                 node[left=-3.3pt,very near end] {$\scriptstyle H\lla$} +(0,\locH)
       (0,0) arc (180:360:\locC)
       (0,\locH) arc (180:0:\locC) ;
  \drawPiv{0*\locC}{0.5*\locH} ;
    \node[\colorCat] at (4*\locC,0.1*\locH) {\tiny $\caln$} ;
    \node[\colorCat] at (1*\locC,0.4*\locH) {\tiny $\calm$} ;
  \scopeArrow{0.5}{\arrowObj}
  \draw[line width=\widthObjSm,\colorObj,postaction={decorate}]
       (\hsA+\locC,\locH+\locC) -- + (0.01,0) ;
  \draw[line width=\widthObjSm,\colorObj,postaction={decorate}]
       (-\hsA+\locC,-\locC) -- +(-0.01,0) ;
  \end{scope}
  \end{tikzpicture}
  }}
  \hspace*{-0.5em} \Big)
  \\[0.8em]
  &={\rm tr}^{\Fun_{\!\cala}\!(\caln,\calm)}_{H\la}\left(\id_{H\la}\right)=\dim(H\la) \,,
  \end{aligned}
  \ee
where the second and fourth equality make use of \eqref{un_triangle}.
 \\[2pt]
Similarly, to prove (ii) we notice that
   \def\locC    {0.2}  
   \def\locH    {0.4}   
  \be
  \begin{aligned}
  {\rm tr}^{\Fun_{\!\cala}\!(\calm,\call)}_{H_2\circ H_1}(\beta\cir\alpha)
  & = {\rm tr}^{\cala_{\!\call}^*}_{\mathbf{1}_{\!\call}}\Bigg( 
  \hspace*{-0.6em}
  \raisebox{-1.5em}{ \begin{tikzpicture}[scale=\locscale]
  \draw[line width=\widthObjSm,\colorObj]
       (0,0) -- node[left=-3pt,midway,teal] {$\scriptstyle \alpha$} +(0,\locH)
       (2*\locC,0) -- +(0,\locH)
       (0,0) arc (180:360:\locC)
       (0,\locH) arc (180:0:\locC) 
       (-1.6*\locC,0) -- node[left=-3pt,midway,teal] {$\scriptstyle \beta$} +(0,\locH)
       (3.6*\locC,0) -- +(0,\locH)
       (-1.6*\locC,0) arc (180:360:2.6*\locC)
       (-1.6*\locC,\locH) arc (180:0:2.6*\locC)
	  ;
  \drawPiv {2*\locC}{0.5*\locH} ;
  \drawPiv {3.6*\locC}{0.5*\locH} ;
  \scopeArrow{0.5}{\arrowObj}
  \draw[line width=\widthObjSm,\colorObj,postaction={decorate}]
       (\hsA+\locC,\locH+\locC) -- + (0.01,0) ;
  \draw[line width=\widthObjSm,\colorObj,postaction={decorate}]
       (-\hsA+\locC,-\locC) -- +(-0.01,0) ;
  \draw[line width=\widthObjSm,\colorObj,postaction={decorate}]
       (\hsA+\locC,\locH+2.6*\locC) -- + (0.01,0) ;
  \draw[line width=\widthObjSm,\colorObj,postaction={decorate}]
       (-\hsA+\locC,-2.6*\locC) -- +(-0.01,0) ;
  \end{scope}
  \node[\colorCat] at (1*\locC,0.1*\locH) {\tiny $\calm\,$} ;
  \node[\colorCat] at (2.6*\locC,-0.1*\locH-0.5*\locC) {\tiny $\caln$} ;
  \node[\colorCat] at (-3.1*\locC,1*\locH+1.3*\locC) {\tiny $\call$} ;
  \filldraw[black] (-1.6*\locC,0.5*\locH) circle (1.2pt) (0,0.5*\locH) circle (1.2pt);
  \end{tikzpicture}
  }
  \hspace*{-0.1em} \Bigg)
  =
   \def\locC    {0.31}  
   \def\locS    {1.4}   
  {\rm tr}^{\cala_{\!\caln}^*}_{\mathbf{1}_{\!\caln}}\Bigg( \hspace*{-0.3em}
  \raisebox{-1.0em}{
  \begin{tikzpicture}[scale=\locscale]
  \draw[line width=\widthObjSm,\colorObj]
       (0,0) -- +(0,\locH)  
       (2*\locC,0) -- node[right=-2pt,midway,teal] {$\scriptstyle \beta$} +(0,\locH) 
       (0,0) arc (180:360:\locC)
       (0,\locH) arc (180:0:\locC) ;
  \drawPiv {0}{0.5*\locH} ;
  \scopeArrow{0.5}{\arrowObj}
  \draw[line width=\widthObjSm,\colorObj,postaction={decorate}]
       (-\hsA+\locC,\locH+\locC) -- + (-0.01,0) ;
  \draw[line width=\widthObjSm,\colorObj,postaction={decorate}]
       (\hsA+\locC,-\locC) -- +(0.01,0) ;
  \end{scope}
  \draw[line width=\widthObjSm,\colorObj]
       (\locS,0) -- node[left=-2pt,midway,teal] {$\scriptstyle \alpha$} +(0,\locH) 
       (\locS+2*\locC,0) -- +(0,\locH) 
       (\locS,0) arc (180:360:\locC)
       (\locS,\locH) arc (180:0:\locC) ;
  \drawPiv {\locS+2*\locC}{0.5*\locH} ;
  \scopeArrow{0.5}{\arrowObj}
  \draw[line width=\widthObjSm,\colorObj,postaction={decorate}]
       (\locS+\hsA+\locC,\locH+\locC) -- + (0.01,0) ;
  \draw[line width=\widthObjSm,\colorObj,postaction={decorate}]
       (\locS-\hsA+\locC,-\locC) -- +(-0.01,0) ;
  \end{scope}
  \node[\colorCat] at (1*\locC,0.1*\locH) {\tiny $\call\,$} ;
  \node[\colorCat] at (1*\locC+0.5*\locS,1*\locH+1*\locC) {\tiny $\caln$} ;
  \node[\colorCat] at (1*\locS+1*\locC,0.1*\locH) {\tiny $\calm$} ;
\filldraw[black] (2*\locC,0.5*\locH) circle (1.2pt)
    (1*\locS,0.5*\locH) circle (1.2pt);
  \end{tikzpicture}
  }
  \hspace*{-0.2em} \Bigg)
  \\[0.8em]
  & =
    \def\locC    {0.31}
  {\rm tr}^{\cala_{\!\caln}^*}_{\mathbf{1}_{\!\caln}}\Bigg( \hspace*{-0.3em}
  \raisebox{-1em}{ \begin{tikzpicture}[scale=\locscale]
  \draw[line width=\widthObjSm,\colorObj]
       (0,0) -- +(0,\locH)
       (2*\locC,0) -- node[right=-3pt,midway,teal] {$\scriptstyle \beta$} +(0,\locH)
       (0,0) arc (180:360:\locC)
       (0,\locH) arc (180:0:\locC) ;
  \drawPiv {0}{0.5*\locH} ;
  \scopeArrow{0.5}{\arrowObj}
  \draw[line width=\widthObjSm,\colorObj,postaction={decorate}]
       (-\hsA+\locC,\locH+\locC) -- + (-0.01,0) ;
  \draw[line width=\widthObjSm,\colorObj,postaction={decorate}]
       (\hsA+\locC,-\locC) -- +(0.01,0) ;
  \end{scope}
  \node[\colorCat] at (1*\locC,0.1*\locH) {\tiny $\call\,$} ;
  \node[\colorCat] at (3.5*\locC,0.5*\locH+1*\locC) {\tiny $\caln$} ;
  \filldraw[black] (2*\locC,0.5*\locH) circle (1.2pt);
  \end{tikzpicture}
  }
  \hspace*{-0.6em} \Bigg) \cdot\,
  {\rm tr}^{\cala_{\!\caln}^*}_{\mathbf{1}_{\!\caln}}\Bigg( \hspace*{-0.7em}
  \raisebox{-1em}{ \begin{tikzpicture}[scale=\locscale]
  \draw[line width=\widthObjSm,\colorObj]
       (0,0) -- node[left=-2pt,midway,teal] {$\scriptstyle \alpha$} +(0,\locH)
       (2*\locC,0) --  +(0,\locH)
       (0,0) arc (180:360:\locC)
       (0,\locH) arc (180:0:\locC) ;
  \drawPiv {2*\locC}{0.5*\locH} ;
  \scopeArrow{0.5}{\arrowObj}
  \draw[line width=\widthObjSm,\colorObj,postaction={decorate}]
       (\hsA+\locC,\locH+\locC) -- + (0.01,0) ;
  \draw[line width=\widthObjSm,\colorObj,postaction={decorate}]
       (-\hsA+\locC,-\locC) -- +(-0.01,0) ;
  \end{scope}
    \node[\colorCat] at (1*\locC,0.1*\locH) {\tiny $\calm\,$} ;
    \node[\colorCat] at (-1.1*\locC,0.5*\locH+1*\locC) {\tiny $\caln$} ;
\filldraw[black] (0,0.5*\locH) circle (1.2pt);
  \end{tikzpicture}
  }
  \hspace*{-0.2em} \Bigg)
  = \,{\rm tr}^{\Fun_{\!\cala}\!(\caln,\call)}_{H_2}(\beta)
  \cdot {\rm tr}^{\Fun_{\!\cala}\!(\calm,\caln)}_{H_1}(\alpha) \,.
  \end{aligned}
  \ee
Here the second equality is \eqref{un_triangle} applied to the $2$-morphism
$\beta \,{\cdot}\, {\rm tr}_{\rm r}^{H_1}(\alpha)$, while the third step is a
consequence of the multiplicativity of ${\rm tr}_{\mathbf{1}_\caln}^{\cala_\caln^*}$.
The first and last steps follow from \eqref{un_triangle} as well.
 \\[2pt]
Statement (iii) directly follows by applying (ii) to the respective identity
$2$-morphisms.
\end{proof}

Further identities valid for fusion categories generalize to
$\textbf{\Mod}^{\rm sph}(\cala)$ as well:

\begin{prop}\label{properties_inv}
Let $\calm$ and $\caln$ be indecomposable spherical mo\-du\-le categories over 
a spherical fusion category $\cala$, and let $H\iN\Fun_\cala(\calm,\caln)$
be a simple module functor.
 \Enumeratei 
 \item 
Every module natural endotransformation $\alpha\colon H \,{\xRightarrow{\,~}}\, H$
satisfies
  \be
  \alpha = \frac{{\rm tr}^{\Fun_\Cala\!(\!\calm,\caln)}_H(\alpha)}{\dim(H)}\;\id_H \,.
  \ee
 \item
For any two module natural transformations $\beta\colon H\,{\xRightarrow{\,~}}\, H$
and $\alpha\colon H \,{\xRightarrow{\,~}}\, H$ one has
  \be
  \dim(H)\cdot {\rm tr}^{\Fun_\Cala\!(\!\calm,\caln)}_H(\beta \,{\cdot}\,\alpha)
  = {\rm tr}^{\Fun_\Cala\!(\!\calm,\caln)}_H(\beta)
  \cdot {\rm tr}^{\Fun_\Cala\!(\!\calm,\caln)}_H(\alpha) \,.
  \ee
 \end{enumerate}
Further, given indecomposable spherical $\cala$-module categories $\calm_1$, 
$\calm_2, ... \,, \calm_{n+1}$ we have:
 \Enumeratei
 \setcounter{enumi}{2}
 \item 
For composable simple module functors $H_1, H_2, ...\,, H_n$, with 
$H_k\iN\Fun_\Cala(\calm_k,\calm_{k+1})$ for $k\eq1,2,...\,,n{-}1$, the equality
   \def\locA    {0.21}  
   \def\locB    {0.04}  
   \def\locCa    {2pt}  
   \def\locD    {0.45}  
   \def\locHa    {1.5}   
  \be
  \label{BK}
  \scalebox{0.75}{ \raisebox{-5.9em}{
    \begin{tikzpicture}[scale=\locscale]
    \draw[line width=\widthObj,\colorObj]
        (-4*\locW,0) node[below=1pt] {$H_n$} -- +(0,2.7*\locHa)
        (2*\locW,0) node[below=1pt] {$H_2$} -- +(0,2.7*\locHa)
        (4*\locW,0) node[below=1pt] {$H_1$} -- +(0,2.7*\locHa);
    \node[\colorCat] at (-5.5*\locW,0.5*2.7*\locHa) {\small$\calm_{n+1}$} ;
    \node[\colorCat] at (-1*\locW,0.5*2.7*\locHa) {\small$\calm_{n}\;~~\cdots~~\;\calm_3$} ;    
    \node[\colorCat] at (3*\locW,0.5*2.7*\locHa) {\small$\calm_2$} ;
    \node[\colorCat] at (5*\locW,0.5*2.7*\locHa) {\small$\calm_1$} ;
    \end{tikzpicture}}}
  ~=~ \sum_{F}\,\dim(F)\,\sum_i
  \scalebox{0.75}{ \raisebox{-5.9em}{
  \begin{tikzpicture}[scale=\locscale]
  \draw[line width=\widthObj,\colorObj]
       (-2.2*\locD,0) node[below=-1pt] {\footnotesize$H_n$} [out=90,in=200] to (0,\locHa)
       (0.5*\locD,0) node[below=-1pt] {\footnotesize$H_2$} [out=90,in=295] to (0,\locHa)
       (1.8*\locD,0) node[below=-1pt] {\footnotesize$H_1$} [out=90,in=340] to (0,\locHa)
       (0,\locHa) -- node[left=2pt,midway] {\footnotesize$F$} +(0,0.7*\locHa)
       (0,1.7*\locHa) [in=270,out=160] to (-2.2*\locD,2.7*\locHa) node[above=-1pt] {\footnotesize$H_n$}
       (0,1.7*\locHa) [in=270,out=65] to (0.6*\locD,2.7*\locHa) node[above=-1pt] {\footnotesize$H_2\,$}
       (0,1.7*\locHa) [in=270,out=20]  to (1.8*\locD,2.7*\locHa) node[above=-1pt] {\footnotesize$H_1$}
  ;
  \fill[\colorObj]
       (0,\locHa) circle (\locCa) node[right=1pt,yshift=3pt,teal] {\small$\psi_i$}
       (0,1.7*\locHa) circle (\locCa) node[right=1pt,yshift=-3pt,teal] {\small$\psi_i^*$} ;
  \fill[black]
       (-1.13*\locD,0.35*\locHa)
       circle (\locB) ++(\locA,0) circle (\locB) +(\locA,0) circle (\locB)
       (-1.13*\locD,2.35*\locHa)
       circle (\locB) ++(\locA,0) circle (\locB) +(\locA,0) circle (\locB) ;
  \node[\colorCat] at (-3*\locD,0.6*2.7*\locHa) {\small$\calm_{n+1}\;$} ;
  \node[\colorCat] at (3*\locD,0.6*2.7*\locHa) {\small$\calm_1$} ;
  \end{tikzpicture}}
}
  \ee
is valid, where the $F$-summation runs over all simple objects in the Calabi-Yau category
$\Fun_\Cala(\calm_1,\calm_{\!n+1})$ and where $\{\psi^*_i\,\}$ and $\{\psi_i\}$ are a
pair of dual bases of the vector spaces $\Nat_{\rm mod}(H_n\,{\circ}\cdots{\circ}\,
H_1, F)$ and $\Nat_{\rm mod}(F,H_n\,{\circ}\cdots{\circ}\, H_1)$.
 \item 
If, moreover, the module categories satisfy $\calm_{\!n+1}\eq \calm_1 \,{=:}\, \calm$,
then for any module natural transformation 
$\eta\iN\Nat_{\rm mod}(H_n \,{\circ}\cdots{\circ}\, H_1, \id_\calm)$ one has
  \be
  \scalebox{0.75}{ \raisebox{-3.7em}{
  \begin{tikzpicture}[scale=\locscale]
  \draw[line width=\widthObj,\colorObj]
       (-2.2*\locD,0) node[below=-1pt] {\footnotesize$H_n$} [out=90,in=200] to (0,2*\locHa)
       (0.5*\locD,0) node[below=-1pt] {\footnotesize$H_2$} [out=90,in=287] to (0,2*\locHa)
       (1.8*\locD,0) node[below=-1pt] {\footnotesize$H_1$} [out=90,in=340] to (0,2*\locHa)  ;
  \fill[\colorObj] (0,2*\locHa) circle (\locCa) ;
  \fill[black] (-1.13*\locD,0.8*\locHa)
       circle (\locB) ++(\locA,0) circle (\locB) +(\locA,0) circle (\locB) ;
    \node[\colorCat] at (-3.3*\locD,0.8*\locHa) {\small$\calm$} ;
    \node[\colorCat] at (3.3*\locD,0.8*\locHa) {\small$\calm$} ;
    \node[teal,above=2pt,right=2pt] at (0,2*\locHa) {\small$\eta$};
  \end{tikzpicture}
  }}
  \quad =~\sum_i ~
\scalebox{0.75}{
  \raisebox{-4.0em}{
  \begin{tikzpicture}[scale=\locscale]
  \draw[line width=\widthObj,\colorObj]
    (-2.2*\locD,0) node[below=-1pt] {\footnotesize$H_n$}  [out=90,in=200] to (0,0.7*\locHa)
       (0.5*\locD,0) node[below=-1pt] {\footnotesize$H_2\,$} [out=90,in=295] to (0,0.7*\locHa)
       (1.8*\locD,0) node[below=-1pt] {\footnotesize$H_1$} [out=90,in=340] to (0,0.7*\locHa)
       (0,1.2*\locHa) [in=270,out=160] to (-2.2*\locD,1.9*\locHa) node[above=-1pt,left=1pt] {\tiny$H_n$}
       (0,1.2*\locHa) [in=270,out=65] to (0.5*\locD,1.9*\locHa) node[above=-1pt,right=-1pt] {\tiny$H_2$}
       (0,1.2*\locHa) [in=270,out=20]  to (1.8*\locD,1.9*\locHa) node[above=-1pt,right=1pt] {\tiny$H_1$}
       (-2.2*\locD,1.9*\locHa)   [out=90,in=200] to (0,2.6*\locHa)
       (0.5*\locD,1.9*\locHa)   [out=90,in=295] to (0,2.6*\locHa)
       (1.8*\locD,1.9*\locHa)   [out=90,in=340] to (0,2.6*\locHa);
  \fill[\colorObj] (0,2.6*\locHa) circle (\locCa) ; 
  \fill[\colorObj]
       (0,0.7*\locHa) circle (\locCa) node[right=2pt,yshift=5pt,teal] {\small$\psi_i$}
       (0,1.2*\locHa) circle (\locCa) node[right=2pt,yshift=-3pt,teal] {\small$\psi_i^*$} ;
  \fill[black]
       (-1.13*\locD,0.17*\locHa)
       circle (\locB) ++(\locA,0) circle (\locB) +(\locA,0) circle (\locB)
       (-1.13*\locD,1.9*\locHa)
       circle (\locB) ++(\locA,0) circle (\locB) +(\locA,0) circle (\locB) ;
  \node[\colorCat] at (-3*\locD,0.9*\locHa) {\small$\calm$} ;
  \node[\colorCat] at (3*\locD,0.9*\locHa) {\small$\calm$} ;
  \node[teal,above=2pt,right=2pt] at (0,2.6*\locHa) {\small$\eta$};
  \end{tikzpicture}}
  }
  \ee
with a pair  $\{\psi^*_i\,\}$ and $\{\psi_i\}$ of dual bases of the vector spaces 
$\Nat_{\rm mod}(H_n \,{\circ}\cdots{\circ}\, H_1, F)$ and 
$\Nat_{\rm mod}(F,H_n \,{\circ}\cdots{\circ}\, H_1)$.
 \end{enumerate}
\end{prop}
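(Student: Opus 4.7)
The proof strategy leverages the fact, established in Subsection~\ref{traces_modsphA}, that $\Fun_\cala(\calm,\caln)$ is a semisimple Calabi-Yau category, by Proposition~\ref{Fun_spherical} combined with Theorem~\ref{thm:CY_bimodule}. For part~(i), I apply Lemma~\ref{semi_CY}(i) to the simple object $H$ of this Calabi-Yau category, which yields the scalar expansion directly. For part~(ii), I apply~(i) to write $\alpha = \lambda_\alpha\,\id_H$ and $\beta = \lambda_\beta\,\id_H$ with $\lambda_\alpha = \tr^{\Fun_\cala(\calm,\caln)}_H(\alpha)/\dim(H)$ and analogously for $\lambda_\beta$. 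Then $\beta \circ \alpha = \lambda_\alpha \lambda_\beta\,\id_H$, so $\tr_H(\beta \circ \alpha) = \lambda_\alpha \lambda_\beta \dim(H)$, and a direct rearrangement yields the claim.

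For part~(iii), my plan is to invoke the standard resolution of identity available in any semisimple $\ko$-linear category, applied to the object $X := H_n \circ \cdots \circ H_1$ of the Calabi-Yau category $\Fun_\cala(\calm_1, \calm_{n+1})$. Recall from the proof of Lemma~\ref{semi_CY} that if, for each simple $F$, one chooses bases $\{\tilde\psi_i^*\}$ of $\Nat_{\rm mod}(F, X)$ and $\{\tilde\psi_i\}$ of $\Nat_{\rm mod}(X, F)$ satisfying $\tilde\psi_j \circ \tilde\psi_i^* = \delta_{ij}\,\id_F$, then $\sum_F \sum_i \tilde\psi_i^* \circ \tilde\psi_i = \id_X$. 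Passing to the trace-dual normalization $\{\psi_i\}, \{\psi_i^*\}$ characterized by $\tr_F(\psi_j \circ \psi_i^*) = \delta_{ij}$: part~(i) applied to the simple $F$ forces $\psi_j \circ \psi_i^* = \frac{\delta_{ij}}{\dim(F)}\,\id_F$, whence $\tilde\psi_i = \dim(F)\,\psi_i$, and substituting reproduces the stated formula together with its $\dim(F)$ prefactor.

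For part~(iv), I apply~(iii) to the identity $2$-morphism of $H_n \circ \cdots \circ H_1$ and then post-compose with $\eta$ from above. Since $\calm$ is indecomposable, $\id_\calm$ is a simple object of the fusion category $\cala_\calm^*$; Schur's lemma then forces $\Nat_{\rm mod}(F, \id_\calm) = 0$ for every simple $F \not\cong \id_\calm$, so $\eta \circ \psi_i^*$ vanishes unless $F \cong \id_\calm$. Consequently only the summand with $F = \id_\calm$ survives, and the $\dim(F)$ prefactor disappears because $\dim(\id_\calm) = 1$ --- the monoidal unit of any pivotal tensor category has dimension one. The result is exactly the sum over a single index~$i$ displayed in the statement.

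The bulk of this argument is essentially bookkeeping once the local Calabi-Yau structure is in hand. The principal pitfall is keeping the two natural conventions for dual bases --- trace-dual versus composition-dual --- straight, so that factors of $\dim(F)$ appear where they should in~(iii) and cancel precisely where they must in~(iv).
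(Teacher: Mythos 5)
Your proposal is correct and follows essentially the same route as the paper's own proof: parts (i) and (ii) are handled identically via Lemma~\ref{semi_CY}(i), part (iv) via Schur's lemma and $\dim(\mathbf{1}_\calm)=1$, and part (iii) turns on the same key computation relating composition-dual and trace-dual bases via part (i), which is exactly where the $\dim(F)$ prefactor comes from. The only cosmetic difference is that the paper organizes (iii) by first treating $n=1$ with $X$ simple and then extending to direct sums, whereas you apply the resolution of identity to the composite $X = H_n\circ\cdots\circ H_1$ directly and rescale the bases explicitly; this is a slightly more explicit presentation of the same argument.
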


\begin{proof}
 \Enumeratei
 \item 
The statement is Lemma \ref{semi_CY}(i) applied to the category
$\Fun_{\!\cala}(\calm,\caln)$ with associated Calabi-Yau structure \eqref{trace_spherical}.
 \item 
is a direct consequence of (i).
 \item 
The statement is a modest generalization of Lemma 1.1 of \cite{BK} to the
bicategorical situation considered here and follows from semisimplicity. Without 
loss of generality, it is enough to consider the case $n \eq 1$ for every simple 
$K\iN\Fun_{\!\cala}(\calm_1,\calm_{n+1})$. In that case $\Nat_{\rm mod}(K, F)$ is 
non-zero only for $F\,{\cong}\, K$, in which case it is one-dimensional; then 
$\psi \,{\circ}\,\psi^* \eq \lambda\,\id_K$ for some $\lambda\iN\ko^\times$. This 
implies $\lambda\,\dim(K) \eq {\rm tr}(\psi \,{\circ}\, \psi^*)
\eq {\rm tr}(\psi^* \,{\circ}\, \psi)=1$, which produces the factor of $\dim(K)$ in
\eqref{BK}. In view of semisimplicity, extending the previous argument to direct sums
implies the full statement.
 \item 
By Schur's lemma we have $\Nat_{\rm mod}(F,\mathbf{1}_\calm) \eq 0$ unless $F$ is
isomorphic to the identity functor. In addition, by the normalization condition we
have $\dim(\mathbf{1}_\calm) \eq 1$; hence the claim follows from (iii).
 \end{enumerate}
  ~\\[-3.1em]
\end{proof}


\section{A bicategorical state sum construction}\label{sec:state_sum}

In this section we formulate a bicategorical three-dimensional state sum construction
using the pivotal bicategory $\mathbf{Mod}^{\rm sph}(\cala)$ of spherical
$\cala$-module categories as input datum. The main results are Theorem 
\ref{skeleton_inv} in which we prove that 
the bicategorical state sums obtained by our 
construction do not depend on the choice of skeleton that is used in the construction,
and Theorem \ref{thm:TV=St} in which we show that the values of these state sums
reproduce the ones obtained in the original Turaev-Viro invariant based on the
spherical fusion category $\cala$, thereby intrinsically proving that the Turaev-Viro
construction only depends on the pivotal Morita class of a spherical fusion category.


\subsection{Skeletons on manifolds}\label{sec:primary_moves}

As a preparation, we summarize the pertinent aspects of an auxiliary datum on a
manifold $M$ that is crucial to us: a \emph{skeleton} of $M$, as
introduced in \cite{TV17}.

\begin{defi}{ \Cite{Sect.\,11.1}{TV17}}\label{polyhedron}
\Enumeratei
  \item 
A \emph{graph} \gra\ is a topological space that can be obtained from a disjoint 
union of intervals $[0,1]$ by identifying end points. The images of the intervals 
are called the \emph{edges} of \gra, and the images of the end points are called the
\emph{vertices} of \gra. Each edge connects two (possibly coinciding) vertices,
and each vertex is incident to at least one edge. The images of the half-intervals
$[0,\textstyle \frac{1}{2}]$ and $[\textstyle \frac{1}{2},1]$ in \gra\ are called 
\emph{half-edges}.

  \item 
A \emph{$2$-polyhedron }$S$ is a compact topological space that can be triangulated
using a finite number of simplices of dimension 2 or lower. 
The interior ${\rm Int}(S)$ of $S$ is the set of points in $S$ that have a 
neighborhood homeomorphic to $\mathbb{R}^2$.

  \item 
A \emph{stratification} on a $2$-polyhedron $S$ is a graph $S^{(1)}$ embedded in 
$S$ such that $S \setmin {\rm Int}(S ) \subset S^{(1)}$. 

  \item 
Given a stratified $2$-polyhedron $S$, the edges and vertices of the graph $S^{(1)}$ 
are called the \emph{edges of} $S$ and \emph{vertices of} $S$, respectively. The
sets of edges and vertices of $S$ are denoted by $v(S)$ and $e(S)$, respectively. The
connected components of $S \setmin S^{(1)}$ are called the \emph{regions} of $S$.
The finite set of regions of $S$ is denoted by $\Reg(S)$.

  \item 
A \emph{branch} of a stratified $2$-polyhedron $S$ at an edge $l\iN e(S)$ is a 
germ of a region $r\iN\Reg(S)$ adjacent to $l$, i.e.\ a homotopy class of embeddings
$\sigma \colon [0,1)\Rarr~ S$ such that $\sigma(0)\iN l$ and 
$\sigma((0,1)) \,{\subset}\, S \setmin l$. The set of branches at an edge 
$l\iN e(S)$ is denoted by $S_l$.

  \item 
An \emph{orientation} of a stratified $2$-polyhedron $S$ is an orientation of the
surface $S \setmin S^{(1)}$, i.e.\ a choice of orientation for each region of $S$.

  \item 
The \emph{boundary} $\partial S$ of a stratified $2$-polyhedron $S$ is the union of those edges of $S$ that are adjacent to a single region of $S$.   
\end{enumerate}
\end{defi}

\begin{defi}{\Cite{Sect.\,11.2.1}{TV17}}
Let $M$ be a closed $3$-manifold. A \emph{skeleton} of $M$ is an oriented 
stratified polyhedron $S$ embedded in $M$ such that $\partial S \eq \emptyset$ and 
$M \setminus S$ is a disjoint union of open $3$-balls. Such open $3$-balls are called the
\emph{$3$-cells} of $(M,S)$.
\end{defi}

The set of $3$-cells determined by a skeleton $S$ of $M$ is denoted by $|M{\setminus}S|$. By abuse of notation, the symbol $|M{\setminus}S|$ also stands for the cardinality of this set, i.e.\ for the number of $3$-cells. An example of an skeleton is given by a triangulation of a closed manifold.

Next we summarize results \Cite{Sect.\,11.3.1}{TV17} about local transformations on
skeletons known as \emph{moves}, which are derived from the theory developed in 
\cite{Mat}. Any two skeletons $S$ and $S'$ of a closed $3$-manifold $M$
can be connected by a finite sequence of \emph{primary moves}.
A primary move happens locally on a subset of $M$. In what follows, this subset will 
be represented by a part of a plane consisting of regions adjacent to common 
connected components of $M {\setminus} S$. The primary moves are the following:

\begin{enumerate}
  \item[$T_0\;$:] 
The \emph{bubble move}. This move consists of first removing an open disk from a
region and afterwards gluing a sphere along its equator to the boundary that results
from the removal of the disk; in addition, a new edge along the equator of the 
sphere as well as a new vertex on that edge are added:
  \be
  \raisebox{-2.1em}{ \begin{tikzpicture}
  \tikzpicTwocell
  \end{tikzpicture}
  }
  \hspace*{1.4em} \nicemapsto{T_0} \hspace*{1.2em}
  \raisebox{-2.1em}{ \begin{tikzpicture}
  \tikzpicTwocell
  \shade[top color=\colorMd,bottom color=\colorMl]
            (0.5*\stdx+0.5*\stdxx,0.5*\stdy) circle (\radcir);
  \draw[black,thick] (0.5*\stdx+0.5*\stdxx+\radcir,0.5*\stdy) arc (0:180:\radcir);
  \draw[black,thick,densely dotted] (0.5*\stdx+0.5*\stdxx+\radcir,0.5*\stdy) arc (0:-180:\radcir);
  \draw[black] (0.5*\stdx+0.5*\stdxx+\radcir,0.5*\stdy) arc (0:-180:\radcir cm and 0.4*\radcir cm);
  \draw[black,dashed] (0.5*\stdx+0.5*\stdxx+\radcir,0.5*\stdy) arc (0:180:\radcir cm and 0.4*\radcir cm);
  \filldraw[black] (0.5*\stdx,0.5*\stdy-0.372*\radcir) circle (\radvtx) ;
  \end{tikzpicture}
  }
  \ee
%
Thus we have $|v(S')| \eq |v(S)|\,{+}\,1$ and $|e(S')| \eq |e(S)|\,{+}\,1$. The
two hemispheres of the attached sphere constitute new regions, so that
$|\Reg(S')| \eq |\Reg(S)|\,{+}\,2$, and the $3$-ball inside the sphere is an 
additional $3$-cell, so that $|M{\setminus} S'| \eq |M{\setminus} S|\,{+}\,1$.

  \item[$T_1\;$:] 
The \emph{phantom edge move}. This move adds a new edge by gluing its end points 
to two existing distinct vertices of $S$:
  \be
  \raisebox{-2.1em}{ \begin{tikzpicture}
  \tikzpicTwocellEE
  \verddots
  \end{tikzpicture}
  }
  \hspace*{1.4em} \nicemapsto{T_1} \hspace*{1.2em}
  \raisebox{-2.1em}{ \begin{tikzpicture}
  \tikzpicTwocellEE 
  \verddots
  \draw[black,thick] (p11) [out=0,in=180] to (p12) ; 
  \end{tikzpicture}
  }
  \ee
Thus we have $|e(S')| \eq |e(S)|\,{+}\,1$. Also, no vertices are added, hence
$|v(S')| \eq |v(S)|$. The number of $3$-cells is preserved as well,
$|M{\setminus} S'| \eq |M{\setminus} S|$. Finally, depending on whether the regions
that are adjacent to the new edge actually are the same region or not, we have either
$|\Reg(S')| \eq |\Reg(S)|$ or $|\Reg(S')| \eq |\Reg(S)|\,{+}\,1$.

    \item[$T_2\;$:] 
The \emph{contraction move}. This collapses an edge in $S$ with distinct end points
to a single vertex:
  \be
  \raisebox{-2.1em}{ \begin{tikzpicture}
  \tikzpicTwocellEE
  \verddots
  \draw[black,thick] (p11) [out=0,in=180] to (p12) ; 
  \end{tikzpicture} 
  }
  \hspace*{1.4em} \nicemapsto{T_2} \hspace*{1.2em}
  \raisebox{-2.1em}{ \begin{tikzpicture}
  \tikzpicTwocellFF 
  \verddots
  \end{tikzpicture}
  }
  \ee
As a consequence, the numbers of vertices and of edges decrease by one, 
$|v(S')| \eq |v(S)|\,{-}\,1$ and $|e(S')| \eq |e(S)|\,{-}\,1$, while the numbers of 
regions and of $3$-cells are preserved, $|\Reg(S')| 
         $\linebreak[0]$
         {=}\, |\Reg(S)|$
and $|M{\setminus} S'| \eq |M{\setminus} S|$.

    \item[$T_3\;$:] 
The \emph{percolation move}. This move pushes a branch across a vertex $v$ in $S$.
In more detail, an open disk whose boundary contains $v$ is removed from a branch, and
then the boundary resulting from the disk removal is glued on another branch at $v$:
  \be
  \raisebox{-2.1em}{ \begin{tikzpicture}
  \tikzpicTwocellGG
  \horddots
  \end{tikzpicture}
  }
  \hspace*{1.4em} \nicemapsto{T_3} \hspace*{1.2em}
  \raisebox{-2.1em}{ \begin{tikzpicture}
  \coordinate (p1n)  at (0.26*\stdx+0.5*\stdxx,0.52*\stdy) ;
  \tikzpicTwocellGG
  \horddots
  \tikzpicpercol
  \end{tikzpicture}
  }
  \ee
This move creates a new edge each of whose end points is the vertex $v$; in 
particular, $|e(S')| \eq |e(S)|\,{+}\,1$. 
 \\
The disk bounded by this edge is a new region, hence
$|\Reg(S')| \eq |\Reg(S)|\,{+}\,1$. The numbers of vertices and of $3$-cells do not
change, $|v(S')| \eq |v(S)|$ and $|M{\setminus} S'| \eq |M{\setminus} S|$.
\end{enumerate}

\noindent
For each of the primary moves $T_i$ there is a primary move $T_i^{-1}$ in the 
opposite direction that is inverse to $T_i$.
For instance, the move $T_0^{-1}$ removes a bubble from a skeleton.

\begin{thm}{\rm \Cite{Thm.\,11.1}{TV17}}\label{skeletons}
Any two skeletons of a closed $3$-manifold $M$ can be related by a finite sequence of primary moves.
\end{thm}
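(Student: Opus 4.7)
The statement is a classical theorem in low-dimensional topology (due originally to Matveev and Piergallini for simple spines, with the extension to general skeletons being carried out in \cite{TV17}). The plan is to reduce to the case of \emph{special} skeletons -- those whose regions are all open disks -- for which a uniqueness-up-to-moves result is available.

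First, I would establish a normalization lemma: every skeleton $S$ of $M$ can be brought, via a finite sequence of primary moves, to a special skeleton $S^{\rm sp}$. For this, given an arbitrary skeleton, I would use phantom edge moves $T_1$ to subdivide any non-disk region of $S$ into disks (adding arcs along a system of compressing curves representing generators of $\pi_1$ of the region), and bubble moves $T_0$ to absorb any region whose closure is not a 2-sphere into a 3-cell. Along the way I would use $T_2$ to collapse redundant edges ending at valence-one vertices. The essential point is that these local modifications can always be performed while preserving the skeleton property, because the complement of $S$ is a disjoint union of open balls and any local modification supported inside a small neighborhood of a region or vertex can only split or merge such balls.

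Second, after both $S$ and $S'$ have been normalized to special skeletons $S^{\rm sp}$ and $(S')^{\rm sp}$ of the same manifold $M$, I would invoke the Matveev--Piergallini theorem: two special spines of $M$ that have at least two true vertices and encode the same handle decomposition (up to stabilization) are related by a sequence of the percolation move $T_3$ (sometimes called the Matveev or $2\,{\leftrightarrow}\,3$ move) together with bubble stabilizations $T_0$. Stabilizing both $S^{\rm sp}$ and $(S')^{\rm sp}$ by a sufficient number of bubble moves, one lands in the regime where this classical result applies.

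The main obstacle is the normalization step. Ensuring that every region can be cut into disks by $T_1$-moves without destroying the global structure requires a careful analysis of the incidence of edges and regions near vertices of high valence; one must argue, essentially by induction on the genus of the regions and on the number of non-disk regions, that the required arcs can always be realized by phantom edges connecting existing vertices (possibly after introducing auxiliary vertices via $T_0$ followed by $T_3$). Once this reduction is in place the rest is a direct appeal to the established theory of simple spines, as carried out in detail in \Cite{Sect.\,11.1--11.3}{TV17}.
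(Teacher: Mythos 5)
The paper itself does not prove Theorem~\ref{skeletons}: it is stated with an attribution to \cite{TV17} (Theorem 11.1 there) and used as a black box, so there is no in-paper argument against which to compare your proposal. Your sketch is a fair reconstruction of the strategy carried out in \cite{TV17} and \cite{Mat}: first normalize an arbitrary skeleton to a special one (all regions open disks, singular graph of special type) using the primary moves, then invoke the Matveev--Piergallini theorem.

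Two remarks on the details. First, your formulation of the Matveev--Piergallini theorem contains a spurious hypothesis: the theorem says that any two special spines of the same compact $3$-manifold, each having at least two true vertices, are related by $2\,{\leftrightarrow}\,3$ moves and their inverses; there is no condition about ``encoding the same handle decomposition up to stabilization''. Second, the normalization step that you yourself flag as the main obstacle is indeed the technical heart of the matter, and your sketch does not actually carry it out. One must show that finitely many applications of $T_0,T_1,T_2,T_3$ and their inverses turn every region into an open disk, put the singular graph into the required local form, and that the procedure terminates; the arcs you would like to insert via $T_1$ must connect \emph{existing} vertices, so in general one first has to manufacture vertices by combinations of $T_0$ and $T_3$, and it is not automatic that this can always be arranged. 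These details are precisely what \cite{TV17} supplies; the paper's choice to cite that reference rather than reprove the theorem is the intended resolution, and for a self-contained treatment you would need to reproduce the corresponding sections of \cite{TV17} rather than gesture at them.
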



\subsection{Defining the bicategorical state sum}\label{sec:state_sum_inv}

Consider an oriented closed $3$-manifold $M$ with a skeleton $S$. We will
construct a scalar invariant of $M$. We closely follow the procedure described in 
Section 13.1.1 of \cite{TV17}, with the major difference that we work with the pivotal 
bicategory $\mathbf{Mod}^{\rm sph}(\cala)$ for a spherical fusion category $\cala$ 
as an input, which necessitates additional considerations. 

We fix a set $\Irr\,\mathbf{Mod}^{\rm sph}(\cala)$ of representatives for the
equivalence classes of indecomposable spherical $\cala$-module categories (with
equivalence furnished by invertible $1$-morphisms). Similarly,
for every pair $\calm,\caln\in \Irr\,\mathbf{Mod}^{\rm sph}(\cala)$, we fix a set
$\Irr\,\Fun_\cala(\calm,\caln)$ of representatives for the isomorphism classes of
simple module functors. The construction of the bicategorical state sum proceeds
in multiple steps:
\Enumerate
   \item \textbf{Labeling:} 
We define a \emph{labeling} on $(M,S)$ as a pair 
$\varphi \,{:=}\, (\varphi_3,\varphi_2)$ of functions of the following form.
\Enumerateii
   \item 
$3$-labeling: Every $3$-cell in $|M{\setminus}S|$ is labeled by a simple object in
$\Irr\,\mathbf{Mod}^{\rm sph}(\cala)$, i.e.\ by an indecomposable spherical 
$\cala$-module category. This specifies the function
  \be
  \label{label_3}
  \varphi_3\Colon |M{\setminus} S|\rarr~ \Irr\,\mathbf{Mod}^{\rm sph}(\cala)\,.
  \ee

   \item $2$-labeling:
Every region $r\iN \Reg(S)$ is adjacent to two $3$-cells. The orientation on $r$ and
the global orientation of the $3$-manifold $M$ determine uniquely a direction normal 
to $r$, so that it makes sense to say that one $3$-cell $\emph{R}_r$ is located to 
the right of the region $r$ and the other $3$-cell $\emph{L}_r$ is located to its left,
as illustrated in the following picture:
  \def\locH    {0.85}   
 \be
 \raisebox{-3em}{ \begin{tikzpicture}[scale=2*\locscale]
 \filldraw[draw=\colorL,thick,fill=\colorM]
    (\locH,0.5*\locH) -- (0,0) -- (0,2*\locH) -- ++(\locH,0.5*\locH);
    \draw[->] (-0.5*\locH,2*\locH) to (-0.5*\locH,2.5*\locH);
    \draw[->] (-0.5*\locH,2*\locH) to (-1*\locH,2*\locH);
    \draw[->] (-0.5*\locH,2*\locH) to (-0.75*\locH,2.25*\locH);    
    \node[black] at (-0.75*\locH,2.25*\locH) [anchor=south east] {\scriptsize $3$} ;
    \node[black] at (-1*\locH,2*\locH) [anchor=east] {\scriptsize $2$} ;
    \node[black] at (-0.5*\locH,2.5*\locH) [anchor=south] {\scriptsize $1$} ;
   \draw[->] (0.25*\locH,1*\locH) to (0.25*\locH,1.25*\locH);
   \draw[->]   (0.25*\locH,1*\locH) to (0.5*\locH,1.125*\locH); 
    \node[black] at (0.25*\locH,1.35*\locH) {\tiny $2$} ;
    \node[black] at (0.6*\locH,1.125*\locH) {\tiny $1$} ;
    \node[\colorMlab] at (0.6*\locH,1.8*\locH) {\large $r$} ;
    \node[\colorCat] at (-2*\locC,1*\locH) {\Large $\emph{R}_r$} ;
    \node[\colorCat] at (3*\locC,1*\locH) {\Large $\emph{L}_r$} ;
  \end{tikzpicture}
  }
  \ee
Given a $3$-labeling $\varphi_3$, we label the region $r$ with a simple object $H$
from the set $\Irr\,\Fun_\cala(\varphi_3(\emph{L}_r),\varphi_3(\emph{R}_r))$, i.e.\ 
with a simple $\cala$-module functor between the spherical module categories 
that label the $3$-cells adjacent to $r$. This assignment determines the function
  \be
  \label{label_2}
  \varphi_2\Colon \Reg(S)\rarr~ \bigsqcup_{r\in \Reg(S) \!\! }\Irr\,
  \Fun_\cala\left(\varphi_3(\emph{L}_r),\varphi_3(\emph{R}_r)\right) .
  \ee
\end{enumerate}
We fix for now such a labeling $\varphi \eq (\varphi_3,\varphi_2)$ on $(M,S)$.

   \item 
\textbf{Assignment of vector spaces:} 
For every edge in $S$ we pick an orientation. Each edge comes with two half-edges. We
are going to assign to each half-edge $l \,{\equiv}\, (l,v)$ a vector space $V_l$.
The pair of vector spaces assigned to the half-edges of a given edge remains the 
same if we pick the opposite orientation on said edge. In order to specify the 
assignment we make
use of the set $S_l$ of branches at $l$, see Definition \ref{polyhedron}(v).
\Enumerateii
   \item 
Every branch $r\iN S_l$ at the edge $l$ is contained in a region $\Tilde{r}$ bounding
$l$. We assign to $r$ a label depending on the label of the region $\Tilde{r}$ and on
a sign $\delta_r \iN \{\pm1\}$ that is determined from the relative orientation 
between $r$ and $l$, as indicated in the following pictures:
  \def\locH    {0.85} 
 \be
 \raisebox{-2em}{ \begin{tikzpicture}[scale=1.5*\locscale]
  \filldraw[draw=\colorL,thick,fill=\colorM,line width=\widthObjSm]
       (1.75*\locH,0) -- (0,0) -- (0,1*\locH) -- (1.75*\locH,1*\locH) ;
  \draw[line width=\widthObjSm,\colorCat] (0,0) -- (0,1*\locH) ;
  \filldraw[black] (0,0) circle (1pt);
  \scopeArrow{0.5}{\arrowObj}
  \draw[line width=\widthObjSm,\colorCat,postaction={decorate}]
       (0,0.5*\locH+\hsA) -- + (0,0.01) ;
  \end{scope}    
   \draw[->] (0.5*\locH,0.25*\locH) to (0.5*\locH,0.5*\locH);
   \draw[->]   (0.5*\locH,0.25*\locH) to (0.75*\locH,0.25*\locH); 
    \node[black] at (0.5*\locH,0.5*\locH)[anchor=south] {\tiny $1$} ;
    \node[black] at (0.75*\locH,0.25*\locH) [anchor=west]{\tiny $2$} ;
    \node[\colorMlab] at (1.5*\locH,0.75*\locH) {\small $r$} ;
    \node[black] at (0,0) [anchor=north east] {\footnotesize $v$} ;
    \node[\colorCat] at (0,0.5*\locH) [anchor=east] {\footnotesize $l$} ;
  \end{tikzpicture}
  }
  ~~\delta_r=+1 \hspace*{5em}
  \raisebox{-2em}{ \begin{tikzpicture}[scale=1.5*\locscale]
  \filldraw[draw=\colorL,thick,fill=\colorM,line width=\widthObjSm]
       (1.75*\locH,0) -- (0,0) -- (0,1*\locH) -- (1.75*\locH,1*\locH) ;
  \draw[line width=\widthObjSm,\colorCat] (0,0) -- (0,1*\locH) ;
  \filldraw[black] (0,0) circle (1pt);
  \scopeArrow{0.5}{\arrowObj}
  \draw[line width=\widthObjSm,\colorCat,postaction={decorate}]
       (0,0.5*\locH+\hsA) -- + (0,0.01) ;
  \end{scope}    
   \draw[->] (0.5*\locH,0.6*\locH) to (0.5*\locH,0.35*\locH);
   \draw[->]   (0.5*\locH,0.6*\locH) to (0.75*\locH,0.6*\locH); 
    \node[black] at (0.5*\locH,0.35*\locH)[anchor=north] {\tiny $1$} ;
    \node[black] at (0.75*\locH,0.6*\locH) [anchor=west]{\tiny $2$} ;
    \node[\colorMlab] at (1.5*\locH,0.75*\locH) {\small $r$} ;
    \node[black] at (0,0) [anchor=north east] {\footnotesize $v$} ;
    \node[\colorCat] at (0,0.5*\locH) [anchor=east] {\footnotesize $l$} ;
  \end{tikzpicture}
  }
  ~~\delta_r=-1
  \ee
Denote by $H\iN\Fun_\cala(\varphi_3(\emph{L}_{\Tilde{r}}),\varphi_3(\emph{R}_{\Tilde{r}}))$ 
the label of the region $\Tilde{r}\iN\Reg(S)$; then the branch $r$ is labeled by 
  \be\label{delta_def}
  H^{\delta_r} = 
    \begin{cases}
  H    &\quad\text{if } \delta_r=+1 \,,\\
  H\ra &\quad\text{if } \delta_r=-1 \,.
    \end{cases}
  \ee
(Choosing here right instead of left adjoints is a mere convention because the 
pivotal structure on the bicategory identifies these two options.)
    
    \item 
The global orientation of the $3$-manifold $M$ together with the orientation of $l$
determine an orientation on a circle around the half-edge $l$. This defines a cyclic 
order on the set of branches $S_l \eq \{r_1,...,r_n\}$ that bound $l$.
  \def\locA  {55}  
  \def\locB  {20}  
  \def\locC  {-25} 
  \def\locH  {2.5} 
  \def\locL  {3.5} 
  \be
  \raisebox{-6.7em}{ \begin{tikzpicture}
    \scalebox{0.75}{\draw[->] (-0.5*\locH,1*\locH) to (-0.5*\locH,1.5*\locH);
    \draw[->] (-0.5*\locH,1*\locH) to (-1*\locH,1*\locH);
    \draw[->] (-0.5*\locH,1*\locH) to (-0.75*\locH,1.25*\locH);    
    \node[black] at (-0.75*\locH,1.25*\locH) [anchor=south east] {\scriptsize $3$} ;
    \node[black] at (-1*\locH,1*\locH) [anchor=east] {\scriptsize $2$} ;
    \node[black] at (-0.5*\locH,1.5*\locH) [anchor=south] {\scriptsize $1$} ;}
  \coordinate (bl1) at (58:1.52*\locL) ;
  \coordinate (bl2) at (55:1.54*\locL) ;
  \coordinate (bl3) at (52:1.53*\locL) ;
  \filldraw[draw=\colorL,fill=\colorMA,line width=\widthObjSm]
       (0,0) -- ++ (\locC:0.93*\locH) -- ++ (\locA:\locL) -- ++ (\locC+180:0.93*\locH) -- cycle ; 
  \filldraw[draw=\colorL,fill=\colorM,line width=\widthObjSm]
       (0,0) -- ++ (0,\locH) -- ++ (\locA:\locL) -- ++ (0,-\locH) -- cycle ; 
  \filldraw[draw=\colorL,fill=\colorN,line width=\widthObjSm]
       (0,0) -- ++ (\locB:\locH) -- ++ (\locA:\locL) -- ++ (\locB+180:\locH) -- cycle ; 
  \draw[\colorL,line width=\widthObjSm,densely dashed]
       (\locA:\locL) -- ++ (\locC:0.8*\locH) ;
  \scopeArrow{0.5}{\arrowObj}
  \draw[line width=\widthObjSm,\colorCat,postaction={decorate},rotate=-35]
       (0,0.75*\locH+\hsA) -- + (0,0.01) ;
  \end{scope}
  \filldraw[black] (0,0) circle (1.2*\radvtx)
       (bl1) circle (\radblob) (bl2) circle (\radblob) (bl3) circle (\radblob) ;
  \node[black] at (0.05,0.05) [anchor=north east] {\footnotesize $v$} ;
  \node[\colorMlab] at (0.91*\stdx+\stdxx,0.91*\stdy) {$\scriptstyle r_{\!1}^{}$} ;
  \node[\colorMlab] at (\locL+0.11,1.2*\locH+0.12) {$\scriptstyle r_{\!2}^{}$} ;
  \node[\colorMlab] at (0.5*\locL-0.06,0.5*\locL+\locH) {$\scriptstyle r_{\!n}^{}$} ;
  \node[\colorCat] at (-0.85,-0.85) {\small $\calm_1$} ;
  \node[\colorCat] at (\locL,\locH-0.1) [anchor=west] {\small $\calm_2$} ;   
  \begin{scope}[shift={(\locA:1.22*\locL)}]
       \draw[\colorCat,line width=\widthObjSm]
            (150:0.25) arc (-210:125:0.25) ;
       \draw[\colorCat,line width=0.6*\widthObjSm,->,>=latex]
            (135:0.265) -- +(203:0.001) ;
  \end{scope}
  \draw[white,line width=4*\widthObjSm] (\locA:1.1*\locL) -- (\locA:1.2*\locL) ;
  \draw[\colorCat,line width=1.26*\widthObjSm]
       (0,0) -- (\locA:1.19*\locL) node[near end,above,sloped,yshift=-2pt]{$\scriptstyle l$} ;
  \draw[\colorCat,line width=1.3*\widthObjSm,dash pattern=on 2.6pt off 2.5pt]
       (\locA:1.2*\locL) -- (\locA:1.4*\locL) ;
  \end{tikzpicture}
  }
  \ee
We pick a \emph{starting branch} $r_1$ in $S_l$ and compose the labels 
$H_i^{\delta_i}$ of the adjacent branches $r_i$ following the cyclic order on $S_l$.
The resulting composite is an endofunctor of the spherical module category $\calm_1$
that labels the $3$-cell to the left of the starting branch $r_1$. We assign to the 
half-edge $l$ a vector space of 
$2$-morphisms in the pivotal bicategory $\mathbf{Mod}^{\rm sph}(\cala)$:
  %
   \def\locH    {0.85}   
   \def\locC    {0.31}
 \be
 \raisebox{-3em}{ \begin{tikzpicture}[scale=2*\locscale]
 \draw[line width=\widthObjSm,\colorObj]
     (0,0.5*\locC) to (0,0.5*\locC+\locH)
     (0.353*\locC,-0.353*\locC) --  +(0.707*\locH,-0.707*\locH);
\draw[line width=\widthObjSm,\colorObj,rotate=-30]
     (0.353*\locC,0.353*\locC) --  +(0.707*\locH,0.707*\locH) ;     
   \scopeArrow{0.3}{\arrowObj}
  \draw[line width=0.7*\widthObjSm,\colorObj,postaction={decorate}]
       (0.5*\locC-0*\hsA,0) -- + (0,0.01) ;
  \draw[line width=0.7*\widthObjSm,\colorObj,postaction={decorate}]
       (-0*\hsA,0.5*\locC+0.5*\locH) -- + (0,-0.01) ;
  \draw[line width=0.7*\widthObjSm,\colorObj,postaction={decorate},rotate=-75]
       (-0*\hsA,0.5*\locC+0.5*\locH) -- + (0,0.01) ;
  \draw[line width=0.7*\widthObjSm,\colorObj,postaction={decorate},rotate=-135]
       (-0*\hsA,0.5*\locC+0.5*\locH) -- + (0,0.01) ;
  \end{scope}
    \draw[->] (-0.5*\locH,1*\locH) to (-0.5*\locH,1.5*\locH);
    \draw[->] (-0.5*\locH,1*\locH) to (-1*\locH,1*\locH);
    \draw[->] (-0.5*\locH,1*\locH) to (-0.75*\locH,1.25*\locH);    
    \node[black] at (-0.75*\locH,1.25*\locH) [anchor=south east] {\scriptsize $3$} ;
    \node[black] at (-1*\locH,1*\locH) [anchor=east] {\scriptsize $2$} ;
    \node[black] at (-0.5*\locH,1.5*\locH) [anchor=south] {\scriptsize $1$} ;

    \draw[black] (0,0) circle (0.5*\locC);
    \filldraw[black] (0,0) circle (0.3pt);    
    \node[black] at (-0.49*\locC,0) [anchor=east] {\scriptsize $l$} ;
    \node[black] at (0,0) [anchor=north] {\scriptsize $v$} ;
    \node[\colorObj] at (0,0.48*\locC+\locH) [anchor=south] {\footnotesize $H_n$} ;
    \node[\colorObj] at (\locC+0.707*\locH,0*\locC+0.5*\locH) [anchor=north west]{\footnotesize $H_2$} ;
    \node[\colorObj] at 
    (0.21*\locC+0.707*\locH,-0.3*\locC-0.7*\locH) [anchor=north west]{\footnotesize $H_1$} ;    
    \node[\colorCat] at (-1.8*\locC,-1.1*\locC) {\small $\calm_1$} ;
    \node[\colorCat] at (1.5*\locC,-0.35*\locC) {\small $\calm_2$} ;
    \node[\colorCat] at (0.7*\locC,0.5*\locC+0.85*\locH) {\small $\calm_n$};
  \filldraw[black]
            (40:0.87*\locH) circle (0.5*\radblob)
            (50:0.9*\locH) circle (0.5*\radblob)
            (60:0.87*\locH) circle (0.5*\radblob) ;
  \end{tikzpicture}
  }\quad\longmapsto \;
  \Nat_{\rm mod}\left(
  1_{\!\calm_1},H_n^{\delta_n}\,{\circ}\cdots{\circ}\, H_1^{\delta_1}\right) .
  \ee
The so obtained vector space depends up to canonical isomorphism only on the cyclic
order of $S_l$ and not on the linear order determined by the selected starting branch.
More explicitly, this vector space can be defined as the tip of the limit cone of the diagram defined by the choices of linear orders compatible with the cyclic order of $S_l$ \Cite{Def.\ 2.9}{FuSY}:
    \be
    V_l:= {\rm lim}_{\rho} \;\Nat_{\rm mod}\left(
    1_{\!\calm_{\rho(1)}},H_{\rho(n)}^{\delta_{\rho(n)}} \,{\circ}\cdots{\circ}\, 
    H_{\rho(1)}^{\delta_{\rho(1)}}\right) ,
    \ee
where $\rho$ is a cyclic permutation of the ordered tuple $(1,2,...,n)$.
\end{enumerate}

Every edge in $S$ consists of two half-edges $l$ and $\overline{l}$. By tensoring
over all edges we obtain a vector space
  \be
  V(M,S,\varphi):=\bigotimes_{l\in e(S)}\! V_{l}^{} \otimes_\ko V_{\overline{l}}
  \ee
associated to the labeled skeleton $S$ on $M$.

   \item 
\textbf{Distinguished vector:} For each edge in the skeleton, we can realize the 
vector spaces $V_{l}$ and $V_{\overline{l}}$ associated to its half-edges $l$ and 
$\overline{l}$ as morphism spaces in the dual category $\dualcat$ with respect to 
the label $\calm$ of any $3$-cell adjacent to the starting branch at $l$. Since
$\mathbf{Mod}^{\rm sph}(\cala)$ is locally Calabi-Yau, in particular $\dualcat$ is
Calabi-Yau (see Section \ref{traces_modsphA}), and hence the non-degeneracy of the
traces yields an isomorphism $V_{\overline{l}}\,{\cong}\,V_l^*$. Thus the 
coevaluation morphism determines a non-zero vector $*_l\iN V_{l}\otik V_{\overline{l}}$.
 \\
By tensoring over all edges in $S$ we then obtain a \emph{distinguished vector}:
  \be
  *_\varphi:=\bigotimes_{l\in e(S)} *_l\;\in V(M,S,\varphi)\,.
  \ee

   \item 
\textbf{Evaluation at vertices:} For any given vertex $v\iN v(S)$, consider a small ball
$B_v$ around $v$ in $M$. The intersection $B_v\,{\cap}\, S$ of this ball with the 
skeleton is a closed graph $\mathcal{G}_v$ on the sphere ${\mathtt S}^2_v \eq \partial B_v$.
To see this, denote by $e_v(S)$ the set of half-edges whose end point is the vertex $v$.
The sphere ${\mathtt S}^2_v$ intersects the edges in $e_v(S)$ at pairwise distinct
points and the regions bounding these edges at strands that connect the corresponding
points. Thereby the intersection determines a graph $\mathcal{G}_v$ on ${\mathtt S}^2_v$.
Given a labeling $\varphi$, the labels on the regions of the skeleton $\Reg(S)$
induce a labeling of the edges of $\mathcal{G}_v$, and any vector
$\gamma\iN V(M,S,\varphi)$ induces a labeling of the vertices of $\mathcal{G}_v$. An
intersecting $3$-cell of the
skeleton meets the sphere at a \emph{patch}, i.e.\ at a connected component of 
${\tt S}^2_v\setmin \mathcal{G}_v$. Similarly, the labels of the $3$-cells induce labels
on the patches. Altogether we obtain a closed $\mathbf{Mod}^{\rm sph}(\cala)$-labeled
graph on the sphere. The following picture shows an example of such a graph:
    \def\locR  {2.3}   
  \be
  \raisebox{-4em}{ \begin{tikzpicture}
  \coordinate (p1) at (25:0.65*\locR) ;
  \coordinate (p2) at (105:0.58*\locR) ;
  \coordinate (p3) at (245:0.54*\locR) ;
  \coordinate (q0) at (176:0.65*\locR) ;
  \coordinate (q1) at (25:0.99*\locR) ;
  \coordinate (q2) at (135:\locR) ;
  \coordinate (q3) at (220:\locR) ;
  \shade[top color=gray!22!white,bottom color=white,draw=gray,thin] (0,0) circle (\locR) ;
  \draw[gray,thin] (-\locR,0) arc (180:360:\locR cm and 0.19*\locR cm) ;
  \draw[gray!66!white,thin,densely dashed]
       (-\locR,0) arc (180:0:\locR cm and 0.19*\locR cm) ;
  \scopeArrow{0.43}{\arrowObj} 
  \draw[line width=\widthObj,\colorObj,postaction={decorate}]
       (p1) [out=125,in=20] to node[near end,above,sloped,yshift=-2pt]{$\scriptstyle K$} (p2) ;
  \draw[line width=\widthObj,\colorObj,postaction={decorate}]
       (p3) [out=5,in=260] to node[near start,below,sloped,yshift=2pt]{$\scriptstyle J$} (p1) ;
  \end{scope}
  \scopeArrow{0.88}{\arrowObj} 
  \draw[line width=\widthObj,\colorObj,postaction={decorate}]
       (p3) [out=125,in=230] to node[very near start,below,sloped,yshift=2pt]{$\scriptstyle H$} (p2) ;
  \end{scope}
  \scopeArrow{0.52}{\arrowObj} 
  \draw[line width=\widthObj,\colorObj,postaction={decorate}]
       (p1) [out=40,in=130] to node[midway,above,sloped,yshift=-2pt]{$\scriptstyle G$} (q1) ;
  \end{scope}
  \scopeArrow{0.66}{\arrowObj} 
  \draw[line width=\widthObj,\colorObj,postaction={decorate}]
       (p2) [out=120,in=40] to node[near start,above,sloped,yshift=-2pt]{$\scriptstyle F$} (q2) ;
  \draw[line width=\widthObj,\colorObj,postaction={decorate}]
       (q3) [out=-40,in=240] to node[near start,above,sloped,yshift=-2pt]{$\scriptstyle I$} (p3) ;
  \draw[line width=0.8*\widthObj,\colorObj,densely dashed,postaction={decorate}]
       (q2) [out=220,in=145] to (q0) ;
  \end{scope}
  \draw[line width=0.8*\widthObj,\colorObj,dash pattern=on 2.65pt off 2.55pt,dash phase=-0.1pt]
       (q1) [out=-70,in=-25] to (q0) ;
  \draw[line width=0.8*\widthObj,\colorObj,densely dashed]
       (q0) [out=220,in=130] to (q3) ;
  \filldraw[black]
       (p1) circle (2.2*\radvtx) (p2) circle (2.2*\radvtx) (p3) circle (2.2*\radvtx)
       (q0) circle (1.8*\radvtx)
       (0,0.1) node[xshift=4pt,yshift=2pt] {\scriptsize $v$} circle (\radvtx);  
  \node[\colorCat] at (140:0.77*\locR) {\small $\calm_{\!1}$} ;
  \node[\colorCat] at (60:0.83*\locR) {\small $\calm_{\!2}$} ;
  \node[\colorCat] at (75:0.36*\locR) {\small $\calm_{\!3}$} ;
  \node[\colorCat] at (310:0.79*\locR) {\small $\calm_{\!4}$} ;
  \end{tikzpicture}
  }
  \label{eq:pic:G}
  \ee
The structure on the bicategory $\mathbf{Mod}^{\rm sph}(\cala)$ described in Section \ref{sec:sph_module} allows us to 
define a graphical calculus, by which a closed graph on the sphere is evaluated
to a scalar in the following manner. Let $\mathcal{G}_v$ be such a labeled graph. 
First we select one of the patches on the sphere, i.e.\ a specific connected 
component of ${\tt S}^2_v\setmin \mathcal{G}_v$. By puncturing the chosen patch, we 
obtain a closed graph $\Tilde{\mathcal{G}}_v$ on the plane. The labeling of the graph
determines a planar string diagram in $\mathbf{Mod}^{\rm sph}(\cala)$.
This string diagram represents a $2$-endomorphism of the identity $1$-morphism, which is invariant under 
isotopies of the graph $\Tilde{\mathcal{G}}_v$ on the plane. (This invariance 
is established, for general pivotal bicategories, in \Cite{Prop.\,2.2}{FuSY}.)
Further, we can trace the associated $2$-endomorphism into an scalar by means of the
local Calabi-Yau structure, i.e.\ of the trace map of the corresponding Hom-category:
    \def\locR  {2.5}   
  \be
  \raisebox{-5.6em}{ \begin{tikzpicture}
  \coordinate (p1) at (25:0.65*\locR) ;
  \coordinate (p2) at (105:0.58*\locR) ;
  \coordinate (p3) at (245:0.54*\locR) ;
  \coordinate (q0) at (350:1.13*\locR) ;
  \coordinate (q2) at (15:1.3*\locR) ;
  \coordinate (q3) at (290:0.85*\locR) ;
  \scopeArrow{0.47}{\arrowObj} 
  \draw[line width=\widthObj,\colorObj,postaction={decorate}]
       (p1) [out=125,in=20] to node[near start,above,sloped,yshift=-2pt]{$\scriptstyle K$} (p2) ;
  \draw[line width=\widthObj,\colorObj,postaction={decorate}]
       (p3) [out=5,in=260] to node[near start,below,sloped,yshift=2pt]{$\scriptstyle J$} (p1) ;
  \draw[line width=\widthObj,\colorObj,postaction={decorate}]
       (p2) [out=100,in=110] to node[near end,above,sloped,yshift=-2pt]{$\scriptstyle F$} 
       (q2) [out=290,in=35] to (q0) ;
  \draw[line width=\widthObj,\colorObj,postaction={decorate}]
       (p3) [out=125,in=230] to node[near start,below,sloped,yshift=2pt]{$\scriptstyle H$} (p2) ;
  \draw[line width=\widthObj,\colorObj,postaction={decorate}]
       (q0) [out=240,in=0] to
       (q3) [out=180,in=295] to node[near start,above,sloped,yshift=-2pt]{$\scriptstyle I$} (p3) ;
  \end{scope}
  \scopeArrow{0.61}{\arrowObj} 
  \draw[line width=\widthObj,\colorObj,postaction={decorate}]
       (p1) [out=20,in=110] to node[near end,above,sloped,yshift=-2pt]{$\scriptstyle G$}  (q0) ;
  \end{scope}
  \filldraw[black]
       (p1) circle (2.2*\radvtx) (p2) circle (2.2*\radvtx) (p3) circle (2.2*\radvtx)
       (q0) circle (2.2*\radvtx)
       (0,0.1);
  \node[\colorCat] at (150:0.62*\locR) {\small $\calm_{\!1}$} ;
  \node[\colorCat] at (33:0.91*\locR) {\small $\calm_{\!2}$} ;
  \node[\colorCat] at (45:0.35*\locR) {\small $\calm_{\!3}$} ;
  \node[\colorCat] at (330:0.71*\locR) {\small $\calm_{\!4}$} ;
  \draw[thick,gray,dashed] (232:1.17*\locR) rectangle (34:1.72*\locR) ;
  \end{tikzpicture}
  }
  ~~~\longmapsto~~ \Nat_{\rm mod}\left(1_{\!\calm_1},1_{\!\calm_1}\right)
  \,\xrightarrow{~\tr^{\scriptscriptstyle\Cala_{\Calm}^*}
  _{\scriptscriptstyle\mathbf{1}_{\Calm}}~}\, \ko \,. \qquad
  \ee
As we will show in Lemma \ref{invariance_sphere_graph} below, the so obtained scalar
is independent of the choice of patch used in the transition from the graph
$\mathcal{G}_v$ on the sphere to the graph $\Tilde{\mathcal{G}}_v$ on the
plane. Altogether, by tensoring over all the vertices of the skeleton,
the graphical calculus on the sphere determines a linear form
  \be
  \tr_\varphi\Colon V(M,S,\varphi)=\bigotimes_{v\in v(S)}\,\bigotimes_{l\in e_v(S)}
  V_{l} \rarr~ \ko\,.
  \label{vertex_evaluation}
  \ee

\begin{lem}\label{invariance_sphere_graph}
The scalar assigned by the prescription above to a closed
$\mathbf{Mod}^{\rm sph}(\cala)$-labeled graph $\mathcal{G}$ on the sphere ${\tt S}^2$
is invariant under isotopies of $\mathcal{G}$ and independent of the patch chosen to
define it.
\end{lem}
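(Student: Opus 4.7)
The plan is to separate the two claims and handle each in turn. For the first part (isotopy invariance with the patch fixed), I would first observe that once a patch has been chosen, the resulting labeled planar graph $\widetilde{\mathcal{G}}$ determines, via the string-diagram calculus of the pivotal bicategory $\mathbf{Mod}^{\rm sph}(\cala)$, a $2$-endomorphism $\Phi(\widetilde{\mathcal{G}}) \iN \Nat_{\rm mod}(\mathbf{1}_{\!\calm_1},\mathbf{1}_{\!\calm_1})$, where $\calm_1$ labels the punctured patch. By \Cite{Prop.\,2.2}{FuSY}, $\Phi$ is invariant under planar isotopies. Composing with the trace functional $\tr^{\cala_{\calm_1}^*}_{\mathbf{1}_{\!\calm_1}}$ is well-defined by Theorem \ref{thm:CY_bimodule} applied to the invertible spherical bimodule $\Fun_\cala(\calm_1,\calm_1)$. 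Any isotopy of $\mathcal{G}$ on ${\tt S}^2$ that does not drag the graph through the chosen puncture corresponds to a planar isotopy of $\widetilde{\mathcal{G}}$ and is therefore absorbed by this invariance.

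For patch independence I would reduce to an elementary move: passing from a patch $P$ to an adjacent patch $P'$ that shares a single edge $e$ of $\mathcal{G}$. Since the dual graph of $\mathcal{G}$ on ${\tt S}^2$ is connected, iterated application of this step connects any two patches, so it suffices to verify invariance under it. The planar diagram $\widetilde{\mathcal{G}}_{P'}$ differs from $\widetilde{\mathcal{G}}_{P}$ only in that the strand labeled by the module functor $H \,{:=}\, \varphi_2(e)$ that separated $P$ from $P'$ has been pulled ``around infinity'' from one side of the diagram to the other.

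The key observation is that this elementary move is precisely the sphericality identity. Namely, one can isotope $\widetilde{\mathcal{G}}_P$ so that near $e$ the local picture is a single strand $H$ attached to the rest of the diagram through a module natural transformation $f$; then $\widetilde{\mathcal{G}}_P$ and $\widetilde{\mathcal{G}}_{P'}$ correspond respectively to the left-hand and right-hand configurations in the trace identity \eqref{un_triangle} of Proposition \ref{spherical_traces2}, applied to the invertible spherical bimodule category $\Fun_\cala(\varphi_3(L_r),\varphi_3(R_r))$. Together with the multiplicativity and cyclicity properties collected in Proposition \ref{multi_sph_mod}, this shows that the trace value is preserved.

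The main obstacle I anticipate is the bookkeeping near the edge $e$: one has to verify that when $e$ is dragged ``around the sphere'', the adjoints $H^{\delta_r}$ appearing at its endpoints (as fixed in \eqref{delta_def}) and the cyclic orderings of half-edges at those endpoints match up correctly on the two sides. This ultimately relies on the pivotal structure \eqref{pivotal_structure_bicategory_of_pivotal_modules} of $\mathbf{Mod}^{\rm sph}(\cala)$, which canonically identifies $H$ with $H^{\vee\vee}$ and ensures that the two planar interpretations of $\mathcal{G}$ differ only by the sphericality move. Once this identification is made explicit, the proof reduces to a finite iteration of the identities in Propositions \ref{spherical_traces2} and \ref{multi_sph_mod}.
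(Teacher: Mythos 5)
Your proposal is correct and follows essentially the same route as the paper's own proof: the crucial step is the trace identity \eqref{un_triangle} of Proposition \ref{spherical_traces2}, applied to the strand shared by two adjacent patches, and the general case is reduced to this elementary move by iterating over pairs of patches adjacent along a common strand. The paper's proof is terser — it treats the two-patch case directly and disposes of the general case in one sentence — while you spell out the reduction via connectedness of the dual graph, the role of planar isotopy invariance from \Cite{Prop.\,2.2}{FuSY}, and the bookkeeping of adjoints \eqref{delta_def} through the pivotal structure; nonetheless the underlying mechanism is identical.
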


\begin{proof}
Consider first the situation that the graph $\mathcal{G}$ determines precisely two 
patches on the sphere. Then \eqref{un_triangle} ensures that by puncturing
either patch we obtain the same scalar. In case of a larger number of patches, the
result follows by iteratively applying the previous argument to any pair of patches 
adjacent to a common strand in the graph.
\end{proof}

   \item 
\textbf{State sum:} 
Finally we take a weighted average $\sum_{\varphi_3}\omega_3\,\sum_{\varphi_2}
\omega_2 \,{\cdot}\, {\rm tr}_\varphi(*_\varphi)$
over the scalars obtained for all possible labelings on $(M,S)$. In more detail,
given a labeling $\varphi \eq (\varphi_3,\varphi_2)$ we define the weight $\omega_2$
associated to the function $\varphi_2$ in \eqref{label_2} as
  \be
  \omega_2 \equiv \omega_2(\varphi_2)
  := \prod_{r\in\Reg(S)}\! \dim\left(\varphi_2(r)\right)^{\chi_r} ,
  \label{omega_2}
  \ee
where for a label $\varphi_2(r) \eq H \iN \Fun_\cala(\calm,\caln)$, the number
$\dim(H)$ is given by \eqref{ob_dim} and $\chi_r$ is the Euler characteristic of 
the region $r\iN\Reg(S)$. To the function $\varphi_3$ in \eqref{label_3} we designate a weight 
  \be 
  \omega_3 \equiv \omega_3(\varphi_3)
  := \prod_{\Gamma\in|M\backslash S|}\! \left(\dim\,\cala_{\varphi_3(\Gamma)}^*
  \cdot \#\varphi_3(\Gamma) \right)^{\!-1} ,
  \label{eq:omega3}
  \ee
where for $\calm\iN\mathbf{Mod}^{\rm sph}(\cala)$ the value $\dim\,\dualcat$ is 
the categorical dimension of the fusion category $\dualcat$ and $\#\calm$ is the 
number of equivalence classes of simple objects in the connected component of
$\calm$, i.e.\ the number of equivalence classes of simple objects
$\caln\iN\mathbf{Mod}^{\rm sph}(\cala)$ for which there are non-zero $1$-morphisms
$\calm\Rarr~\caln$. 

In our setting, the two types of factors entering the definition of $\omega_3$
simplify: First, the categorical dimension of any indecomposable $\cala$-module $\calm$ 
satisfies $\dim\,\dualcat \eq \dim\,\cala$ \Cite{Prop.\,9.3.9}{EGno}. And second,
since $\cala$ is fusion, the number $\#\calm \,{=:}\, \#\cala$ equals the number 
of equivalence classes of indecomposable $\cala$-module categories, which is a finite
number \Cite{Cor.\,9.1.6(ii)}{EGno}. It follows that we have in fact
  \be
  \omega_3=\big(\dim\,\cala\cdot \#\cala\big)^{\!-|M\backslash S|},
  \ee
which is a number independent of $\varphi_3$.
\end{enumerate}

\begin{defi}\label{state_sum_inv}
Let $M$ be a closed oriented $3$-manifold, $S$ a skeleton of $M$,
$\cala$ a spherical fusion category, and
$\mathbf{Mod}^{\rm sph}(\cala)$ the bicategory of spherical $\cala$-modules.
The \emph{bicategorical state sum} assigned to $M$ is the scalar
  \be
  \label{the_sum}
  {\rm St}_{{\mathbf{Mod}^{\rm sph}(\cala)}} (M,S)
  := \, \big(\dim\,\cala\cdot \#\cala\big)^{\!-|M{\setminus}S|}\!\!\!\!\!
  \sum_{\varphi=(\varphi_3,\varphi_2)}\!
  \omega_2\cdot {\rm tr}_\varphi(*_\varphi) \,\in\ko \,,
  \ee
where the summation extends over all possible labelings \eqref{label_3} and
\eqref{label_2}, the value of $\omega_2$ is given by \eqref{omega_2}, 
$\dim\,\cala$ denotes the categorical dimension of $\cala$, and $\#\cala$ is 
the number of equivalence classes of indecomposable $\cala$-module categories.
\end{defi}

\begin{rem}\label{TV_term}
The Turaev-Viro invariant based on a spherical fusion category $\cala$ appears in
the state sum \eqref{the_sum} as the term for which $\varphi_3$ is the constant
labeling whose constant value is the regular spherical module category ${}_\cala\cala$, once one recalls that $\cala\simeq \Fun_\cala(\cala,\cala)$. 
\end{rem}


\subsection{Skeleton independence of the state sum}\label{invariance}

While the state sum \eqref{the_sum} is defined for a $3$-manifold together with a 
skeleton, in fact it only depends on the manifold:

\begin{thm}\label{skeleton_inv}
Given a closed oriented $3$-manifold $M$, the bicategorical state sum scalar
 \\
${\rm St}_{{\mathbf{Mod}^{\rm sph}(\cala)}}(M,S)$ is independent of the choice of 
skeleton $S$ for $M$.
\end{thm}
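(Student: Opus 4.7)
The plan is to invoke Theorem \ref{skeletons} and reduce the statement to showing that ${\rm St}_{\mathbf{Mod}^{\rm sph}(\cala)}(M,S)$ is invariant under each of the four primary moves $T_0, T_1, T_2, T_3$, as well as their inverses. For each move I would analyze how the two weight factors $\omega_2$ and $\omega_3$ change together with the change in the evaluation ${\rm tr}_\varphi(*_\varphi)$, and verify that summing over the labels of the newly created regions and $3$-cells exactly compensates. The sphere-graph invariance of Lemma \ref{invariance_sphere_graph}, the cyclicity and partial-trace identities of Proposition \ref{spherical_traces2}, and the Biedenharn--Elliott-type completeness identity \eqref{BK} of Proposition \ref{properties_inv}(iii), are the core technical ingredients; the multiplicativity of dimensions from Proposition \ref{multi_sph_mod} supports the counting.

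For the phantom edge move $T_1$, which adds one edge and (when the new edge splits a region) one new region while leaving the number of $3$-cells unchanged, the two half-edges of the new edge contribute a dual pairing via their distinguished vectors. Summing over the label $F$ of the new region with Euler-characteristic weight $\dim(F)$ amounts to inserting the completeness relation \eqref{BK} between the two existing vertices that bound the new edge, which reproduces the vertex evaluations of the original skeleton. When the new edge does not split a region, only the Euler characteristic of the ambient region drops by one, producing a factor $\dim(H)^{-1}$ which is absorbed by the trace over dual bases at the two affected vertices. The contraction move $T_2$ is the reverse procedure and needs no separate argument.

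For the percolation move $T_3$, which adds one edge and one disk-shaped region at a single vertex, the same mechanism applies: one uses the distinguished vectors on the new edge and Proposition \ref{properties_inv}(iii) applied to the sphere graph at the affected vertex, together with the partial-trace identities \eqref{un_triangle}, to see that pushing the branch across the vertex does not change the sphere-graph evaluation. The Euler-characteristic change of the regions bordering the vertex is precisely compensated by the weight $\dim(F)$ of the new disk, summed out via the completeness relation; here Lemma \ref{invariance_sphere_graph} is essential to freely redraw the relevant sphere graph.

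The main obstacle is the bubble move $T_0$, the only move that alters the number of $3$-cells. A new $3$-cell labeled by $\calm \in \Irr\,\mathbf{Mod}^{\rm sph}(\cala)$ is created, together with two new hemisphere-shaped regions labeled by simple module functors $F$ and $G$ between $\calm$ and the $3$-cells adjacent to the original region $r$. The sphere-graph evaluation at the new vertex is a theta-type graph with strands $F$, $G$ and the label $H$ of $r$ (whose Euler characteristic has dropped by one). Summing over $F$ and $G$ with combined weight $\dim(F)\dim(G)/\dim(H)$ collapses the theta graph into a single $H$-strand that matches the evaluation of the unmoved skeleton, by a double application of \eqref{BK}, while producing the categorical dimension $\dim\,\Fun_\cala(\calm_L,\calm) = \dim\,\dualcat = \dim\,\cala$ as a scalar factor (using Proposition \ref{Fun_spherical} together with \Cite{Prop.\,9.3.9}{EGno}). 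Summing over the $\#\cala$ isomorphism classes of simple $\calm$ then yields the additional factor $\#\cala$, so that the two factors together precisely cancel the $(\dim\,\cala \cdot \#\cala)^{-1}$ that $\omega_3$ assigns to the new $3$-cell. The delicate point in this step is to keep careful track of the orientation signs $\delta_r$ for the three branches of the equator and to compose them correctly with the pivotal structure \eqref{pivotal_structure_bicategory_of_pivotal_modules} on $\mathbf{Mod}^{\rm sph}(\cala)$, so that the theta evaluation identifies with the contraction prescribed by the BK identity.
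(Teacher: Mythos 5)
Your overall strategy matches the paper's: invoke Theorem~\ref{skeletons}, reduce to invariance under the primary moves $T_0,T_1,T_2,T_3$ and their inverses, and for each move track how $\omega_2$, $\omega_3$, and $\tr_\varphi(*_\varphi)$ change. Your treatments of $T_3$ and $T_0$ are in the right spirit (the paper's $T_0$ bookkeeping is a direct computation via the sum rule \eqref{dim_expansion} and Proposition~\ref{multi_sph_mod}(i),(iii) rather than a ``double application'' of~\eqref{BK}, but these are equivalent routes), and for $T_1$ the paper in fact delegates to the argument of \Cite{Thm.\,13.1}{TV17} using Proposition~\ref{multi_sph_mod}(ii) and Proposition~\ref{properties_inv}(i),(iv), which is compatible with what you sketch.

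The genuine gap is your claim that the contraction move $T_2$ ``is the reverse procedure'' of $T_1$ ``and needs no separate argument.'' This is false: $T_1$ adds one edge between two \emph{existing, distinct} vertices, so $|v(S')|=|v(S)|$ and $|e(S')|=|e(S)|+1$, whereas $T_2$ collapses an edge with distinct endpoints to a \emph{single} vertex, so $|v(S')|=|v(S)|-1$ as well as $|e(S')|=|e(S)|-1$. Thus $T_1^{-1}$ removes an edge but retains both endpoints, while $T_2$ removes an edge and merges its endpoints — these are different moves and must be treated separately. The paper's proof of invariance under $T_2$ uses the fact that the two half-edges of the collapsed edge $l$ carry dual bases, invokes the multiplicativity of the trace under horizontal composition from Proposition~\ref{multi_sph_mod}(ii) to combine the two trace factors at the endpoints of $l$ into a single trace, and then applies the dominance relation of Proposition~\ref{properties_inv}(iv) with $F\cong\mathbf{1}$ to reproduce the sphere-graph evaluation at the merged vertex in $S'$. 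That argument has to be supplied; it does not follow formally from your treatment of $T_1$.
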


\begin{proof}
According to Theorem \ref{skeletons}, any two skeletons on $M$ can be related by a 
finite sequence of the primary moves listed in Section \ref{sec:primary_moves}. 
Consequently, it suffices to show invariance of ${\rm St}_{} (M,S)$ under all primary
moves. For each of the primary moves we denote the skeleton before the move by $S$
and the one after the move by $S'$, and we fix a labeling
$\varphi \eq (\varphi_3,\varphi_2)$ of the skeleton $S$.
 \\[2pt]
We first consider the percolation move $T_3$, which creates a new edge both of
whose end points are a vertex $v$ and which bounds a new region. 
We restrict our attention to the situation that four regions meet at the vertex $v$
before the move; the general case is treated analogously.
Assigning any simple object $J\iN\Fun_\cala(\caln,\caln)$ to the new 
region created by the move extends $\varphi$ to a labeling of the new skeleton $S'$: 
  \be
  \raisebox{-3.6em}{ \begin{tikzpicture}
  \tikzpicTwocellGGfourlabeled
  \node[\colorCat] at (0.5*\stdx,-0.42) {\small $\caln$} ;
  \node[\colorCat] at (0.5*\stdx+\stdxx,\stdy+0.42) {\small $\calm$} ;
  \node at (0.541*\stdx,0.54*\stdy) [anchor=north]{$\scriptscriptstyle v$} ;
  \end{tikzpicture}
  }
  \hspace*{1.4em} \nicemapsto{T_3} \hspace*{1.2em}
  \raisebox{-3.6em}{ \begin{tikzpicture}
  \coordinate (p1n)  at (0.26*\stdx+0.5*\stdxx,0.52*\stdy) ;
  \tikzpicTwocellGGfour
  \tikzpicpercolfour
  \node[\colorCat] at (0.5*\stdx,-0.42) {\small $\caln$} ;
  \node[\colorCat] at (0.5*\stdx+\stdxx,\stdy+0.42) {\small $\calm$} ;
  \node at (0.06*\stdx+\stdxx,0.72*\stdy) {$\scriptstyle H_{\!1}$} ;
  \node at (0.54*\stdx+0.8*\stdxx,0.9*\stdy) {$\scriptstyle H_{\!2}$} ;
  \node at (0.85*\stdx+\stdxx,0.72*\stdy) {$\scriptstyle H_{\!3}$} ;
  \node at (0.53*\stdx,0.14*\stdy) {$\scriptstyle H_{\!4}$} ;
  \node at (0.392*\stdx,0.515*\stdy) {$\scriptstyle J$} ;
  \node at (0.541*\stdx,0.54*\stdy) [anchor=north]{$\scriptscriptstyle v$} ;
  \filldraw[black] (p1m) circle (1.5*\radvtx) ;
  \end{tikzpicture}
  }
  \ee
The contribution of the vertex $v$ to the state sum ${\rm St}_{} (M,S)$ is
the value of the corresponding graph on a sphere that surrounds $v$, i.e.\
the trace of the string diagram
  \def\locH    {0.85}   
  \be
  \label{trace_abcd}
  \raisebox{-3em}{ \begin{tikzpicture}[scale=\locscale]
  \draw[line width=\widthObjSm,\colorObj]
       (0,0) -- 
       +(0,\locH)
       (2*\locC,0) --  +(0,\locH)
       (0,0) arc (180:360:\locC)
       (0,\locH) arc (180:0:\locC) ;
  \scopeArrow{0.5}{\arrowObj}
  \draw[line width=\widthObjSm,\colorObj,postaction={decorate}]
       (\hsA+\locC,\locH+\locC) -- + (0.01,0) ;
  \draw[line width=\widthObjSm,\colorObj,postaction={decorate}]
       (-\hsA+\locC,-\locC) -- +(-0.01,0) ;
  \end{scope}
  \node[\colorCat] at (1*\locC,0.5*\locH) {\tiny $\calm\,$} ;
  \node[\colorCat] at (-3*\locC,0.5*\locH+0*\locC) {\small $\caln$} ;
  \node[\colorCat] at (5*\locC,0.5*\locH+0*\locC) {\small $\caln$} ;
  \node[black] at (-0.65*\locC,0.5*\locH)  {\scriptsize $H_1$} ;
  \node[black] at (0.5*\locC,1.75*\locC+1*\locH)  {\scriptsize $H_2$} ;
  \node[black] at (2.65*\locC,0.5*\locH)  {\scriptsize $H_3$} ;    
  \node[black] at (0.5*\locC,-\locC) [anchor=north west] {\scriptsize $H_4$} ;    
  \node[teal] at (0,1.2*\locH) [anchor=east] {\small $\alpha$} ;
  \node[teal] at (2*\locC,1.2*\locH) [anchor=west] {\small $\beta$} ;
  \node[teal] at (0,-0.2*\locH) [anchor=east] {\small $\delta$} ;
  \node[teal] at (2*\locC,-0.2*\locH) [anchor=west] {\small $\gamma$} ;    
  \filldraw[black] (0,0.1*\locH)  circle (1.3pt);
  \filldraw[black] (0,0.9*\locH) circle (1.3pt);
  \filldraw[black] (2*\locC,0.1*\locH) circle (1.3pt);
  \filldraw[black] (2*\locC,0.9*\locH) circle (1.3pt);
  \end{tikzpicture}
  }
  \ee
where $\alpha, \beta, \gamma, \delta$ are the module natural transformations coming
from the distinguished vector $*_\varphi$. The contribution of $v$ to
${\rm St}_{} (M,S')$ is the trace of the string diagram
      \def\locC {0.5}
  \be
  \sum_{J\in\Irr\cala_{\!\caln}^*} \dim(J)\;~\sum_i~~
  \raisebox{-4.5em}{ \begin{tikzpicture}[scale=\locscale]
  \draw[line width=\widthObjSm,\colorCat]
       (\locC,-0.5*\locH) -- 
       +(0,0.75*\locH)       ;
  \draw[line width=\widthObjSm,\colorObj]
       (\locC,0.25*\locH) to  [out=150,in=270] 
       (0,\locH)
       (\locC,0.25*\locH) to  [out=30,in=270] 
       (2*\locC,\locH)       
       (\locC,-0.5*\locH) to  [out=-150,in=90] 
       (0,-1.25*\locH)
       (\locC,-0.5*\locH) to  [out=-30,in=90] 
       (2*\locC,-1.25*\locH)       
       (0,-1.25*\locH) arc (180:360:\locC)
       (0,\locH) arc (180:0:\locC) ;
  \scopeArrow{0.5}{\arrowObj}
  \draw[line width=\widthObjSm,\colorObj,postaction={decorate}]
       (\hsA+\locC,\locH+\locC) -- + (0.01,0) ;
  \draw[line width=\widthObjSm,\colorObj,postaction={decorate}]
       (-\hsA+\locC,-1.25*\locH-1*\locC) -- +(-0.01,0) ;
  \end{scope}
    \node[\colorCat] at (1*\locC,-1.25*\locH) {\tiny $\calm\,$} ;
    \node[\colorCat] at (1*\locC,1*\locH) {\tiny $\calm\,$} ;
    \node[\colorCat] at (-2.5*\locC,0) {\small $\caln$};
    \node[\colorCat] at (4.5*\locC,0) {\small $\caln$};
    \node[\colorCat] at (0.5*\locC,-0.15*\locH)  {\scriptsize $J$};
    \node[black] at (-0.3*\locC,-0.75*\locH)  {\scriptsize $H_1$};
    \node[black] at (-0.3*\locC,0.5*\locH)  {\scriptsize $H_1$} ;
    \node[black] at (0.5*\locC,1.75*\locC+1*\locH)  {\scriptsize $H_2$} ;
    \node[black] at (2.3*\locC,-0.75*\locH)  {\scriptsize $H_3$};
    \node[black] at (2.3*\locC,0.5*\locH)  {\scriptsize $H_3$};
    \node[black] at (0.5*\locC,-1.25*\locH-1*\locC) [anchor=north west] {\scriptsize $H_4$} ;    
    \node[teal] at (\locC,-0.3*\locH) [anchor=west] {\tiny $\psi_i$};
    \node[teal] at (\locC,0.15*\locH) [anchor=west] {\tiny $\psi^*_i$};
    \node[teal] at (0,1.2*\locH) [anchor=east] {\small $\alpha$};
    \node[teal] at (2*\locC,1.2*\locH) [anchor=west] {\small $\beta$};
    \node[teal] at (0,-1.25*\locH) [anchor=east] {\small $\delta$};
    \node[teal] at (2*\locC,-1.25*\locH) [anchor=west] {\small $\gamma$};
    \filldraw[black] (0,-1.25*\locH)  circle (1.3pt);
    \filldraw[black] (\locC,-0.5*\locH)  circle (1.3pt);
    \filldraw[black] (\locC,0.25*\locH)  circle (1.3pt);    
    \filldraw[black] (0,\locH) circle (1.3pt);
    \filldraw[black] (2*\locC,-1.25*\locH) circle (1.3pt);
    \filldraw[black] (2*\locC,\locH) circle (1.3pt);
  \end{tikzpicture}
  }
  \ee
where $\psi_i^{}$ and $\psi_i^*$ are dual bases because the two half-edges associated 
to the new edge in $S'$ share their end point. The factor $\dim(J)$ comes from the 
factor in $\omega_2$ associated to the new region, while the $J$-summation accounts 
for the different possible labelings of $S'$ that extend $\varphi$. Now it follows
from Proposition \ref{properties_inv}(iii) that the so obtained expressions coincide:
  \be
  \def\locC {0.4}
  \tr^{\cala_{\!\caln}^*}_{\mathbf{1}_{\!\caln}}
  \Bigg(
  \hspace*{0.2em}
  \raisebox{-2.75em}{ \begin{tikzpicture}[scale=0.9*\locscale]
  \draw[line width=\widthObjSm,\colorObj]
       (0,0) -- +(0,\locH)
       (2*\locC,0) --  +(0,\locH)
       (0,0) arc (180:360:\locC)
       (0,\locH) arc (180:0:\locC) ;
  \scopeArrow{0.5}{\arrowObj}
  \draw[line width=\widthObjSm,\colorObj,postaction={decorate}]
       (\hsA+\locC,\locH+\locC) -- + (0.01,0) ;
  \draw[line width=\widthObjSm,\colorObj,postaction={decorate}]
       (-\hsA+\locC,-\locC) -- +(-0.01,0) ;
  \end{scope}
  \node[\colorCat] at (1*\locC,0.5*\locH) {\tiny $\calm\,$} ;
  \node[\colorCat] at (-2.5*\locC,0.5*\locH+0*\locC) {\small $\caln$} ;
  \node[\colorCat] at (4.5*\locC,0.5*\locH+0*\locC) {\small $\caln$} ;
  \node[black] at (-0.65*\locC,0.5*\locH)  {\scriptsize $H_1$} ;
  \node[black] at (0.5*\locC,1.75*\locC+1*\locH)  {\scriptsize $H_2$} ;
  \node[black] at (2.65*\locC,0.5*\locH)  {\scriptsize $H_3$} ;    
  \node[black] at (0.5*\locC,-\locC) [anchor=north west] {\scriptsize $H_4$} ;    
  \node[teal] at (0,1.2*\locH) [anchor=east] {\small $\alpha$} ;
  \node[teal] at (2*\locC,1.2*\locH) [anchor=west] {\small $\beta$} ;
  \node[teal] at (0,-0.2*\locH) [anchor=east] {\small $\delta$} ;
  \node[teal] at (2*\locC,-0.2*\locH) [anchor=west] {\small $\gamma$} ;    
  \filldraw[black] (0,0.1*\locH)  circle (1.3pt);
  \filldraw[black] (0,0.9*\locH) circle (1.3pt);
  \filldraw[black] (2*\locC,0.1*\locH) circle (1.3pt);
  \filldraw[black] (2*\locC,0.9*\locH) circle (1.3pt);
  \end{tikzpicture}
  }  
  \hspace*{0.2em}
  \Bigg) \;
  = \sum_{J\in\Irr\cala_{\!\caln}^*}\!\! \dim(J)\,\sum_i\,
  \tr^{\cala_{\!\caln}^*}_{\mathbf{1}_{\!\caln}}
  \Bigg( \hspace*{0.2em}
       \def\locC {0.5}  
  \raisebox{-4.5em}{ \begin{tikzpicture}[scale=\locscale]
  \draw[line width=\widthObjSm,\colorCat]
       (\locC,-0.5*\locH) -- 
       +(0,0.75*\locH)       ;
  \draw[line width=\widthObjSm,\colorObj]
       (\locC,0.25*\locH) to  [out=150,in=270] (0,\locH)
       (\locC,0.25*\locH) to  [out=30,in=270] (2*\locC,\locH)       
       (\locC,-0.5*\locH) to  [out=-150,in=90] (0,-1.25*\locH)
       (\locC,-0.5*\locH) to  [out=-30,in=90] (2*\locC,-1.25*\locH)       
   (0,-1.25*\locH) arc (180:360:\locC)
       (0,\locH) arc (180:0:\locC) ;
  \scopeArrow{0.5}{\arrowObj}
  \draw[line width=\widthObjSm,\colorObj,postaction={decorate}]
       (\hsA+\locC,\locH+\locC) -- + (0.01,0) ;
  \draw[line width=\widthObjSm,\colorObj,postaction={decorate}]
       (-\hsA+\locC,-1.25*\locH-1*\locC) -- +(-0.01,0) ;
  \end{scope}
  \node[\colorCat] at (1*\locC,-1.25*\locH) {\tiny $\calm\,$} ;
  \node[\colorCat] at (1*\locC,1*\locH) {\tiny $\calm\,$} ;
  \node[\colorCat] at (-2*\locC,0) {\small $\caln$};
  \node[\colorCat] at (4*\locC,0) {\small $\caln$};
  \node[\colorCat] at (0.5*\locC,-0.15*\locH)  {\scriptsize $J$};
  \node[black] at (-0.3*\locC,-0.75*\locH)  {\scriptsize $H_1$};
  \node[black] at (-0.3*\locC,0.5*\locH)  {\scriptsize $H_1$} ;
  \node[black] at (0.5*\locC,1.75*\locC+1*\locH)  {\scriptsize $H_2$} ;
  \node[black] at (2.3*\locC,-0.75*\locH)  {\scriptsize $H_3$};
  \node[black] at (2.3*\locC,0.5*\locH)  {\scriptsize $H_3$};
  \node[black] at (0.5*\locC,-1.25*\locH-1*\locC) [anchor=north west] {\scriptsize $H_4$} ;    
  \node[teal] at (\locC,-0.3*\locH) [anchor=west] {\tiny $\psi_i$};
  \node[teal] at (\locC,0.15*\locH) [anchor=west] {\tiny $\psi^*_i$};
  \node[teal] at (0,1.2*\locH) [anchor=east] {\small $\alpha$};
  \node[teal] at (2*\locC,1.2*\locH) [anchor=west] {\small $\beta$};
  \node[teal] at (0,-1.25*\locH) [anchor=east] {\small $\delta$};
  \node[teal] at (2*\locC,-1.25*\locH) [anchor=west] {\small $\gamma$};
  \filldraw[black] (0,-1.25*\locH)  circle (1.3pt);
  \filldraw[black] (\locC,-0.5*\locH)  circle (1.3pt);
  \filldraw[black] (\locC,0.25*\locH)  circle (1.3pt);    
  \filldraw[black] (0,\locH) circle (1.3pt);
  \filldraw[black] (2*\locC,-1.25*\locH) circle (1.3pt);
  \filldraw[black] (2*\locC,\locH) circle (1.3pt);
  \end{tikzpicture}
  }
  \hspace*{0.4em} \Bigg)
  \ee
This establishes the invariance of ${\rm St}_{{\mathbf{Mod}^{\rm sph}(\cala)}}(M,S)$ 
under the percolation move $T_3$.
 \\[2pt]
Next consider the contraction move $T_2$ which
collapses an edge $l \iN e(S)$ into a single vertex $v\iN v(S')$,
again for the situation of four regions meeting at $v$:
  \be
  \raisebox{-3.6em}{ \begin{tikzpicture}
  \tikzpicTwocellEEfour
  \node[\colorCat] at (0.5*\stdx,-0.42) {\small $\caln$} ;
  \node[\colorCat] at (0.5*\stdx+\stdxx,\stdy+0.42) {\small $\calm$} ;
  \draw[black,thick] (p11) [out=0,in=180] to (p12) ;
  \node at (0.06*\stdx+\stdxx,0.72*\stdy) {$\scriptstyle H_{\!1}$} ;
  \node at (0.54*\stdx+0.8*\stdxx,0.9*\stdy) {$\scriptstyle H_{\!2}$} ;
  \node at (0.85*\stdx+\stdxx,0.58*\stdy) {$\scriptstyle H_{\!3}$} ;
  \node at (0.53*\stdx,0.14*\stdy) {$\scriptstyle H_{\!4}$} ;
  \node at (0.45*\stdx+0.8*\stdxx,0.61*\stdy) {$\scriptstyle l$} ;
  \filldraw[black] (p11) circle (1.5*\radvtx) (p12) circle (1.5*\radvtx) ;

  \end{tikzpicture}
  }
  \hspace*{1.4em} \nicemapsto{T_2} \hspace*{1.2em}
  \raisebox{-3.6em}{ \begin{tikzpicture}
  \tikzpicTwocellFFfour
  \node[\colorCat] at (0.5*\stdx,-0.42) {\small $\caln$} ;
  \node[\colorCat] at (0.5*\stdx+\stdxx,\stdy+0.42) {\small $\calm$} ;
  \node at (0.06*\stdx+\stdxx,0.72*\stdy) {$\scriptstyle H_{\!1}$} ;
  \node at (0.54*\stdx+0.8*\stdxx,0.9*\stdy) {$\scriptstyle H_{\!2}$} ;
  \node at (0.85*\stdx+\stdxx,0.58*\stdy) {$\scriptstyle H_{\!3}$} ;
  \node at (0.53*\stdx,0.14*\stdy) {$\scriptstyle H_{\!4}$} ;
  \node at (0.53*\stdx,0.52*\stdy) [anchor=north]{$\scriptscriptstyle v$} ;
  \filldraw[black] (p1m) circle (1.5*\radvtx) ;
  \end{tikzpicture}
  }
  \ee
The contribution of the two end points of the edge $l$ to ${\rm St}_{}( M,S)$ is
  \def\locC {0.4}
  \def\locH {0.75}   
  \be
  \sum_i~\tr^{\cala_{\!\caln}^*}_{\mathbf{1}_{\!\caln}}
  \Bigg(
  \hspace*{0.2em}
  \raisebox{-2.8em}{ \begin{tikzpicture}[scale=\locscale]
  \draw[line width=\widthObjSm,\colorObj]
       (0,0) -- +(0,\locH)
       (2*\locC,0) --  +(0,\locH)
       (0,0) arc (180:360:\locC)
       (0,\locH) arc (180:0:\locC) ;
  \scopeArrow{0.5}{\arrowObj}
  \draw[line width=\widthObjSm,\colorObj,postaction={decorate}]
       (\hsA+\locC,\locH+\locC) -- + (0.01,0) ;
  \draw[line width=\widthObjSm,\colorObj,postaction={decorate}]
       (-\hsA+\locC,-\locC) -- +(-0.01,0) ;
  \end{scope}
  \node[\colorCat] at (1*\locC,0.5*\locH) {\tiny $\calm\,$} ;
  \node[\colorCat] at (-2.5*\locC,0.5*\locH+0*\locC) {\small $\caln$} ;
  \node[black] at (-0.65*\locC,0.5*\locH)  {\scriptsize $H_1$} ;
  \node[black] at (0.5*\locC,1.75*\locC+1*\locH)  {\scriptsize $H_2$} ; 
  \node[black] at (0.5*\locC,-\locC) [anchor=north west] {\scriptsize $H_4$} ;    
  \node[teal] at (0,1.2*\locH) [anchor=east] {\small $\alpha$} ;
  \node[teal] at (2*\locC,0.5*\locH) [anchor=west] {\footnotesize $\psi_i$} ;
  \node[teal] at (0,-0.2*\locH) [anchor=east] {\small $\delta$} ;
  \filldraw[black] (0,0.1*\locH)  circle (1.3pt);
  \filldraw[black] (0,0.9*\locH) circle (1.3pt);
  \filldraw[black] (2*\locC,0.5*\locH) circle (1.3pt);
  \end{tikzpicture}
  }
  \hspace*{0.2em}
  \Bigg) \; \cdot \;
  \tr^{\cala_{\!\caln}^*}_{\mathbf{1}_{\!\caln}}
  \Bigg(
  \hspace*{0.2em}
  \raisebox{-2.8em}{ \begin{tikzpicture}[scale=\locscale]
  \draw[line width=\widthObjSm,\colorObj]
       (0,0) -- 
       +(0,\locH)
       (2*\locC,0) --  +(0,\locH)
       (0,0) arc (180:360:\locC)
       (0,\locH) arc (180:0:\locC) ;
  \scopeArrow{0.5}{\arrowObj}
  \draw[line width=\widthObjSm,\colorObj,postaction={decorate}]
       (\hsA+\locC,\locH+\locC) -- + (0.01,0) ;
  \draw[line width=\widthObjSm,\colorObj,postaction={decorate}]
       (-\hsA+\locC,-\locC) -- +(-0.01,0) ;
  \end{scope}
  \node[\colorCat] at (1*\locC,0.5*\locH) {\tiny $\calm\,$} ;
  \node[\colorCat] at (4.5*\locC,0.5*\locH+0*\locC) {\small $\caln$} ;
  \node[black] at (0.5*\locC,1.75*\locC+1*\locH)  {\scriptsize $H_2$} ;
  \node[black] at (2.65*\locC,0.5*\locH)  {\scriptsize $H_3$} ;    
  \node[black] at (0.5*\locC,-\locC) [anchor=north west] {\scriptsize $H_4$} ;    
  \node[teal] at (0,0.5*\locH) [anchor=east] {\footnotesize $\psi_i^*$} ;
  \node[teal] at (2*\locC,1.2*\locH) [anchor=west] {\small $\beta$} ;
  \node[teal] at (2*\locC,-0.2*\locH) [anchor=west] {\small $\gamma$} ;
  \filldraw[black] (0,0.5*\locH)  circle (1.3pt);
  \filldraw[black] (2*\locC,0.1*\locH) circle (1.3pt);
  \filldraw[black] (2*\locC,0.9*\locH) circle (1.3pt);
  \end{tikzpicture}
  }
  \hspace*{0.2em}
  \Bigg)
  \label{contr_endpoints}
  \ee
where $\psi_i$ and $\psi_i^*$, coming from the two half-edges associated to $l$, are 
again dual bases. By the multiplicativity of the trace the number
\eqref{contr_endpoints} equals 
        \def\locH    {0.75}   
  \be
  \sum_i~\tr^{\cala_{\!\caln}^*}_{\mathbf{1}_{\!\caln}}
  \Bigg(
  \hspace*{0.2em}
  \raisebox{-2.8em}{ \begin{tikzpicture}[scale=\locscale]
  \draw[line width=\widthObjSm,\colorObj]
       (0,0) -- 
       +(0,\locH)
       (2*\locC,0) --  +(0,\locH)
       (0,0) arc (180:360:\locC)
       (0,\locH) arc (180:0:\locC) ;
  \scopeArrow{0.5}{\arrowObj}
  \draw[line width=\widthObjSm,\colorObj,postaction={decorate}]
       (\hsA+\locC,\locH+\locC) -- + (0.01,0) ;
  \draw[line width=\widthObjSm,\colorObj,postaction={decorate}]
       (-\hsA+\locC,-\locC) -- +(-0.01,0) ;
  \end{scope}
  \node[\colorCat] at (1*\locC,0.5*\locH) {\tiny $\calm\,$} ;
  \node[\colorCat] at (-2.5*\locC,0.5*\locH+0*\locC) {\small $\caln$} ;
  \node[black] at (-0.65*\locC,0.5*\locH)  {\scriptsize $H_1$} ;
  \node[black] at (0.5*\locC,1.75*\locC+1*\locH)  {\scriptsize $H_2$} ; 
  \node[black] at (0.5*\locC,-\locC) [anchor=north west] {\scriptsize $H_4$} ;    
  \node[teal] at (0,1.2*\locH) [anchor=east] {\small $\alpha$} ;
  \node[teal] at (2*\locC,0.5*\locH) [anchor=west] {\footnotesize $\psi_i$} ;
  \node[teal] at (0,-0.2*\locH) [anchor=east] {\small $\delta$} ;
  \filldraw[black] (0,0.1*\locH)  circle (1.3pt);
  \filldraw[black] (0,0.9*\locH) circle (1.3pt);
  \filldraw[black] (2*\locC,0.5*\locH) circle (1.3pt);
  \end{tikzpicture}
  }
  \hspace*{0.2em}
  \raisebox{-2.8em}{ \begin{tikzpicture}[scale=\locscale]
  \draw[line width=\widthObjSm,\colorObj]
       (0,0) -- +(0,\locH)
       (2*\locC,0) --  +(0,\locH)
       (0,0) arc (180:360:\locC)
       (0,\locH) arc (180:0:\locC) ;
  \scopeArrow{0.5}{\arrowObj}
  \draw[line width=\widthObjSm,\colorObj,postaction={decorate}]
       (\hsA+\locC,\locH+\locC) -- + (0.01,0) ;
  \draw[line width=\widthObjSm,\colorObj,postaction={decorate}]
       (-\hsA+\locC,-\locC) -- +(-0.01,0) ;
  \end{scope}
  \node[\colorCat] at (1*\locC,0.5*\locH) {\tiny $\calm\,$} ;
  \node[\colorCat] at (4.5*\locC,0.5*\locH+0*\locC) {\small $\caln$} ;
  \node[black] at (0.5*\locC,1.75*\locC+1*\locH)  {\scriptsize $H_2$} ;
  \node[black] at (2.65*\locC,0.5*\locH)  {\scriptsize $H_3$} ;    
  \node[black] at (0.5*\locC,-\locC) [anchor=north west] {\scriptsize $H_4$} ;    
  \node[teal] at (0,0.5*\locH) [anchor=east] {\footnotesize $\psi_i^*$} ;
  \node[teal] at (2*\locC,1.2*\locH) [anchor=west] {\small $\beta$} ;
  \node[teal] at (2*\locC,-0.2*\locH) [anchor=west] {\small $\gamma$} ;
  \filldraw[black] (0,0.5*\locH)  circle (1.3pt);
  \filldraw[black] (2*\locC,0.1*\locH) circle (1.3pt);
  \filldraw[black] (2*\locC,0.9*\locH) circle (1.3pt);
  \end{tikzpicture}
  }
  \hspace*{0.2em}
  \Bigg)\label{contr_endpoints_2}
  ~=~\,
  \tr^{\cala_{\!\caln}^*}_{\mathbf{1}_{\!\caln}}
  \Bigg(
  \hspace*{0.2em}
  \raisebox{-2.8em}{ \begin{tikzpicture}[scale=\locscale]
  \draw[line width=\widthObjSm,\colorObj]
       (0,0) -- +(0,\locH)
       (2*\locC,0) --  +(0,\locH)
       (0,0) arc (180:360:\locC)
       (0,\locH) arc (180:0:\locC) ;
  \scopeArrow{0.5}{\arrowObj}
  \draw[line width=\widthObjSm,\colorObj,postaction={decorate}]
       (\hsA+\locC,\locH+\locC) -- + (0.01,0) ;
  \draw[line width=\widthObjSm,\colorObj,postaction={decorate}]
       (-\hsA+\locC,-\locC) -- +(-0.01,0) ;
  \end{scope}
  \node[\colorCat] at (1*\locC,0.5*\locH) {\tiny $\calm\,$} ;
  \node[\colorCat] at (-3*\locC,0.5*\locH+0*\locC) {\small $\caln$} ;
  \node[\colorCat] at (5*\locC,0.5*\locH+0*\locC) {\small $\caln$} ;
  \node[black] at (-0.65*\locC,0.5*\locH)  {\scriptsize $H_1$} ;
  \node[black] at (0.5*\locC,1.75*\locC+1*\locH)  {\scriptsize $H_2$} ;
  \node[black] at (2.65*\locC,0.5*\locH)  {\scriptsize $H_3$} ;    
  \node[black] at (0.5*\locC,-\locC) [anchor=north west] {\scriptsize $H_4$} ;    
  \node[teal] at (0,1.2*\locH) [anchor=east] {\small $\alpha$} ;
  \node[teal] at (2*\locC,1.2*\locH) [anchor=west] {\small $\beta$} ;
  \node[teal] at (0,-0.2*\locH) [anchor=east] {\small $\delta$} ;
  \node[teal] at (2*\locC,-0.2*\locH) [anchor=west] {\small $\gamma$} ;    
  \filldraw[black] (0,0.1*\locH)  circle (1.3pt);
  \filldraw[black] (0,0.9*\locH) circle (1.3pt);
  \filldraw[black] (2*\locC,0.1*\locH) circle (1.3pt);
  \filldraw[black] (2*\locC,0.9*\locH) circle (1.3pt);
  \end{tikzpicture}
  }
  \hspace*{0.2em} \Bigg)  
  \ee
where the equality follows from Proposition \ref{properties_inv}(iv). On the other
hand, the graph on the sphere around $v$ after the move is again of the 
form \eqref{trace_abcd}. Hence the contribution of $v$ to ${\rm St}(M,S')$ 
is precisely given by the right hand side of \eqref{contr_endpoints_2}.
This proves invariance of the state sum under $T_2$.
 \\[2pt]
For the bubble move $T_0$, the interior of the newly created bubble is an additional
$3$-cell of $S'$. Every choice of a spherical module category 
$\caln\iN\Irr\,\mathbf{Mod}^{\rm sph}(\cala)$ extends the function $\varphi_3$ to the
first component of a labeling $\varphi' \eq (\varphi_3',\varphi_2')$ of $S'$.
Now consider a region $r$ labeled by $K$ and
fix such a spherical module $\caln$ and thus the function $\varphi_3'$. Then the
choice of simple module functors $H_1\iN\Fun_\cala(\calm,\caln)$ and 
$H_2\iN\Fun_\cala(\caln,\call)$ extends the function $\varphi_2$ to the second 
component $\varphi_2'$ of a labeling of $S'$:
  \be
  \raisebox{-3.6em}{ \begin{tikzpicture}
  \tikzpicTwocell
  \node at (0.14*\stdx,0.18*\stdy) {$\scriptstyle K$} ;
  \node[\colorMlab]  at (0.9*\stdx+\stdxx,0.83*\stdy) {$\scriptstyle r$} ;
  \node[\colorCat] at (0.5*\stdx,-0.42) {\small $\call$} ;
  \node[\colorCat] at (0.5*\stdx+\stdxx,\stdy+0.42) {\small $\calm$} ;
  \end{tikzpicture}
  }
  \hspace*{1.4em} \nicemapsto{T_0} \hspace*{1.2em}
  \raisebox{-3.6em}{ \begin{tikzpicture}
  \tikzpicTwocell
  \node at (0.14*\stdx,0.18*\stdy) {$\scriptstyle K$} ;
  \node[\colorMlab] at (0.9*\stdx+\stdxx,0.83*\stdy) {$\scriptstyle r$} ;
  \node[\colorCat] at (0.5*\stdx,-0.42) {\small $\call$} ;
  \node[\colorCat] at (0.5*\stdx+\stdxx,\stdy+0.42) {\small $\calm$} ;
  \shade[top color=\colorMd,bottom color=\colorMl]
            (0.5*\stdx+0.5*\stdxx,0.5*\stdy) circle (\radcir);
  \draw[black,thick] (0.5*\stdx+0.5*\stdxx+\radcir,0.5*\stdy) arc (0:180:\radcir);
  \draw[black,thick,densely dotted] (0.5*\stdx+0.5*\stdxx+\radcir,0.5*\stdy) arc (0:-180:\radcir);
  \draw[black] (0.5*\stdx+0.5*\stdxx+\radcir,0.5*\stdy) arc (0:-180:\radcir cm and 0.4*\radcir cm);
  \draw[black,dashed] (0.5*\stdx+0.5*\stdxx+\radcir,0.5*\stdy) arc (0:180:\radcir cm and 0.4*\radcir cm);
  \filldraw[black] (0.5*\stdx,0.5*\stdy-0.372*\radcir) circle (\radvtx) ;
  \node[\colorCat] at (0.5*\stdx+0.5*\stdxx,0.51*\stdy) {$\scriptstyle \caln$} ;
  \node at (0.54*\stdx+0.5*\stdxx,0.74*\stdy) {$\scriptstyle H_{\!1}$} ;
  \node at (0.54*\stdx+0.5*\stdxx,0.25*\stdy) {$\scriptstyle H_{\!2}$} ;
  \node at (0.38*\stdx+0.5*\stdxx,0.47*\stdy) {$\scriptstyle l$} ;
  \end{tikzpicture}
  }
  \ee
The new edge $l\iN e(S')$ created along the equator of the bubble has coinciding 
end points, namely the new vertex $v\iN v(S')$, and thus dual bases label its 
half-edges. Therefore the value of the graph around $v$ equals the dimension of 
the vector space $\Nat_{\rm mod}(\mathbf{1}_\Caln, K^\delta\,{\circ}\, H_2^{\delta_2}
\,{\circ}\, H_1^{\delta_1})$ associated to the edge $l$, where the
exponents $\delta$ are the signs
given by \eqref{delta_def}, which come 
from the relative orientation of the respective region and the edge $l$. Hence for a 
fixed $\varphi_3'$ the contribution of $v$ to the state sum ${\rm St}_{}(M,S)$ is
  \be
  \begin{aligned}
  \sum_{H_1,H_2} & \dim(H_1^{\delta_1})\,\dim(H_2^{\delta_2})\, \dim_\ko
  \Nat_{\rm mod}(\mathbf{1}_\Caln, K^\delta\cir H_2^{\delta_2}\cir H_1^{\delta_1})
  \\[-4pt]
  & \! = \sum_{H_1,H_2} \dim(H_1)\,\dim(H_2)\,\dim_\ko
  \Nat_{\rm mod}(\mathbf{1}_\Caln, K^\delta\cir H_2\cir H_1\ra)
  \\
  & = \sum_{H_2} \dim(H_2)\,\sum_{H_1} \dim(H_1)\,
  \dim_\ko \Nat_{\rm mod}(H_1, K^\delta\cir H_2)
  \\
  & = \sum_{H_2} \dim(H_2)\,\dim(K^\delta\cir H_2) = \dim(K^\delta)\,\sum_{H_2} (\dim(H_2))^2
  \\
  & = \dim(K)\,\dim\,\Fun_\Cala(\caln,\call) = \dim(K)\,\dim\,\cala_{\!\caln}^* \,.
  \end{aligned}
  \label{long_chain}
  \ee
Here the first equality makes use of Proposition \ref{multi_sph_mod}(i)
and the second equality holds because $H_1$ and $H_1\ra$ are duals; the third step
follows from the sum rule \eqref{dim_expansion}, while the fourth equality uses 
Proposition \ref{multi_sph_mod}(iii). Finally we use the definition of the
categorical dimension and the fact \Cite{Prop.\,2.17}{etno} that 
$\dim\,\Fun_\Cala(\caln,\call) \eq \dim\,\cala_{\!\caln}^*$ and once more
Proposition \ref{multi_sph_mod}(i).
 \\
Further, summing over all $\varphi_3'$ amounts to a factor of $\#\cala$, which
together with the factor $\dim\,\cala_{\!\caln}^* \eq \dim\,\cala$ from 
\eqref{long_chain} cancels out the additional factor in $\omega_3(\varphi_3')$ that according to \eqref{eq:omega3} arises from the presence of the new $3$-cell.
The factor of $\dim(K)$ in \eqref{long_chain} accounts for the change in 
Euler characteristic of the region $r$ labeled by $K$ (removing a disk from $r$
reduces $\chi_r$ by 1). Taken together, this demonstrates the invariance of the
state sum under $T_0$. 
 \\[2pt]
Finally, invariance under the move $T_1$ is proven in complete analogy to the proof in
\Cite{Thm.\ 13.1}{TV17} by using Proposition \ref{multi_sph_mod}(ii) and Proposition
\ref{properties_inv}(i) and (iv).
\end{proof}


\subsection{Comparison to the Turaev-Viro invariant}

Let $\cala$ be a spherical fusion category. The main purpose of this section is to
show that the Turaev-Vi\-ro invariant associated to $\cala$ coincides with the 
state sum invariant constructed in Section \ref{sec:state_sum_inv} from the
pivotal bicategory $\mathbf{Mod}^{\rm sph}(\cala)$ of spherical $\cala$-modules.

\begin{thm}\label{thm:TV=St}
The bicategorical state sum invariant for $\mathbf{Mod}^{\rm sph}(\cala)$ coincides 
with the Turaev-Viro invariant for $\cala$:
  \be
  {\rm St}_{\mathbf{Mod}^{\rm sph}(\cala)} (M)={\rm TV}_{\Cala}(M) 
  \ee
for every closed oriented $3$-manifold $M$.	
\end{thm}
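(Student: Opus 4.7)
I appeal to Theorem \ref{skeleton_inv} to work with an arbitrary fixed skeleton $S$ of $M$, and partition the bicategorical state sum according to the $3$-labeling:
\begin{equation*}
\mathrm{St}(M,S) \,=\, \omega_3 \sum_{\varphi_3} \mathrm{St}_{\varphi_3}(M,S) \,, \qquad
\mathrm{St}_{\varphi_3}(M,S) \,:=\, \sum_{\varphi_2} \omega_2 \cdot \mathrm{tr}_\varphi(*_\varphi) \,.
\end{equation*}
For the constant labeling $\varphi_3 \equiv \cala$, the canonical tensor equivalence $\Fun_\cala(\cala,\cala) \simeq \cala$ identifies simple $1$-morphisms $\cala \to \cala$ with simple objects of $\cala$, and the Calabi-Yau trace on $\cala_\cala^*$ inherited from the bicategory restricts to the usual quantum trace on the spherical fusion category $\cala$. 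As recorded in Remark \ref{TV_term}, this gives
\begin{equation*}
\mathrm{St}_{\varphi_3\equiv\cala}(M,S) \,=\, (\dim\cala)^{|M\setminus S|} \cdot \mathrm{TV}_\cala(M) \,,
\end{equation*}
i.e.\ the unnormalized Turaev-Viro sum for $\cala$ appears exactly as the $\varphi_3 \equiv \cala$ contribution.

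The theorem then follows once one proves that $\mathrm{St}_{\varphi_3}(M,S)$ is independent of $\varphi_3$: since the set of $3$-labelings has cardinality $(\#\cala)^{|M\setminus S|}$, the normalization factors collapse to
\begin{equation*}
\mathrm{St}(M,S) = (\dim\cala \cdot \#\cala)^{-|M\setminus S|} \cdot (\#\cala)^{|M\setminus S|} \cdot (\dim\cala)^{|M\setminus S|} \cdot \mathrm{TV}_\cala(M) = \mathrm{TV}_\cala(M) \,.
\end{equation*}

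To verify the $\varphi_3$-independence, I would compare two labelings $\varphi_3$ and $\varphi_3'$ that differ on a single $3$-cell $\Gamma$, with values $\calm$ and $\calm'$. The plan is to apply a $T_0$ bubble move inside $\Gamma$: by Theorem \ref{skeleton_inv} this preserves the state sum, and in the enlarged skeleton the new inner $3$-cell carries a freely chosen label while the two hemispherical regions carry module functor labels between the inner and outer labels. An explicit local computation parallel to the bubble move invariance step in the proof of Theorem \ref{skeleton_inv}, but with the inner-cell label kept fixed rather than summed, together with the multiplicativity statements of Proposition \ref{multi_sph_mod}, the Calabi-Yau identities of Proposition \ref{properties_inv}, and the sphere-graph trace identity \eqref{un_triangle}, should produce an expression manifestly symmetric under the exchange of the inner and outer labels. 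Combining this symmetry with bubble move invariance equates $\mathrm{St}_{\varphi_3}(M,S)$ and $\mathrm{St}_{\varphi_3'}(M,S)$, and iterating this local label-swap across the $3$-cells of $S$ identifies $\mathrm{St}_{\varphi_3}$ with $\mathrm{St}_{\varphi_3\equiv\cala}$ for every $\varphi_3$.

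The principal obstacle is the nonlocal character of the $3$-labeling: altering $\varphi_3(\Gamma)$ changes the bicategories in which the region labels adjacent to $\Gamma$ take values, so no termwise comparison between $\mathrm{St}_{\varphi_3}$ and $\mathrm{St}_{\varphi_3'}$ is available. The bubble-move detour circumvents this by realizing both labelings inside a common geometric configuration, and the Morita and Calabi-Yau trace identities developed in Section \ref{sec:traces} supply the precise algebraic matching required to complete the identification.
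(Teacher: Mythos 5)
Your overall framework matches the paper's: decompose by $\varphi_3$, identify the constant labeling's contribution with the unnormalized Turaev-Viro sum via Remark \ref{TV_term}, collapse the normalization by counting $3$-labelings, and reduce to showing that the partial sums $\mathrm{St}_{\varphi_3}(M,S)$ are independent of $\varphi_3$ (which is exactly Lemma \ref{partial_sums_invariant} of the paper). The setup and the bookkeeping in the first part of your proposal are correct.

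The gap is in your strategy for the $\varphi_3$-independence step. The bubble move $T_0$ does not swap $3$-cell labels: it inserts a new small $3$-cell into an existing region while leaving $\Gamma$ intact with its old label (and likewise the $3$-cell on the other side of that region). After the move, $S'$ has one more $3$-cell than $S$, so there is no bijection of $3$-labelings and no canonical transport of partial sums. More to the point, the "symmetry under exchange of the inner and outer labels" that you invoke is neither a consequence of the local $T_0$ computation nor straightforward: the bubble cell is adjacent to only the two hemispheres, whereas $\Gamma$ is adjacent to arbitrarily many regions bounding arbitrarily many other $3$-cells, so the two occupy inequivalent positions in the labeled skeleton. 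Proving that swapping their labels leaves the partial sum invariant is precisely the $\varphi_3$-independence statement you are trying to establish; as sketched, the argument is circular.

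The paper supplies the needed input in Lemma \ref{partial_sums_invariant} by fixing a triangulation (available by Moise's theorem), so that the relabeled $3$-cell is bounded by a tetrahedron $\Theta$ with four vertices and six internal edges. The partial sum is factorized into the contribution of $\Theta$'s vertices and the contribution of the rest of the skeleton, and applying the dominance relation of Proposition \ref{properties_inv}(iv) across the six internal edges collapses the four vertex contributions into a single sphere graph whose labels all come from the unchanged $3$-cells outside $\Theta$; the inner label $\calm$ drops out, leaving $(\dim\,\cala)^{4}$ times an $\calm$-independent trace. It is this tetrahedron-level cancellation, not a bubble-move symmetry, that you would need to supply to close the argument.
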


In order to prove this statement, first notice that by Theorem 
\ref{skeleton_inv} the value ${\rm St}_{\mathbf{Mod}^{\rm sph}(\cala)} (M)$ 
is independent of the skeleton chosen to compute it. Since every $3$-manifold admits 
a triangulation \Cite{Ch.\,33}{Moi}, we can fix a triangulation on $M$ and calculate
the state sum values by using its associated skeleton $T$. Explicitly, we would like 
to compare the state sum expression 
  \be \label{TV_A}
  {\rm TV}_{\cala} (M,T)= (\dim\,\cala)^{-|M\backslash T|}\!\!
  \sum_{\varphi_2:\Reg(T)\to\Irr\cala}\,\prod_{r\in\Reg(T)}
  \dim\,\varphi_2(r)\cdot {\rm tr}_{\varphi_2}(*_{\varphi_2})
  \ee
coming from the Turaev-Viro invariant 
for the spherical fusion category $\cala$ and the bicategorical state sum value
  \be \label{ST_MOD_A}
  {\rm St}_{\mathbf{Mod}^{\rm sph}(\cala)} (M,T)
  = \big(\dim\,\cala\cdot \#\cala\big)^{\!-|M\backslash T|}\!\!
  \sum_{\varphi=(\varphi_3,\varphi_2)}\,\prod_{r\in\Reg(T)}
  \dim\,\varphi_2(r)\cdot {\rm tr}_\varphi(*_\varphi)
  \ee
associated to the pivotal bicategory $\mathbf{Mod}^{\rm sph}(\cala)$

In \eqref{ST_MOD_A}, $\#\cala$ is the number of equivalence classes of simple objects
in $\mathbf{Mod}^{\rm sph}(\cala)$, which is finite \Cite{Cor.\ 9.1.6 (ii)}{EGno}.
Since each $3$-cell of $T$ is labeled by such an object via $\varphi_3$, the number
$\#\cala^{|M\backslash T|}$ is exactly the number of possible $3$-labelings 
  \be
  \label{labeling_phi3}
  \varphi_3\Colon |M\backslash T|\longrightarrow \Irr\,\mathbf{Mod}^{\rm sph}(\cala)\,.
  \ee
For a fixed choice of $\varphi_3$ define the partial sum
  \be
  \mathrm{P}_{\Varphi_3} := \sum_{\varphi_2}\prod_{r\in\Reg(T)}\!\dim\,\varphi_2(r)
  \cdot {\rm tr}_\varphi(*_\varphi)\,.
  \label{partial_sum_phi}
  \ee
This quantity involves two main ingredients of the state sum construction: On the
one hand, the trace form ${\rm tr}_\varphi$ from equation \eqref{vertex_evaluation}
is built up from graph evaluations at spheres organized according to the vertices 
of $T$. On the other hand, the distinguished vector $*_\varphi$ is governed by 
contributions that come from the edges of $T$. We can regard the expression 
\eqref{partial_sum_phi} as a linear form
  \be
  \overline{\mathrm{P}}_{\Varphi_3}\Colon\bigoplus_{\varphi_2} V(M,T,\varphi)
  \rarr~ \ko \,,\qquad \sum_{\varphi_2}\gamma \xmapsto{~~} \sum_{\varphi_2}
  \prod_{r\in\Reg(T)}\!\dim\,\varphi_2(r)\cdot {\rm tr}_\varphi(\gamma)
  \label{partial_sum_form}
  \ee
defined on the space of all possible $2$-labelings associated to $\varphi_3$,
evaluated at the sum $\sum_{\varphi_2}*_\varphi$ of the distinguished vectors. With
this consideration in mind, we next prove that the value of the partial sum 
$\mathrm{P}_{\Varphi_3}$ is in fact independent of the $3$-labeling $\varphi_3$.

\begin{lem}\label{partial_sums_invariant}
For any two $3$-labelings $\varphi_3$ and $\psi_3$, the partial sums defined by
\eqref{partial_sum_phi} are equal: $\mathrm{P}_{\Varphi_3} \eq \mathrm{P}_{\!\psi_3}$.
\end{lem}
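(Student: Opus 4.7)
My plan is to first reduce the claim to the case where $\varphi_3$ and $\psi_3$ differ at only a single $3$-cell $\Gamma$, with $\varphi_3(\Gamma) = \calm$ and $\psi_3(\Gamma) = \caln$; the general case follows by iterating such single-cell modifications. The main tool will be the bubble move $T_0$ together with the explicit computation carried out in the proof of Theorem \ref{skeleton_inv}.

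Next I would apply $T_0$ at a region adjacent to $\Gamma$ to produce a skeleton $S'$ with one additional $3$-cell $\Gamma'$. The calculation in the proof of $T_0$-invariance shows that, for any spherical $\cala$-module $\mathcal{X}$ chosen as the label of $\Gamma'$,
\[
  \mathrm{P}_{\varphi_3 \cup \{\Gamma' \to \mathcal{X}\}}(S') \;=\; \dim\cala \cdot \mathrm{P}_{\varphi_3}(S),
\]
and the right-hand side is manifestly independent of $\mathcal{X}$. Taking $\mathcal{X} = \caln$ on the $\varphi_3$ side, and the analogous identity on the $\psi_3$ side with $\mathcal{X} = \calm$, reduces the claim to proving
\[
  \mathrm{P}_{\varphi_3 \cup \{\Gamma' \to \caln\}}(S') \;=\; \mathrm{P}_{\psi_3 \cup \{\Gamma' \to \calm\}}(S').
\]
These two labelings of the same skeleton $S'$ differ only by swapping the labels $\calm \leftrightarrow \caln$ on the pair $\{\Gamma, \Gamma'\}$, which are adjacent in $S'$ via a hemisphere of the newly inserted bubble.

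The hard part will be establishing this swap invariance. The plan is to exhibit a bijection between the $2$-labelings contributing to the two partial sums by transporting the module-functor labels of regions incident to $\Gamma$ or $\Gamma'$ through the pivotal structure of $\mathbf{Mod}^{\rm sph}(\cala)$. Under the bijection, the dimension factors entering $\omega_2$ should be preserved by Proposition \ref{multi_sph_mod}(i), and the vertex traces by the symmetries of the graphical calculus in the pivotal bicategory developed in Section \ref{traces_modsphA}. The main obstacle is that swapping the cell labels $\calm \leftrightarrow \caln$ does not merely reverse strand orientation—it changes the source or target Hom-category of every region incident to $\Gamma$ or $\Gamma'$, so the naive dualization $H \mapsto H^\vee$ does not give the correct correspondence. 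A viable route around this is to insert additional bubbles along the boundary of $\Gamma$, labeled alternately by $\calm$ and $\caln$, creating a local \emph{Morita chain} of cells in $S'$ along which the relabeling can be propagated by iterated applications of the bubble relation; the contributions of the intermediate cells then cancel using Propositions \ref{multi_sph_mod} and \ref{properties_inv}. Once the swap invariance is established, combining it with the bubble identity yields $\dim\cala \cdot \mathrm{P}_{\varphi_3} = \dim\cala \cdot \mathrm{P}_{\psi_3}$ and hence $\mathrm{P}_{\varphi_3} = \mathrm{P}_{\psi_3}$.
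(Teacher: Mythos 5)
Your proposal takes a genuinely different route from the paper, but it has a gap at its crux. The bubble-move reduction is sound: the computation in the proof of Theorem~\ref{skeleton_inv} does show, for fixed extension $\varphi_3\cup\{\Gamma'\to\mathcal{X}\}$, that the vertex contribution produces $\dim\cala$ (independent of $\mathcal{X}$, since $\dim\cala^*_{\mathcal{X}}=\dim\cala$) times a factor of $\dim(K)$ that compensates the Euler-characteristic shift of the punctured region. So your displayed bubble identity holds, and the claim does reduce to proving
\[
  \mathrm{P}_{\varphi_3 \cup \{\Gamma' \to \caln\}}(S') = \mathrm{P}_{\psi_3 \cup \{\Gamma' \to \calm\}}(S').
\]

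But notice that this reduction actually goes from two $3$-labelings that differ at \emph{one} cell of $S$ to two $3$-labelings that differ at \emph{two} cells of $S'$. It is not clear this is a simplification at all; the ``swap invariance'' you need is essentially two applications of the lemma you are trying to prove, so the burden of proof has not moved. You acknowledge the swap is the hard part, and the step where you must make it precise --- the ``Morita chain'' of alternating $\calm$- and $\caln$-labeled bubbles along which the relabeling is ``propagated'' and intermediate cells ``cancel'' --- is not fleshed out. Concretely: a region separating $\Gamma$ (now $\calm$) from some outer cell $\call_i$ carries a label in $\Fun_\cala(\call_i,\calm)$; after the swap it must carry a label in $\Fun_\cala(\call_i,\caln)$. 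These categories are equivalent (both are simple module categories over the same multi-fusion category, by Morita theory), but the equivalence is not canonical, and no argument is given for why the transported $2$-labelings preserve the weights $\omega_2$ and the vertex traces simultaneously. Without that, the bijection you want does not exist in the form stated, and Propositions~\ref{multi_sph_mod} and~\ref{properties_inv} by themselves do not supply it.

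The paper's proof sidesteps this entirely by exploiting that the skeleton comes from a triangulation, so that the closure of the modified $3$-cell is bounded by a tetrahedron $\Theta$. It factors the partial sum into a contribution from vertices outside $\Theta$ (which is manifestly the same for $\varphi_3$ and $\psi_3$) and one from the four vertices of $\Theta$; for the latter it uses the graphical calculus of Section~\ref{traces_modsphA} --- specifically the dominance relation of Proposition~\ref{properties_inv}(iv) applied to the six internal edges, and the loop contraction $\sum_{H_i}\dim(H_i)^2=\dim\cala$ --- to reduce the tetrahedron contribution to $(\dim\cala)^4$ times a trace in which the inner cell's label has disappeared. This is a direct computation without any swap or induction, and your $T_0$-based approach loses exactly the combinatorial structure (the tetrahedral boundary) that makes this computation go through: after one bubble move $S'$ is no longer a triangulation-derived skeleton. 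If you want to salvage your approach you would need an independent argument for the two-cell swap, not just a plan to reduce to it.
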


\begin{proof}
Since the triangulation $T$ (or, more generally, any skeleton) has a finite number
of $3$-cells, it is enough to check that the partial sums $\mathrm{P}_{\Varphi_3}$ 
and $\mathrm{P}_{\!\psi_3}$ coincide if the $3$-labelings $\varphi_3$ and $\psi_3$
differ only in the assignment for a single $3$-cell $c\iN|M\backslash T|$. As we are 
considering a triangulation, the boundary of the closure of the $3$-cell $c$ is a 
tetrahedron $\Theta$; we depict this situation as follows:
  %
    \def\locH  {3.4}   
    \def\locL  {1.1}   
  \be\label{tetrahedron}
  \raisebox{-6em}{
  \begin{tikzpicture}
  \coordinate (p1) at (1.02*\locH,0.25*\locH) ;
  \coordinate (p2) at (0.9*\locH,-0.15*\locH) ;
  \coordinate (p3) at (0,0) ;
  \coordinate (p4) at (0.49*\locH,0.8*\locH) ;
  \coordinate (r1) at (0.26*\locH,0.20*\locH) ;
  \coordinate (r2) at (0.48*\locH,0.46*\locH) ;
  \coordinate (r3) at (0.72*\locH,0.45*\locH) ;
  \coordinate (r4) at (0.71*\locH,0.03*\locH) ;
  \filldraw[draw=black,thick,fill=\colorM] (p3) -- (p2) -- (p4)  --
       (p3) ;
  \filldraw[draw=\colorN,fill=\colorN] (p3) -- (p2) -- (p1) -- (p3) ;
  \draw[black,thick]
       (p3) -- (p2) -- (p4)
       (p1) -- node[near end,sloped,above,yshift=-2pt] {$\scriptstyle l_{\mathrm{I}}$} + (25:1.2*\locL)
       (p2) -- node[near end,sloped,above,yshift=-1pt] {$\scriptstyle l_{\mathrm{II}}$} + (-45:0.6*\locL)
       (p3) -- node[near end,sloped,above,yshift=-2pt] {$\scriptstyle l_{\mathrm{III}}$} + (210:\locL)
       (p4) -- node[near end,sloped,above,yshift=-2pt] {$\scriptstyle l_{\mathrm{IV}}$} + (115:0.7*\locL)
  ;
  \draw[black,thick,densely dashed] (p1) -- + (25:1.8*\locL)
       (p2) -- + (-45:1.2*\locL) (p3) -- + (210:1.5*\locL) (p4) -- + (115:1.1*\locL) ;
  \filldraw[draw=black,thick,fill=\colorMJ]
       (p2) -- (p1) -- (p4) -- (p2) ;
  \draw[black,thick,dashed] (p1) -- (p3) ;
  \filldraw[black]
       (p1) circle (1.5*\radvtx) (p2) circle (1.5*\radvtx)
       (p3) circle (1.5*\radvtx) (p4) circle (1.5*\radvtx) ;
  \node[\colorMlab,rotate=-46] at (r1) {$1$} ;
  \node[\colorMlabb,rotate=8] at (r2) {\reflectbox{$2$}} ;
  \node[\colorMlab,rotate=32] at (r3) {$3$} ;
  \node[\colorMlabb,rotate=74] at (r4) {\reflectbox{$4$}} ;
  \end{tikzpicture}
  }
  \ee
The general idea of the proof is now the following: The tetrahedron $\Theta$ splits
the triangulation into two parts $T\backslash\Theta$ and $\Theta$. The value of
the partial sum \eqref{partial_sum_phi} can be obtained by evaluating the linear
form $\overline{\mathrm{P}}_{\Varphi_3}$ defined in \eqref{partial_sum_form} at the
distinguished vector. This linear form can be factored into the contribution
$\overline{\mathrm{P}}_{\Varphi_3}^{\Theta}$ of the four vertices of $\Theta$ and
the contribution $\overline{\mathrm{P}}_{\Varphi_3}^{\,\rm out}$ coming from
vertices outside $\Theta$. Now the $3$-labelings $\varphi_3$ and $\psi_3$ coincide
in $T\backslash\Theta$ by assumption and, moreover, the sets of possible $2$-labelings
of regions agree. Thus we have $\overline{\mathrm{P}}_{\Varphi_3}^{\,\rm out}
\eq \overline{\mathrm{P}}_{\!\psi_3}^{\,\rm out}$. On the other hand, the colors
assigned to the $3$-cell inside $\Theta$ by $\varphi_3$ and by $\psi_3$ are different
by assumption; as a consequence the possible $2$-labelings on the faces of the
tetrahedron are of a different type. Nevertheless, as we will show, the partial
evaluation of $\overline{\mathrm{P}}_{\Varphi_3}^{\Theta}$ at the factor of the
distinguished vector that comes from the edges of the tetrahedron is the same for
$\varphi_3$ and $\psi_3$, and hence the partial sums coincide
$\mathrm{P}_{\Varphi_3} \eq \mathrm{P}_{\!\psi_3}$. The analysis on the factorization
of the distinguished vector according to different types of edge contributions
requires introducing additional terminology and notation, and further details which
are described in what follows.

 \medskip

Let us now present the proof in more detail. 
\Enumerate
   \item 
At the level of $3$-cells: Since the
$3$-labelings $\varphi_3$ and $\psi_3$ differ only in the assignment given to the
$3$-cell $c$, we have $\varphi_3(b) \eq \psi_3(b) \,{=:}\, \lambda_3(b)$ for every
$3$-cell $b\,{\neq}\, c$. Denote by $\calm$ the spherical module category assigned 
to $c$ by the $3$-labeling $\varphi_3$ and by $\caln$  the spherical module assigned 
to $c$ by $\psi_3$.

  \item 
At the level of $2$-cells or regions: The tetrahedron has four faces, which 
constitute regions of the skeleton $T$. Let $\call_i \,{\coloneqq}\, \lambda_3(b_i)$
be the common label assigned by $\varphi_3$ and $\psi_3$ to the $3$-cell $b_i$ outside
the tetrahedron adjacent to the face $i$, for $i \eq 1,2,3,4$. The four faces of $\Theta$
then have labels of the type $\varphi_2(i) \,{:=}\, H_i\iN\Fun_\cala(\call_i,\calm)$ 
and $\psi_2(i) \,{:=}\, J_i\iN\Fun_\cala(\call_i,\caln)$, respectively.
The choice of such colors on each of the four tetrahedron faces can be extended to
a $2$-labeling $\varphi_2$ of the form \eqref{label_2} associated to $\varphi_3$ and
to a $2$-labeling $\psi_2$ associated to $\psi_3$ by coloring any region
$r\iN\Reg(T\backslash\Theta)$ outside the tetrahedron with a common label
$\lambda_2(r) \,{:=}\, \psi_2(r) \eq \varphi_2(r)$.

  \item 
The situation for the edges is more involved. The triangulation $T$ has three
different types of edges. First, there are \emph{internal edges}, namely the six
edges contained in $\Theta$. Each of these six edges has vertices of $\Theta$ as its
end points. There are also \emph{external edges}, which are the edges in $T$ that do
not belong to $\Theta$ and neither of whose end points is contained in $\Theta$. And
finally there are \emph{mixed edges}, which are edges outside $\Theta$ that have at
least one vertex in $\Theta$ as an end point.

To each of the labelings $\varphi \eq (\varphi_3,\varphi_2)$ there is assigned the
tensor product $V_l^\varphi\otimes_\ko V_{\overline{l}}^\varphi$ of a pair of vector
spaces for every internal edge $l\iN e(\Theta)$, and similarly for the labeling
$\psi \eq (\psi_3,\psi_2)$. In the case of external and mixed edges, both of the
labelings lead to the same vector spaces: for $l\iN e(T\backslash\Theta)$ we have 
  \be
  V_l^\varphi\otimes_\ko V_{\overline{l}}^\varphi=V_l^\psi\otimes_\ko
  V_{\overline{l}}^\psi\equiv V_l\otimes_\ko V_{\overline{l}} \,. 
  \ee
We obtain a factorization of the distinguished vector in terms of the three edge types:
  \be
  *_\varphi=*^{\rm ext} \otik *^{\rm mix} \otik *^{\rm int}_\varphi\;
  \in \bigotimes_{l \in{\rm ext}} V_{l}\otik V_{\overline{l}}
  \bigotimes_{l \in{\rm mix}} V_{l}\otik V_{\overline{l}}
  \bigotimes_{l \in{\rm int}} V^\varphi_{l}\otik V^\varphi_{\overline{l}}
  \cong V(M,S,\varphi)\,.
  \ee
Analogously, there is an edge factorization of $*_\psi$.

  \item 
On the other hand, we can reorganize the tensorands of $V(M,S,\varphi)$ according 
to the vertices, whereby we obtain a factorization
  \be
  V(M,T,\varphi) \cong V_{\rm out} \otimes_\ko V_\Theta^\varphi \,,
  \ee
with $V_\Theta^\varphi$ the vector space associated to the four vertices of the 
tetrahedron $\Theta$ and $V_{\rm out}$ the space coming from the vertices 
outside $\Theta$.
Consequently, the map $\tr_\varphi$ from equation \eqref{vertex_evaluation} 
given by evaluation at vertices factors as well:
  \be
  \label{tr_factorization}
  \tr_\varphi(\gamma_{\rm out}\oti \gamma_\Theta)
  = \tr^{\rm out}(\gamma_{\rm out})\cdot \tr_\varphi^\Theta(\gamma_\Theta)
  \ee
with $\gamma_{\rm out}\iN V_{\rm out}$ and $\gamma_\Theta\iN V_\Theta^\varphi$. The
same holds for the labeling $\psi$.

\item 
The partial sum \eqref{partial_sum_phi} for $\varphi_3$ can be re-expressed by
taking the previous discussion into account. First, we can split the sum over 
$2$-labelings by multiple sums over the colors on the four tetrahedron faces and 
sums on the regions outside $\Theta$. If, additionally, we consider the trace 
factorization \eqref{tr_factorization}, we obtain
  \be
  \mathrm{P}_{\Varphi_3} = \!\! \sum_{\overset{\lambda_2(r)}{r\in\Reg(T\backslash \Theta)}}
  \,\prod_{r\in\Reg(T\backslash \Theta)} \!\! \dim\,\lambda_2(r) \!\!
  \sum_{H_1\in{\rm F}
  (\call_1,\calm)} \!\!\! \cdots \!\!\! \sum_{H_4\in{\rm F}(\call_4,\calm)}\,
  \prod_{i=1}^4\dim \,H_i \;
  ({\rm tr}^{\rm out}\ot\tr_\varphi^\Theta) (*_\varphi) \,,
  \ee
where we abbreviate $\Fun_\cala(\call_i,\calm)$ as ${\rm F}(\call_i,\calm)$. 
The associated form \eqref{partial_sum_form} can be re-expressed similarly. 
Furthermore, we can factor this expression by considering the contributions of 
outer and inner vertices separately:
  \be
  \overline{\mathrm{P}}_{\Varphi_3} = \overline{\mathrm{P}}_{\Varphi_3}^{\,\rm out}
  \otimes \overline{\mathrm{P}}_{\Varphi_3}^{\Theta} \,.
  \ee
Explicitly,
  \be
  \overline{\mathrm{P}}_{\Varphi_3}^{\,\rm out}\coloneqq \sum_{\overset{\lambda_2(r)}{r\in\Reg(T\backslash \Theta)}}\,\prod_{r\in\Reg(T\backslash \Theta)}\,\dim\,\lambda_2(r)\,{\rm tr}^{\rm out}
  \Colon \bigoplus_{\varphi_2}\,V_{\rm{out}}\longrightarrow\ko
  \ee
and
  \be\label{phi_form_theta}
  \overline{\mathrm{P}}_{\Varphi_3}^{\,\Theta}\coloneqq \sum_{H_1\in{\rm F}(\call_1,\calm)}\cdots\sum_{H_4\in{\rm F}(\call_4,\calm)}\,\prod_{i=1}^4\dim \,H_i \;
  \tr_\varphi^\Theta 
  \Colon \bigoplus_{\varphi_2}\,V_\Theta^\varphi\longrightarrow\ko\,.
  \ee
Similar expressions are obtained for the $3$-labeling $\psi_3$. Since all labels
outside the tetrahedron coincide for $\varphi_3$ and $\psi_3$, it follows that
$\overline{\mathrm{P}}_{\Varphi_3}^{\,\rm out} \eq 
\overline{\mathrm{P}}_{\!\psi_3}^{\,\rm out}$. Thus the relevant contribution to the
partial sum \eqref{partial_sum_phi} that is affected by the color of the $3$-cell
inside of $\Theta$ comes exclusively from the four vertices of the tetrahedron, i.e.\
from the evaluation of the linear form \eqref{phi_form_theta}. It still remains to 
show that this contribution agrees for $\varphi_3$ and $\psi_3$ as well.

  \item 
By partially evaluating this linear form at $*^{\rm int}_\varphi$ for each $\varphi_2$
and restricting to the space corresponding to labels of regions bounding mixed edges,
we obtain a linear form
  \be
  \overline{\overline{\rm P}}^{\,\Theta}_{\Varphi_3}\coloneqq
  \overline{\mathrm{P}}^{\,\Theta}_{\Varphi_3}
  \left( \sum_{\varphi_2}\,-\otimes_\ko*^{\rm int}_\varphi\right) \Colon
  \bigoplus_{\varphi_2}\,\bigotimes_{l \in{\rm mix}}\,V_{l}\longrightarrow \ko\,,
  \ee
and in a similar manner a linear form for $\psi_3$. Notice that mixed edges only bound
regions outside of $\Theta$. These are colored with the same type of values by the
possible $2$-labelings $\varphi_2$ and $\psi_2$, therefore
$\overline{\overline{\rm P}}^{\,\Theta}_{\Varphi_3}$ and
$\overline{\overline{\rm P}}^{\,\Theta}_{\!\psi_3}$ are forms defined on the same space.

\item 
Moreover, these two maps coincide: Every $\gamma$ in their common domain can be
factored into a tensor product of vectors $\gamma_v$ associated to each of the four tetrahedron
vertices $v\iN v(\Theta)$; these will appear in the diagrams on the spheres around
said vertices:
    \def\locr  {1.2pt} 
    \def\locR  {1.2}   
    \def\locs  {0.6}   
  \be
  \raisebox{-2.4em}{ \begin{tikzpicture}
  \coordinate (p1) at (5:0.43*\locR) ;
  \coordinate (p2) at (100:0.71*\locR) ;
  \coordinate (p3) at (180:0.49*\locR) ;
  \coordinate (q0) at (44:0.71*\locR) ;
  \coordinate (q1) at (-22:\locR) ;
  \coordinate (q2) at (77:\locR) ;
  \coordinate (q3) at (215:0.98*\locR) ;
  \shade[top color=gray!18!white,bottom color=white,draw=gray,thin] (0,0) circle (\locR) ;
  \draw[gray,thin] (-\locR,0) arc (180:360:\locR cm and 0.19*\locR cm) ;
  \draw[gray!66!white,thin,dashed] (-\locR,0) arc (180:0:\locR cm and 0.33*\locR cm) ;
  \scopeArrow{0.59}{\arrowObj} 
  \draw[line width=\locs*\widthObj,\colorObj]
       (p1) [out=115,in=-20] to (p2)
       (p3) [out=-15,in=200] to
            node[midway,above,sloped,yshift=-2pt,\colorCat]{$\scriptstyle 1$} (p1)
       (p3) [out=75,in=210] to (p2)
       (p1) [out=-75,in=240] to 
            node[midway,below,sloped,yshift=2pt,\colorCat]{$\scriptstyle 3$} (q1)
       (p2) [out=85,in=160] to (q2) 
       (p3) [out=265,in=-40] to
            node[midway,below,sloped,yshift=2pt,\colorCat]{$\scriptstyle 2$} (q3) ;
  \draw[line width=\locs*\widthObj,\colorObj,densely dashed]
       (q1) [out=60,in=10] to (q0)
       (q2) [out=-20,in=110] to (q0)
       (q3) [out=140,in=165] to (q0) ;
  \end{scope}
  \filldraw[black]
       (p1) circle (\locr) (p2) circle (\locr) (p3) circle (\locr) (q0) circle (\locr) ;
  \node[\colorCat] at (275:0.65*\locR) {\small $\calm$} ;
  \end{tikzpicture}
  }
  \hspace*{1.7em}
  \raisebox{-2.4em}{ \begin{tikzpicture}
  \coordinate (p1) at (10:0.63*\locR) ;
  \coordinate (p2) at (85:0.81*\locR) ;
  \coordinate (p3) at (175:0.61*\locR) ;
  \coordinate (q0) at (260:0.55*\locR) ;
  \coordinate (q1) at (-22:0.96*\locR) ;
  \coordinate (q2) at (73:0.98*\locR) ;
  \coordinate (q3) at (215:0.97*\locR) ;
  \shade[top color=gray!18!white,bottom color=white,draw=gray,thin] (0,0) circle (\locR) ;
  \draw[gray,thin] (-\locR,0) arc (180:360:\locR cm and 0.19*\locR cm) ;
  \draw[gray!66!white,thin,dashed]
       (-\locR,0) arc (180:100:\locR cm and 0.33*\locR cm)
       (\locR,0) arc (0:80:\locR cm and 0.33*\locR cm) ;
  \draw[line width=\locs*\widthObj,\colorObj]
       (p1) [out=85,in=-20] to
       node[near start,above,sloped,xshift=-3pt,yshift=-3pt,\colorCat]{$\scriptstyle 3$} (p2)
       (p3) [out=-15,in=200] to
            node[near start,above,sloped,yshift=-3pt,\colorCat]{$\scriptstyle 4$} (p1)
       (p3) [out=75,in=210] to
            node[midway,above,sloped,yshift=-3pt,\colorCat]{$\scriptstyle 2$} (p2)
       (p1) [out=-35,in=105] to (q1)
       (p2) [out=75,in=140] to (q2) 
       (p3) [out=215,in=105] to (q3) ;
  \draw[line width=\locs*\widthObj,\colorObj,densely dashed]
       (q1) [out=-85,in=-65] to (q0)
       (q2) [out=-50,in=40] to (q0)
       (q3) [out=-85,in=205] to (q0) ;
  \filldraw[black]
       (p1) circle (\locr) (p2) circle (\locr) (p3) circle (\locr) (q0) circle (\locr) ;
  \node[\colorCat] at (91:0.36*\locR) {\small $\calm$} ;       
  \end{tikzpicture}
  }
  \hspace*{1.7em}
\raisebox{-2.4em}{ \begin{tikzpicture}
  \coordinate (p1) at (2:0.18*\locR) ;
  \coordinate (p2) at (85:0.65*\locR) ;
  \coordinate (p3) at (171:0.69*\locR) ;
  \coordinate (q0) at (31:0.79*\locR) ;
  \coordinate (q1) at (-24:0.99*\locR) ;
  \coordinate (q2) at (42:0.99*\locR) ;
  \coordinate (q3) at (210:0.99*\locR) ;
  \coordinate (q4) at (255:0.73*\locR) ;
  \shade[top color=gray!18!white,bottom color=white,draw=gray,thin] (0,0) circle (\locR) ;
  \draw[gray,thin] (-\locR,0) arc (180:360:\locR cm and 0.19*\locR cm) ;
  \draw[gray!66!white,thin,dashed]
       (-\locR,0) arc (180:75:\locR cm and 0.33*\locR cm)
       (\locR,0) arc (0:62:\locR cm and 0.33*\locR cm) ;
  \draw[line width=\locs*\widthObj,\colorObj]
       (p1) [out=125,in=250] to
       node[near start,below,sloped,xshift=-2pt,yshift=3pt,\colorCat]{$\scriptstyle 1$} (p2)
       (p3) [out=-45,in=200] to (p1)
       (p3) [out=65,in=170] to (p2)
       (p1) [out=-40,in=245] to
            node[midway,above,sloped,xshift=4pt,yshift=3pt,\colorCat,rotate=180]{$\scriptstyle 4$} (q1)
       (p2) [out=35,in=105] to
            node[near start,above,sloped,yshift=-3pt,\colorCat]{$\scriptstyle 2$} (q2)
       (p3) [out=215,in=120] to (q3) ;
  \draw[line width=\locs*\widthObj,\colorObj,densely dashed]
	  (q3) [out=290,in=180] to (q4) [out=0,in=275] to (q0) ;
  \draw[line width=\locs*\widthObj,\colorObj,dash pattern=on 2.5pt off 1.2,dash phase=-1pt]
       (q1) [out=65,in=-45] to (q0)
       (q2) [out=275,in=45] to (q0);
  \filldraw[black]
       (p1) circle (\locr) (p2) circle (\locr) (p3) circle (\locr) (q0) circle (\locr);
  \node[\colorCat,rotate=-75] at (34:0.49*\locR) {\small $\calm$};       
  \end{tikzpicture}
}
  \hspace*{1.7em}
  \raisebox{-2.4em}{ \begin{tikzpicture}
  \coordinate (p0) at (-65:0.66*\locR) ;
  \coordinate (q1) at (19:0.99*\locR) ;
  \coordinate (q2) at (80:0.99*\locR) ;
  \coordinate (q3) at (220:0.99*\locR) ;
  \coordinate (r1) at (62:0.55*\locR) ;
  \coordinate (r2) at (105:0.76*\locR) ;
  \coordinate (r3) at (175:0.43*\locR) ;
  \shade[top color=gray!18!white,bottom color=white,draw=gray,thin] (0,0) circle (\locR) ;
  \draw[gray,thin] (-\locR,0) arc (180:360:\locR cm and 0.19*\locR cm) ;
  \draw[gray!66!white,thin,dashed]
       (-\locR,0) arc (180:105:\locR cm and 0.33*\locR cm)
       (\locR,0) arc (0:89:\locR cm and 0.33*\locR cm) ;
  \draw[line width=\locs*\widthObj,\colorObj,dash pattern=on 2.5pt off 1.2]
       (r1) [out=120,in=10] to 
       node[midway,above,sloped,yshift=-2pt,\colorCat]{\small\reflectbox{$\scriptstyle 3$}} 
       (r2) [out=190,in=140] to 
       node[midway,above,sloped,yshift=-3pt,\colorCat]{\small\reflectbox{$\scriptstyle 1$}} 
       (r3) [out=-50,in=300] to
       node[midway,below,sloped,yshift=3pt,\colorCat]{\small\reflectbox{$\scriptstyle 4$}} 
       (r1)
       (q1) [out=110,in=30] to (r1)
       (q2) [out=170,in=80] to (r2)
       (q3) [out=120,in=200] to (r3) ;
  \draw[line width=\locs*\widthObj,\colorObj]
       (p0) [out=15,in=-70] to (q1)
       (p0) [out=75,in=-10] to (q2)
       (p0) [out=240,in=-60] to (q3) ;
  \filldraw[black]
       (r1) circle (0.9*\locr) (r2) circle (0.9*\locr) (r3) circle (0.9*\locr)
       (p0) circle (1.1*\locr) ;
  \node[\colorCat] at (109:0.38*\locR) {\small \reflectbox{$\calm$}} ;
  \end{tikzpicture}
  }
  \ee
(Here in order not to overburden the picture we omit the orientation of the edges
of the graph and abbreviate the labels $H_i$ by $i$.)
Puncturing each of these spheres at the $\calm$-labeled patch produces four graphs on
the plane. The multiplicativity of the trace from Proposition \ref{properties_inv}(ii)
allows us to compute the value of the four graphs as the trace of a single graph on 
the plane consisting of the disjoint union of these four graphs. Thus we have
   \def\locS  {1.5}   
   \def\locv  {1.4pt} 
  \begin{eqnarray}&&
  \overline{\overline{\rm P}}^{\,\Theta}_{\Varphi_3}(\gamma)
  = \! \sum_{H_1\in{\rm F}(\call_1,\calm)} \!\!\cdots\!\!\!
  \sum_{H_4\in{\rm F}(\call_4,\calm)}\,\prod_{i=1}^4 \dim \,H_i \;
  \tr^{\cala_{\!\calm}^*}_{\mathbf{1}_{\!\calm}}
  \Bigg(\hspace*{0.1em}
  \raisebox{-6.1em}{ \begin{tikzpicture}
    \draw[line width=0.8*\widthObj,\colorObj]
    (0,-0.5*\locS) to node[midway,xshift=-6pt] {\tiny $H_{\!1}$} (0,0.5*\locS)  
    (0,0.5*\locS) to node[midway,sloped,yshift=5pt] {\tiny $H_{\!3}$} (0.866*\locS,0)
    (0,-0.5*\locS) to node[midway,sloped,yshift=-6pt] {\tiny $H_{\!4}$} (0.866*\locS,0)
    (1.866*\locS+0.25*\locS,-0.5*\locS) to node[midway,xshift=6pt] {\tiny $H_{\!2}$} (1.866*\locS+0.25*\locS,0.5*\locS)
    (1.866*\locS+0.25*\locS,0.5*\locS) to node[midway,sloped,yshift=4pt] {\tiny $H_{\!3}$} (1*\locS+0.25*\locS,0)
    (1.866*\locS+0.25*\locS,-0.5*\locS) to node[midway,sloped,yshift=-5pt] {\tiny $H_{\!4}$} (1*\locS+0.25*\locS,0)
    (0,-0.5*\locS) to  (0.288*\locS,0)
    (0,0.5*\locS) to (0.288*\locS,0)
    (0.866*\locS,0) to (0.288*\locS,0)
    (1.866*\locS+0.25*\locS,-0.5*\locS) to (1.866*\locS+0.25*\locS-0.288*\locS,0)
    (1.866*\locS+0.25*\locS,0.5*\locS) to (1.866*\locS+0.25*\locS-0.288*\locS,0)
    (1*\locS+0.25*\locS,0) to (1.866*\locS+0.25*\locS-0.288*\locS,0);
    \draw[line width=0.8*\widthObj,\colorObj]
    (1.866*\locS+0.5*\locS,1.1*\locS) to  (1.866*\locS+0.25*\locS,0.75*\locS)
    (1.866*\locS+0.5*\locS,1.1*\locS) to (0,0.75*\locS)
    (1.866*\locS+0.5*\locS,1.1*\locS) to (1.866*\locS+1*\locS,0.25*\locS)    
    (0,0.75*\locS) to node[midway,sloped,yshift=-5pt] {\tiny $H_{\!3}$} (1.866*\locS+0.25*\locS,0.75*\locS)
    (1.866*\locS+1*\locS,0.25*\locS) to node[midway,sloped,yshift=-5pt] {\tiny $H_{\!2}$} (1.866*\locS+0.25*\locS,0.75*\locS);
    \draw[line width=0.8*\widthObj,\colorObj]    (0,0.75*\locS)  .. controls 
    (1.866*\locS-0.5*\locS,1.75*\locS) and
    (1.866*\locS+1.25*\locS,1.75*\locS) .. (1.866*\locS+1*\locS,0.25*\locS)  
    ;
    \node[black] at (1.866*\locS+1.16*\locS,0.7*\locS) {\tiny $H_{\!1}$} ;
    \draw[line width=0.8*\widthObj,\colorObj]
    (1.866*\locS+0.5*\locS,-1.1*\locS) to  (1.866*\locS+0.25*\locS,-0.75*\locS)
    (1.866*\locS+0.5*\locS,-1.1*\locS) to (0,-0.75*\locS)
    (1.866*\locS+0.5*\locS,-1.1*\locS) to (1.866*\locS+1*\locS,-0.25*\locS)  
    (0,-0.75*\locS) to node[midway,sloped,yshift=4pt] {\tiny $H_{\!4}$} (1.866*\locS+0.25*\locS,-0.75*\locS)
    (1.866*\locS+1*\locS,-0.25*\locS) to node[midway,sloped,yshift=4pt] {\tiny $H_{\!2}$} (1.866*\locS+0.25*\locS,-0.75*\locS);
    \draw[line width=0.8*\widthObj,\colorObj]    (0,-0.75*\locS) .. controls 
    (1.866*\locS-0.5*\locS,-1.75*\locS) and
    (1.866*\locS+1.25*\locS,-1.75*\locS) .. (1.866*\locS+1*\locS,-0.25*\locS)
    ;
    \node[black] at (1.866*\locS+1.16*\locS,-0.7*\locS) {\tiny $H_{\!1}$} ;   
  \filldraw[\colorObj]
    (0.288*\locS,0) circle (\locv) (1.866*\locS+0.25*\locS-0.288*\locS,0) circle (\locv)
        (0,-0.5*\locS) circle (\locv) (0,0.5*\locS) circle (\locv)
    (0,0.5*\locS) circle (\locv) (0.866*\locS,0) circle (\locv)
    (0,-0.5*\locS) circle (\locv) (0.866*\locS,0) circle (\locv)
    (1.866*\locS+0.25*\locS,-0.5*\locS) circle (\locv) (1.866*\locS+0.25*\locS,0.5*\locS) circle (\locv)
    (1.866*\locS+0.25*\locS,0.5*\locS) circle (\locv) (1*\locS+0.25*\locS,0) circle (\locv)
    (1.866*\locS+0.25*\locS,-0.5*\locS) circle (\locv) (1*\locS+0.25*\locS,0) circle (\locv)
    (1.866*\locS+0.5*\locS,1.1*\locS) circle (\locv) (1.866*\locS+0.25*\locS,0.75*\locS) circle (\locv)
    (1.866*\locS+0.5*\locS,1.1*\locS) circle (\locv) (0,0.75*\locS)    circle (\locv)
    (1.866*\locS+0.5*\locS,1.1*\locS) circle (\locv) (1.866*\locS+1*\locS,0.25*\locS)     circle (\locv)
    (1.866*\locS+0.5*\locS,-1.1*\locS) circle (\locv) (1.866*\locS+0.25*\locS,-0.75*\locS) circle (\locv)
    (1.866*\locS+0.5*\locS,-1.1*\locS) circle (\locv) (0,-0.75*\locS)    circle (\locv)
    (1.866*\locS+0.5*\locS,-1.1*\locS) circle (\locv) (1.866*\locS+1*\locS,-0.25*\locS)     circle (\locv)
    ;
  \node[\colorCat] at (\locS,0.35*\locS)  {\small $\calm$} ;
  \node[\colorCat] at (1.1*\locS,-0.35*\locS)  {\small $\calm$} ;
  \node[\colorCat] at (3.25*\locS,0.2)  {\small $\calm$} ;
  \node[\colorCat] at (0.12*\locS,0)  {\scriptsize $\call_{\!1}$} ;
  \node[\colorCat] at (1.866*\locS+0.12*\locS,0)  {\scriptsize $\call_{\!2}$} ; 
  \node[\colorCat] at (0.39*\locS,0.14*\locS)  {\scriptsize $\call_{\!3}$} ;
  \node[\colorCat] at (0.35*\locS,-0.14*\locS)  {\scriptsize $\call_{\!4}$} ;
  \node[\colorCat] at (1.876*\locS-0.15*\locS,0.14*\locS)  {\scriptsize $\call_{\!3}$} ;
  \node[\colorCat] at (1.879*\locS-0.15*\locS,-0.14*\locS)  {\scriptsize $\call_{\!4}$} ;
  \node[\colorCat] at (2*\locS,0.9*\locS)  {\footnotesize $\call_{\!3}$} ;
  \node[\colorCat] at (2.35*\locS,0.8*\locS)  {\footnotesize $\call_{\!2}$} ;   
  \node[\colorCat] at (1.7*\locS,1.23*\locS)  {\footnotesize $\call_{\!1}$} ;
  \node[\colorCat] at (2*\locS,-0.9*\locS)  {\footnotesize $\call_{\!4}$} ;
  \node[\colorCat] at (2.35*\locS,-0.8*\locS)  {\footnotesize $\call_{\!2}$} ;
  \node[\colorCat] at (1.7*\locS,-1.23*\locS)  {\footnotesize $\call_{\!1}$} ;  
\end{tikzpicture}
  }
  \hspace*{0.1em} \Bigg) . \hspace*{1.5em} 
  \nonumber \\[-2.5em] &&
  \end{eqnarray}
where the summation over the six pairs of dual bases coming from the partial
evaluation at $*^{\rm int}_\varphi$ is implicit. 
For each of these six pairs, the vertices labeled by the two basis morphisms in the
pair belong to disjoint components of the graph. When applying the dominance relation
from Proposition \ref{properties_inv}(iv) to each of these pairs of dual bases,
the four subgraphs get connected to each other, yielding
   \def\locaA {70}    
   \def\locaB {190}   
   \def\locaC {300}   
   \def\locR  {1.5}   
   \def\locv  {2.1pt} 
  \be
  \label{eq:cmp2}
  \overline{\overline{\rm P}}^{\,\Theta}_{\Varphi_3}(\gamma)
  = \sum_{H_1\in{\rm F}(\call_1,\calm)} \!\!\! \dim \,H_1\,\,\cdots \!\!\!\!\!
  \sum_{H_4\in{\rm F}(\call_4,\calm)}\!\!\! \dim \,H_4 \;
  \tr^{\cala_{\!\calm}^*}_{\mathbf{1}_{\!\calm}}
  \Bigg( \hspace*{0.5em} 
  \raisebox{-4.5em}{ \begin{tikzpicture}
  \coordinate (p0I) at (3:0.02) ;
  \mercedesgraph
  \draw[line width=0.8*\widthObj,\colorObj]
        (0,0) circle (1.31*\locR) 
        (p2) circle (0.27*\locR) (p3) circle (0.26*\locR) (p4) circle (0.26*\locR) ;
  \node[rotate=-27] at (55:1.44*\locR) {$\scriptstyle H_{\!1}$} ;
  \node[\colorCat,rotate=16] at (103:1.141*\locR) {\small $\call_{\!1}$} ;
  \begin{scope}[shift=(p2)]
        \node[rotate=-8] at (75:0.38*\locR) {$\scriptstyle H_{\!2}$} ;
        \node[\colorCat] at (282:0.55*\locR) {\small $\call_{\!2}$} ;
  \end{scope}
  \begin{scope}[shift=(p3)]
        \node[rotate=-33] at (50:0.37*\locR) {$\scriptstyle H_{\!3}$} ;
        \node[\colorCat] at (219:0.50*\locR) {\small $\call_{\!3}$} ;
  \end{scope}
  \begin{scope}[shift=(p4)]
        \node[rotate=63] at (150:0.36*\locR) {$\scriptstyle H_{\!4}$} ;
        \node[\colorCat] at (323:0.47*\locR) {\small $\call_{\!4}$} ;
  \end{scope}
  \scopeArrow{0.5}{\arrowObj}
    \draw[line width=\widthObjSm,\colorObj,postaction={decorate}]
       (\hsA,-1.31*\locR) -- + (0.01,0) ;
    \draw[line width=\widthObjSm,\colorObj,postaction={decorate},shift=(p3)]
       (\hsA,-0.26*\locR) -- + (0.01,0) ;
    \draw[line width=\widthObjSm,\colorObj,postaction={decorate},shift=(p2)]
       (\hsA,-0.26*\locR) -- + (0.01,0) ;
    \draw[line width=\widthObjSm,\colorObj,postaction={decorate},shift=(p4)]
       (\hsA,-0.26*\locR) -- + (0.01,0) ;       
  \end{scope}
  \node[\colorObj] at (\locaA:1.13*\locR){\tiny $\gamma_{\rm IV}$} ;
  \node[\colorObj] at (\locaB:1.15*\locR){\tiny $\gamma_{\rm II}$} ;
  \node[\colorObj] at (p0I) {\tiny $\gamma_{\rm I}$} ;
  \node[\colorObj] at (\locaC:1.15*\locR){\tiny $\gamma_{\rm III}$} ;  
  \node[\colorCat] at (p2) {\small $\calm$} ;
  \node[\colorCat] at (p3) {\small $\calm$} ;
  \node[\colorCat] at (p4) {\small $\calm$} ;
  \node[\colorCat] at (-28:1.3*\locR) [anchor=west] {\small $\calm$} ;  
  \end{tikzpicture}
  }
  \hspace*{-0.1em} \Bigg)\,.~~
  \ee
For each $i \iN \{1,2,3,4\}$ the so obtained graph in \eqref{eq:cmp2} contains
a $H_i$-labeled loop, which amounts to a factor of $\dim \,H_i$. Summing over all
possible values of $H_i$ then gives a factor of $\dim\,\Fun_\Cala(\call_i,\calm)$,
which according to \Cite{Prop.\,2.17}{etno} equals $\dim\,\cala$. Thereby 
\eqref{eq:cmp2} reduces to
   \def\locR  {1.3}   
   \def\locv  {1.9pt} 
  \be
  \label{eq:cmp3}
  \overline{\overline{\rm P}}^{\,\Theta}_{\Varphi_3}(\gamma)=\;(\dim\,\cala)^4\;
  \tr^{\cala_{\!\call_1}^*}_{\mathbf{1}_{\!\call_1}}
  \Bigg( \hspace*{0.1em} 
  \raisebox{-3.2em}{ \begin{tikzpicture}
  \coordinate (p0I) at (3:0.02) ;
  \mercedesgraph
  \node[\colorCat] at (150:1.3*\locR) {\small $\call_{\!1}$} ;
  \node[\colorCat] at (350:0.63*\locR) {\small $\call_{\!2}$} ;
  \node[\colorCat] at (130:0.63*\locR) {\small $\call_{\!3}$} ;
  \node[\colorCat] at (245:0.63*\locR) {\small $\call_{\!4}$} ;
  \node[\colorObj] at (\locaA:1.2*\locR){\tiny $\gamma_{\rm IV}$} ;
  \node[\colorObj] at (\locaB:1.22*\locR){\tiny $\gamma_{\rm II}$} ;
  \node[\colorObj] at (p0I) {\tiny $\gamma_{\rm I}$} ;
  \node[\colorObj] at (\locaC:1.22*\locR){\tiny $\gamma_{\rm III}$} ;  
  \end{tikzpicture}
  }
  \hspace*{0.7em} \Bigg)\,.
  \ee
The same argument works for the labeling $\psi_3$, and thus we have
$\overline{\overline{\rm P}}^{\,\Theta}_{\Varphi_3}(\gamma) \eq
\overline{\overline{\rm P}}^{\,\Theta}_{\!\psi_3}(\gamma)$.
It follows that $\mathrm{P}_{\Varphi_3} \eq \mathrm{P}_{\!\psi_3}$, as desired.
\end{enumerate}
\end{proof}

\begin{proof}[Proof of Theorem $\ref{thm:TV=St}$]
First, as mentioned in Remark \ref{TV_term}, the Turaev-Viro invariant
\eqref{TV_A} appears up to a scaling as one of the summands in \eqref{ST_MOD_A},
taking the equivalence $\cala \,{\simeq}\, \Fun_\cala(\cala,\cala)$ into account: 
  \be
  {\rm TV}_{\Cala} (M,T)=(\dim\,\cala)^{-|M\backslash T|}\;\mathrm{P}_{\Cala} \,,
  \ee
where $\mathrm{P}_{\Cala}$ is the partial sum associated to the labeling that colors
every $3$-cell with the regular spherical module category ${}_\cala\cala$.
Furthermore, by Lemma \ref{partial_sums_invariant} we have
$\mathrm{P}_{\Varphi_3} \eq \mathrm{P}_{\Cala}$ for every labeling $\varphi_3$.
It follows that
  \be 
  {\rm St}_{\mathbf{Mod}^{\rm sph}(\cala)} (M,T)
  = (\dim\,\cala\cdot \#\cala)^{-|M\backslash T|}\,
  \sum_{\varphi_3}\,\mathrm{P}_{\Varphi_3}
  = (\dim\,\cala\cdot \#\cala)^{-|M\backslash T|}\,
  \#\cala^{|M\backslash T|}\cdot\mathrm{P}_{\Cala} \,,
  \ee
where in the second equality we use the fact that $\#\cala^{|M\backslash T|}$ counts
the number of labelings \eqref{labeling_phi3}.
\end{proof}

\begin{rem}
A natural question that arises when dealing with a numerical manifold invariant
is whether it takes values in some particular subfield or subring of the field \ko.
Specifically, one may ask whether for every closed 3-manifold the Turaev-Viro
invariant based on a spherical fusion category for $\ko \eq \complex$ takes values in
the rationals, or in a real field, or in a cyclotomic field, etc. Now as a 
consequence of Theorem \ref{thm:TV=St}, to make sure that the Turaev-Viro invariant 
for a spherical fusion category \(\mathcal{A}\) takes only real values, it is
sufficient to show that \(\mathcal{A}\) is pivotal Morita equivalent to its complex 
conjugate, even if \(\mathcal{A}\) is not  tensor equivalent to its conjugate.
\end{rem}

The following example exhibits a family of spherical fusion categories that 
illustrates this point.

\begin{exa}
For \(G\) a finite group and \(\omega \iN Z^3(G,\ko^\times)\) a 3-cocycle on $G$, 
denote by \(\vect_G^\omega\) the pointed fusion category of finite-dimensional
\(G\)-graded vector spaces with associativity constraint twisted by \(\omega\) and
with a canonical spherical structure that ensures that the quantum dimension of every
simple object is $1$.
 \\[2pt]
Module categories over \(\vect_G^\omega\)
are in bijection with pairs \((S, \psi)\), where \(S \,{\subseteq}\, G\) is a 
subgroup and \(\psi \iN C^2(S,\ko^\times)\) is a 2-cochain on $S$ satisfying
$\mathrm d\psi \eq \omega|_S^{}$ \cite{ostr5}.
Given such a pair \((S,\psi)\), the twisted group algebra \(\ko_\psi[S]\) is an 
algebra in \(\vect_G^\omega\), and the associated module category \(\calm(S, \psi)\) 
is realized as the category of right \(\ko_\psi[S]\)-modules in \(\vect_G^\omega\).
\(\ko_\psi[S]\) is even a special symmetric Frobenius algebra, and hence \(\calm(S, \psi)\)
is a pivotal \(\vect_G^\omega\)-mo\-du\-le category.
 \\[2pt]
Let now \(A\) and \(B\) be finite abelian groups and \(\alpha \iN Z^2(A,\widehat{B})\)
be a 2-cocycle. We denote by \(\widehat{B} \,{\rtimes_{\alpha}}\, A\) the central
extension of \(A\) by \(\widehat{B}\) associated with \(\alpha\). The function
$\omega_{\alpha}$ defined by
  \be
  \omega_{\alpha}((x_1, a_1), (x_2, a_2), (x_3, a_3)) = \alpha(a_1, a_2)(x_3)
  \ee
is a 3-cocycle in \(Z^3(A \,{\oplus}\, B,\ko^\times)\). 
According Theorem 3.6 of \cite{urib} the fusion categories
\(\vect^{\omega_\alpha}_{B \oplus A}\) and \(\vect_{\widehat{B} \rtimes_{\alpha} A}\) 
are Morita equivalent.
 \\[2pt]
As a concrete example, take \(A \eq \Z/n\Z \,{\oplus}\, \Z/n\Z\) and
\(B \eq \Z/n\Z\), and set $\ko\eq\complex$ and
  \be
  \alpha \Colon A \Times A \rarr~ \hat{B} \,, \qquad
  \alpha((m_1, m_2), (n_1, n_2)) := \chi^{m_1 n_2} ,
  \ee
where \(\chi \colon \Z/n\Z \Rarr~ \ko^\times\) is a character such that 
\(\chi(1) \eq q\) is a primitive \(n\)th root of unity. The associated 3-cocycle is
given by
  \be
  \omega(\vec{x}, \vec{y}, \vec{z}) = q^{x_1 y_2 z_3}
  \ee
for \(\vec{x}, \vec{y}, \vec{z} \iN (\Z/n\Z)^{\oplus 3}\). The cohomology class of
\(\omega\) has order \(n\), and hence for \(n \,{>}\, 2\) the fusion category 
\(\vect^{\omega_\alpha}_{B \oplus A}\) is not tensor equivalent to its complex
conjugate. On the other hand, since \(\vect^{\omega_\alpha}_{B \oplus A}\) is pivotal 
Morita equivalent to \(\vect_{\widehat{B} \rtimes_{\alpha} A}\) and the latter 
underlying fusion category \emph{is} equal to its conjugate, 
\(\vect^{\omega_\alpha}_{B \oplus A}\) is pivotal Morita equivalent to its complex 
conjugate. Con\-sequently, the Turaev-Viro invariant for 
\(\vect^{\omega_\alpha}_{B \oplus A}\) takes values in the reals. More specifically, 
the invariant equals the Turaev-Viro invariant of \(\vect_{\widehat{B} 
\rtimes_{\alpha} A}\) which, having a trivial 3-cocycle, takes values in the rationals.
\end{exa}


\vskip 4.5em

\noindent
{\sc Acknowledgments:}\\[.3em]
J.F.\ is supported by the Swedish Research Council VR under project no.\ 2022-02931.
C.G. was partially supported by Grant INV-2023-162-2830 from the School of Science of Universidad de los Andes.
D.J.\ is supported by The Research Council of Norway - project 324944.
C.S.\ is supported by the Deutsche Forschungsgemeinschaft (DFG, German Research
Foundation) by SFB 1624 and under Germany's Excellence Strategy - EXC 2121
``Quantum Universe'' - 390833306.

\newpage
\appendix

\section{String diagrams in pivotal bicategories}\label{app_graph}

\def\width{0.6}
\def\height{0.6}

The graphical string calculus for morphisms in a pivotal bicategory is a 
useful tool for computations. A survey of this calculus can be found for instance 
in \cite{humm,hiMar} and in \Cite{Sect.\,2}{FuSY}. 
In this appendix we settle the conventions that are used in the present document.

For a bicategory $\mathscr{F}$ we employ the following graphical representations 
on a canvas that is a stratified rectangle in the plane, regarded as 2-framed 
(with vector fields parallel to the two coordinate axes):
\Enumerate
    \item 
Objects: $\calx\iN\mathscr{F}$ is represented as a label on a two-cell of the
canvas.
    \item 
For a $1$-morphism $H\iN\mathscr{F}(\calx,\caly)$ we draw a line labeled by $H$ between two-cells
labeled by $\calx$ and $\caly$. The diagram is to be read from right to left, i.e.\
the domain of $H$ is to the right of the line and the codomain is to its the left:
  \be
  \label{id_H}
  \raisebox{-4.7em}{ \begin{tikzpicture}[scale=\locscale]
  \draw[line width=0.5*\widthObj,gray,dashed] (-4*\locW,0) rectangle (5*\locW,\locH);
  \draw[line width=\widthObj,\colorObj]
       (0.5*\locW,0) node[below=1pt] {$H$} -- +(0,\locH) node[above=-1pt] {$H$};
  \node[\colorCat] at (-2*\locW,0.5*\locH) {$\caly$} ;
  \node[\colorCat] at (2.5*\locW,0.5*\locH) {$\calx$} ;
  \end{tikzpicture}}
  \ee
Here the dashed line indicate the canvas; in all diagrams below they will be 
suppressed.
  \item 
The identity $1$-morphism $\textbf{1}_\calx\iN\mathscr{F}(\calx,\calx)$ amounts to
a transparent, or invisible, line.
  \item 
A $2$-morphism $\alpha \colon H_1 \,{\xRightarrow{~~}}\, H_2$ is represented by a 
rectangular coupon inserted between the lines for the $1$-morphisms $H_1$ and $H_2$,
with the line for the domain of $\alpha$ attached to the bottom of the coupon and
the line for the codomain attached to its top. To simplify the graphical notation,
we sometimes replace the rectangular coupon by a labeled dot:
  \be
  \raisebox{-4.7em}{
    \begin{tikzpicture}[scale=\locscale]
  \draw[line width=\widthObj,\colorObj]
       (0.5*\locW,0) node[below=1pt] {$H_1$} -- +(0,\locH) node[above=-1pt] {$H_2$};
  \filldraw[line width=\widthMor,fill=white,draw=\colorMor]
       (-\locWM,0.5*\locH-0.5*\locHM) rectangle +(\locW+2*\locWM,\locHM) ;
  \node at (0.5*\locW,0.5*\locH) {$\alpha$} ;
  \node[\colorCat] at (-2*\locW,0.5*\locH) {$\caly$} ;
  \node[\colorCat] at (2.5*\locW,0.5*\locH) {$\calx$} ;  
  \end{tikzpicture}
  }
  \qquad\text{ or simply }\qquad
  \raisebox{-4.7em}{
    \begin{tikzpicture}[scale=\locscale]
  \draw[line width=\widthObj,\colorObj]
         (0.5*\locW,0) node[below=1pt] {$H_1$} -- +(0,\locH) node[above=-1pt] {$H_2$};
  \filldraw[black] (0.5*\locW,0.5*\locH) circle (2.5pt) node[anchor=east]{$\alpha\;$};
  \node[\colorCat] at (-2*\locW,0.5*\locH) {$\caly$} ;
  \node[\colorCat] at (2.5*\locW,0.5*\locH) {$\calx$} ;  
  \end{tikzpicture}
  }
  \ee
    \item 
The identity $2$-morphism $\id_H \colon H \,{\xRightarrow{~~}}\, H$ is represented 
by an invisible coupon, such that its graphical description is identical with 
\eqref{id_H}.
    \item 
Parallel lines labeled by $1$-morphisms $F\iN\mathscr{F}(\caly,\calz)$ and
$H\iN\mathscr{F}(\calx,\caly)$ evaluate to a single line labeled by the horizontal
composition $1$-morphism $F\cir H\iN\mathscr{F}(\calx,\calz)$.
For $2$-morphisms $\alpha \colon H_1 \,{\xRightarrow{~~}}\, H_2$,
$\beta\colon H_2 \,{\xRightarrow{~~}}\, H_3$ and $\gamma \colon F_1 \,{\xRightarrow{~~}}\, F_2$ 
in $\mathscr{F}$, the diagrams
  \be
  \raisebox{-4.7em}{ \begin{tikzpicture}[scale=\locscale]
  \draw[line width=\widthObj,\colorObj]
       (0.5*\locW,0) node[below=1pt] {$F_1$} -- +(0,\locH) node[above=-1pt] {$F_2$};
  \filldraw[line width=\widthMor,fill=white,draw=\colorMor]
       (-\locWM,0.5*\locH-0.5*\locHM) rectangle +(\locW+2*\locWM,\locHM) ;
  \node at (0.5*\locW,0.5*\locH) {$\gamma$} ;
  \node[\colorCat] at (-1.85*\locW,0.5*\locH) {$\calz$} ;
  \node[\colorCat] at (3*\locW,0.5*\locH) {$\caly$} ;
  \draw[line width=\widthObj,\colorObj]
       (0.5*\locW+5*\locW,0) node[below=1pt] {$H_1$} -- +(0,\locH)
       node[above=-1pt] {$H_2$};
  \filldraw[line width=\widthMor,fill=white,draw=\colorMor]
       (-\locWM+5*\locW,0.5*\locH-0.5*\locHM) rectangle +(\locW+2*\locWM,\locHM) ;
  \node at (0.5*\locW+5*\locW,0.5*\locH) {$\alpha$} ;
  \node[\colorCat] at (2.85*\locW+5*\locW,0.5*\locH) {$\calx$} ;
  \end{tikzpicture}
  }
  \hsp{2.1} \text{and} \hsp{2.1} 
  \raisebox{-4.7em}{        \begin{tikzpicture}[scale=\locscale]
  \draw[line width=\widthObj,\colorObj]
       (0.5*\locW,0) node[below=1pt] {$H_1$} -- +(0,\locH)
       node[above=-1pt] {$H_3$};
  \node[anchor=west] at (0.5*\locW,0.5*\locH) {$H_2$} ;
  \filldraw[line width=\widthMor,fill=white,draw=\colorMor]
       (-\locWM,0.25*\locH-0.5*\locHM) rectangle +(\locW+2*\locWM,\locHM) ;
  \node at (0.5*\locW,0.25*\locH) {$\alpha$} ;
  \filldraw[line width=\widthMor,fill=white,draw=\colorMor]
       (-\locWM,0.75*\locH-0.5*\locHM) rectangle +(\locW+2*\locWM,\locHM) ; 
  \node at (0.5*\locW,0.75*\locH) {$\beta$} ;
  \node[\colorCat] at (-1.85*\locW,0.5*\locH) {$\caly$} ;
  \node[\colorCat] at (3.35*\locW,0.5*\locH) {$\calx$} ;
  \end{tikzpicture}}
  \hsp{2.1}
  \ee
evaluate to the horizontal composition $2$-morphism 
$\gamma\cir\alpha \colon F_1\cir H_1 \,{\xRightarrow{~~}}\, F_2\cir H_2$ and to the
vertical composition $\beta\,{\cdot}\,\alpha \colon H_1 \,{\xRightarrow{~~}}\, H_3$,
respectively.
\end{enumerate}

\noindent 
Next recall from Section \ref{sec:module_duals} the notion of duals for a
$1$-morphism in a bicategory $\mathscr{F}$ (see also e.g.\ \Cite{App.\,A.3}{schaum2}).
We will use the following graphical description. 
   \\[-0.8em]
   \Enumerate \setcounter{enumi}{6}

\item 
A \emph{right dual} to a $1$-morphism $H\iN\mathscr{F}(\calx,\caly)$ consists 
of a $1$-morphism $H^\vee\iN\mathscr{F}(\caly,\calx)$ and $2$-morphisms 
${\rm ev}_H \colon H^\vee\cir H\,{\Ra}\, \mathbf{1}_\calx$ and 
${\rm coev}_H \colon \mathbf{1}_\caly\,{\Ra}\, H\cir H^\vee$, which
obey appropriate snake relations. We depict these $2$-morphisms by
  \be
  \raisebox{-2.6em}{ \begin{tikzpicture}[scale=\locscale]
  \draw[line width=\widthObj,\colorObj]
       (\locW,0*\locHs) node[below=-1pt] {$H^\vee$} -- +(0,\locHs) 
       (\locW+2*\locC,0*\locHs) node[below=-1pt] {$H$} -- +(0,\locHs) 
       (\locW,\locHs-0*\locHs) arc (180:0:\locC);
  \scopeArrow{0.5}{\arrowObj}
  \draw[line width=\widthObj,\colorObj,postaction={decorate}]
       (\locW-\hsA+\locC,\locHs-0*\locHs+\locC) -- + (-0.01,0) ;
  \end{scope}
   \node[\colorCat] at (\locW+\locC,0.5*\locHs) {$\caly$} ;
  \node[\colorCat] at (\locW+\locC,2*\locHs+0.7*\locC) {$\calx$} ;
  \end{tikzpicture}
  }
  \qquad\qquad\text{ and }\qquad\qquad
  \raisebox{-2.6em}{\begin{tikzpicture}[scale=\locscale]
  \draw[line width=\widthObj,\colorObj]
       (\locW,\locHs+\locC)  -- +(0,\locHs) node[above=1pt] {$H$}
       (\locW+2*\locC,\locHs+\locC)  -- +(0,\locHs) node[above=1pt] {$\,H^\vee$}
       (\locW,\locHs+\locC) arc (180:360:\locC);
  \scopeArrow{0.5}{\arrowObj}
  \draw[line width=\widthObj,\colorObj,postaction={decorate}]
       (\locW-\hsA+\locC,\locHs) -- + (-0.01,0) ;
  \end{scope}
  \node[\colorCat] at (\locW+\locC,\locC+1.25*\locHs) {$\calx$} ;
  \node[\colorCat] at (\locW+\locC,0.25*\locHs) {$\caly$} ;
  \end{tikzpicture}}
  \ee

\item 
Similarly, a \emph{left dual} to $H\iN\mathscr{F}(\calx,\caly)$ is a $1$-morphism
${}^\vee\! H\iN\mathscr{F}(\caly,\calx)$ together with $2$-mor\-phisms 
${\rm ev'}_{\!H} \colon H\cir {}^\vee\! H\,{\Ra}\, \mathbf{1}_\caly$ and 
${\rm coev'}_{\!H} \colon \mathbf{1}_\calx\,{\Ra}\, {}^\vee\! H\cir H$, obeying
snake relations. We portray them as
  \be
  \raisebox{-2.6em}{ \begin{tikzpicture}[scale=\locscale]
  \draw[line width=\widthObj,\colorObj]
       (\locW,0*\locHs) node[below=-1pt]  {$H$}-- +(0,\locHs) 
       (\locW+2*\locC,0*\locHs) node[below=-1pt] {${}^\vee\!H\,$} -- +(0,\locHs) 
       (\locW,\locHs-0*\locHs) arc (180:0:\locC);
  \scopeArrow{0.5}{\arrowObj}
  \draw[line width=\widthObj,\colorObj,postaction={decorate}]
       (\locW+\hsA+\locC,\locHs-0*\locHs+\locC) -- + (0.01,0) ;
  \end{scope}
  \node[\colorCat] at (\locW+\locC,0.5*\locHs) {$\calx$} ;
  \node[\colorCat] at (\locW+\locC,2*\locHs+0.7*\locC) {$\caly$} ;
  \end{tikzpicture}
  }
  \qquad\qquad\text{ and }\qquad\qquad
  \raisebox{-2.6em}{        \begin{tikzpicture}[scale=\locscale]
  \draw[line width=\widthObj,\colorObj]
       (\locW,\locHs+\locC)  -- +(0,\locHs) node[above=1pt] {${}^\vee\!H\,$}
       (\locW+2*\locC,\locHs+\locC)  -- +(0,\locHs) node[above=1pt] {$H$}
       (\locW,\locHs+\locC) arc (180:360:\locC);
  \scopeArrow{0.5}{\arrowObj}
  \draw[line width=\widthObj,\colorObj,postaction={decorate}]
       (\locW+\hsA+\locC,\locHs) -- + (0.01,0) ;
  \end{scope}
  \node[\colorCat] at (\locW+\locC,\locC+1.25*\locHs) {$\caly$} ;
  \node[\colorCat] at (\locW+\locC,0.25*\locHs) {$\calx$} ;
  \end{tikzpicture}
  }
  \ee
\end{enumerate}

\noindent
If the bicategory $\mathscr{F}$ admits dualities for every $1$-morphism,
it may further admit a \emph{pivotal structure}, i.e.\ a pseudo-natural equivalence 
  \be
  \mathbf{P} \Colon \id_\mathscr{F} \xRightarrow{\;\simeq~\,} (-)\dd
  \ee
obeying $\mathbf{P}_{\!\calx} \eq \id_\calx$ for every object $\calx\iN\mathscr{F}$.
   \\[-0.9em]
  \Enumerate
  \setcounter{enumi}{8}
  \item 
We depict the component $\mathbf{P}_H \colon H\Ra H\dd$ of a pivotal structure 
$\mathbf{P}$ at a $1$-morphism $H\iN\mathscr{F}(\calx,\caly)$ by
  \be
  \raisebox{-3.6em}{ \begin{tikzpicture}[scale=.7*\locscale]
  \draw[line width=\widthObj,\colorObj]
        (0.5*\locW,0) node[below=1pt] {$H$} -- +(0,\locH) node[above=-1pt] {$~H\dd$};
  \filldraw[line width=\widthMor,fill=white,draw=\colorMor]
        (0.5*\locW,0.5*\locH) circle (0.5*\locC) ;
  \node[\colorCat] at (-2*\locW,0.5*\locH) {$\caly$} ;
  \node[\colorCat] at (2.5*\locW,0.5*\locH) {$\calx$} ;  
  \end{tikzpicture}
  }
  \qquad\text{ or simply }\qquad
  \raisebox{-3.6em}{ \begin{tikzpicture}[scale=.7*\locscale]
  \draw[line width=\widthObj,\colorObj]
       (0.5*\locW,0) node[below=1pt] {$H$} -- +(0,\locH) node[above=-1pt] {$~H\dd$};
  \node[\colorCat] at (-2*\locW,0.5*\locH) {$\caly$} ;
  \node[\colorCat] at (2.5*\locW,0.5*\locH) {$\calx$} ;  
  \end{tikzpicture}
  }  
  \ee
in the case that the labeling of the lines is evident.
\end{enumerate}


Let us look a bit closer into the graphical calculus for the bicategories that are
of prime interest to us. First, any tensor category $\cala$ can be regarded as a 
bicategory $\mathbf{\mathbbm{A}}$ with a single object, called the \emph{delooping} 
of $\cala$. The End-category of the unique object in $\mathbf{\mathbbm{A}}$ is the 
category underlying $\cala$, and the composition of $1$-morphisms $a,b\iN\cala$ is 
the tensor product, $a\cir b \,{:=}\, a\oti b$. A morphism $f \colon a\oti b\Rarr~ c$ 
in $\cala$ is graphically represented as the string diagram
  \be
\raisebox{-4.7em}{
  \begin{tikzpicture}[scale=0.9*\locscale]
  \draw[line width=\widthObj,\colorObj]
       (0,0) node[below=1pt] {$a$} -- +(0,0.5*\locH)
       (\locW,0) node[below=-2pt] {$b$} -- +(0,0.5*\locH)
       (0.5*\locW,0.5*\locH)  -- +(0,0.5*\locH) node[above=-1pt] {$c$} ;
  \filldraw[line width=\widthMor,fill=white,draw=\colorMor]
       (-\locWM,0.5*\locH-0.5*\locHM) rectangle +(\locW+2*\locWM,\locHM) ;
  \node at (0.5*\locW,0.5*\locH) {$f$} ;
  \end{tikzpicture}}
  \ee
in $\mathbf{\mathbbm{A}}$. Left and right duals in $\cala$ correspond to 
bicategorical left and right duals in $\mathbf{\mathbbm{A}}$. A pivotal structure 
on $\cala$ endows the bicategory $\mathbf{\mathbbm{A}}$ with a pivotal structure in
the bicategorical sense. The requirement that $\textbf{P}_{\!x} \eq \id_x$ imposed
on the pseudo-natural equivalence \eqref{pivotal_st_bicategory} ensures that every
pivotal structure on $\mathbf{\mathbbm{A}}$ comes from a pivotal structure 
on the tensor category $\cala$.

Another pivotal bicategory of interest to us is the bicategory 
$\textbf{Mod}^{{\rm piv}\!}(\cala)$ of pivotal modules over a pivotal tensor category
discussed in Section \ref{sec:PME}. In this bicategory the right dual of a module
functor is its left adjoint functor with its canonical module functor structure, 
and the left dual is the right adjoint module functor.

Finally recall that associated to an invertible bimodule category 
${}_\calc\call_\cald$ there is two-object bicategory $\Mor$ \Cite{Sect.\,3}{FGJS}.
We denote the two objects of $\Mor$ are by ``$+$'' and ``$-$''; their Hom-categories
are
  \be
  \begin{aligned}
  \Mor(\mathbf{+},\mathbf{+}) = \calc \,, \qquad
  \Mor(\mathbf{-},\mathbf{-}) & = \cald \simeq \Fun_\calc(\call,\call) \,,
  \Nxl2
  \Mor(\mathbf{-},\mathbf{+}) = \call \,,\qquad
  \Mor(\mathbf{+},\mathbf{-}) & =\Fun_\calc(\call,\calc)  \,.
  \end{aligned}
  \ee
The horizontal composition in $\Mor$ is given by the tensor products of $\calc$ and
$\cald$, by their module actions on $\call$ and $\Fun_\calc(\call,\calc)$, and by
the bimodule functors
  \be
  \begin{aligned}
  \bmixt{}{}\Colon & \call\Times\Fun_\calc(\call,\calc) \rarr~ \calc \,,
  & (x,H)\longmapsto H(x) \phantom{-\act\,.}
  \Nxl2
  \text{and} \qquad
  \bmixtd{}{}\Colon & \Fun_\calc(\call,\call)\Times\call \rarr~ \calc_\call^\vee
  \simeq \cald \,, \quad & (H,x)\longmapsto H(-)\act x \,.
  \end{aligned}
  \ee
Moreover, owing to \Cite{Thm.\,4.2}{FGJS}, $\Mor$ is a bicategory with dualities. As
a consequence, objects in $\call$ and the actions of $\calc$ and $\cald$ can be
graphically represented in terms of string diagrams. For example, given 
morphisms $f\iN\Hom_\call(x,y)$ and $g\iN\Hom_\cald(d_1,d_2)$, the diagram
  \be\label{inv_graph_example}
  \raisebox{-4.7em}{ \begin{tikzpicture}[scale=\locscale]
  \draw[line width=\widthObj,\colorObj]
       (0.5*\locW,0) node[below=1pt] {$x$} -- +(0,\locH-\locHs) node[right=1pt] {$y$}
       (-4*\locC+0.5*\locW,0) node[below=-4pt] {$\;y^{\!\vee}$} -- +(0,\locH-\locHs) 
       (0.5*\locW,\locH-\locHs) arc (0:180:2*\locC);
  \filldraw[line width=\widthMor,fill=white,draw=\colorMor]
       (-\locWM,0.5*\locH-0.5*\locHM) rectangle +(\locW+2*\locWM,\locHM) ;
  \node at (0.5*\locW,0.5*\locH) {$f$} ;
  \scopeArrow{0.5}{\arrowObj}
  \draw[line width=\widthObj,\colorObj,postaction={decorate}]
       (0.5*\locW-\hsA-2*\locC,\locH-\locHs+2*\locC) -- + (-0.01,0) ;
  \end{scope}
  \node[\colorCat] at (-1.5*\locW-4*\locC,0.5*\locH) {$\mathbf{-}$} ;
  \node[\colorCat] at (-1.5*\locW,0.5*\locH) {$\mathbf{+}$} ;
  \node[\colorCat] at (3*\locW,0.5*\locH) {$\mathbf{-}$} ;

  \draw[line width=\widthObj,\colorObj]
       (0.5*\locW+5*\locW,0) node[below=1pt] {$d_1$} -- +(0,\locH)
       node[above=-1pt] {$d_2$};
  \filldraw[line width=\widthMor,fill=white,draw=\colorMor]
       (-\locWM+5*\locW,0.5*\locH-0.5*\locHM) rectangle +(\locW+2*\locWM,\locHM) ;
  \node at (0.5*\locW+5*\locW,0.5*\locH) {$g$} ;
  \node[\colorCat] at (2.5*\locW+5*\locW,0.5*\locH) {$\mathbf{-}$} ;
  \end{tikzpicture}
  }
  \ee
then stands for the morphism $({\rm ev}_y\act \id_{d_2}) \cir (\bmixtd{\id_{y^{\!\vee}}}
{f\actr g}) \colon \bmixtd{y^\vee}{x\actr d_1} \Rarr~ \textbf{1}\otimes_\cald d_2$ 
in $\cald$.

We denote by $\Mor^{\textbf{1(op)}}$ the bicategory with the same objects as $\Mor$,
but for which the Hom-ca\-tegories are $\Mor^{\textbf{1(op)}}(x,y) \eq \Mor(y,x)$; 
thus as compared to $\Mor$, only the $1$-morphisms are reversed. The bicategory
$\Mor^{\textbf{1(op)}}$ can be embedded into the bicategory 
$\mathbf{Mod}^{{\rm ex}\!}(\calc)$ of exact $\calc$-mo\-du\-le categories, i.e.\
there is a fully faithful pseudo-functor
  \be
  \Mor^{\textbf{1(op)}}\longrightarrow\mathbf{Mod}^{{\rm ex}\!}(\calc)
  \label{Mor_embedding}
  \ee
that maps objects as $\mathbf{+} \,{\xmapsto{\phantom x\,}}\, \calc$ and 
$\mathbf{-}\,{\xmapsto{\phantom x\,}}\, \call$, and has equivalences at the level of 
Hom-categories given by the assignments
  \be
  \begin{aligned}
  \Mor^{\textbf{1(op)}}(\mathbf{+},\mathbf{+})=\calc
  & \xrightarrow{\,\simeq~} \Fun_\calc(\calc,\calc) \,, \qquad&
  \Mor^{\textbf{1(op)}}(\mathbf{+},\mathbf{-})=\call
  & \xrightarrow{\,\simeq~} \Fun_\calc(\calc,\call) \,,\\
  c&\xmapsto{\phantom x~~} {-}\oti c&  y&\xmapsto{\phantom x~~} -\act y
  \Nxl9
  \Mor^{\textbf{1(op)}}(\mathbf{-},\mathbf{-})=\cald
  & \xrightarrow{\,\simeq~} \Fun_\calc(\call,\call) \,, \qquad &
  \Mor^{\textbf{1(op)}}(\mathbf{-},\mathbf{+})=&\Fun_\calc(\call,\calc) \,.\\
  d& \xmapsto{\phantom x~~} -\actr d&   &
  \end{aligned}
  \ee
With these identifications one represents the morphism \eqref{inv_graph_example} 
by the diagram
  \be
  \raisebox{-4.7em}{ \begin{tikzpicture}[scale=\locscale]
  \draw[line width=\widthObj,\colorObj]
       (0.5*\locW,0) node[below=1pt] {$x$} -- +(0,\locH-\locHs) node[right=1pt] {$y$}
       (0.5*\locW-4*\locC,0) node[below=-4pt] {$\;y^{\!\vee}$} -- +(0,\locH-\locHs) 
       (0.5*\locW,\locH-\locHs) arc (0:180:2*\locC);
  \filldraw[line width=\widthMor,fill=white,draw=\colorMor]
       (-\locWM,0.5*\locH-0.5*\locHM) rectangle +(\locW+2*\locWM,\locHM) ;
  \node at (0.5*\locW,0.5*\locH) {$f$} ;
  \scopeArrow{0.5}{\arrowObj}
  \draw[line width=\widthObj,\colorObj,postaction={decorate}]
       (0.5*\locW-\hsA-2*\locC,\locH-\locHs+2*\locC) -- + (-0.01,0) ;
  \end{scope}
  \node[\colorCat] at (-1.5*\locW-4*\locC,0.5*\locH) {$\call$} ;
  \node[\colorCat] at (-1.5*\locW,0.5*\locH) {$\calc$} ;
  \node[\colorCat] at (3*\locW,0.5*\locH) {$\call$} ;
  \draw[line width=\widthObj,\colorObj]
       (0.5*\locW+5*\locW,0) node[below=1pt] {$d_1$} -- +(0,\locH)
       node[above=-1pt] {$d_2$};
  \filldraw[line width=\widthMor,fill=white,draw=\colorMor]
       (-\locWM+5*\locW,0.5*\locH-0.5*\locHM) rectangle +(\locW+2*\locWM,\locHM) ;
  \node at (0.5*\locW+5*\locW,0.5*\locH) {$g$} ;
  \node[\colorCat] at (2.5*\locW+5*\locW,0.5*\locH) {$\call$} ;
  \end{tikzpicture}
  }
  \ee
i.e.\ as a $2$-morphism in the bicategory $\mathbf{Mod}^{{\rm ex}\!}(\calc)$. 
Furthermore, in a pivotal setting the bicategory $\Mor$ inherits a pivotal structure
according to \Cite{Thm.\,5.9}{FGJS} and the pseudo-functor factors through a pivotal pseudo-functor 
  \be
  \Mor^{\textbf{1(op)}}\longrightarrow\mathbf{Mod}^{{\rm piv}\!}(\calc)\,.
  \ee

\newpage

\newcommand\wb{\,\linebreak[0]} \def\wB {$\,$\wb}
\newcommand\Arep[2]  {{\em #2}, available at {\tt #1}}
\newcommand\Bi[2]    {\bibitem[#2]{#1}}
\newcommand\inBO[9]  {{\em #9}, in:\ {\em #1}, {#2}\ ({#3}, {#4} {#5}), p.\ {#6--#7} {\tt [#8]}}
\newcommand\J[7]     {{\em #7}, {#1} {#2} ({#3}) {#4--#5} {{\tt [#6]}}}
\newcommand\JO[6]    {{\em #6}, {#1} {#2} ({#3}) {#4--#5} }
\newcommand\JP[7]    {{\em #7}, {#1} ({#3}) {{\tt [#6]}}}
\newcommand\Jpress[7]{{\em #7}, {#1} {} (in press) {} {{\tt [#6]}}}
\newcommand\Jtoa[7]  {{\em #7}, {#1} {} (to appear) {} {{\tt [#6]}}}
\newcommand\K[6]       {\ {\sl #6}, {#1} {#2} ({#3}) {#4}}
\renewcommand\K[6]     {\ {\sl #6}, {#1} {#2} ({#3}) {#4} {\small\tt [#5]}}
\newcommand\BOOK[4]  {{\em #1\/} ({#2}, {#3} {#4})}
\newcommand\PhD[2]   {{\em #2}, Ph.D.\ thesis #1}
\newcommand\Prep[2]  {{\em #2}, preprint {\tt #1}}
\newcommand\uPrep[2] {{\em #2}, unpublished preprint {\tt #1}}
\def\aagt  {Alg.\wB\&\wB Geom.\wb Topol.}   
\def\alrt  {Algebr.\wb Represent.\wB Theory}   
\def\anma  {Ann.\wb Math.}
\def\inma  {Invent.\wb Math.}
\def\imrn  {Int.\wb Math.\wb Res.\wb Notices}
\def\jhrs  {J.\wb Ho\-mo\-to\-py\wb Relat.\wb Struct.}
\def\joal  {J.\wB Al\-ge\-bra}
\def\jpaa  {J.\wB Pure\wB Appl.\wb Alg.}
\def\kyjm  {Ky\-o\-to J.\ Math.}
\def\mams  {Memoirs\wB Amer.\wb Math.\wb Soc.}
\def\nupb  {Nucl.\wb Phys.\ B}
\def\pajm  {Pa\-cific\wB J.\wb Math.}
\def\quto  {Quantum Topology}
\def\tams  {Trans.\wb Amer.\wb Math.\wb Soc.}
\def\trgr  {Trans\-form.\wB Groups}
\def\lamp  {Journal of Logical and Algebraic Methods in Programming}

\small

 \end{document}